\newtheorem{thm}{Theorem}[subsection]
\newtheorem{prop}[thm]{Proposition}
\newtheorem{cor}[thm]{Corollary}
\newtheorem*{coro}{Corollary}
\newtheorem{lem}[thm]{Lemma}
\theoremstyle{definition}
\newtheorem{defn}[thm]{Definition}
\newtheorem{setup}[thm]{Setup}
\newtheorem*{mainthm}{Main Theorem}
\newtheorem*{thrm}{Theorem}
\newtheorem*{conji}{Conjecture}
\newtheorem{conj}[thm]{Conjecture}
\theoremstyle{remark}
\newtheorem{ex}[thm]{Example}
\newtheorem{rmk}[thm]{Remark}
\DeclareSymbolFont{ugrf@m}{U}{eur}{m}{n}
\DeclareMathSymbol{\upmu}{\mathord}{ugrf@m}{"16}
\newcommand{\cat}[1]{{\mathbf{#1}}}
\newcommand{\p}{}
\newcommand{\spec}{\operatorname{Spec}}
\newcommand{\onto}{\twoheadrightarrow}
\newcommand{\into}{\hookrightarrow}
\newcommand{\from}{\leftarrow}
\newcommand{\lot}{\otimes^{\mathbb{L}}}
\newcommand{\per}{{\ensuremath{\cat{per}}}\kern 1pt}
\newcommand{\stab}{\underline{\mathrm{CM}}}
\DeclareMathOperator{\id}{id}
\let\im\relax\DeclareMathOperator{\im}{im}
\let\ker\relax\DeclareMathOperator{\ker}{ker}
\DeclareMathOperator{\coker}{coker}
\let\hom\relax\newcommand{\hom}{\mathrm{Hom}}
\newcommand{\enn}{\mathrm{End}}
\DeclareMathOperator{\tor}{Tor}
\DeclareMathOperator{\ext}{Ext}
\newcommand{\cell}{\operatorname{Cell}}
\newcommand{\Z}{\mathbb{Z}}
\newcommand{\N}{\mathbb{N}}
\newcommand{\A}{\mathbb{A}}
\newcommand{\R}{{\mathrm{\normalfont\mathbb{R}}}}
\numberwithin{equation}{section}
\newcommand{\con}{\mathrm{con}}
\newcommand{\dca}{{\ensuremath{A^\mathrm{der}_\con}}}
\newcommand{\dq}{\ensuremath{A/^{\mathbb{L}}\kern -2pt AeA} }
\newcommand{\dqb}{\ensuremath{B/^{\mathbb{L}}\kern -2pt BeB} }
\newcommand{\thick}{\ensuremath{\cat{thick} \kern 0.5pt}}
\newcommand{\dgh}{\mathrm{HOM}}
\newcommand{\dge}{\mathrm{END}}
\newcommand{\recol}{\mathrel{\substack{\textstyle\leftarrow\\[-0.6ex]
			\textstyle\rightarrow \\[-0.6ex]
			\textstyle\leftarrow}}}
\newcommand{\dloc}{\mathbb{L}_S(A)}
\newcommand{\mc}{\mathscr{M}\kern -0.7pt C}
\newcommand{\ggr}{\mathscr{G}\kern -1pt g}
\newcommand{\holim@}[2]{%
	\vtop{\m@th\ialign{##\cr
			\hfil$#1\operator@font holim$\hfil\cr
			\noalign{\nointerlineskip\kern1.5\ex@}#2\cr
			\noalign{\nointerlineskip\kern-\ex@}\cr}}%
}
\newcommand{\holim}{%
	\mathop{\mathpalette\holim@{\leftarrowfill@\textstyle}}\nmlimits@
}
\begin{document}

		\title{Singularity categories via the derived quotient}
	\author{Matt Booth}
	
	\address{Universiteit Antwerpen,
		Departement Wiskunde-Informatica,
Campus Middelheim,
		Middelheimlaan 1,
		2020 Antwerpen,
		Belgium}
	
	\email{matt.booth@uantwerpen.be}
	
	\urladdr{mattbooth.info}

	\subjclass[2010]{14B05; 16S38, 16E45, 18E30, 14A22, 16E35, 16G50}

	\keywords{Singularity categories, noncommutative resolutions, derived quotients, recollements, relative singularity categories, Cohen-Macaulay modules, isolated hypersurface singularities, homotopical algebra, dg enhancements, homological MMP}

	\begin{abstract}
Given a noncommutative partial resolution $A=\enn_R(R\oplus M)$ of a Gorenstein singularity $R$, we show that the relative singularity category $\Delta_R(A)$ of Kalck--Yang is controlled by a certain connective dga $\dq$, the derived quotient of Braun--Chuang--Lazarev. We think of $\dq$ as a kind of `derived exceptional locus' of the partial resolution $A$, as we show that it can be thought of as the universal dga fitting into a suitable recollement. This theoretical result has geometric consequences. When $R$ is an isolated hypersurface singularity, it follows that the singularity category $D_\mathrm{sg}(R)$ is determined completely by $\dq$, even when $A$ has infinite global dimension. Thus our derived contraction algebra classifies threefold flops, even those $X \to  \spec (R)$ where $X$ has only terminal singularities. This gives a solution to the strongest form of the derived Donovan--Wemyss conjecture, which we further show is the best possible classification result in this singular setting.

\end{abstract}
	
	\maketitle

	\section{Introduction} 

Recently, the study of noncommutative rings with idempotents, and in particular their homological properties, has become important within algebraic geometry \cite{bikr, kiwy, DWncdf}. Their study in this context was initiated by Van den Bergh \cite{vdbnccr, vdb}, who introduced noncommutative crepant resolutions (NCCRs) as a noncommutative notion of a resolution of singularities amenable to techniques from birational geometry.

One such use of this technology is the conjectural Donovan--Wemyss classification of smooth simple threefold flops (over an algebraically closed field $k$ of characteristic zero). Given such a flopping contraction $X \to \spec (R)$, one can construct an NCCR $A$ of $R$ together with a derived equivalence $D^b(A)\simeq D^b(X)$ \cite{vdb}. The quotient of $A$ by the idempotent $e$ is known as the contraction algebra; it is a finite-dimensional local $k$-algebra. It is conjectured that this algebra classifies smooth simple threefold flops complete locally.
\begin{conji}[Donovan--Wemyss {\cite[1.4]{DWncdf}}]
	Let $X \to \spec R$ and $X' \to \spec R'$ be flopping contractions of an irreducible rational curve in a smooth projective threefold, where $R$ and $R'$ are complete local rings. If the associated contraction algebras are isomorphic, then $R \cong  R'$.
\end{conji}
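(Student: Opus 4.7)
The plan is to factor the conjecture through two enhancements of the contraction algebra: the derived contraction algebra $\dq$ and the singularity category $D_\mathrm{sg}(R)$. Concretely, I would aim for the chain of implications
\[
A_\con \cong A'_\con \ \Longrightarrow \ \dq \simeq \dq' \ \Longrightarrow \ D_\mathrm{sg}(R) \simeq D_\mathrm{sg}(R') \ \Longrightarrow \ R \cong R'.
\]
The middle implication is the main theorem announced in the abstract: for a Gorenstein (here, hypersurface) isolated singularity, the singularity category is completely controlled by the derived quotient of any noncommutative partial resolution. The outer implications are the ones that carry the real content.

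For the first implication, the key point is that $\dq$ should not be thought of as just one of many possible dg lifts of $A_\con$: it is canonically recoverable from $A_\con$ via derived deformation theory. Specifically, I would identify $\dq$ with the pro-representing hull of a natural derived deformation functor attached to $A_\con$ -- for instance, the functor of derived noncommutative deformations of the unique simple $A_\con$-module, or equivalently the connective augmented dga Koszul-dual to $A_\con$. This aligns with the paper's own description of $\dq$ as the universal dga fitting into a suitable recollement. Granting this intrinsic formality, any isomorphism $A_\con \cong A'_\con$ transports the deformation functor and hence produces a quasi-isomorphism $\dq \simeq \dq'$ via the Pridham--Lurie correspondence between pro-Artinian derived deformation problems and augmented connective dgas.

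For the third implication, I would invoke a Hua--Keller style reconstruction theorem in the hypersurface setting. For a complete local cDV singularity $R = k\llbracket x,y,z,w\rrbracket/(f)$, the dg enhancement of $D_\mathrm{sg}(R)$, together with its $2$-periodic (Buchweitz--Orlov) structure, should encode the Tyurina algebra $T_f = k\llbracket x,y,z,w\rrbracket/(f, \partial f)$, and a Mather--Yau type rigidity statement then pins down $R$ up to isomorphism. Since every smooth simple threefold flop has $R$ cDV, this closes the chain.

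The main obstacle I anticipate is this final reconstruction step: extracting a genuine ring isomorphism from an equivalence of (enhanced) singularity categories requires carrying along enough structure to recover the defining equation of the hypersurface. By contrast, the first implication is essentially formal once one packages $\dq$ as a pro-representing object, and the second is the paper's central theoretical contribution stated in the abstract. The delicate point is the bridge from triangulated invariants back to commutative-algebraic data.
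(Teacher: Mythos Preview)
The statement you are addressing is a \emph{conjecture}, and the paper does not claim to prove it. What the paper actually proves is the derived analogue (\ref{ddwc}): if the \emph{derived} contraction algebras are quasi-isomorphic, then $R\cong R'$. Your chain of implications correctly isolates the second and third steps as exactly this result together with the Hua--Keller theorem (\ref{hkthm}). The entire weight of the original Donovan--Wemyss conjecture therefore rests on your first implication, and that is where your argument fails.

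Your claim that $\dq$ is ``the connective augmented dga Koszul-dual to $A_\con$'' is incorrect. What is true (see the remark following Setup \ref{sfsetup}) is that $\dq$ is Koszul dual to $\R\enn_A(S)$, where $S$ is the simple regarded as an $A$-module. This object records how $S$ sits inside $D(A)$, not merely inside $\cat{mod}\text{-}A_\con$; in particular it is not an invariant of $A_\con$ alone. Equivalently, the derived deformation functor whose prorepresenting object is $\dq$ is the functor of noncommutative deformations of $S$ \emph{as an $A$-module}, not as an $A_\con$-module. An isomorphism $A_\con\cong A'_\con$ gives no control over $\R\enn_A(S)$ versus $\R\enn_{A'}(S')$, so your transport argument does not go through.

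There is a decisive internal check: your step 1 nowhere uses smoothness of $X$, so if valid it would apply verbatim in the singular setting. But Example \ref{singularnope} exhibits two singular flops with isomorphic contraction algebras $A_\con\cong k[y]/y^3$ and non-isomorphic bases $R\not\cong R'$. Since the paper establishes steps 2 and 3 in that generality, step 1 must fail there, and hence your argument for it cannot be correct.
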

In the singular setting, although one can still define the contraction algebra the conjecture is false (\ref{singularnope}). Hence, if one wishes to classify singular flops, more information is necessary. To rectify this failure, we move to the derived setting and replace the contraction algebra $A/AeA$ with the derived quotient $\dq$.
 
\p Let $A$ be a $k$-algebra with an idempotent $e$. One can take the derived quotient $\dq$ of $A$ by $e$, which by Braun, Chuang, and Lazarev is the differential graded algebra (dga) universal\footnote{The universal property only defines $\dq$ up to quasi-isomorphism of $A$-algebras.} with respect to homotopy annihilating $e$ \cite{bcl}. In fact, below we show that it also has a universal property with respect to recollements of derived categories generated by idempotents (\ref{recoll}, \ref{derivedmoritarmk}), which gives a functorial description of a recollement of Kalck and Yang \cite{kalckyang}. This formalism allows us to easily recover many statements about recollements already found in the literature (\ref{recolrmk1}, \ref{recolrmk2}).

The objective of this paper is to use these derived quotients $\dq$ in the setting of noncommutative partial resolutions. These generalise Van den Bergh's NCCRs and appear for example in the homological MMP as rings derived equivalent to partial crepant resolutions \cite{hmmp}. These partial resolutions are rings of the form $A=\enn_R(R\oplus M)$, and for such a partial resolution $e$ denotes the idempotent $\id_R \in A$. The main advantage for us of working in such generality is that we can entirely remove smoothness hypotheses on $A$, which are strictly necessary in \cite{DWncdf} and \cite{huakeller}. Our main result is the following.

\begin{mainthm}[\ref{recov}]
	Let $R$ be a complete local isolated hypersurface singularity. Let $M$ be an indecomposable non-projective maximal Cohen--Macaulay (MCM) $R$-module and let $A\coloneqq \enn_{R}(R\oplus M)$ be the associated noncommutative partial resolution of $R$. Then the dga quasi-isomorphism class of the derived exceptional locus\footnote{For an explanation of why we use this terminology, see \ref{delrmk}.} $\dq$ recovers $R$ amongst all complete local isolated hypersurface singularities of the same (Krull) dimension as $R$.
\end{mainthm}
In particular, $A$ is allowed to have infinite global dimension. This leads to the following corollary, which we show in Example \ref{singularnope} is the strongest possible classification result for singular threefold flops.
\begin{coro}[singular derived Donovan-Wemyss conjecture; see \ref{ddwc}]
	Let $X \to \spec R$ and $X' \to \spec R'$ be flopping contractions of an irreducible rational curve in a projective threefold with only terminal singularities, where $R$ and $R'$ are complete local rings with an isolated singularity. If the derived contraction algebras are quasi-isomorphic, then $R \cong  R'$.
	\end{coro}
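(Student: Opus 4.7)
The plan is to deduce the corollary directly from the Main Theorem (\ref{recov}) via the homological minimal model program. The first input is geometric: for any threefold flopping contraction $X \to \spec R$ of a rational curve, the base $R$ is a three-dimensional complete local Gorenstein ring with isolated terminal singularity, and hence by a theorem of Reid a compound Du Val singularity, in particular an isolated hypersurface. This conclusion is insensitive to whether $X$ itself is smooth or only has terminal singularities, so that both $R$ and $R'$ a priori satisfy the hypotheses on the base singularity imposed by the Main Theorem.

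The second input is the construction of a noncommutative partial resolution. By the Iyama--Wemyss homological MMP \cite{hmmp}, to the flopping contraction $f \colon X \to \spec R$ one associates an algebra $A = \enn_R(R \oplus M)$ together with a derived equivalence $D^b(X) \simeq D^b(A)$. The summand $M$ is MCM by construction, indecomposable because the flopping curve is irreducible, and non-projective because $f$ is not an isomorphism. The derived contraction algebra of $f$ is by definition the derived quotient $\dq$; analogously on the primed side one obtains $A' = \enn_{R'}(R' \oplus M')$ with derived contraction algebra $\dqb$.

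Granted these two inputs, the conclusion is immediate. A hypothesised quasi-isomorphism $\dq \simeq \dqb$ of derived contraction algebras puts us precisely in the situation of the Main Theorem, which recovers each of $R$ and $R'$ from its derived exceptional locus. Since both are complete local isolated hypersurface singularities of Krull dimension three, quasi-isomorphism of the exceptional loci forces $R \cong R'$.

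The main obstacle I anticipate is the second input. In the singular setting one must verify that the homological MMP genuinely delivers a partial resolution $A$ with an indecomposable, non-projective MCM summand $M$, as demanded by the Main Theorem; the irreducibility of the flopping curve is what ultimately guarantees indecomposability, while the Cohen--Macaulay property comes from pushing forward the tilting summand along a crepant morphism over a Gorenstein base. The decisive feature that makes the derived --- as opposed to the classical --- conjecture correct is that the Main Theorem imposes no finite global dimension hypothesis on $A$, a restriction that is precisely what fails in the singular case and that motivates the passage from $A/AeA$ to $\dq$ in the first place.
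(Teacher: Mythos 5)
Your argument is correct and follows essentially the same route as the paper: identify $R$ as a compound Du Val (hence isolated hypersurface) singularity via Kollár--Mori/Reid, invoke the homological MMP (Van den Bergh, Donovan--Wemyss, Iyama--Wemyss) to produce the partial resolution $A = \enn_R(R\oplus M)$ with $M$ indecomposable MCM and non-projective, and then apply \ref{recov}. The only cosmetic difference is how you justify non-projectivity of $M$: you say $f$ is not an isomorphism, while the paper notes directly that $A_{\con}\cong\underline{\enn}_R(M)$ is never the zero ring; the two are equivalent observations.
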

If $X\to\spec(R)$ is a smooth flopping contraction then $R$ necessarily has isolated singularities, and it follows that the Donovan--Wemyss conjecture is true if one substitutes the derived contraction algebra for the usual contraction algebra. 

\p The proof of our main result goes via the (dg) singularity category $D_\mathrm{sg}(R)$ of the base $R$. Indeed, a recent theorem of Hua and Keller \cite{huakeller} states that the quasi-equivalence class of $D_\mathrm{sg}(R)$, together with the Krull dimension, is enough to recover $R$, so we reduce to showing that the derived exceptional locus recovers the singularity category. As a first step in this direction, we prove the following key technical theorem, which may be of independent interest.

\begin{thrm}
Let $A=\enn_R(R\oplus M)$ be any noncommutative partial resolution of a complete local Gorenstein singularity $R$.
	\begin{enumerate}
		\item (\ref{qisolem}) The derived exceptional locus $\dq$ is the truncation to nonpositive degrees of the derived endomorphism dga of $M\in D_\mathrm{sg}(R)$.
		\item (\ref{etaex}) When $R$ is in addition a hypersurface, the derived endomorphism dga of $M$ is in fact the derived localisation \cite{bcl} of $\dq$ at a suitably unique degree -2 ‘periodicity element’.
	\end{enumerate}
	Thus, when $R$ is a hypersurface, it is possible to recover the derived endomorphism dga of $M$ from the dga $\dq$ (\ref{uniqueetaqi}).
\end{thrm}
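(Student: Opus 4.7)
The overall strategy is to first construct a comparison map of dgas $\dq \to \tau^{\leq 0}\R\enn_{D_\mathrm{sg}(R)}(M)$, verify it is a quasi-isomorphism for part (1), then use Eisenbud's 2-periodicity of hypersurface matrix factorisations to upgrade this to the derived-localisation statement of part (2).

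For (1), the map is driven by the universal property of $\dq$: both sides are dgas under $A$, and on the right the idempotent $e\in A$ acts by zero on cohomology, since morphisms factoring through the projective summand $R$ vanish in $D_\mathrm{sg}(R)$. So the structure map $A\to \tau^{\leq 0}\R\enn_{D_\mathrm{sg}(R)}(M)$ factors up to coherent homotopy through $\dq$. In degree zero this recovers the standard identification $A/AeA\cong \enn_{\stab(R)}(M)\cong \enn_{D_\mathrm{sg}(R)}(M)$ via the Buchweitz equivalence (valid because $R$ is Gorenstein). For higher cohomology I would model the right-hand side using a Tate (complete) resolution $T_\bullet$ of $M$ and check directly that the induced map realises isomorphisms $H^{-i}(\dq)\cong \hom_{D_\mathrm{sg}(R)}(M,M[-i])$, drawing on the Kalck--Yang recollement identifying the relative singularity category $\Delta_R(A)$ with $D_{fd}(\dq)$ and the fact that the natural functor $\Delta_R(A)\to D_\mathrm{sg}(R)$ sends the simple object corresponding to $M$ to $M$ itself.

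For (2), Eisenbud matrix factorisations furnish a canonical isomorphism $M\simeq M[-2]$ in $D_\mathrm{sg}(R)$, hence an invertible class $\eta\in H^{-2}\R\enn_{D_\mathrm{sg}(R)}(M)$. By part (1), $\eta$ is already present in $H^{-2}(\dq)$ but fails to be invertible there, since $\tau^{\leq 0}$ discards its inverse. The derived localisation of Braun--Chuang--Lazarev universally inverts $\eta$, and I would verify that the induced comparison $\mathbb{L}_\eta(\dq)\to \R\enn_{D_\mathrm{sg}(R)}(M)$ is a quasi-isomorphism by direct inspection on cohomology, where multiplication by powers of $\eta$ generates the expected periodic structure. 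For the concluding uniqueness, I would show that any two periodicity classes in $H^{-2}(\dq)$ differ by a unit of $H^0(\dq)$—a rank-one freeness statement for $H^{-2}(\dq)$ over $H^0(\dq)$ in the matrix factorisation setting—and that derived localisation is invariant under such rescaling.

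The main obstacle I expect is part (1) at the level of dg structure, rather than just cohomology. A purely cohomological version follows fairly cleanly from the Kalck--Yang recollement and Buchweitz's theorem, but matching the full $A_\infty$-/dga-structure requires a careful choice of compatible dg enhancements of $\Delta_R(A)$ and $D_\mathrm{sg}(R)$, together with a functorial dg model of the comparison map. Once that is in place, part (2) and the uniqueness claim become relatively formal consequences of 2-periodicity and the universal property of derived localisation.
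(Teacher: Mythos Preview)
Your overall architecture is correct and matches the paper: construct a comparison map of dgas, verify it is a quasi-isomorphism in nonpositive degrees for (1), then use Eisenbud 2-periodicity and the universal property of derived localisation for (2). Your treatment of (2) and of the uniqueness of $\eta$ is essentially what the paper does: pull back the invertible periodicity class along the comparison map, check the induced map from the localisation is a quasi-isomorphism by a commuting-square argument with powers of $\eta$, and characterise $\eta$ up to units via the Artinian-local structure of $H^0(\dq)$. You also correctly flag that the real content is (1) at the dga level.

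Where your proposal diverges from the paper is in how (1) is actually executed. You build the comparison map by invoking the universal property of $\dq$ (the target homotopy-kills $e$, so the $A$-algebra map factors). The paper instead constructs the map as the component at $\dq$ of a dg enhancement of the singularity functor $\per(\dq)\to D_\mathrm{sg}(R)$; your map and the paper's agree by uniqueness, but the paper's route makes the dga map concrete from the start. More significantly, the paper does \emph{not} verify the quasi-isomorphism via Tate resolutions or by appealing to the Kalck--Yang recollement abstractly. It writes down an explicit ind-object model for $M$ in the Drinfeld quotient $D^\mathrm{dg}_\mathrm{sg}(R)$ using the bar resolution of $Ae$, carefully controls the limits involved (this requires a nontrivial ``induced submodule'' construction to ensure transition maps are surjective, so that the relevant inverse limit is a homotopy limit), and then identifies the resulting endomorphism dga with $\mathrm{cone}(\mathrm{Cell}(A)\to \R\enn_R(Ae))$, which is visibly the Drinfeld model for $\dq$ in nonpositive degrees. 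The comparison map is then matched to a concatenation-of-tensors action and seen to be a quasi-isomorphism by inspection.

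Two small corrections to your sketch. First, the Kalck--Yang picture does not identify $\Delta_R(A)$ with $D_{\mathrm{fd}}(\dq)$; rather, there is a functor $\per(\dq)\to \Delta_R(A)^\omega$, and $D_{\mathrm{fg}}(\dq)$ is the \emph{kernel} of the quotient $\Delta_R(A)\to D_\mathrm{sg}(R)$. So the recollement alone does not hand you the cohomological comparison you want without further work. Second, the Tate-resolution model you propose is closer to how the paper handles the \emph{hypersurface} case (where a 2-periodic resolution gives a clean description of $\R\underline{\enn}_R(M)$ as an honest localisation), not the general Gorenstein case of (1); in that generality the paper's bar-resolution/Drinfeld-quotient computation seems to be genuinely needed to match the dga structures, which is exactly the obstacle you anticipated.
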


\p Key to our arguments will be the use of Buchweitz's stable category of $R$ \cite{buchweitz}, which is why we need to assume that $M$ is MCM; note that $M$ vanishes in the stable category if and only if it is projective. The assumption that M is MCM is automatic in our main application to flops (see \cite{hmmp}). We remark that part (1) of the previous theorem is true in slightly more generality (see \ref{premcmrmk}).

\p This paper is an adaptation and improvement of the second part of the author's PhD thesis \cite{me}, the main idea of which is to regard $\dq$ as a derived version of the Donovan--Wemyss contraction algebra \cite{DWncdf, hmmp, enhancements, contsdefs}. Here we focus on the more algebraic aspects of $\dq$ related to the Donovan--Wemyss conjecture; in other papers we will explore the derived deformation-theoretic interpretation of $\dq$, as well as the more geometric aspects (in particular, we will do some computations and relate $\dq$ to the mutation-mutation autoequivalence).

\p  The author would like to thank his PhD supervisor, Jon Pridham, for his continued guidance throughout the project. He would like to thank Jenny August, Joe Chuang, Simon Crawford, Ben Davison, Zheng Hua, Andrey Lazarev, and Dong Yang for helpful discussions. He would especially like to thank Martin Kalck, Bernhard Keller, and Michael Wemyss for their valuable comments which improved both the exposition of the paper and the results obtained. Finally, he would like to thank the anonymous referee for their careful reading, helpful suggestions, and for pointing out an error in the proof of \ref{qisolem}.

\p The author would like to thank Hongxing Chen and Zhengfang Wang for pointing out a mistake in \ref{kymap} of the published version of this paper, and their helpful comments towards its rectification. This version rectifies that mistake. The author has endeavoured to keep the numbering of theorems, definitions etc.\ as close to the published version as possible. This has lead to some oddities: firstly, the logical place for the new section 6.5 to appear is directly after section 6.2, but it does not. Sections 8.1 and 8.2 contain some extraneous material that is correct but not particularly relevant. For a full list of differences between this version and the published paper, see the errata at \url{http://mattbooth.info/papers/singcats-errata.pdf}.

			\section{Notation and conventions}
	Throughout this paper, $k$ will denote an algebraically closed field of characteristic zero. Many of our theorems are true in positive characteristic, or even if one drops the algebraically closed assumption, and we will try to indicate where this holds. Modules are right modules by default. Consequently, noetherian means right noetherian, global dimension means right global dimension, et cetera. Unadorned tensor products are by default over $k$. We denote isomorphisms (of modules, functors, \ldots) with $\cong$ and weak equivalences with $\simeq$.
	
	\p If we refer to an object as just \textbf{graded}, then by convention we mean that it is $\Z$-graded. We use cohomological grading conventions, so that the differential of a complex has degree $1$. If $X$ is a complex, we will denote its cohomology complex by $H(X)$ or just $HX$. If $X$ is a complex, let $X[i]$ denote `$X$ shifted left $i$ times': the complex with $X[i]^j=X^{i+j}$ and the same differential as $X$, but twisted by a sign of $(-1)^i$. This sign flip can be worked out using the \textbf{Koszul sign rule}: when an object of degree $p$ moves past an object of degree $q$, one should introduce a factor of $(-1)^{pq}$. If $x$ is a homogeneous element of a complex of modules, we denote its degree by $|x|$.

	\p Recall that a $k$-algebra is a $k$-vector space with an associative unital $k$-bilinear multiplication. In other words, this is a monoid inside the monoidal category $(\cat{Vect}_k, \otimes)$. Similarly, a \textbf{differential graded algebra} (\textbf{dga} for short) over $k$ is a monoid in the category of chain complexes of vector spaces. More concretely, a dga is a complex of $k$-vector spaces $A$ with an associative unital chain map $\mu:A\otimes A \to A$, which we refer to as the multiplication. The condition that $\mu$ be a chain map forces the differential to be a derivation for $\mu$. 
	
	\p A $k$-algebra is equivalently a dga concentrated in degree zero, and a graded $k$-algebra is equivalently a dga with zero differential. We will sometimes refer to $k$-algebras as \textbf{ungraded algebras} to emphasise that they should be considered as dgas concentrated in degree zero. A dga is \textbf{graded-commutative} or just \textbf{commutative} or a \textbf{cdga} if all graded commutator brackets $[x,y]=xy-(-1)^{|x||y|}yx$ vanish. Commutative polynomial algebras are denoted with square brackets $k[x_1,\ldots, x_n]$ whereas noncommutative polynomial algebras are denoted with angle brackets $k\langle x_1,\ldots, x_n\rangle$. Note that in a cdga, even degree elements behave like elements of a symmetric algebra, whereas odd degree elements behave like elements of an exterior algebra: in particular, odd degree elements are square-zero since they must commute with themselves.
	
	\p A \textbf{dg module} (or just a \textbf{module}) over a dga $A$ is a complex of vector spaces $M$ together with an action map $M \otimes A \to M$ satisfying the obvious identities (equivalently, a dga map $A \to \enn_k(M)$). Note that a dg module over an ungraded ring is exactly a complex of modules. Just as for modules over a ring, the category of dg modules is a closed monoidal abelian category. If $A$ is an algebra, write $\cat{Mod}\text{-}A$ for its category of right modules and $\cat{mod}\text{-}A\subseteq\cat{Mod}\text{-}A$ for its category of finitely generated modules.
	
	\p Say that a complex $X$ is \textbf{connective} if one has $X^i=0$ for $i>0$. Up to quasi-isomorphism, by taking the good truncation to nonpositive degrees it is enough to assume that $H^i(X)\cong0$ for $i>0$. Say that $X$ is \textbf{coconnective} if one has $X^i=0$ for $i<0$ (or equivalently $H^i(X)\cong0$ for $i<0$ up to quasi-isomorphism). We use the same terminology in case $X$ admits extra structure (e.g. that of a dga). Note that if $A$ is a connective dga in the weak sense then the good truncation map $\tau_{\leq 0 }A \into A$ is a dga quasi-isomorphism. We use the term `connective' rather than `nonpositive' as the former notion is independent of our grading conventions.
	
	\p We freely use terminology and results from the theory of model categories; see \cite{quillenHA, hovey, dwyerspalinski, riehlCHT} for references. We will also assume that the reader has a good familiarity with the theory of triangulated and derived categories; see \cite{neemanloc} and \cite{weibel, huybrechts} respectively for references. In particular we will make use of the fact that the derived category of a dga is the homotopy category of a model category. By convention we use the projective model structure on dg-modules, where every object is fibrant, and over a ring the cofibrant complexes are precisely the perfect complexes (this is the `q-model structure' of \cite{sixmodels}).

	\section{The derived quotient}
		We begin by introducing our main object of study: the derived quotient of Braun--Chuang--Lazarev \cite{bcl}.  We will mostly be interested in derived quotients of ungraded algebras by idempotents. The derived quotient is a natural object to study, and has been investigated before by a number of authors: for example it appears in Kalck and Yang's work \cite{kalckyang,kalckyang2, kalckyang3} on relative singularity categories, de Thanhoffer de V\"olcsey and Van den Bergh's paper \cite{dtdvvdb} on stable categories, and Hua and Zhou's paper \cite{huazhou} on the noncommutative Mather--Yau theorem. Our study of the derived quotient will unify some of the aspects of all of the above work. We remark that all of the results of this section are valid over any field $k$.
	\subsection{Derived localisation}\label{derloc}
	The derived quotient is a special case of a general construction -- the derived localisation. Let $A$ be any dga over $k$ (the construction works over any commutative base ring). Let $S \subseteq H(A)$ be any collection of homogeneous cohomology classes. Braun, Chuang and Lazarev define the \textbf{derived localisation} of $A$ at $S$, denoted by $\dloc$, to be the dga universal with respect to homotopy inverting elements of $S$:
	\begin{defn}[{\cite[\S3]{bcl}}]
		Let $Q A \to A$ be a cofibrant replacement of $A$. The \textbf{derived under category} $A \downarrow^\mathbb{L} \cat{dga}$ is the homotopy category of the under category $Q A \downarrow \cat{dga}$ of dgas under $Q A$. A $QA$-algebra $f:Q A \to Y$ is \textbf{$S$-inverting} if for all $s \in S$ the cohomology class $f(s)$ is invertible in $HY$. The \textbf{derived localisation} $\dloc$ is the initial object in the full subcategory of $S$-inverting objects of $A \downarrow^\mathbb{L} \cat{dga}$.
	\end{defn} 
	\begin{prop}[{\cite[3.10, 3.4, and 3.5]{bcl}}]
		The derived localisation exists, is unique up to unique isomorphism in the derived under category, and is quasi-isomorphism invariant.
	\end{prop}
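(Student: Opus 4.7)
My plan is to prove the three claims---uniqueness, quasi-isomorphism invariance, and existence---working from easiest to hardest. Uniqueness is formal: any two initial objects in the $S$-inverting subcategory of $A \downarrow^\mathbb{L} \cat{dga}$ are canonically isomorphic, since initiality provides unique maps $L_1 \to L_2$ and $L_2 \to L_1$ whose composites must equal the identities by initiality of $L_1$ and $L_2$ respectively. This same argument shows the localisation map $A \to \dloc$ is determined up to unique isomorphism in the derived under category.

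For quasi-isomorphism invariance, I would argue that a quasi-isomorphism $f:A \to A'$ induces a Quillen equivalence between the under categories $QA \downarrow \cat{dga}$ and $QA' \downarrow \cat{dga}$ (after choosing a cofibrant replacement of $f$), and therefore an equivalence of derived under categories. Since cohomology classes are preserved, a dga $Y$ under $A$ is $S$-inverting precisely when its image as a dga under $A'$ is $f_*(S)$-inverting, so the full subcategories of inverting objects correspond under this equivalence. Equivalences of categories preserve initial objects, hence $\dloc \simeq \mathbb{L}_{f_*S}(A')$.

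The real work is existence, which I would handle by a Bousfield localisation. For each homogeneous cocycle $s$ of degree $n$ representing a class in $S$, let $F_s = k\langle x\rangle$ with $|x|=n$, $dx=0$, and let $\tilde{F}_s$ be obtained from $F_s$ by freely adjoining a homotopy inverse: a generator $y$ in degree $-n$ together with two generators $h_1, h_2$ in degree $-1$ satisfying $dh_1 = xy - 1$ and $dh_2 = yx - 1$. The natural map $F_s \to \tilde F_s$ is a cofibration of dgas, and an $A$-algebra $Y$ is $S$-inverting if and only if it is local with respect to the set $\{F_s \to \tilde F_s : s \in S\}$ pushed forward to $A$-algebras. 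I would then invoke Hirschhorn/Hovey machinery to produce the left Bousfield localisation of the model structure on $QA \downarrow \cat{dga}$ at this set, and take $\dloc$ to be a fibrant replacement of $A$ itself in the localised structure; the universal property follows because fibrant replacement is initial among maps to local (i.e.\ $S$-inverting) objects. The main obstacle here is the model-categorical bookkeeping: one must verify the localised model structure exists (cellularity/combinatoriality of $\cat{dga}$), that local objects coincide with $S$-inverting dgas, and that fibrant replacement is actually homotopy-invariant in the required sense---all of which is precisely the content of Braun--Chuang--Lazarev's small object construction.
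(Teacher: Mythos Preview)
The paper does not actually prove this proposition; it is stated with a bare citation to \cite[3.10, 3.4, 3.5]{bcl} and no argument is given. So there is nothing in the paper to compare your proof against directly. That said, the remark immediately following the proposition records that $\dloc$ is the homotopy pushout of $A \leftarrow k\langle S\rangle \to k\langle S,S^{-1}\rangle$, which is in fact the existence construction used in \cite{bcl}: one builds the localisation as a derived free product rather than by setting up a left Bousfield localisation of the model category and taking a fibrant replacement. Your Bousfield localisation approach is correct in outline and would work, but it is heavier machinery than needed; the homotopy pushout description gives existence more directly, and the universal property follows because homotopy pushouts are initial among objects receiving compatible maps from both legs, while any $S$-inverting $A$-algebra receives a map from $k\langle S,S^{-1}\rangle$ compatible with the one from $A$. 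Your arguments for uniqueness and quasi-isomorphism invariance are fine and are essentially what \cite{bcl} does.
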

	In particular, the derived localisation $\dloc$ comes with a canonical map from $A$ (in the derived under category) making it into an $A$-bimodule, unique up to $A$-bimodule quasi-isomorphism. In what follows, we will refer to $\dloc$ as an $A$-bimodule, with the assumption that this always refers to the canonical bimodule structure induced from the dga map $A \to \dloc$.
	\begin{rmk}
		The derived localisation is the homotopy pushout of the span$$A \from k\langle S \rangle \to k\langle S, S^{-1}\rangle.$$
	\end{rmk}
	
	\begin{defn}A map $A \to B$ of dgas is a \textbf{homological epimorphism} if the multiplication map $B\lot_A B \to B$ is a quasi-isomorphism of $B$-modules.
	\end{defn}
	\begin{prop}\label{homepi}
		Let $A$ be a dga and $S \subseteq H(A)$ be any collection of homogeneous cohomology classes. Then the canonical localisation map $A \to \dloc$ is a homological epimorphism.
	\end{prop}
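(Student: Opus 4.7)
The plan is to produce a quasi-isomorphism section of $\mu: \dloc \lot_A \dloc \to \dloc$, which by 2-out-of-3 forces $\mu$ itself to be a quasi-isomorphism. The natural candidate is the map $\eta: \dloc \to \dloc \lot_A \dloc$ obtained by base-changing the canonical localisation map $\phi: A \to \dloc$ along itself: concretely, $\eta$ is the image of $\phi$ under the functor $(-) \lot_A \dloc : D(A) \to D(A)$, after the canonical identification $A \lot_A \dloc \simeq \dloc$. That $\mu \circ \eta \simeq \id_{\dloc}$ is immediate from the unitality of the multiplication on $\dloc$, so the whole proof reduces to showing that $\eta$ is a quasi-isomorphism.

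To see that $\eta$ is a quasi-isomorphism, I would appeal to the Bousfield-localisation perspective on derived localisation, which is the main technical thrust of \cite{bcl}: restriction along $\phi$ embeds $D(\dloc)$ as a reflective subcategory of $D(A)$, whose essential image is the full subcategory of \emph{$S$-local} $A$-modules, i.e.\ those $M$ on which every $s \in S \subseteq H(A)$ acts as a quasi-isomorphism. The reflection is precisely $(-) \lot_A \dloc$, and its unit at an $A$-module $M$ is the canonical map $M \to M \lot_A \dloc$; in any reflective localisation the unit is a quasi-isomorphism whenever the source already lies in the reflective subcategory. Now $\dloc$ is itself $S$-local: by the universal property each $s \in S$ becomes a cohomologically invertible element of $\dloc$, so acts invertibly on any $\dloc$-module, in particular on $\dloc$ as a right $A$-module. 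Hence the unit $\eta: \dloc \to \dloc \lot_A \dloc$ is a quasi-isomorphism, and we are done.

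The main obstacle is really the invocation of the Bousfield-localisation perspective, i.e.\ the identification of $D(\dloc)$ with the $S$-local objects of $D(A)$ together with the identification of the reflection functor with derived base-change. These are nontrivial inputs from \cite{bcl}, but once in hand the argument is essentially formal. A more hands-on alternative would be to exploit the homotopy pushout presentation $\dloc \simeq A \lot_{k\langle S\rangle} k\langle S, S^{-1}\rangle$ to reduce, via base-change and the stability of homological epimorphisms under homotopy pushouts, to proving that the universal map $k\langle S\rangle \to k\langle S, S^{-1}\rangle$ is a homological epimorphism; for a single degree-zero element this is just classical flat localisation, and the higher-degree and multi-element cases can be handled similarly by direct computation.
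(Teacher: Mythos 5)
Your approach is genuinely different from the paper's, which disposes of the statement in one line: it cites \cite[3.17]{bcl} for the fact that $A\to\dloc$ is a \emph{homotopy} epimorphism (i.e.\ the codiagonal of the derived \emph{coproduct} $\dloc\sqcup^{\mathbb L}_A\dloc\to\dloc$ is an equivalence) and then \cite[4.4]{clhomepi} for the general fact that homotopy epimorphisms of dgas coincide with homological epimorphisms. You instead bypass the homotopy-epimorphism dictionary entirely and argue directly from the module-localisation theory of \cite[\S4]{bcl}: produce a section of $\mu:\dloc\lot_A\dloc\to\dloc$, identify it with the localisation unit, and invoke the fact that the unit is a quasi-isomorphism on $S$-local modules. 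That route is sound and arguably more self-contained within \cite{bcl} (no appeal to the separate reference for homotopy vs.\ homological epimorphisms), though it does lean on the full force of \cite[4.14, 4.15]{bcl}, which, in view of the remark following that theorem in the paper, is itself equivalent to the homological-epimorphism assertion --- so the two proofs are more like different unpackings of the same input than independent arguments.

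There is one small but real slip to fix. As you define it, $\eta$ is the image of $\phi$ under the functor $(-)\lot_A\dloc$ applied to the morphism $\phi:A\to\dloc$; after identifying $A\lot_A\dloc\simeq\dloc$ this is the map $y\mapsto 1\otimes y$. But the unit of the extension-of-scalars $\dashv$ restriction adjunction at the right $A$-module $\dloc$ is the \emph{other} section $y\mapsto y\otimes 1$. These are distinct morphisms $\dloc\to\dloc\lot_A\dloc$, and the cited theorem only directly tells you that the latter is a quasi-isomorphism (it is the localisation map of the right $A$-module $\dloc$, which is $S$-local). So either replace your $\eta$ by the unit $y\mapsto y\otimes 1$ throughout --- $\mu\circ(\cdot\otimes 1)=\mathrm{id}$ still holds and the rest of the argument is unchanged --- or supply the left-module version of \cite[4.14]{bcl} to handle $y\mapsto 1\otimes y$. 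Finally, in the ``hands-on alternative'' sketch note that the homotopy pushout presenting $\dloc$ is the derived \emph{free} (amalgamated) product $A\sqcup^{\mathbb L}_{k\langle S\rangle}k\langle S,S^{-1}\rangle$, not a derived tensor product; the notation $\lot$ there is misleading, and the reduction to ``classical flat localisation'' needs care since $k\langle S\rangle$ is noncommutative and $S$ may contain classes in nonzero degrees.
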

	\begin{proof}
		The map is a homotopy epimorphism by \cite[3.17]{bcl}, which by \cite[4.4]{clhomepi} is the same as a homological epimorphism.
	\end{proof}

	\begin{defn}[{\cite[4.2 and 7.1]{bcl}}]
		Let $X$ be an $A$-module. Say that $X$ is \textbf{$S$-local} if, for all $s\in S$, the map $s: X \to X$ is a quasi-isomorphism. Say that $X$ is \textbf{$S$-torsion} if $\R\hom_A(X,Y)$ is acyclic for all $S$-local modules $Y$. Let $D(A)_{S\text{-loc}}$ be the full subcategory of $D(A)$ on the $S$-local modules, and let $D(A)_{S\text{-tor}}$ be the full subcategory on the $S$-torsion modules.
	\end{defn}
	Similarly as for algebras, one defines the notion of the derived localisation $\mathbb{L}_S(X)$ of an $A$-module $X$. It is not too hard to prove the following:
	\begin{thm}[{\cite[4.14 and 4.15]{bcl}}]
		Localisation of modules is smashing, in the sense that $X \to X \lot_A \dloc$ is the derived localisation of $X$. Moreover, restriction of scalars gives an equivalence of $D(\dloc)$ with $D(A)_{S\text{-loc}}$.
	\end{thm}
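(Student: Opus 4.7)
The plan is to set up an adjunction between $D(A)$ and $D(\dloc)$ and show it restricts to the desired equivalence, using the homological epimorphism property established in Proposition \ref{homepi}.

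First, I would observe that the evident adjunction $F = -\lot_A \dloc \dashv G$, with $G:D(\dloc)\to D(A)$ restriction of scalars, has counit $F G M = M \lot_A \dloc \to M$ an equivalence for every $\dloc$-module $M$. Indeed, the canonical map $A \to \dloc$ is a homological epimorphism by Proposition \ref{homepi}, so $\dloc \lot_A \dloc \xrightarrow{\sim} \dloc$; resolving $M$ cofibrantly over $\dloc$ and tensoring yields $M \lot_A \dloc \simeq M \lot_{\dloc} (\dloc \lot_A \dloc) \simeq M$. Consequently $G$ is fully faithful, and its essential image is a reflective subcategory of $D(A)$ with reflector $F$.

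Next, I would identify this essential image with $D(A)_{S\text{-loc}}$. One inclusion is immediate: any $\dloc$-module is $S$-local, since by the universal property of $\dloc$ each $s \in S$ becomes invertible in $H(\dloc)$ and hence acts by a quasi-isomorphism on every $\dloc$-module. For the reverse inclusion, I would take $Y \in D(A)_{S\text{-loc}}$ and study the unit $\eta_Y\colon Y \to Y\lot_A\dloc$. Extending $\eta_Y$ to a triangle $Y \to Y \lot_A \dloc \to C \to Y[1]$ in $D(A)$, the term $Y\lot_A \dloc$ is $S$-local (as observed above), so $C$ is also $S$-local. Tensoring the triangle with $\dloc$ and using $\dloc\lot_A\dloc\simeq\dloc$ shows $C\lot_A\dloc\simeq 0$.

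The crux is then to show $C\simeq 0$. The key point is that on an $S$-local module $C$, the $A$-action factors canonically through $\dloc$: by the universal property of Section \ref{derloc} applied to the dga map $A\to\R\enn_A(C)$ (whose image of $S$ is invertible in cohomology precisely because $C$ is $S$-local), we obtain an $A$-linear $\dloc$-action on $C$, which is coherent in the derived category. Thus $C$ lifts to $D(\dloc)$, and by the counit equivalence applied to $C$ we get $C \simeq C\lot_A\dloc \simeq 0$. Hence $\eta_Y$ is a quasi-isomorphism, placing $Y$ in the essential image of $G$. Combining, the adjunction $F\dashv G$ restricts to an equivalence $D(\dloc)\simeq D(A)_{S\text{-loc}}$, and the unit $X\to X\lot_A\dloc$ is by construction the initial map from $X$ to an $S$-local object, which is to say the derived localisation $\mathbb{L}_S(X)$.

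The main obstacle is the dg-coherence in the last step: producing the $\dloc$-module structure on the $S$-local object $C$ in a way that is compatible with its $A$-module structure and the derived category structure, rather than just at the level of homotopy categories. A clean execution requires the model-categorical machinery of \cite{bcl}, where the factorisation of $A\to \R\enn_A(C)$ through $\dloc$ is made strict enough to yield an honest module structure; everything else is a formal consequence of the homological epimorphism property.
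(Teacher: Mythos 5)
The paper does not prove this statement; it is quoted as \cite[4.14 and 4.15]{bcl} with no argument given, so there is nothing in the text to compare against directly. Your proof is a reasonable and essentially correct reconstruction, proceeding (as one would expect) from the homological-epimorphism property of \ref{homepi}: the counit $M\lot_A\dloc\to M$ is an isomorphism for $\dloc$-modules, so restriction of scalars is fully faithful, and you then identify its essential image with the $S$-local modules, from which the smashing statement is a formal consequence of the reflection.

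The one place that needs fixing is the appeal to the universal property at the crux. You write that the $A$-action on an $S$-local module $C$ is encoded by a dga map $A\to\R\enn_A(C)$; for a noncommutative $A$ there is no such map, since $A$ does not act on $C$ by $A$-linear endomorphisms. What you want is the structure map to the \emph{$k$-linear} endomorphism dga, i.e.\ $A^{\mathrm{op}}\to\enn_k(QC)$ for a cofibrant replacement $QC$ of the right $A$-module $C$ (the paper works with right modules, hence the opposite; and one needs $\mathbb{L}_S(A^{\mathrm{op}})\simeq\mathbb{L}_S(A)^{\mathrm{op}}$, which is in \cite{bcl}). With that correction the argument is sound: each $s\in S$ acts by a quasi-isomorphism on $C$, so its image in $H^*\enn_k(QC)$ is invertible, the map factors through $\dloc^{\mathrm{op}}$ by the universal property of \S\ref{derloc}, and this strict factorisation is exactly the lift of $C$ to $D(\dloc)$ whose existence you need. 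The dg-coherence concern you flag at the end is resolved by precisely this: the universal property already produces a strict dga map from a cofibrant model of $\dloc$, hence an honest module structure, not merely one in the homotopy category.
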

In particular, the dga $\dloc$ is the derived localisation of the $A$-module $A$.
	\begin{rmk}
		If $A\to B$ is a dga map then the three statements 
		\begin{itemize}
			\item $A \to B$ is a homological epimorphism.
			\item $A \to B$ induces an embedding $D(B)\to D(A)$.
			\item $-\lot_AB$ is a smashing localisation on $D(A)$.
		\end{itemize}
		are all equivalent \cite{phomepis}.
	\end{rmk}

	One defines a \textbf{colocalisation functor} pointwise by setting $\mathbb{L}^S(X)\coloneqq \mathrm{cocone}(X \to \mathbb{L}_S(X))$. An easy argument shows that $\mathbb{L}^S(X)$ is $S$-torsion. 
	\begin{defn}\label{colocdga}The \textbf{colocalisation} of $A$ along $S$ is the dga $$\mathbb{L}^S(A)\coloneqq \R\enn_A\left(\oplus_{s \in S}\,\mathrm{cone}(A \xrightarrow{s} A)\right).$$
	\end{defn}Note that the dga $\mathbb{L}^S(A)$ may differ from the colocalisation of the $A$-module $A$. If $S$ is a finite set, then the dga $\mathbb{L}^S(A)$ is a compact $A$-module, and we get the analogous:
	\begin{thm}[{\cite[7.6]{bcl}}]
		Let $S$ be a finite set. Then $D(\mathbb{L}^S(A))$ and $D(A)_{S\text{-tor}}$ are equivalent.
	\end{thm}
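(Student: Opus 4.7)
The plan is to exhibit $T := \bigoplus_{s \in S} \mathrm{cone}(A \xrightarrow{s} A)$ as a compact generator of the triangulated subcategory $D(A)_{S\text{-tor}}$ and then apply the standard Keller/Schwede--Shipley tilting theorem, which identifies $D(\R\enn_A(T))$ with the localising subcategory of $D(A)$ generated by $T$. By the very definition \ref{colocdga}, $\mathbb{L}^S(A) = \R\enn_A(T)$, so if $T$ is indeed a compact generator of $D(A)_{S\text{-tor}}$ the theorem follows immediately.

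The first step is to verify that $T$ lies in $D(A)_{S\text{-tor}}$. Given any $S$-local $Y$, apply $\R\hom_A(-,Y)$ to the defining triangle $A \xrightarrow{s} A \to \mathrm{cone}(A \xrightarrow{s} A)$. This yields a triangle whose two outer terms are quasi-isomorphic to $Y \xrightarrow{s} Y$, and since $Y$ is $S$-local the connecting map is a quasi-isomorphism, hence $\R\hom_A(\mathrm{cone}(A \xrightarrow{s} A), Y) \simeq 0$. Summing over the finite set $S$ shows $T$ is $S$-torsion. The second step is compactness: each summand is a two-term complex of copies of $A$ (hence perfect), and finiteness of $S$ ensures the finite direct sum $T$ is still perfect, i.e.\ compact in $D(A)$.

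The third and most delicate step is generation. Suppose $X \in D(A)_{S\text{-tor}}$ satisfies $\R\hom_A(T, X) \simeq 0$. Applying $\R\hom_A(-, X)$ to the defining triangle of $\mathrm{cone}(A \xrightarrow{s} A)$ shows that multiplication by $s$ induces a quasi-isomorphism $X \xrightarrow{s} X$ for every $s \in S$. Hence $X$ is simultaneously $S$-local and $S$-torsion, so by definition $\R\hom_A(X,X) \simeq 0$, forcing $\mathrm{id}_X = 0$ in $D(A)$ and thus $X \simeq 0$. Combined with compactness, this shows $T$ generates $D(A)_{S\text{-tor}}$ as a triangulated subcategory closed under (the relevant) coproducts.

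Having secured these three properties, the final step is to invoke the standard Morita-theoretic tilting result: for a compact object $T$ of a compactly generated triangulated category, restriction along the evaluation map induces an equivalence $D(\R\enn_A(T)) \xrightarrow{\sim} \langle T \rangle$ onto the localising subcategory generated by $T$. Here this localising subcategory is exactly $D(A)_{S\text{-tor}}$ by the previous paragraph. The main obstacle I anticipate is to confirm that $D(A)_{S\text{-tor}}$ is indeed closed under arbitrary coproducts so that the tilting machinery produces the full derived category rather than a proper subcategory; this follows from \ref{homepi} together with the fact that the $S$-local subcategory is coreflective with smashing colocalisation functor $\mathbb{L}^S$, ensuring the torsion part is a compactly generated localising ideal.
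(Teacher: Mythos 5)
The paper offers no proof of this statement; it is simply cited from Braun--Chuang--Lazarev \cite[7.6]{bcl}. Your proof is correct and follows the expected route: exhibit $T = \bigoplus_{s\in S}\mathrm{cone}(A\xrightarrow{s}A)$ as a compact generator of $D(A)_{S\text{-tor}}$ and invoke the Keller/Schwede--Shipley tilting theorem, noting that $\mathbb{L}^S(A) = \R\enn_A(T)$ by Definition \ref{colocdga}. Your three verifications (torsionness, compactness, generation) are the right ones, and the generation argument -- applying the $\langle T\rangle$-colocalisation triangle to a torsion $X$ and observing that the quotient piece is simultaneously $S$-torsion and $S$-local, hence zero -- is sound.

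One small remark on your final worry: the closure of $D(A)_{S\text{-tor}}$ under arbitrary coproducts needs no appeal to \ref{homepi} or to smashingness of the localisation. It follows immediately from the definition: $D(A)_{S\text{-tor}}$ is the left orthogonal of the class of $S$-local modules, and since $\R\hom_A(\coprod_i X_i, Y) \simeq \prod_i \R\hom_A(X_i, Y)$, a coproduct of $S$-torsion modules is again $S$-torsion. Thus $D(A)_{S\text{-tor}}$ is a localising subcategory of $D(A)$ for free, $T$ remains compact inside it (as coproducts are computed in $D(A)$), and the tilting theorem applies without the detour you anticipated.
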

	Neeman--Thomason--Trobaugh--Yao localisation gives the following:
	\begin{thm}[{\cite[7.3]{bcl}}]\label{ntty}
		Let $S$ be finite. Then there is a sequence of dg categories $$\per\mathbb{L}^S(A) \to \per A \to \per \dloc$$which is exact up to direct summands.
	\end{thm}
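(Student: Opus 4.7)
The plan is to apply Neeman--Thomason--Trobaugh--Yao localisation to the compactly generated triangulated category $D(A)$, whose subcategory of compact objects is $\per A$. The preceding two theorems exhibit $D(\dloc)$ as a smashing localisation of $D(A)$, with $D(A)_{S\text{-tor}}$ the corresponding kernel. In NTTY language, this means $D(A)_{S\text{-tor}}\subseteq D(A)\to D(\dloc)$ is a localisation sequence of compactly generated triangulated categories, and I need to check that the kernel $D(A)_{S\text{-tor}}$ is itself compactly generated by objects compact in $D(A)$, not just in itself.

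First, I would observe that because $S$ is finite, the object $T\coloneqq\bigoplus_{s\in S}\mathrm{cone}(A\xrightarrow{s}A)$ is a perfect $A$-module, hence compact in $D(A)$. Each summand is manifestly $S$-torsion: for any $S$-local $Y$, applying $\R\hom_A(-,Y)$ to the triangle $A\xrightarrow{s}A\to \mathrm{cone}(s)$ gives $Y\xrightarrow{s}Y$ a quasi-isomorphism, so $\R\hom_A(\mathrm{cone}(s),Y)$ is acyclic. Hence $T\in D(A)_{S\text{-tor}}$. Conversely, if $X\in D(A)$ is right-orthogonal to $T$, then $s:X\to X$ is an isomorphism in $D(A)$ for every $s\in S$, so $X$ is $S$-local; this exhibits $T$ as a (compact) generator of $D(A)_{S\text{-tor}}$.

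Now Keller's theorem identifies $D(A)_{S\text{-tor}}\simeq D(\R\enn_A(T))=D(\mathbb{L}^S(A))$, and restricts to an equivalence on compact objects, so $\per\mathbb{L}^S(A)\simeq D(A)_{S\text{-tor}}^c=D(A)^c\cap D(A)_{S\text{-tor}}$. Applying the NTTY theorem to the localisation sequence $D(A)_{S\text{-tor}}\to D(A)\to D(\dloc)$ yields the exact sequence
\[
\per\mathbb{L}^S(A)\to \per A\to \per\dloc
\]
up to direct summands; the final functor is essentially surjective onto a triangulated subcategory whose idempotent completion is $\per\dloc$, which is exactly the meaning of ``exact up to summands''.

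The main obstacle is the verification that $T$ compactly generates the torsion subcategory in $D(A)$ (not merely in $D(A)_{S\text{-tor}}$), since this is what lets NTTY apply. The subtle point is ruling out nonzero $S$-local objects right-orthogonal to $T$; this is handled by the observation above that right-orthogonality to every $\mathrm{cone}(s)$ forces each $s$ to act invertibly, which is exactly $S$-locality. After that, the result is just the standard NTTY recipe together with Keller's reconstruction of the compact part of a smashing kernel as perfect modules over the endomorphism dga of a compact generator.
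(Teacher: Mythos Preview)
Your proposal is correct and follows exactly the approach the paper indicates: the paper does not give its own proof but simply cites \cite[7.3]{bcl} with the remark that ``Neeman--Thomason--Trobaugh--Yao localisation gives the following'', which is precisely the strategy you carry out. Your identification of $T=\bigoplus_{s\in S}\mathrm{cone}(A\xrightarrow{s}A)$ as a compact generator of $D(A)_{S\text{-tor}}$, together with Keller's derived Morita theory and the NTTY theorem, is the standard unpacking of this result; the only minor wording issue is in your closing paragraph, where the ``subtle point'' should read as ruling out nonzero $S$-\emph{torsion} objects right-orthogonal to $T$ (you show such an object would be $S$-local as well, hence zero), but the mathematics is already correct.
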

	\begin{rmk}[{\cite[7.9]{bcl}}]\label{bclrcl}
		The localisation and colocalisation functors fit into a recollement $$\begin{tikzcd}[column sep=huge]
		D(A)_{S\text{-loc}} \ar[r]& D(A)\ar[l,bend left=25]\ar[l,bend right=25]\ar[r] & D(A)_{S\text{-tor}}\ar[l,bend left=25]\ar[l,bend right=25]
		\end{tikzcd}$$We will see a concrete special case of this in \ref{recoll}.
	\end{rmk}

	\begin{defn}[{\cite[9.1 and 9.2]{bcl}}]
		Let $A$ be a dga and let $e$ be an idempotent in $H^0(A)$. The \textbf{derived quotient} $\dq$ is the derived localisation $\mathbb{L}_{1-e}A$.
	\end{defn}
	Clearly, $\dq$ comes with a natural quotient map from $A$. One can write down an explicit model for $\dq$, at least when $k$ is a field.
	\begin{prop}\label{drinfeld}
		Let $A$ be a dga over $k$, and let $e\in H^0(A)$ be an idempotent. Then the derived quotient $\dq$ is quasi-isomorphic as an $A$-dga to the dga $$B\coloneqq \frac{A\langle h \rangle}{(he=eh=h)}\;, \quad d(h)=e$$with $h$ in degree -1.
	\end{prop}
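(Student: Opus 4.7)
The plan is to verify that the explicit dga $B$ satisfies the defining universal property of the derived localisation $\mathbb{L}_{1-e}A = \dq$ in the derived under category $A \downarrow^{\mathbb{L}} \cat{dga}$. Concretely, I would produce a canonical $A$-dga map $\dq \to B$ coming from the universal property of the derived quotient, and then show that $B$ itself is initial among $A$-dgas in which $1-e$ is inverted, yielding a map in the opposite direction. Both compositions are then initial morphisms in their respective derived under categories and hence agree with the identity, giving the desired quasi-isomorphism $\dq \simeq B$ over $A$.

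The forward map is essentially formal. After fixing a strict idempotent lift $e \in A^0$ of the cohomology class, I would first check that $B$ is well-defined: the relations $he = eh = h$ are compatible with $d(h) = e$, since $d(he - h) = e \cdot e - e = 0$ and symmetrically on the left, using $e^2 = e$ and $d(e) = 0$. Now $e = d(h)$ is a coboundary in $B$, so $[e] = 0$ in $H^0(B)$ and therefore $[1-e] = [1]$ is the multiplicative unit, in particular invertible. Applying the universal property of the derived localisation yields the canonical map $\dq \to B$ in the derived under category.

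For the reverse direction, let $C$ be any $A$-dga in which $[1-e]$ is invertible. The relation $e(1-e) = 0$ in $A$ forces $[e] = [e][1-e][1-e]^{-1} = 0$ in $H^*(C)$, so we can write $e = d(c_0)$ for some $c_0 \in C^{-1}$. Setting $c \coloneqq e c_0 e$ rectifies the choice: $d(c) = e \cdot e \cdot e = e$ and $c = ece$ by construction. The assignment $h \mapsto c$ then extends to a strict $A$-dga map $B \to C$. The main obstacle is to show that this map is canonical up to homotopy, independent of the choice of $c_0$. I would handle this by observing that $A \to B$ is a cofibration in the projective model structure on $A \downarrow \cat{dga}$, since $B$ is obtained from $A$ by freely attaching a single generator in degree $-1$ with prescribed differential and idempotent-support relations. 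Cofibrancy guarantees that strict dga maps out of $B$ represent morphisms in the derived under category, and any two choices of $c$ differ by a cocycle in $eC^{-1}e$ whose difference becomes exact after passing to cohomology; an explicit dga homotopy between the two induced maps can then be built using the polynomial path object on $C$, restricted to $e C e$. This establishes that $B$ satisfies the universal property of the derived localisation, completing the proof.
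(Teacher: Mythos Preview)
Your approach is different from the paper's, which simply cites \cite[9.6]{bcl}: over a field, $A$ is flat (``left proper'' in the terminology of \cite{bcl}), and under this hypothesis the Drinfeld-type model $B$ computes the relevant homotopy pushout defining the derived localisation. Your idea of verifying the universal property of $B$ directly is sound in outline, but two steps are not adequately justified.

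First, the assertion that $A \to B$ is a cofibration. Adjoining a generator subject to the relations $eh = he = h$ is not a free cell attachment, so ``freely attaching \ldots\ with idempotent-support relations'' is not a valid justification. One way to repair this: the map $k[e]/(e^2-e) \to P \coloneqq k[e]/(e^2-e)\langle h\rangle/(eh=he=h,\ dh=e)$ has the left lifting property against acyclic fibrations (given a surjective quasi-isomorphism $X \to Y$ and compatible data, lift $h$ arbitrarily, correct the differential using acyclicity of the kernel, then replace the lift by $\tilde e(\,\cdot\,)\tilde e$ to enforce the corner condition). Then $A \to B$ is the pushout of this cofibration along $k[e]/(e^2-e) \to A$, hence a cofibration. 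You also implicitly assume $e$ lifts to a strict idempotent cocycle in $A$; for a general dga this may require first passing to a quasi-isomorphic model.

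Second, and more seriously, your uniqueness argument is incomplete. You assert that the cocycle $c - c' \in eC^{-1}e$ ``becomes exact after passing to cohomology'' but give no reason. The missing observation is that the complex $eCe$ is \emph{acyclic}: since $e = dc$ and $e$ acts as the identity on $eCe$, any cocycle $x \in (eCe)^n$ satisfies $x = xe = (-1)^n d(xc)$. This is precisely what forces $c - c' = dw$ for some $w \in (eCe)^{-2}$, after which the path-object homotopy $H(h) = c + (c'-c)t - w\,dt$ does the job. Without this acyclicity argument there is no reason for two choices of $c$ to be homotopic, and the initiality claim fails. Note also that the polynomial path object $C \otimes k[t,dt]$ requires characteristic zero; over a general field one must either use a different path object or fall back on the homotopy-pushout argument of \cite{bcl}.
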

	\begin{proof}
		This is essentially \cite[9.6]{bcl}; because $k$ is a field, $A$ is flat (and in particular left proper) over $k$. The quotient map $A \to B$ is the obvious one.
	\end{proof}
	\begin{rmk}This specific model for $\dq$ is an incarnation of the Drinfeld quotient: see \cite[9.7]{bcl} for the details.
	\end{rmk}
	\begin{rmk}
		In particular, this is a concrete model for $\dq$ as an $A$-bimodule.
	\end{rmk}
	\subsection{Cohomology}
	Let $A$ be a dga and let $e\in A$ be an idempotent. Write $R$ for the cornering\footnote{We take this terminology from \cite{cikcorner}; the motivating example is to take one of the obvious nontrivial idempotents in $M_2(k)$ to obtain a subalgebra (isomorphic to $k$) on matrices with entries concentrated in one corner.} $eAe$. We will investigate the cohomology of the derived quotient $Q\coloneqq \dq$.
	\begin{defn}[{cf.\ Dwyer and Greenlees \cite{dgcompletetorsion}}]\label{celldef}
		The \textbf{cellularisation functor}, denoted by $\cell : D(A) \to D(A)$, is the functor that sends $M$ to $Me\lot_R eA$.
	\end{defn}
	In particular, the cellularisation of $A$ itself is the bimodule $Ae\lot_R eA$. Note that this admits an $A$-bilinear multiplication map $\mu: \cell A \to A$ which has image the submodule $AeA\into A$.
	\begin{prop}[{cf.\ \cite[\S 7]{kalckyang2} and \cite[\S4]{nsparam}}]\label{dqexact}
		The multiplication map $\cell A \xrightarrow{\mu} A$ and the structure map $A \to Q$ fit into an exact triangle of $A$-bimodules $\cell A \xrightarrow{\mu} A \to Q \to $.
	\end{prop}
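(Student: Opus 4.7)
The plan is to exhibit $Q \coloneqq \dq$ explicitly as the mapping cone of the multiplication map $\mu\colon \cell A \to A$, by unwinding the Drinfeld presentation of Proposition~\ref{drinfeld}. I will work with the concrete model $B \coloneqq A\langle h\rangle/(he = eh = h)$ with $|h| = -1$ and $dh = e$, which is quasi-isomorphic to $Q$ as an $A$-dga, and decompose $B$ as an $A$-bimodule in terms of words in $h$.

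Because $he = eh = h$ (equivalently $h = ehe$), absorbing copies of $e$ into neighbouring scalars expresses every word in $B$ uniquely as $a_0 h a_1 h \cdots h a_n$ with $a_0 \in Ae$, $a_n \in eA$, and $a_i \in R = eAe$ for $1 \le i \le n-1$. Grouping by the number of $h$'s yields a decomposition of $B$ as a graded $A$-bimodule
\begin{equation*}
B \;\cong\; A \;\oplus\; \bigoplus_{n \ge 1}\bigl(Ae \otimes_k R^{\otimes (n-1)} \otimes_k eA\bigr)[n],
\end{equation*}
the $n$-th summand sitting in cohomological degree $-n$. Since $k$ is a field, $R$ is $k$-flat, so the total summand $\bigoplus_{n \ge 1}(Ae \otimes_k R^{\otimes(n-1)} \otimes_k eA)[n]$ is precisely the shift $\mathrm{Bar}_R(Ae, eA)[1]$ of the two-sided bar resolution that computes $\cell A = Ae \lot_R eA$ in $D(A^e)$.

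The differential on $B$ is determined by Leibniz and $dh = e$: applied to a length-$n$ word, $d$ replaces each $h$ in turn with $e$, picking up a Koszul sign $(-1)^{i-1}$ from moving past the earlier $h$'s. The substitution $a_{i-1} e a_i = a_{i-1} a_i$ (valid because at least one side of each product lies in $R$, $Ae$, or $eA$) collapses the result into a length-$(n-1)$ word; for $n \ge 2$ this reproduces the bar differential on $\mathrm{Bar}_R(Ae, eA)$, while for $n = 1$ it gives $d(a_0 h a_1) = a_0 a_1$, landing in the $A$-summand by $\mu(a_0 \otimes a_1)$. Hence $B$ agrees with $\mathrm{cone}(\mu)$ as complexes of $A$-bimodules, the structure map $A \to B$ is the direct-summand inclusion, and the stated triangle $\cell A \xrightarrow{\mu} A \to Q \to$ in $D(A^e)$ follows. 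The main nuisance is bookkeeping Koszul signs against the sign conventions for the bar differential and the cone; as an independent sanity check, one can verify that the cocone of $A \to Q$ is annihilated by $-\lot_A Q$, using that $A \to Q$ is a homological epimorphism (Proposition~\ref{homepi}) together with the fact that $eQ$ is acyclic via the contracting homotopy $eq \mapsto hq$.
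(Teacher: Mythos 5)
Your proof is correct and essentially reproduces the paper's ``alternate proof'' of \ref{dqexact} for ungraded $A$: the explicit decomposition of the Drinfeld model $B^{-n} \cong Ae \otimes_k R^{\otimes(n-1)} \otimes_k eA$ and the identification of $d$ with the bar differential are precisely Lemma \ref{drinfeldmodel}(1)--(2), and the observation that (since $k$ is a field) this bar complex computes $\cell A = Ae \lot_R eA$ matches the paper's invocation of relative Tor $= $ absolute Tor. The paper also gives a shorter, more abstract first proof via Dwyer--Greenlees cellularisation ($A\to Q$ being the localisation of the $A$-module $A$ at the perfect module $Ae$, so that its cocone is $\cell A$); your closing ``sanity check'' via the homological epimorphism property gestures in that direction but is not developed, and is anyway redundant given the explicit computation.
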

	\begin{proof}Forget the algebra structure on $Q$ and view it as an $A$-bimodule; recall that the localisation map $A \to Q$ is the derived localisation of the $A$-module $A$. Observe that the localisation map $A \to Q$ is also the localisation of the $A$-module $A$ at the perfect module $Ae$, in the sense of \cite{dgcompletetorsion}. Thus by \cite[4.8]{dgcompletetorsion}, the homotopy fibre of $A \to Q$ is the cellularisation of $A$.
	\end{proof}
	When $A$ is an ungraded algebra, then one can write down a much more explicit proof using the `Drinfeld quotient' model for $Q$. We do this below, since we will need to use some facts about this explicit model later.
	\begin{lem}\label{drinfeldmodel}
		Suppose that $A$ is an ungraded algebra with an idempotent $e\in A$. Put $R\coloneqq eAe$ the cornering. Let $B$ be the dga of \ref{drinfeld} quasi-isomorphic to $Q$. Then:
		\begin{enumerate}
			\item Let $n>0$ be an integer. There is an $A$-bilinear isomorphism $$B^{-n}\cong Ae \otimes R\otimes\cdots \otimes R \otimes eA$$where the tensor products are taken over $k$ and there are $n$ of them.
			\item Let $n>0$. The differential $B^{-n} \to B^{-n+1}$ is the Hochschild differential, which sends $$x_0\otimes \cdots \otimes x_n\mapsto\sum_{i=0}^{n-1} (-1)^i x_0\otimes\cdots \otimes x_ix_{i+1}\otimes \cdots \otimes x_n.$$
			\item Let $n,m>0$ and let $a\in B^0=A$. Let $x=x_0\otimes \cdots \otimes x_n \in B^{-n}$ and $y=y_0\otimes \cdots \otimes y_m\in B^{-m}$. Then we have
			\begin{align*}
			xy &= x_0\otimes \cdots \otimes x_n y_0\otimes \cdots \otimes y_m
			\\ ax&=ax_0\otimes \cdots \otimes x_n
			\\ xa&=x_0\otimes \cdots \otimes x_na.
			\end{align*}
		\end{enumerate}	
	\end{lem}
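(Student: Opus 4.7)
The plan is to verify all three assertions by directly unpacking the explicit dga $B = A\langle h\rangle / (eh - h,\, he - h)$, with $|h| = -1$ and $d(h) = e$, from \ref{drinfeld}. Since $A$ sits in degree $0$ and $|h| = -1$, the degree $-n$ part of the free product $A\langle h\rangle$ is spanned by words $x_0 h x_1 h \cdots h x_n$ interleaving $n$ copies of $h$ with $n+1$ elements of $A$, and the entire lemma reduces to working with these words modulo the two idempotent relations.

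For (1), the relations $eh = h$ and $he = h$ translate into rewriting rules $ah \mapsto (ae)h$ on the leftmost $A$-factor (via $h = eh$), $ha \mapsto h(ea)$ on the rightmost (via $h = he$), and $hah \mapsto h(eae)h$ on any interior $A$-factor. Thus every element of $B^{-n}$ admits a representative with $x_0 \in Ae$, $x_n \in eA$, and $x_i \in eAe = R$ for $0 < i < n$. Linear independence of these normal forms follows from a short Diamond Lemma argument: the rewriting system above is terminating and its critical pairs resolve trivially. The resulting isomorphism is manifestly $A$-bilinear, since left and right multiplication by $a \in A$ absorbs into the outer factors $x_0 \in Ae$ and $x_n \in eA$.

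For (2), since $d$ vanishes on $A$ and $d(h) = e$, the Leibniz rule applied to $x_0 h x_1 \cdots h x_n$ produces one summand per copy of $h$. Differentiating the $h$ between $x_i$ and $x_{i+1}$ carries a Koszul sign of $(-1)^{-i} = (-1)^i$, since only the preceding $i$ copies of $h$, each of degree $-1$, contribute. The resulting factor $x_i e x_{i+1}$ collapses to $x_i x_{i+1}$: if $i = 0$ then $x_0 \in Ae$ gives $x_0 e = x_0$, and if $i > 0$ then $x_i \in R$ gives $x_i e = x_i$. This matches the claimed Hochschild-style formula. For (3), the multiplication in $B$ is concatenation of words from the free-product structure, so $xy = x_0 h \cdots h (x_n y_0) h \cdots h y_m$; the middle factor $x_n y_0 \in (eA)(Ae) \subseteq eAe = R$ lies between the surrounding copies of $h$ already in normal form, matching the stated product formula. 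The one-sided actions of $a \in A = B^0$ absorb into the outer factor in the same way.

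The only step with any content is the basis claim in (1); everything else is a routine unpacking. But confluence of the two-rule rewriting system is short and entirely standard, so no significant obstacle is expected.
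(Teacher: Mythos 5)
Your argument is correct and follows the same route as the paper's: write a degree $-n$ element as a word $x_0 h x_1 \cdots h x_n$, use the relations $eh=h=he$ to push idempotents into the factors (yielding $x_0 \in Ae$, $x_i \in R$, $x_n \in eA$), and read off (2) and (3) from the Leibniz rule with the Koszul sign and from concatenation of words. You add welcome detail on linear independence in (1), which the paper simply asserts; the one small imprecision is that a rule like $ah \mapsto (ae)h$ is idempotent rather than strictly reducing, so for an honest Diamond-Lemma (or direct-sum) argument one should first split $A = Ae \oplus A(1-e)$ (and similarly on each side) and observe that the ideal is exactly the span of the words with a bad component, but this is easily repaired.
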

	\begin{proof}
		For the first claim, observe that a generic element of $B^{-n}$ looks like a path $x_0h\cdots hx_n$ where $x_0=x_0e$, $x_n=ex_n$, and $x_j=ex_je$ for $0<j<n$. Replacing occurrences of $h$ with tensor product symbols gives the claimed isomorphism. For the second claim, because $h$ has degree $-1$ we must have $$d(x_0h\cdots hx_n)=\sum_i (-1^i)x_0h\cdots hx_id(h)x_{i+1}h \cdots hx_n$$ but because $d(h)=e$ we have $x_id(h)x_{i+1}=x_ix_{i+1}$. The third claim is clear from the definition of $B$.
	\end{proof}

	\begin{proof}[Alternate proof of \ref{dqexact} when $A$ is ungraded]Consider the shifted bimodule truncation $$T:=(\tau_{\leq-1}B)[-1] \simeq \cdots \to Ae\otimes R\otimes R\otimes eA\to Ae\otimes R\otimes eA\to Ae\otimes eA$$with $Ae\otimes eA$ in degree zero. By \ref{drinfeldmodel}(2) we see that this truncation is exactly the complex that computes the relative Tor groups $\tor^{R/k}(Ae,eA)$ \cite[8.7.5]{weibel}. Since $k$ is a field, the relative Tor groups are the same as the absolute Tor groups, and hence $T$ is quasi-isomorphic to $\cell A$. Because we have $B\simeq \mathrm{cone}(T \xrightarrow{\mu} A)$ we are done.
	\end{proof}
	The following is immediately obtained by considering the long exact sequence associated to the exact triangle $\cell A \xrightarrow{\mu} A \to Q \to $.
	\begin{cor}\label{derquotcohom}
		Let $A$ be an algebra over a field $k$, and let $e\in A$ be an idempotent.
		Then the derived quotient $\dq$ is a connective dga, with cohomology spaces 
		$$H^j(\dq)\cong\begin{cases}
		0 & j>0
		\\ A/AeA & j=0
		\\ \ker(Ae\otimes_{R}eA \to A) & j=-1
		\\ \tor^{R}_{-j-1}(Ae,eA) & j<-1
		\end{cases}$$
	\end{cor}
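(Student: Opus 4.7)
The plan is to apply the long exact sequence in cohomology to the exact triangle $\cell A \xrightarrow{\mu} A \to Q \to$ of $A$-bimodules furnished by Proposition \ref{dqexact}, and identify all the terms.

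First I would record the cohomology of $\cell A = Ae \lot_R eA$. Since $R = eAe$, the hypercohomology spectral sequence (or just the definition of derived tensor product) gives $H^{-j}(\cell A) \cong \tor_j^R(Ae, eA)$ for $j \geq 0$, and zero in positive degrees. In particular $H^0(\cell A) \cong Ae \otimes_R eA$, and the degree-zero component of $\mu$ is precisely the multiplication map $Ae \otimes_R eA \to A$ with image $AeA$. Since $A$ is ungraded, $H^j(A)$ vanishes except in degree $0$ where it equals $A$.

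Next I would feed these into the long exact sequence
\[
\cdots \to H^j(\cell A) \to H^j(A) \to H^j(Q) \to H^{j+1}(\cell A) \to H^{j+1}(A) \to \cdots
\]
and read off the pieces. For $j > 0$, both flanking groups $H^j(A)$ and $H^{j+1}(\cell A)$ vanish (the latter because $\cell A$ is connective), so $H^j(Q) = 0$, establishing connectivity. For $j = 0$ the sequence reads $H^0(\cell A) \xrightarrow{\mu_0} A \to H^0(Q) \to 0$, yielding $H^0(Q) \cong A/AeA$. For $j = -1$ the relevant portion is $0 \to H^{-1}(Q) \to H^0(\cell A) \xrightarrow{\mu_0} A$, giving $H^{-1}(Q) \cong \ker(Ae \otimes_R eA \to A)$. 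Finally for $j < -1$ one has $0 \to H^j(Q) \to H^{j+1}(\cell A) \to 0$, so $H^j(Q) \cong H^{j+1}(\cell A) \cong \tor_{-j-1}^R(Ae, eA)$.

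This is really just bookkeeping once the triangle is in hand, so I do not expect any genuine obstacle; the only point that requires a moment of care is making sure the degree-zero component of $\mu$ is identified with the multiplication map (rather than, say, its negative, which would not affect the kernel/cokernel anyway), and this is already implicit in the construction of $\mu$ as an $A$-bilinear map $Ae \otimes_R eA \to A$. All claimed isomorphisms then drop out of the long exact sequence directly.
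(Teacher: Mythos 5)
Your proof is correct and is exactly what the paper does: the corollary is stated to follow immediately from the long exact sequence of the triangle $\cell A \xrightarrow{\mu} A \to Q \to$ from Proposition \ref{dqexact}, and your identification of $H^{-j}(\cell A) \cong \tor_j^R(Ae,eA)$ and the resulting bookkeeping matches the intended argument.
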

	\begin{rmk}
		The ideal $AeA$ is said to be \textbf{stratifying} if the map $Ae \lot_{R} eA \to AeA$ is a quasi-isomorphism. It is easy to see that $AeA$ is stratifying if and only if $H^0:\dq \to A/AeA$ is a quasi-isomorphism.
	\end{rmk}

	\begin{ex}\label{quiv1} Let $A$ be the path algebra (over $k$) of the quiver $$
		\begin{tikzcd}
		1 \arrow[rr,bend left=20,"x"]  && 2 \arrow[ll,bend left=20,"w"']\ar[ld,"y"] \\ & 3 \ar[lu,"z"]&
		\end{tikzcd}$$ modulo the relations $w=yz$ and $xyz=yzx=zxy=0$. Put $e\coloneqq e_1+e_2 \in A$ and put $R\coloneqq eAe$. It is not hard to compute that $\mathrm{dim}_k(A)=9$, $\mathrm{dim}_k(R)=4$, and $\mathrm{dim}_k(A/AeA)=1$. One can check using \ref{derquotcohom} that $H^{-1}(\dq)$ is two-dimensional, with basis $\{e\otimes w - y \otimes z, z \otimes xy\}$. Loosely, $H^{-1}(\dq)$ measures how many relations there are in $A$ that cannot be `seen' from the vertex set $\{1,2\}$.
	\end{ex}

	\subsection{Recollements}
	Loosely speaking, a recollement (see \cite{bbd} or \cite{jorgensen} for a definition) between three triangulated categories $(\mathcal{T}',\mathcal{T},\mathcal{T}'')$ is a collection of functors describing how to glue $\mathcal{T}$ from a subcategory $\mathcal{T}'$ and a quotient category $\mathcal{T}''$. One can think of a recollement as a short exact sequence $\mathcal{T}' \to \mathcal{T} \to \mathcal{T}''$ of triangulated categories where both maps admit left and right adjoints.
	\begin{thm}[{cf.\ \cite[2.10]{kalckyang} and \cite[9.5]{bcl}}]\label{recoll}
		Let $A$ be an algebra over $k$, and let $e\in A$ be an idempotent. Write $Q\coloneqq \dq$ and $R\coloneqq eAe$. Let $Q_A$ denote the $Q\text{-}A$-bimodule $Q$, let ${}_AQ$ denote the $A\text{-}Q$-bimodule $Q$, and let ${}_AQ_A$ denote the $A$-bimodule $Q$. Put \begin{align*}
		i^*\coloneqq -\lot_A {}_AQ, &\quad j_!\coloneqq  -\lot_{R} eA
		\\ i_*=\R\hom_{Q}({}_AQ,-), &\quad j^!\coloneqq \R\hom_A(eA,-)
		\\ i_!\coloneqq \lot_{Q}Q_A, & \quad j^*\coloneqq -\lot_A Ae
		\\ i^! \coloneqq \R\hom_{A}(Q_A,-), & \quad j_*\coloneqq \R\hom_{R}(Ae,-)
		\end{align*}
		Then the diagram of unbounded derived categories
		$$\begin{tikzcd}[column sep=huge]
		D(Q) \ar[r,"i_*=i_!"]& D(A)\ar[l,bend left=25,"i^!"']\ar[l,bend right=25,"i^*"']\ar[r,"j^!=j^*"] & D(R)\ar[l,bend left=25,"j_*"']\ar[l,bend right=25,"j_!"']
		\end{tikzcd}$$
		is a recollement diagram.
	\end{thm}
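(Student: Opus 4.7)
The plan is to deduce the theorem from Remark \ref{bclrcl}, the general recollement arising from a derived localisation, specialised to $S=\{1-e\}\subset H^0(A)$, for which $\mathbb{L}_S A \simeq Q$ by definition of the derived quotient. Once the shape $D(A)_{S\text{-loc}}\recol D(A) \recol D(A)_{S\text{-tor}}$ of Remark \ref{bclrcl} is taken for granted, the work splits into three tasks: check that each listed pair of functors really is an adjoint pair, identify $D(A)_{S\text{-loc}}\simeq D(Q)$ and $D(A)_{S\text{-tor}}\simeq D(R)$ via the concrete functors above, and verify fully faithfulness and orthogonality.

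First I would check each adjoint pair. The pair $(i^*,i_*)$ is extension/restriction of scalars along the dga map $A \to Q$, while the pairs $(i_!,i^!)$, $(j_!,j^!)$ and $(j^*,j_*)$ are standard tensor--hom adjunctions along the displayed bimodules. The identifications $i_!=i_*$ and $j^!=j^*$ are automatic: $i_!(Y) = Y\lot_Q Q_A$ is again just restriction of scalars, and $\R\hom_A(eA,Y) \cong Y\otimes_A Ae \cong Ye$ because $eA$ is projective as a right $A$-module, being a direct summand of $A$. This same projectivity also shows that the derived tensor product $eA \lot_A Ae$ equals the ordinary one $eAe = R$.

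Second, I would verify the fully faithfulness statements. For $i_*$, by \ref{homepi} the map $A\to Q$ is a homological epimorphism, so $Q\lot_A Q \to Q$ is a quasi-isomorphism; hence the counit $i^*i_*Y = Y\lot_Q Q \lot_A Q \simeq Y$ is a quasi-isomorphism and $i_*$ is fully faithful. For $j_!$, the unit $X\to j^!j_!X = (X\lot_R eA)\lot_A Ae \simeq X\lot_R (eA\lot_A Ae) = X\lot_R R = X$ is a quasi-isomorphism; the analogous verification for $j_*$ goes through symmetrically. For orthogonality I would reduce the condition $i^*j_! \simeq 0$ to the statement $eA\lot_A Q \simeq 0$. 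Applying $eA\lot_A -$ to the triangle $\cell A \to A \to Q \to $ of \ref{dqexact} and using $\cell A = Ae\lot_R eA$ together with $eA\lot_A Ae \simeq R$ yields a triangle $eA \to eA \to eA \lot_A Q \to $ in which the first map is seen to be the identity, so the third term vanishes.

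Finally, the gluing triangle $j_!j^!X \to X \to i_*i^*X \to$ for $X\in D(A)$ is obtained by simply tensoring the exact triangle $\cell A \to A \to Q \to$ of \ref{dqexact} on the left by $X\lot_A -$; the other gluing triangle then follows by the standard recollement formalism once the preceding properties are in place. The main obstacle, such as it is, is less mathematical than bookkeeping: matching eight functors, four adjunctions, and several triangle identifications, and in particular carefully verifying that the map $eA\to eA$ arising in the orthogonality step really is an equivalence. Beyond \ref{homepi}, \ref{dqexact}, and projectivity of $eA$, no further input is required.
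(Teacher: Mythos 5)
Your proposal is correct and, despite the framing around Remark \ref{bclrcl}, it ultimately does essentially what the paper's proof does: verify the recollement axioms directly, with the exact triangle $\cell A \to A \to Q \to$ of Proposition \ref{dqexact} as the central input for both the gluing triangles and the orthogonality. The differences are minor variations in which axioms are established by computation and which by citation. The paper declares the adjoint triples ``clear,'' cites \cite[2.10]{kalckyang} for fully faithfulness of $j_!$ and $j_*$, deduces $j^*i_* \simeq 0$ from the observation that $Q$ is $e$-killing (so $Qe$ is acyclic), and builds both gluing triangles by applying $\R\hom_A(-,X)$ and $X \lot_A -$ to the triangle from \ref{dqexact}. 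You instead verify $i_*$ is fully faithful via the homological-epimorphism criterion of \ref{homepi}, verify $j_!$ and $j_*$ fully faithful by the projectivity of $eA$ (which is correct and makes the proof more self-contained than the paper's appeal to Kalck--Yang), obtain orthogonality in the adjoint form $i^*j_! \simeq 0$ by tensoring the triangle of \ref{dqexact} with $eA\lot_A -$ and identifying the resulting map as the identity (this works, and is morally the same fact that $eQ$ is acyclic since $Q$ is $e$-killing), and build one gluing triangle explicitly while deducing the other formally. That last step is fine: once the adjoint triples, orthogonality, fully faithfulness of $i_*, j_!, j_*$, and one gluing triangle are in place, the other follows since $\ker j^* = \im i_*$ and $i^! j_* = 0$ by adjunction from $j^* i_* = 0$. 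Note that your appeal to \ref{bclrcl} at the outset is essentially decorative, since you end up verifying the axioms directly rather than transporting them from the abstract recollement; the paper skips that step entirely and just does the verification, which is what you do too.
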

	\begin{proof}We give a rather direct proof. It is clear that $(i^*,i_*=i_!,i^!)$ and $(j_!, j^!=j^*,j_*)$ are adjoint triples, and that $i_*=i_!$ is fully faithful. Fullness and faithfulness of $j_!$ and $j_*$ follow from \cite[2.10]{kalckyang}. The composition $j^*i_*$ is tensoring by the $Q\text{-}R$-bimodule $Q.e$, which is acyclic since $Q$ is $e$-killing in the sense of \cite[\S9]{bcl}. The only thing left to show is the existence of the two required classes of distinguished triangles. First observe that 
		\begin{align*}& i_!i^!\cong \R\hom_A({}_AQ_A,-)
		\\ & j_*j^* \cong \R\hom_{R}(Ae,\R\hom_A(eA,-))\cong \R\hom_A(\cell A,-)
		\\ & j_!j^! \cong -\lot_A \cell A
		\\ &  i_*i^* \cong -\lot_A {}_AQ_A\end{align*}
		Now, recall from \ref{dqexact} the existence of the distinguished triangle of $A$-bimodules $$\cell A \xrightarrow{\mu} A \xrightarrow{} {}_AQ_A \to$$
		Taking any $X$ in $D(A)$ and applying $\R\hom_A(-,X)$ to this triangle, we obtain a distinguished triangle of the form $i_!i^!X \to X \to j_*j^* X \to$. Similarly, applying $X\lot_A-$, we obtain a distinguished triangle of the form $j_!j^!X \to X \to i_*i^* X \to$.
	\end{proof}
	\begin{rmk}\label{recolrmk1}This recollement is given in \cite[9.5]{bcl}, although they are not explicit with their functors. The existence of a dga $Q$ fitting into the above recollement is shown in \cite[2.10]{kalckyang}; our method has the advantage of being an explicit construction as well as being functorial with respect to maps of rings with idempotents. We remark that the existence of a dga quasi-isomorphic to $\dq$ fitting into a recollement as above already appears as \cite[7.1]{kalckyang2} (see also \cite[proof of Theorem 4]{nsparam}). If $AeA$ is stratifying, this recovers a recollement constructed by Cline, Parshall, and Scott \cite{cps}. See e.g.\ \cite{cikcorner} or \cite{pssrecol} for the analogous recollement on the level of abelian categories.
	\end{rmk}

	\begin{prop}\label{Rcoloc}
		In the above setup, $D(R)$ is equivalent to the derived category of $(1-e)$-torsion modules.
	\end{prop}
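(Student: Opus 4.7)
The plan is to identify $D(R)$, via the fully faithful functor $j_! = -\lot_R eA : D(R) \to D(A)$ from \ref{recoll}, with the subcategory $D(A)_{(1-e)\text{-tor}}$. First I note that by \ref{homepi} and the subsequent discussion, the canonical map $A \to Q$ is a smashing localisation realising the derived localisation of $A$ at $S = \{1-e\}$; in particular, an object $X \in D(A)$ is $(1-e)$-torsion if and only if $X \lot_A Q \simeq 0$.

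The main step is the standard recollement fact that the essential image of $j_!$ coincides with $\ker(i^*)$. To see this, the proof of \ref{recoll} already produces a distinguished triangle of endofunctors $j_!j^! \to \id \to i_*i^* \to$ on $D(A)$, obtained by applying $X \lot_A -$ to the bimodule triangle $\cell A \to A \to Q \to$ of \ref{dqexact}. Since $i_*$ is fully faithful, $X$ lies in the essential image of $j_!$ if and only if $i_*i^*X \simeq 0$, equivalently $i^*X = X \lot_A Q \simeq 0$. Combined with the torsion characterisation above, this yields the desired equivalence $D(R) \simeq D(A)_{(1-e)\text{-tor}}$.

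As a consistency check, one can see directly that $j_!$ lands in the torsion subcategory: for $M \in D(R)$, associativity of the derived tensor product and projectivity of $eA$ as a right $A$-module give $j_! M \lot_A Q \simeq M \lot_R eQ$, and $eQ$ is acyclic because in the Drinfeld model of \ref{drinfeld} one has $e = d(h)$, so $q \mapsto hq$ is a contracting homotopy for left multiplication by $e$ on $Q$. No real obstacle is expected: the argument is essentially a formal consequence of the recollement \ref{recoll} together with the characterisation of torsion objects from \ref{bclrcl}.
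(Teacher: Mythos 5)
Your proof is correct, and it takes a mildly different route from the paper's. The paper invokes the abstract fact that a recollement is determined by fixing one half (citing Kalck) together with the recollement of \ref{bclrcl}, and also gives a direct computation that the colocalisation dga $\mathbb{L}^{1-e}(A)$ is quasi-isomorphic to $R$ (using $\mathrm{cone}(A\xrightarrow{1-e}A)\simeq eA$ and $\R\enn_A(eA)\simeq R$); by \cite[7.6]{bcl} this identifies $D(R)$ with the torsion subcategory. You instead unpack the recollement principle by hand: you identify $D(A)_{(1-e)\text{-tor}}$ with $\ker(i^*)=\{X:X\lot_A Q\simeq 0\}$ via the smashing property, and you identify $\im(j_!)$ with $\ker(i^*)$ via the triangle $j_!j^!\to\id\to i_*i^*\to$ from the proof of \ref{recoll}. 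Both are sound. The paper's version has the small bonus of exhibiting the colocalisation dga explicitly as $R$ rather than just a derived-Morita equivalence, while your version is self-contained at the level of subcategories and avoids the appeal to the "determined by one half" principle. One presentational nitpick: the step "$X$ is $(1-e)$-torsion iff $X\lot_A Q\simeq 0$" is correct but deserves a sentence (it follows by adjunction from $D(A)_{(1-e)\text{-loc}}\simeq D(Q)$ together with testing against $X\lot_A Q$ itself); as written you treat it as immediate from \ref{homepi}, which is a slight overstatement since \ref{homepi} only gives the homological epimorphism.
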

	\begin{proof}
		Recollements are determined completely by fixing one half (e.g.\ \cite[Remark 2.4]{kalck}). Now the result follows from the existence of the recollement of \ref{bclrcl}. More concretely, one can check that the colocalisation $\mathbb{L}^{1-e}A$ is $R$: because $A\cong eA\oplus(1-e)A$ as right $A$-modules, we have $\mathrm{cone}(A \xrightarrow{1-e} A)\simeq eA$, and we know that $\R\enn_A(eA)\simeq \enn_A(eA)\cong R$ because $eA$ is a projective $A$-module.
	\end{proof}
\begin{rmk}\label{derivedmoritarmk}
	In particular, if $Q'$ is any other $A$-algebra fitting into a recollement as in \ref{recoll}, then $Q'$ must be derived Morita equivalent to $Q$. A sharper uniqueness statement appears as \cite[6.2]{kalckyang2}.
	\end{rmk}
	We show that $\dq$ is a relatively compact $A$-module; before we do this we first introduce some notation.
	\begin{defn}Let $\mathcal X$ be a subclass of objects of a triangulated category $\mathcal{T}$. Then $\thick_\mathcal{T} \mathcal X $ denotes the smallest triangulated subcategory of $\mathcal{T}$ containing $\mathcal{X}$ and closed under taking direct summands. Similarly, $\langle \mathcal X \rangle_\mathcal{T}$ denotes the smallest triangulated subcategory of $\mathcal{T}$ containing $\mathcal{X}$, and closed under taking direct summands and all existing set-indexed coproducts. We will often drop the subscripts if $\mathcal T$ is clear. If $\mathcal X$ consists of a single object $X$, we will write $\thick X$ and $\langle X \rangle$.
	\end{defn}
	\begin{ex}
		Let $A$ be a dga. Then $\langle A \rangle_{D(A)}\cong D(A)$, whereas $\thick_{D(A)}(A) \cong \per A$.
	\end{ex}
	\begin{defn}
		Let $\mathcal{T}$ be a triangulated category and let $X$ be an object of $\mathcal{T}$. Say that $X$ is \textbf{relatively compact} (or \textbf{self compact}) in $\mathcal{T}$ if it is compact as an object of $\langle X \rangle_{\mathcal{T}}$.
	\end{defn}
	\begin{prop}[{cf.\ \cite[3.3]{jorgensen}}]
		The right $A$-module $\dq$ is relatively compact in $D(A)$.
	\end{prop}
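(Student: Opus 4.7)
The plan is to exploit the recollement of Theorem~\ref{recoll} to view $Q = \dq$ as the image under the fully faithful functor $i_*: D(Q) \to D(A)$ of the free rank-one module $Q \in D(Q)$, which is manifestly compact as an object of $D(Q)$. Relative compactness in $D(A)$ then follows from the fact that $i_*$ induces an equivalence onto $\langle Q \rangle_{D(A)}$.

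More concretely, first I would observe that $i_* = i_!$ preserves set-indexed coproducts, since it is a left adjoint (to $i^!$). Combined with full faithfulness (already established in Theorem~\ref{recoll}), this means the essential image $i_*(D(Q)) \subseteq D(A)$ is a localising triangulated subcategory containing $i_*(Q) = Q$, so $\langle Q \rangle_{D(A)} \subseteq i_*(D(Q))$. Conversely, $Q$ is a compact generator of $D(Q)$ in the sense that $\langle Q \rangle_{D(Q)} = D(Q)$; applying the coproduct-preserving functor $i_*$ gives the reverse inclusion $i_*(D(Q)) \subseteq \langle Q \rangle_{D(A)}$. Hence $i_*$ restricts to a triangulated equivalence
\[
i_* : D(Q) \xrightarrow{\simeq} \langle Q \rangle_{D(A)}
\]
that preserves arbitrary coproducts.

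Finally, any coproduct-preserving triangulated equivalence preserves compactness, so compactness of $Q$ in $D(Q)$ transfers to compactness of $i_*(Q) = Q$ in $\langle Q \rangle_{D(A)}$, which is precisely the statement that $Q$ is relatively compact in $D(A)$.

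No step should pose a genuine obstacle, as the recollement has already packaged all the hard work. The only point requiring a modicum of care is verifying that the equivalence onto $\langle Q \rangle_{D(A)}$ preserves coproducts — but this is immediate from $i_*$ being a left adjoint. Alternatively, and perhaps more elegantly, one can cite Jørgensen's result \cite[3.3]{jorgensen} directly: it shows that for any recollement of the form in Theorem~\ref{recoll}, the ``middle'' image of a compact generator from the left-hand category is automatically self-compact.
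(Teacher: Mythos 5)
Your proof is correct and follows essentially the same route as the paper's: the paper invokes \cite[1.7]{jorgensen} for the step you spell out (a coproduct-preserving embedding carries a compact object to a self-compact one), and otherwise states exactly the same "essential idea" — $Q$ is compact in $D(Q)$, $i_*$ is a coproduct-preserving embedding, and $\langle i_*Q\rangle \subseteq \im i_*$. You have simply unfolded the cited lemma.
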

	\begin{proof}The embedding $i_*$ is a left adjoint and so respects coproducts. Hence $i_*(\dq)$ is relatively compact in $D(A)$ by \cite[1.7]{jorgensen}. The essential idea is that $\dq$ is compact in $D(\dq)$, the functor $i_*$ is an embedding, and $\langle i_*(\dq) \rangle \subseteq \im i_*$.
	\end{proof}
	In situations when $\dq$ is not a compact $A$-module (e.g.\ when it has nontrivial cohomology in infinitely many degrees), this gives interesting examples of relatively compact objects that are not compact.
	\begin{defn}
		Let $D(A)_{A/AeA}$ denote the full subcategory of $D(A)$ on those modules $M$ with each $H^j(M)$ a module over $A/AeA$.
	\end{defn}
	\begin{prop}\label{cohomsupport}There is a natural triangle equivalence $D(\dq)\cong D(A)_{A/AeA}$.
	\end{prop}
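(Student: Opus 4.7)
The plan is to exhibit the equivalence as the restriction of the fully faithful embedding $i_* \colon D(\dq) \to D(A)$ furnished by the recollement of \ref{recoll}. Since $i_*$ is an exact fully faithful functor, it suffices to identify its essential image with $D(A)_{A/AeA}$, and naturality in $(A,e)$ will then follow from the functoriality of $i_*$.

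First I would show that the essential image of $i_*$ coincides with $\ker(j^*)$. One direction is immediate: the composition $j^* i_*$ is tensoring by $Q.e$, which is acyclic (as already noted inside the proof of \ref{recoll}), so every object in the image of $i_*$ is killed by $j^*$. Conversely, given $M \in D(A)$ with $j^*M = j^!M \simeq 0$, the distinguished triangle $j_!j^!M \to M \to i_*i^*M \to$ from \ref{recoll} forces the unit map $M \to i_*i^*M$ to be a quasi-isomorphism, so $M$ lies in the essential image of $i_*$.

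Next I would compute $\ker(j^*)$ explicitly. Since $j^* = -\lot_A Ae$ and $Ae$ is a direct summand of $A$ as a left $A$-module (hence flat), the derived tensor product agrees with the underived one, and $H^j(j^*M) \cong H^j(M)\cdot e$ for every $j$. Thus $M \in \ker(j^*)$ if and only if $H^j(M)\cdot e = 0$ for all $j$. Now a right $A$-module $N$ satisfies $N \cdot e = 0$ if and only if $N \cdot AeA = 0$: the reverse implication uses that $e \in AeA$, while the forward implication uses right $A$-linearity, since $n\cdot(aeb) = ((na)\cdot e)\cdot b$. This latter condition is precisely the requirement that $N$ descends to a right $A/AeA$-module, so $\ker(j^*) = D(A)_{A/AeA}$.

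Combining the two steps, $i_*$ restricts to a natural triangle equivalence $D(\dq) \simeq D(A)_{A/AeA}$. I do not anticipate any real obstacle here: the key inputs are the existence of the recollement in \ref{recoll} and the (left) flatness of $Ae$ over $A$, which lets $j^*$ be computed on cohomology pointwise.
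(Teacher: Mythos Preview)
Your argument is correct. The identification $\mathrm{ess.im}(i_*) = \ker(j^*)$ is standard recollement bookkeeping, and your computation of $\ker(j^*)$ via the flatness of $Ae$ as a left $A$-module is clean and accurate.

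The paper takes a slightly different route: it appeals to the proof of \cite[2.10]{kalckyang}, where a recollement $(D(A)_{A/AeA},\, D(A),\, D(R))$ is established directly, and then invokes the fact that a recollement is determined by fixing one half to match this with the recollement $(D(\dq),\, D(A),\, D(R))$ of \ref{recoll}. Your approach instead works entirely internally to \ref{recoll}, computing $\ker(j^*)$ by hand rather than comparing two recollements. This is more self-contained and avoids the external appeal to uniqueness of recollements; the paper's version is terser but leans more heavily on the cited reference. Both land on the same equivalence, namely the one induced by $i_*$.
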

	\begin{proof}Follows from the proof of \cite[2.10]{kalckyang}, along with the fact that recollements are determined completely by fixing one half.
	\end{proof}
	\begin{prop}\label{semiorthog}
		The derived category $D(A)$ admits a semi-orthogonal decomposition $$D(A)\cong \langle D(A)_{A/AeA}, \langle eA \rangle \rangle = \langle \im i_*, \im j_! \rangle $$
	\end{prop}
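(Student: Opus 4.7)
The plan is to deduce the semi-orthogonal decomposition directly from the recollement of Theorem~\ref{recoll}. Recall that for every $X\in D(A)$, the recollement produces a distinguished triangle
\[
j_!j^!X \longrightarrow X \longrightarrow i_*i^*X \longrightarrow,
\]
which already supplies the required filtration: the left-hand term lies in $\im j_!$ and the right-hand term in $\im i_*$. So it only remains to verify semi-orthogonality and to identify the two components with $\langle eA\rangle$ and $D(A)_{A/AeA}$ respectively.

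For semi-orthogonality, take $N\in D(R)$ and $M\in D(Q)$ and compute
\[
\hom_{D(A)}(j_!N,\, i_*M) \;\cong\; \hom_{D(R)}(N,\, j^! i_* M) \;\cong\; \hom_{D(R)}(N,\, j^* i_* M) \;=\; 0,
\]
where the middle identification uses $j^!=j^*$ (both are $(-)\cdot e$, since $eA$ is projective over $A$), and the final vanishing is part of the recollement axioms (equivalently, $j^*i_*$ is acyclic since $Q\cdot e$ is acyclic, as already noted in the proof of \ref{recoll}).

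For the identification of the components, $\im i_* = D(A)_{A/AeA}$ is exactly Proposition~\ref{cohomsupport}. For $\im j_! = \langle eA\rangle$, note that $j_! = -\lot_R eA$ is a left adjoint, and hence preserves arbitrary coproducts; it is also fully faithful by Theorem~\ref{recoll}, so its essential image is a triangulated subcategory of $D(A)$ closed under coproducts and summands. Since $j_!(R) = R\lot_R eA \simeq eA$ and $D(R)=\langle R\rangle_{D(R)}$, we obtain $\im j_! = \langle eA\rangle_{D(A)}$.

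There is no real obstacle here: the result is essentially the standard packaging of a recollement as a semi-orthogonal decomposition, and every ingredient has already been established. The only mild subtlety is being consistent about orientation conventions, i.e.\ checking that the triangle $j_!j^!X\to X\to i_*i^*X\to$ (rather than the other recollement triangle $i_*i^!X\to X\to j_*j^*X\to$) is the one corresponding to the claimed order $\langle \im i_*, \im j_!\rangle$.
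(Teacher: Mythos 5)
Your proof is correct, and it takes the same underlying approach as the paper while unpacking it: the paper simply cites \cite[3.6]{jorgensen} (which is the general fact that a recollement yields semi-orthogonal decompositions), whereas you rederive that from scratch using the triangle $j_!j^!X \to X \to i_*i^*X \to$ and the vanishing $j^*i_* = 0$ established in \ref{recoll}. Every step checks out: the adjunction $j_!\dashv j^!$ plus $j^!=j^*$ gives the orthogonality $\hom(j_!N,i_*M)=0$, the identification $\im i_*=D(A)_{A/AeA}$ is indeed \ref{cohomsupport}, and the identification $\im j_!=\langle eA\rangle$ uses that $j_!$ is a fully faithful, coproduct-preserving left adjoint with $j_!R\simeq eA$ (with the image closed under summands because $D(R)$ is idempotent complete by Bökstedt--Neeman; the paper itself records this identification in the remarks preceding \ref{relsingcatdefn}). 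Your note about choosing the correct one of the two recollement triangles to match the stated ordering $\langle\im i_*,\im j_!\rangle$ is exactly the right sanity check. The only real difference from the paper is expository: you supply the proof that the paper outsources to Jørgensen.
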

	\begin{proof}
		This is an easy consequence of \cite[3.6]{jorgensen}.
	\end{proof}
	We finish with a couple of facts about t-structures; see \cite{bbd} for a definition. In particular, given t-structures on the outer pieces of a recollement diagram, one can glue them to a new t-structure on the central piece \cite[1.4.10]{bbd}. 
	\begin{thm}\label{tstrs}
		The category $D(\dq)$ admits a t-structure $\tau$ with aisles $$\tau^{\leq 0}=\{X: H^i(X)=0 \text{ for } i>0\}\qquad \text{and} \qquad \tau^{\geq 0}=\{X: H^i(X)=0 \text{ for } i<0\}.$$Moreover, $H^0$ is an equivalence from the heart of $\tau$ to $\cat{Mod}$-$A/AeA$. Furthermore, gluing $\tau$ to the natural t-structure on $D(R)$ via the recollement diagram of \ref{recoll} gives the natural t-structure on $D(A)$.
	\end{thm}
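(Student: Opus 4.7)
Parts (1) and (2) are standard facts about derived categories of connective dgas. Since $\dq$ is connective by \ref{derquotcohom}, good truncation gives truncation functors on $D(\dq)$, realising the stated aisles $\tau^{\leq 0}$, $\tau^{\geq 0}$ as a t-structure. Any object in the heart is quasi-isomorphic to its $H^0$ concentrated in degree zero, which inherits a natural module structure over $H^0(\dq) \cong A/AeA$ via the action map; this gives the equivalence $H^0 : \tau^{\heartsuit} \xrightarrow{\sim} \cat{Mod}$-$A/AeA$.

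For part (3), the plan is to apply the BBD gluing construction \cite[1.4.10]{bbd} to the recollement of \ref{recoll}. The glued aisles are
\[
\{X : i^*X \in D(Q)^{\leq 0},\ j^*X \in D(R)^{\leq 0}\} \quad \text{and} \quad \{X : i^!X \in D(Q)^{\geq 0},\ j^*X \in D(R)^{\geq 0}\},
\]
and the goal is to identify these with the natural aisles on $D(A)$. Relative to the natural t-structures one verifies that $j^* = -\lot_A Ae$ is t-exact (since $Ae$ is a direct summand of $A$ as a left $A$-module, so $j^*X = Xe$ preserves cohomology termwise), that $i_* = i_!$ is t-exact (being restriction of scalars along $A \to Q$), that $i^* = -\lot_A Q$ is right t-exact (using the Drinfeld model of \ref{drinfeld}, in which $Q$ is concentrated in nonpositive degrees), and that $j_! = -\lot_R eA$ is right t-exact (since $eA$ sits in degree $0$). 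By adjunction, $i^!$ and $j_*$ are then left t-exact.

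From these exactness properties both containments of t-structures follow formally. If $X \in D(A)^{\leq 0}$ then $j^*X$ and $i^*X$ lie in the respective $\leq 0$ aisles by t-exactness of $j^*$ and right t-exactness of $i^*$, so $X$ lies in the glued $\leq 0$ aisle; the coconnective case is analogous using left t-exactness of $i^!$ in place of $i^*$. Conversely, for $X$ in the glued $\leq 0$ aisle, the triangle $j_!j^*X \to X \to i_*i^*X \to$ from the proof of \ref{recoll} has both endpoints connective, so the long exact sequence in cohomology forces $H^{>0}(X) = 0$. For $X$ in the glued $\geq 0$ aisle one uses the dual triangle $i_!i^!X \to X \to j_*j^*X \to$, whose endpoints are coconnective.

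The main technical obstacle is the right t-exactness of $i^* = -\lot_A Q$, i.e.\ that derived tensor product with the bimodule $Q$ preserves connectivity. Concretely, given a semi-free resolution $P \to X$ with $P^i = 0$ for $i > 0$ of a connective $A$-module $X$, and the Drinfeld model $\widetilde Q$ of \ref{drinfeld} with $\widetilde Q^i = 0$ for $i > 0$, the double complex $P \otimes_A \widetilde Q$ has total-degree-$n$ piece $\bigoplus_{p+q=n} P^p \otimes_A \widetilde Q^q$ vanishing for $n > 0$, giving a connective model for $X \lot_A Q$. The analogous verification for $j_!$ is immediate since $eA$ already lives in degree $0$, and the remaining exactness claims are formal.
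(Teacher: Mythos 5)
Your proof is correct and essentially fills in the details that the paper leaves to citations. For the first two assertions, you give the direct argument via good truncation on a connective dga; the paper instead cites \cite[2.1(a)]{kalckyang}, which proves the same thing. For the third assertion, the paper's proof is a one-line appeal to the uniqueness of the glued t-structure together with the claim that the natural t-structure on $D(A)$ restricts correctly; your argument unwinds exactly what that claim requires. Your verification of the t-exactness properties ($j^*$ and $i_*$ t-exact, $i^*$ and $j_!$ right t-exact, $i^!$ and $j_*$ left t-exact by adjunction) is precisely the content behind ``restricting the natural t-structure clearly gives $\tau$ and the natural t-structure on $D(R)$,'' and your use of the two recollement triangles for the converse containment is the concrete substitute for invoking uniqueness of gluing. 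So the approach is the same in substance; you have simply carried out the direct check where the paper cites uniqueness, which makes your write-up longer but self-contained.
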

	\begin{proof}
		The first two sentences are precisely the content of \cite[2.1(a)]{kalckyang}. The last assertion holds because gluing of t-structures is unique, and restricting the natural t-structure on $D(A)$ clearly gives $\tau$ along with the natural t-structure on $D(R)$.
	\end{proof}

	\subsection{Hochschild theory}
	We collect some facts about the Hochschild theory of the derived quotient. The most important is that taking quotients preserves Hochschild (co)homology complexes:
	\begin{prop}[{\cite[6.2]{bcl}}]\label{hochprop}
		Let $A$ be a dga, $e\in H^0(A)$ an idempotent, and $Q\coloneqq \dq$ the derived quotient. Let $M$ be a $Q$-module. Then there are quasi-isomorphisms $$Q \lot_{Q^e} M \simeq A \lot_{A^e} M \quad\text{and}\quad \R\hom_{Q^e}(Q,M) \simeq \R\hom_{A^e}(A,M) $$and hence isomorphisms $$HH_*(Q,M)\cong HH_*(A,M) \quad\text{and}\quad HH^*(Q,M)\cong HH^*(A,M).$$
	\end{prop}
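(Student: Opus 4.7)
The plan is to exploit the distinguished triangle $\cell A \to A \to Q \to$ of $A$-bimodules from \ref{dqexact}, in combination with the homological epimorphism property of \ref{homepi}. Applying $-\lot_{A^e} M$ to this triangle (with $M$ regarded as an $A$-bimodule via restriction along $A \to Q$) yields
$$\cell A \lot_{A^e} M \;\longrightarrow\; HH_*(A,M) \;\longrightarrow\; Q \lot_{A^e} M \;\longrightarrow,$$
so it suffices to identify the rightmost term with $HH_*(Q,M)$ and show the leftmost vanishes. The cohomology isomorphism follows by the entirely parallel application of $\R\hom_{A^e}(-,M)$.

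For the right-hand identification, recall that $A \to Q$ is a homological epimorphism (\ref{homepi}); since $k$ is a field everything is flat, so this lifts to a homological epimorphism $A^e \to Q^e$. Thus $D(Q^e) \hookrightarrow D(A^e)$ is fully faithful and restriction of scalars is smashing. For the $Q^e$-module $Q$ this gives $Q \lot_{A^e} M \simeq Q \lot_{Q^e} M = HH_*(Q,M)$, and dually $\R\hom_{A^e}(Q,M) \simeq \R\hom_{Q^e}(Q,M) = HH^*(Q,M)$.

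The left-hand term is handled by a change-of-rings calculation. Since $Ae \otimes_k eA$ is a corner of the free bimodule $A^e$ cut out by the idempotent $e \otimes e^{op}$, one has $(Ae\otimes_k eA)\lot_{A^e} N \simeq eNe$ and $\R\hom_{A^e}(Ae\otimes_k eA, N) \simeq eNe$ for any $A$-bimodule $N$. Since $\cell A = Ae \lot_R eA$ is assembled bar-degree-wise from $Ae \otimes_k eA$ via the bar complex of $R$ (see \ref{drinfeldmodel}(2)), applying the above identification in each bar degree yields
$$\cell A \lot_{A^e} M \;\simeq\; R \lot_{R^e} eMe \;=\; HH_*(R, eMe),$$
and likewise $\R\hom_{A^e}(\cell A, M) \simeq HH^*(R, eMe)$. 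It remains to show $eMe \simeq 0$: by \ref{drinfeld} there is $h \in Q^{-1}$ with $dh = e$, so left-right multiplication by $e$ on $M$ (i.e., $m \mapsto eme$) is an idempotent chain endomorphism nullhomotopic via $m \mapsto hme$; conjugating by the idempotent gives the contracting homotopy $y \mapsto ehye$ on $eMe$, so $eMe$ is acyclic. Hence both $HH_*(R, eMe)$ and $HH^*(R, eMe)$ vanish.

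The main obstacle is the bookkeeping in the change-of-rings identification: one must verify that the $R$-bar-complex structure on $Ae \lot_R eA$ interacts correctly with $-\lot_{A^e} -$ and $\R\hom_{A^e}(-,-)$. This is cleanly handled bar-degree-wise using \ref{drinfeldmodel}: each summand $Ae \otimes_k R^{\otimes n} \otimes_k eA$ decomposes as $(Ae \otimes_k eA) \otimes_k R^{\otimes n}$ with trivial $A^e$-action on the middle tensor factor, so tensor and $\R\hom$ over $A^e$ both commute with the bar differential of $R$.
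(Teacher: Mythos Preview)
The paper does not supply its own proof of this proposition; it is quoted directly from \cite[6.2]{bcl}. Your argument is therefore being compared against the literature rather than against anything in the paper itself.

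Your proof is essentially correct, but note a gap in generality. You invoke \ref{drinfeldmodel} to express $\cell A$ as a bar complex with terms $Ae\otimes_k R^{\otimes n}\otimes_k eA$, and this lemma is stated only for ungraded $A$; the proposition is asserted for an arbitrary dga $A$ with $e\in H^0(A)$ merely a cohomology idempotent. In that generality one cannot speak of $eMe$ or the bar description so directly. The ungraded case is what the paper actually uses in its later arguments, so this is a minor point, but the general statement needs a different justification for the vanishing of $\cell A \lot_{A^e} M$.

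A cleaner route that avoids the bar complex entirely, and works for arbitrary dgas, is to push the homological epimorphism argument one step further. You already observe that $A^e\to Q^e$ is a homological epimorphism; hence any $Q^e$-module $M$ satisfies $M\simeq Q^e\lot_{A^e}M$, so
\[
\cell A \lot_{A^e} M \;\simeq\; \bigl(\cell A \lot_{A^e} Q^e\bigr)\lot_{Q^e} M.
\]
Applying $-\lot_{A^e}Q^e$ to the triangle $\cell A\to A\to Q$ and using the standard identity $A\lot_{A^e}(X\otimes_k Y)\simeq Y\lot_A X$ gives $A\lot_{A^e}Q^e\simeq Q\lot_A Q\simeq Q$ (the last step again by \ref{homepi}), whence $\cell A\lot_{A^e}Q^e\simeq 0$. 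This replaces your bar-complex computation and the $eMe$ contraction in one stroke. The $\R\hom$ statement is dual.
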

	Hochschild homology is functorial with respect to recollement:
	\begin{prop}[{\cite[3.1]{kellerhoch}}]
		Let $A$ be an algebra over $k$ and $e\in A$ an idempotent. Put $Q\coloneqq \dq$ the derived quotient and $R\coloneqq eAe$ the cornering. Then there is an exact triangle in $D(k)$ $$
		Q\lot_{Q^e} Q \to A \lot_{A^e} A \to R \lot_{R^e} R \to .$$
	\end{prop}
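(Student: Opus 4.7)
The plan is to appeal directly to Keller's localisation theorem for Hochschild homology of dg algebras, applied to the recollement of Theorem \ref{recoll}. Recall that Theorem \ref{recoll} gives a recollement
$$
\begin{tikzcd}[column sep=large]
D(Q) \ar[r,"i_*"] & D(A) \ar[l,bend right=25] \ar[l,bend left=25] \ar[r,"j^*"] & D(R) \ar[l,bend right=25] \ar[l,bend left=25]
\end{tikzcd}
$$
with all functors given by bimodule tensor products (or their adjoints) involving the structural bimodules ${}_AQ_A$, $Ae$, and $eA$. In particular this is a localisation sequence of derived categories of dg algebras in Keller's sense.

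First I would check that the recollement satisfies the bimodule-compatibility hypotheses required by \cite[3.1]{kellerhoch}: the embedding is implemented by the $A$-$Q$-bimodule $Q$, and the quotient by the $A$-$R$-bimodule $Ae$ together with its adjoint $eA$; all three dgas $Q$, $A$, $R$ are cofibrant over the field $k$, so there are no flatness issues and Hochschild homology may be computed by the usual bar complexes.

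Next I would apply Keller's theorem, whose content is that such a localisation sequence of dg-derived categories induces an exact triangle on Hochschild homology complexes, the outer terms being the Hochschild complexes of the outer dgas in the sequence. Thus one obtains an exact triangle in $D(k)$ of the form
$$HH(Q) \longrightarrow HH(A) \longrightarrow HH(R) \longrightarrow,$$
and unfolding the standard identification $HH(B) \simeq B \lot_{B^e} B$ for any $k$-flat dga $B$ yields precisely the triangle in the statement.

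The only point of genuine work is verifying that the functors in the recollement of Theorem \ref{recoll} are exactly of the bimodule form demanded by Keller; this, however, is visible directly from the definitions of $i_*$, $i^*$, $j_!$, $j^*$ given there. Once this is established, the exact triangle is an immediate invocation of \cite[3.1]{kellerhoch}.
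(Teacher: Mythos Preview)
Your proposal is correct and matches the paper's approach: the paper does not give its own proof but simply attributes the statement to Keller \cite[3.1]{kellerhoch}, and your argument is precisely the unpacking of that citation---verify that the recollement of \ref{recoll} is of the bimodule form required by Keller's localisation theorem, then invoke it. There is nothing further to add.
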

	Unfortunately, Hochschild cohomology does not behave so nicely.
	\begin{prop}Let $A$ be a algebra over $k$ and $e\in A$ an idempotent. Put $Q\coloneqq \dq$ the derived quotient and $R\coloneqq eAe$ the cornering. Then there are exact triangles in $D(k)$\begin{align*}
		\R\hom_{A^e}(Q,A) \to \R\hom_{A^e}(A,A) \to \R\hom_{R^e}(R,R) \to \\
		\R\hom_{A^e}(A,\cell A) \to \R\hom_{A^e}(A,A) \to \R\hom_{Q^e}(Q,Q) \to .
		\end{align*}
	\end{prop}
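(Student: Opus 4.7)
The plan is to start from the fundamental exact triangle of $A$-bimodules from \ref{dqexact}, namely
$$\cell A \xrightarrow{\mu} A \to Q \to,$$
and apply $\R\hom_{A^e}(A,-)$ to obtain the second triangle and $\R\hom_{A^e}(-,A)$ to obtain the first. In each case, the nontrivial step will be identifying the outer term of the resulting triangle with the relevant Hochschild complex.

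For the second triangle, applying $\R\hom_{A^e}(A,-)$ is immediate and yields a triangle
$$\R\hom_{A^e}(A,\cell A) \to \R\hom_{A^e}(A,A) \to \R\hom_{A^e}(A,Q) \to .$$
The rightmost term is then identified with $\R\hom_{Q^e}(Q,Q)$ directly via \ref{hochprop}, applied with $M = Q$ regarded as a $Q$-bimodule; this gives the second triangle on the nose.

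For the first triangle, applying $\R\hom_{A^e}(-,A)$ yields a triangle
$$\R\hom_{A^e}(Q,A) \to \R\hom_{A^e}(A,A) \to \R\hom_{A^e}(\cell A,A) \to,$$
so it remains to identify $\R\hom_{A^e}(\cell A,A)$ with $\R\hom_{R^e}(R,R)$. This is the main step, and it rests on the tensor-hom adjunction between the functor $F : D(R^e) \to D(A^e)$ sending $M \mapsto Ae \lot_R M \lot_R eA$ and its right adjoint $G : D(A^e) \to D(R^e)$ sending $X \mapsto eXe$ (the latter being exact, since $e$ is an idempotent, so no derivation is needed). Concretely, at the underived level an $A$-bimodule map $Ae \otimes_R eA \to X$ is determined by the image of $e\otimes e$, which lies in $eXe$ and must commute with $R = eAe$; this gives the bijection $\hom_{A^e}(Ae\otimes_R eA, X) \cong \hom_{R^e}(R, eXe)$, and the derived version follows from Quillen adjunction between the projective model structures on bimodule categories. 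Taking $M = R$ gives $F(R) = \cell A$, and setting $X = A$ yields
$$\R\hom_{A^e}(\cell A, A) \simeq \R\hom_{R^e}(R, eAe) = \R\hom_{R^e}(R,R),$$
completing the identification and hence the first triangle. The only delicate point is to check that $F$ and $G$ form a Quillen pair so the adjunction descends to derived categories, but this is standard because $G$ preserves all weak equivalences and fibrations.
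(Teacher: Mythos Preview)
Your argument is correct and matches the paper's proof in substance: both apply $\R\hom_{A^e}(-,A)$ and $\R\hom_{A^e}(A,-)$ to the triangle $\cell A \to A \to Q$ of \ref{dqexact}, then identify the outer terms via the adjunction $\R\hom_{A^e}(\cell A,A)\simeq\R\hom_{R^e}(R,R)$ and via \ref{hochprop} respectively. The only cosmetic difference is that the paper organises these two applications as the middle column and middle row of a $3\times 3$ grid (following Han and Koenig--Nagase), and writes the first identification as the chain $\R\hom_{A^e}(\cell A,A)\simeq \R\hom_{A^{\mathrm{op}}\otimes R}(Ae,\R\hom_A(eA,A))\simeq\R\hom_{A^{\mathrm{op}}\otimes R}(Ae,Ae)\simeq \R\hom_{R^e}(R,R)$ rather than packaging it as a single $(F,G)$ adjunction; the content is the same.
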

	\begin{proof}This follows directly from \cite[Theorem 4]{han}; the proof in our setting is not difficult, so we give a full argument following the proof given there. Note that the idea of using the $3\times3$ square already appears in the proof of \cite[2.1]{koenignagase}. Recall that $\mathrm{cocone}(A \to Q)$ is quasi-isomorphic as an $A$-bimodule to $\cell A$ by \ref{dqexact}. Consider the diagram $$\begin{tikzcd} \phantom{}& \phantom{}& \phantom{}& \phantom{}\\
		\R\hom_{A^e}(\cell A,\cell A) \ar[r]\ar[u] & \R\hom_{A^e}(\cell A,A) \ar[r]\ar[u] & \R\hom_{A^e}(\cell A,Q) \ar[r]\ar[u] & \phantom{} \\
		\R\hom_{A^e}(A,\cell A) \ar[r]\ar[u] & \R\hom_{A^e}(A,A) \ar[r]\ar[u] & \R\hom_{A^e}(A,Q) \ar[r]\ar[u] & \phantom{} \\
		\R\hom_{A^e}(Q,\cell A) \ar[r]\ar[u] & \R\hom_{A^e}(Q,A) \ar[r]\ar[u] & \R\hom_{A^e}(Q,Q) \ar[r]\ar[u,"f"] & \phantom{} \end{tikzcd}$$ whose rows and columns are exact triangles. The first triangle can be seen as the middle column, once we make the observation that $$\R\hom_{A^e}(\cell A,A)\simeq \R\hom_{A^{\mathrm{op}}\otimes R}(Ae,\R\hom_A(eA,A))\simeq\R\hom_{A^{\mathrm{op}}\otimes R}(Ae,Ae)\simeq \R\hom_{R^e}(R,R).$$Now, \ref{hochprop} tells us both that the labelled arrow $f$ is a quasi-isomorphism, and moreover that both source and target are quasi-isomorphic to $\R\hom_{Q^e}(Q,Q)$. The second triangle is now visible as the middle row.
	\end{proof}
	\begin{rmk}\label{recolrmk2}We have recovered two of the three triangles obtained by applying \cite[Theorem 4]{han} to the standard recollement $(D(Q),D(A), D(R))$. The third is $$\R\hom_{A^e}(Q,\cell A) \to \R\hom_{A^e}(A,A) \to \R\hom_{Q^e}(Q,Q) \oplus \R\hom_{R^e}(R,R) \to.$$When $AeA$ is a stratifying ideal, these three exact triangles recover the long exact sequences of Koenig--Nagase \cite[3.4]{koenignagase}.
	\end{rmk}
	\section{DG singularity categories}
	First studied by Buchweitz \cite{buchweitz} for noncommutative rings, and then by Orlov \cite{orlovtri} for schemes, the singularity category of a geometric object is a measure of how singular it is: in reasonable cases, it vanishes if and only if the object in consideration is smooth.
We begin with some recollections on dg categories, before defining singularity categories and their dg enhancements. We remark that dg singularity categories have already been studied in the setting of derived geometry by Blanc--Robalo--To\"en--Vezzosi \cite{motivicsingcat}. We also define the dg relative singularity category; aside from being a convenient tool for us, relative singularity categories are very useful in constructing nontrivial equivalences between singularity categories \cite{kkknoerrer}.
	
	\subsection{DG categories}\label{dgcats}
	We assume that the reader is familiar with the basics of the theory of dg categories; survey articles on dg categories include \cite{keller, toendglectures}. We use this section primarily to set notation. All dg categories will be linear over our base field $k$.
	
	Recall that a ($k$-linear) \textbf{dg category} is a category enriched over the symmetric monoidal category of complexes of vector spaces, and that a \textbf{dg functor} between two categories is an enriched functor. If $\mathcal{C}$ is a dg category, and $x,y$ are two objects of $\mathcal{C}$, we denote the hom-complex of maps from $x$ to $y$ by $\dgh_\mathcal{C}(x,y)$. Similarly we denote the endomorphism dga of $x$ by $\dge_\mathcal{C}(x)$. We may omit the subscript of $\mathcal{C}$ if the context is clear. We use this notation because we will want to use $\underline{\hom}$ to denote the homsets in the stable category of a ring. Observe that a dg functor $F:\mathcal{C}\to\mathcal{D}$ induces dga morphisms $F_{xx}:\dge_{\mathcal{C}}(x)\to\dge_{\mathcal{D}}(Fx)$ for every $x \in \mathcal{C}$. 
	
	We denote the homotopy category of $\mathcal{C}$ by $[\mathcal{C}]$; this is the $k$-linear category with the same objects as $\mathcal{C}$ and hom-spaces are given by $\hom_{[\mathcal{C}]}(x,y)\coloneqq H^0(\dgh_\mathcal{C}(x,y))$. We sometimes write $[x,y]\coloneqq \hom_{[\mathcal{C}]}(x,y)$. Say that a dg functor $F:\mathcal{C}\to\mathcal{D}$ is \begin{itemize}
			\item $F$ is \textbf{quasi-fully faithful} if all of its components $F_{xy}$ are quasi-isomorphisms.
			\item $F$ is \textbf{quasi-essentially surjective} if the induced functor $[F]:[\mathcal{C}]\to[\mathcal{D}]$ is essentially surjective.
			\item $F$ is a \textbf{quasi-equivalence} if it is quasi-fully faithful and quasi-essentially surjective.
		\end{itemize}
	
	In a dg category, one may define shifts and mapping cones via the Yoneda embedding into the category of modules. This is equivalent to defining them as representing objects of the appropriate functors; e.g.\ $x[1]$ should represent $\dgh(x,-)[-1]$. Say that a dg category is \textbf{pretriangulated} if it contains a zero object and is closed under shifts and mapping cones. If $\mathcal{C}$ is pretriangulated then the homotopy category $[\mathcal{C}]$ is canonically triangulated, with translation functor given by the shift. 
	
	If $A$ is a dga, then $D_\mathrm{dg}(A)$ denotes the pretriangulated dg category of cofibrant dg modules over $A$, and $\cat{per}_\mathrm{dg}(A) \subseteq {D}_\mathrm{dg}(A)$ denotes the pretriangulated dg subcategory of compact objects. In the notation of \cite{toendglectures}, $\cat{per}_\mathrm{dg}(A)$ is $\hat{A}_{\text{pe}}$. In addition, if $A$ is a $k$-algebra then $D^b_\mathrm{dg}(A)$ denotes the dg category of cofibrant dg $A$-modules with bounded cohomology; these are precisely the bounded above complexes of projective $A$-modules with bounded cohomology.
	
	One has equivalences of triangulated categories $[D_\mathrm{dg}(A)]\cong D(A)$, ${[D^b_\mathrm{dg}(A)]\cong D^b(A)}$ and $[\cat{per}_\mathrm{dg}(A)]\cong\cat{per}(A)$. Note that in the dg categories above, $\dgh$ is a model for the derived hom $\R\hom$; we will implicitly use this fact often.
	One can invert quasi-equivalences between dg categories: 
	\begin{thm}[Tabuada \cite{tabuadamodel}]\label{tabmod}
		The category of all small dg categories admits a (cofibrantly generated) model structure where the weak equivalences are the quasi-equivalences. The fibrations are the objectwise levelwise surjections that lift isomorphisms. Every dg category is fibrant.
	\end{thm}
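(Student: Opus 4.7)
The plan is to apply the standard recognition theorem for cofibrantly generated model categories (e.g.\ Hovey's criterion), producing the model structure from explicit sets $I$ of generating cofibrations and $J$ of generating trivial cofibrations. Smallness will be automatic because the category of small dg categories is locally presentable, so everything reduces to constructing $I$ and $J$ and checking that $J$-cell complexes are weak equivalences and that the maps with the right lifting property against $I$ are precisely the weak-equivalence-fibrations.

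For each $n \in \Z$, let $S^{n-1}$ denote the chain complex with $k$ in degree $n-1$, and $D^n$ the acyclic ``disk'' complex with $k$ in degrees $n-1$ and $n$ with identity differential, so $S^{n-1} \hookrightarrow D^n$. Let $\mathcal{P}(C)$ denote the dg category with two objects $\{0,1\}$ whose only nontrivial hom-complex is $\dgh(0,1) = C$. The generating cofibrations $I$ would consist of (i) the inclusion $\emptyset \hookrightarrow \underline{k}$ of the empty dg category into the one-object dg category with endomorphism dga $k$, together with (ii) the inclusions $\mathcal{P}(S^{n-1}) \hookrightarrow \mathcal{P}(D^n)$ for every $n \in \Z$. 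For the generating trivial cofibrations $J$, I would take: (i) the inclusions $\emptyset \hookrightarrow \mathcal{P}(D^n)$, which add a contracting homotopy and visibly preserve all hom-complex cohomologies; and (ii) the inclusion $\underline{k} \hookrightarrow \mathcal{K}$ of the one-object category into a ``free walking homotopy equivalence'' dg category, built by freely adjoining a second object together with a closed degree-$0$ morphism $f$, a homotopy inverse $g$, and bounding homotopies forcing $fg$ and $gf$ to be identities up to homotopy. This second family of generators is what encodes the isomorphism-lifting condition in the definition of fibration.

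The verification then has three main steps: (a) $I$-injective maps are exactly the surjections on hom-complexes that are also surjective on objects up to isomorphism, hence the trivial fibrations in the desired sense; (b) $J$-injective maps coincide with the fibrations, using that $\underline{k}\hookrightarrow\mathcal{K}$ detects liftability of isomorphisms; (c) the more delicate step, that every transfinite composition of pushouts of maps in $J$ is a quasi-equivalence. Pushouts along type-(i) maps are harmless because they only adjoin acyclic hom-summands. The difficult case is type (ii): pushing out $\underline{k} \hookrightarrow \mathcal{K}$ along a dg functor $\underline{k} \to \mathcal{C}$ formally adjoins to $\mathcal{C}$ a new object homotopy equivalent to a chosen one, and one must show this does not change the quasi-equivalence class, which amounts to checking that the natural inclusion $\mathcal{C} \hookrightarrow \mathcal{C} \sqcup_{\underline{k}} \mathcal{K}$ is a homotopy equivalence of dg categories by writing down the required homotopy data explicitly.

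The main obstacle will be the formulation and analysis of $\mathcal{K}$: one must present it concretely enough to compute pushouts and verify that they are quasi-equivalences, but one cannot naively take the free dg category on $f,g$ modulo $fg = \mathrm{id}$ and $gf = \mathrm{id}$ since this produces a strict equivalence, not the homotopical one. Getting the cells of $\mathcal{K}$ right, so that pushouts correctly add a ``new object plus a coherent homotopy equivalence'' rather than either too little or too much, is the technical heart of Tabuada's argument, and is what turns an otherwise routine application of Hovey's criterion into a genuine construction.
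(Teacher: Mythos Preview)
The paper does not prove this theorem at all: it is stated with attribution to Tabuada and followed only by the sentence ``See \cite{tabuadamodel} for a more precise description of the model structure.'' So there is no proof in the paper to compare your proposal against; you have supplied substantially more than the paper does.

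Your sketch is broadly the correct outline of Tabuada's argument---one applies the recognition theorem with explicit generating sets, and the delicate part is indeed the construction of the interval object $\mathcal{K}$ and the verification that pushouts along $\underline{k}\hookrightarrow\mathcal{K}$ are quasi-equivalences. One genuine slip: your generating trivial cofibrations of type (i) should not be $\emptyset \hookrightarrow \mathcal{P}(D^n)$. The right lifting property against $\emptyset \to \mathcal{P}(D^n)$ would force surjectivity on objects, which is far stronger than being a fibration. The correct family is $\mathcal{P}(0)\hookrightarrow\mathcal{P}(D^n)$, where $\mathcal{P}(0)$ is the dg category on two objects with zero hom-complex between them; lifting against this is exactly what detects levelwise surjectivity of each hom-complex map. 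With that correction, the sketch matches Tabuada's construction.
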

	See \cite{tabuadamodel} for a more precise description of the model structure. The advantage of this result is that it gives one good control over $\mathrm{Hqe}$, the category of dg categories localised at the quasi-equivalences.
	
	\p If $R$ is a ring (or a dga) then the hom-complexes in the dg category $C_\mathrm{dg}(R)$ of dg-$R$-modules will be denoted by simply $\dgh_R$, as opposed to the lengthier $\dgh_{C_\mathrm{dg}(R)}$.

	\subsection{DG quotients}\label{dgquotsctn}
	One can take quotients of dg categories by dg subcategories; these dg quotients were first considered by Keller \cite{kellerdgquot}, and an explicit construction using ind-categories was given by Drinfeld \cite{drinfeldquotient}, which we recall in this section. We begin with the definition of an ind-category.
	
	\begin{defn}
		Let $\mathcal{C}$ be a category. An \textbf{ind-object} of $\mathcal C$ is a formal filtered colimit in $\mathcal{C}$; i.e. a functor $J \to \mathcal{C}$ where $J$ is a small filtered category. We denote such an ind-object by $\{C_j\}_{j \in J}$. The \textbf{ind-category} of $\mathcal{C}$ is the category $\cat{ind}\mathcal{C}$ whose objects are the ind-objects of $\mathcal{C}$ and whose morphisms are given by $$\hom_{\cat{ind}\mathcal{C}}(\{C_i\}_{i \in I}, \{D_j\}_{j \in J}) \coloneqq  \varprojlim_i \varinjlim_j \hom_{\mathcal{C}}(C_i,D_j)$$
	\end{defn}
	If $\mathcal{C}$ has filtered colimits, then there is a `realisation' functor $\varinjlim: \cat{ind}\mathcal{C} \to \mathcal{C}$. In this situation, if $D \in \mathcal{C}$ is a constant ind-object then one has $\hom_{\cat{ind}\mathcal{C}}(\{C_i\}_{i \in I}, D)\cong \hom_\mathcal{C}( \varinjlim_iC_i,D)$.  Note that if $\mathcal{C}$ is a dg category then so is $\cat{ind}\mathcal{C}$ in a natural way.
	\begin{defn}[Drinfeld \cite{drinfeldquotient}]
		Let $\mathcal{A}$ be a dg category and $\mathcal{B}\into\mathcal A$ a full dg subcategory. The \textbf{Drinfeld quotient} $\mathcal{A}/\mathcal{B}$ is the subcategory of $\cat{ind}\mathcal{A}$ on those $X$ such that:\begin{enumerate}
			\item $\dgh_{\cat{ind}\mathcal{A}}(\mathcal{B},X)$ is acyclic.
			\item There exists $a \in \mathcal{A}$ and a map $f:a \to X$ with $\mathrm{cone}(f)\in \cat{ind}\mathcal{B}$.
		\end{enumerate}
		Since $\cat{ind}\mathcal{A}$ is a dg category, so is $\mathcal{A}/\mathcal{B}$. The Drinfeld quotient is a model for ``the'' dg quotient:
		\begin{thm}[{\cite[4.02]{tabquot}}]
			Let $\mathcal{A}$ be a dg category and $i:\mathcal{B}\into \mathcal{A}$ a full dg subcategory. Then the quotient $\mathcal{A}/\mathcal{B}$ is the homotopy cofibre of $i$, taken in $\mathrm{Hqe}$.
		\end{thm}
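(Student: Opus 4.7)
The plan is to show that the Drinfeld quotient $\mathcal{A}/\mathcal{B}$ satisfies the universal property of the homotopy cofibre of $i$ with respect to Tabuada's model structure on $\cat{dgCat}$ (\ref{tabmod}). Concretely, I need to produce a morphism $\pi: \mathcal{A} \to \mathcal{A}/\mathcal{B}$ in $\mathrm{Hqe}$ such that $\pi \circ i \simeq 0$, and verify that this is universal: for every dg category $\mathcal{D}$, pre-composition with $\pi$ induces a weak equivalence of mapping spaces between $\rmap{\mathcal{A}/\mathcal{B}}{\mathcal{D}}$ and the homotopy fibre of $\rmap{\mathcal{A}}{\mathcal{D}} \to \rmap{\mathcal{B}}{\mathcal{D}}$ over the zero functor.

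To construct $\pi$, I would use the fact that each object $a \in \mathcal{A}$ admits a functorial replacement in $\cat{ind}\,\mathcal{A}$ by a filtered colimit acyclifying the hom-complexes $\dgh(b,-)$ for $b \in \mathcal{B}$: one takes the colimit of mapping cones $\mathrm{cone}(b \to a)$ over the comma category of pairs $(b,\beta:b\to a)$, iterating as necessary. The resulting ind-object $\pi(a)$ satisfies defining conditions (1) and (2) by construction. For $b \in \mathcal{B}$ itself, condition (1) applied with $X = \pi(b)$ forces $\dgh(b,\pi(b))$ to be acyclic; chasing the canonical map $b \to \pi(b)$ shows that $\mathrm{id}_{\pi(b)}$ is null-homotopic, so $\pi(b)$ is a zero object of $[\mathcal{A}/\mathcal{B}]$, giving $\pi \circ i \simeq 0$.

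For universality, given a dg functor $F: \mathcal{A} \to \mathcal{D}$ together with coherent contracting homotopies for each $F(b)$ (i.e.\ a witness that $F \circ i$ is zero in $\mathrm{Hqe}$), I would extend $F$ to the relevant subcategory of $\cat{ind}\,\mathcal{A}$ via colimits, using the contracting data to specify the extension on the newly introduced cones. Restriction then yields $\bar{F}: \mathcal{A}/\mathcal{B} \to \mathcal{D}$ with $\bar{F} \circ \pi \simeq F$, and the uniqueness up to quasi-equivalence follows from the explicit nature of the construction together with the universal property of filtered colimits.

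The main obstacle is verifying that this naive ind-category construction genuinely models the model-categorical homotopy cofibre rather than a mere 1-categorical pushout. This amounts to comparing the Drinfeld quotient with an explicit cofibrant replacement of $i$ in the Tabuada model structure, or equivalently with Keller's original construction that freely adjoins contracting homotopies for the identities of $\mathcal{B}$-objects; one must then check that the resulting dg categories are quasi-equivalent. This compatibility check, together with showing that the hom-complex formulas interact correctly with the filtered colimits used to build $\pi$, is the technical heart of the argument, and is where the input from Tabuada \cite{tabquot} is essential.
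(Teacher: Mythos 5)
The paper does not prove this theorem at all; it is stated with a direct citation to Tabuada and used as a black box, so there is no in-paper proof to compare yours against. Evaluated on its own, your proposal has the right overall shape — construct $\pi:\mathcal{A}\to\mathcal{A}/\mathcal{B}$, show $\pi\circ i\simeq 0$, then verify the universal property of the homotopy cofibre at the level of derived mapping spaces — and your closing paragraph correctly locates the technical weight. But several steps as written would not survive scrutiny. The inference that $\id_{\pi(b)}$ is null-homotopic from acyclicity of $\dgh(b,\pi(b))$ alone is not valid: nullity of the structure map $b\to\pi(b)$ does not give contractibility of $\pi(b)$. One has to apply $\dgh(-,\pi(b))$ to the triangle $b\to\pi(b)\to C$ with $C\in\cat{ind}\mathcal{B}$ and argue in the resulting long exact sequence, and even then the acyclicity of $\dgh(C,\pi(b))$ involves passing from ``$\dgh(b',\pi(b))$ acyclic for all $b'\in\mathcal{B}$'' to the same statement for an ind-object, which is a limit and does not come for free.

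More seriously, the proposed construction of $\bar{F}$ ``via colimits'' cannot be carried out inside $\mathcal{D}$, which is an arbitrary dg category with no filtered colimits. One must detour through the Yoneda embedding $\mathcal{D}\into\cat{Mod}\text{-}\mathcal{D}$, extend $F$ there, and then show the extension factors through (quasi-)representables because condition (2) in Drinfeld's definition forces $\pi(a)$ to differ from $a$ only by an ind-$\mathcal{B}$ cone that $F$ sends to contractibles; none of this is spelled out. Finally, mapping spaces in $\mathrm{Hqe}$ are not groupoids of dg functors: by To\"en's theorem they are nerves of categories of right quasi-representable bimodules, and it is in this bimodule formulation that the homotopy fibre sequence of mapping spaces must actually be established. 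Your sketch neither states nor uses this, and this is precisely the content that Tabuada supplies. Deferring it is honest, but it means the proposal is a strategic outline around the citation rather than a self-contained proof.
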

		With this in mind, we will use the terms `Drinfeld quotient' and `dg quotient' interchangeably, although the careful reader should keep in mind that the former is merely a model for the latter, which exists only in a homotopical sense. For pretriangulated dg categories, the Drinfeld quotient is a dg enhancement of the Verdier quotient:
	\end{defn}
	\begin{thm}[{\cite[3.4]{drinfeldquotient}}]\label{drinfeldpretr}
		Let $\mathcal{A}$ be a pretriangulated dg category and $\mathcal{B}\into \mathcal{A}$ a full pretriangulated dg subcategory. Then there is a triangle equivalence $[\mathcal{A}/\mathcal{B}]\cong [\mathcal{A}]/[\mathcal{B}]$.
	\end{thm}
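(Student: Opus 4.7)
The plan is to build a triangulated functor $\bar\pi : [\mathcal{A}]/[\mathcal{B}] \to [\mathcal{A}/\mathcal{B}]$ via the universal property of the Verdier quotient, and then verify it is an equivalence using an explicit presentation of $\mathcal{A}/\mathcal{B}$. Up to quasi-equivalence, one may replace the ind-categorical definition of $\mathcal{A}/\mathcal{B}$ with Drinfeld's ``free'' model: its objects are those of $\mathcal{A}$, and its hom complexes are generated by those of $\mathcal{A}$ together with, for each $b \in \mathcal{B}$, a new morphism $\epsilon_b \in \dgh_{\mathcal{A}/\mathcal{B}}(b,b)$ of degree $-1$ satisfying $d\epsilon_b = \id_b$. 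Equivalence between the two models is a fibrant-replacement argument in $\cat{ind}\,\mathcal{A}$; working with the free model makes the comparison with $[\mathcal{A}]/[\mathcal{B}]$ more transparent.

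First I would check that the natural dg functor $\pi : \mathcal{A} \to \mathcal{A}/\mathcal{B}$ is a dg functor between pretriangulated dg categories, and so induces a triangulated functor $[\pi] : [\mathcal{A}] \to [\mathcal{A}/\mathcal{B}]$. For any $b \in \mathcal{B}$, the relation $\id_b = d\epsilon_b$ makes $\pi(b)$ have null-homotopic identity in $\mathcal{A}/\mathcal{B}$, hence $\pi(b) \cong 0$ in $[\mathcal{A}/\mathcal{B}]$. The universal property of the Verdier quotient then yields a unique triangulated factorisation $\bar\pi : [\mathcal{A}]/[\mathcal{B}] \to [\mathcal{A}/\mathcal{B}]$.

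Essential surjectivity of $\bar\pi$ is immediate: $\mathcal{A}/\mathcal{B}$ has the same objects as $\mathcal{A}$ in the free model. For full faithfulness I would analyse the homs directly. A closed degree-zero morphism in $\dgh_{\mathcal{A}/\mathcal{B}}(a,a')$ can be expanded as a $k$-linear combination of zigzags $a \to b_1 \xrightarrow{\epsilon_{b_1}} b_1 \to \cdots \to b_n \xrightarrow{\epsilon_{b_n}} b_n \to a'$ with each $b_i \in \mathcal{B}$. Each factor $\epsilon_{b_i}$ corresponds, on the Verdier side, to composition with the backward leg of a roof whose cone lies in $\mathcal{B}$, so unfolding a zigzag produces a roof $a \to c \xleftarrow{s} a'$ with $\mathrm{cone}(s) \in \mathcal{B}$. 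Closedness of the zigzag matches compatibility of roofs, and coboundary equivalence in the enlarged hom complex matches the standard equivalence on roofs that defines morphisms in $[\mathcal{A}]/[\mathcal{B}]$.

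The main obstacle is this last bookkeeping step: the detailed identification of coboundary equivalence with roof equivalence. This is the technical heart of Drinfeld's original argument and is carried out carefully in \cite{drinfeldquotient}. A cleaner but more abstract alternative would be to combine the universal property of $\mathcal{A}/\mathcal{B}$ as the homotopy cofibre of $\mathcal{B}\into\mathcal{A}$ in $\mathrm{Hqe}$ (stated just above) with the universal property of the Verdier quotient, and argue formally; that route still requires verifying that the functor $[-]$ from $\mathrm{Hqe}$ to triangulated categories preserves the relevant homotopy cofibres, which ultimately reduces to the same calculation.
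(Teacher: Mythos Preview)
The paper does not give its own proof of this statement: it is quoted as \cite[3.4]{drinfeldquotient} and used as a black box. So there is nothing in the paper to compare your proposal against.

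That said, your sketch is a reasonable outline of Drinfeld's original argument, and you correctly identify where the real work lies. A couple of small points. First, you should verify that $\mathcal{A}/\mathcal{B}$ is itself pretriangulated before invoking $[\pi]$ as a triangulated functor; in the free model this is not automatic from the definition, and one typically passes to the pretriangulated hull (or checks it directly). Second, your description of full faithfulness is more heuristic than an argument: the correspondence between closed zigzags modulo coboundaries and equivalence classes of roofs is exactly the content of the theorem, and ``unfolding a zigzag produces a roof'' does not by itself establish a bijection. Drinfeld's actual proof computes $H^0\dgh_{\mathcal{A}/\mathcal{B}}(a,a')$ by filtering the hom complex by word length in the $\epsilon$'s and identifying the result with the calculus-of-fractions description of the Verdier homs; your proposal gestures at this but does not carry it out. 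Since you already defer to \cite{drinfeldquotient} for this step, the proposal is honest about its own incompleteness, and as a roadmap it is fine.
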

	
	\subsection{Triangulated and dg singularity categories}
We introduce the singularity category of a noncommutative ring, and enhance it to a dg category. We also introduce the relative singularity category and its natural dg enhancement. Let $R$ be any noetherian ring. Observe that the triangulated category $\per R$ of perfect complexes of $R$-modules embeds into the triangulated category $D^b(R)$ of all complexes of finitely generated modules with bounded cohomology.
	\begin{defn}
		Let $R$ be a noetherian ring. The \textbf{singularity category} of $R$ is the Verdier quotient $D_\mathrm{sg}R\coloneqq D^b(R)/ \per R$.
	\end{defn}
	\begin{rmk}
		Strictly, for noncommutative rings one should distinguish between the left and right singularity categories. However, we will always work with right modules.
	\end{rmk}
The following is a standard result about global dimension, regularity, and the singularity category; we provide a proof for completeness.
\begin{prop}
	Let $R$ be a commutative noetherian ring. Then the following hold:
	\begin{enumerate}
		\item If $R$ has finite global dimension, then $D_\mathrm{sg}(R)$ vanishes.
		\item $D_\mathrm{sg}(R)$ vanishes if and only if $R$ is regular.
		\item If $R$ is regular then the global dimension of $R$ is equal to its Krull dimension.
		\end{enumerate}
	\end{prop}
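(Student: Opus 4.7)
The plan is to reduce each statement to the Auslander--Buchsbaum--Serre theorem (for noetherian commutative local rings, regularity is equivalent to finite global dimension, in which case the global dimension equals the Krull dimension) together with the standard characterisation of perfect complexes over noetherian rings.

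For (1), I would start from the observation that, for a noetherian ring $R$, every finitely generated module admits a resolution by finitely generated projectives. Hence if $\mathrm{gl.dim}(R)=n<\infty$, every $M\in\cat{mod}\text{-}R$ has a resolution of length at most $n$ by finitely generated projectives, so the class $\per R$ already contains all of $\cat{mod}\text{-}R$. A standard truncation argument then shows that any object of $D^b(R)$ is quasi-isomorphic to a bounded complex of finitely generated projectives, i.e.\ is perfect. Thus $D^b(R)=\per R$ and the Verdier quotient $D_\mathrm{sg}(R)$ is zero.

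For (2), the key fact to isolate first is: over a noetherian ring, a finitely generated module is perfect (as a complex) if and only if it has finite projective dimension. So $D_\mathrm{sg}(R)$ vanishes precisely when every finitely generated $R$-module has finite projective dimension. For the ``only if'' direction, since localisation is exact, if every f.g.\ $R$-module has finite projective dimension, then every f.g.\ $R_\mathfrak{p}$-module has finite projective dimension at every prime $\mathfrak{p}$; Auslander--Buchsbaum--Serre then gives that $R_\mathfrak{p}$ is regular, so $R$ is regular. For the ``if'' direction, if $R$ is regular, then each $R_\mathfrak{p}$ is regular local, so has finite global dimension bounded by $\dim R_\mathfrak{p}$. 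Combined with the standard identity $\mathrm{pd}_R(M)=\sup_\mathfrak{p}\mathrm{pd}_{R_\mathfrak{p}}(M_\mathfrak{p})$ for finitely generated $M$, every f.g.\ module has finite projective dimension, so $D_\mathrm{sg}(R)=0$.

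For (3), the statement again reduces to the local case by Auslander--Buchsbaum--Serre: locally, $\mathrm{gl.dim}(R_\mathfrak{p})=\dim R_\mathfrak{p}$. Globally, one has $\mathrm{gl.dim}(R)=\sup_\mathfrak{m}\mathrm{gl.dim}(R_\mathfrak{m})$ taken over maximal ideals, and $\dim R=\sup_\mathfrak{m}\dim R_\mathfrak{m}$; putting these together gives $\mathrm{gl.dim}(R)=\dim R$. The only subtlety to check is that the sups make sense under the implicit standing hypothesis (e.g.\ finite Krull dimension), and that the identity $\mathrm{gl.dim}(R)=\sup_\mathfrak{p}\mathrm{pd}_{R_\mathfrak{p}}$ genuinely holds for commutative noetherian $R$; these are routine. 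The main potential obstacle would be if one insisted on allowing infinite Krull dimension in (3) — Nagata's example of a regular ring of infinite Krull dimension shows that finite global dimension of the localisations need not imply finite global dimension of $R$ — but for the application at hand this is not an issue.
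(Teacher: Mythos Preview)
Your approach is the same as the paper's---reduce to Auslander--Buchsbaum--Serre after identifying the vanishing of $D_\mathrm{sg}(R)$ with every finitely generated module having finite projective dimension---and parts (1), the ``only if'' of (2), and (3) are fine. There is, however, a gap in your ``if'' direction for (2). You argue that each $\mathrm{pd}_{R_\mathfrak{p}}(M_\mathfrak{p})$ is finite and then conclude that $\mathrm{pd}_R(M)=\sup_\mathfrak{p}\mathrm{pd}_{R_\mathfrak{p}}(M_\mathfrak{p})$ is finite; but a supremum of finite numbers need not be finite, and you give no reason why it is here. You raise exactly this worry in your discussion of (3) via Nagata's example, but the issue already bites in (2): the claim that a \emph{fixed} finitely generated $M$ over a regular noetherian ring has finite projective dimension is true but not automatic, and is precisely the content of the global Auslander--Buchsbaum--Serre theorem the paper invokes directly.

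One way to close the gap: take finitely generated syzygies $K_n$ of $M$; the locus $U_n=\{\mathfrak{p}:(K_n)_\mathfrak{p}\text{ is free}\}$ is open (as $K_n$ is finitely presented), the $U_n$ are increasing, and regularity gives $\bigcup_n U_n=\spec R$. Since $\spec R$ is a noetherian space, some $U_n$ already equals $\spec R$, so $K_n$ is projective and $\mathrm{pd}_R(M)\leq n$. Note also that (3) holds even in infinite Krull dimension, since then both sides are $\infty$; your hedging about an ``implicit standing hypothesis'' is unnecessary.
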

\begin{proof}
	 First we note that if every finitely generated $R$-module has finite projective dimension, then $D_\mathrm{sg}(R)$ vanishes: to see this, take a bounded complex $X\ = \ X_p \to \cdots \to X_q$ of finitely generated modules and write it as an iterated cone of maps between modules. By assumption each of these modules is quasi-isomorphic to a perfect complex. Taking mapping cones preserves perfect complexes, and hence $X$ must itself be quasi-isomorphic to a perfect complex. Hence $X$ maps to zero in the Verdier quotient and so $D_\mathrm{sg}(R)$ vanishes. Now statement (1) is clear. Moreover the `if' part of statement (2) now follows from the global Auslander--Buchsbaum--Serre theorem \cite[5.94]{lamlect}, as does statement (3). For the `only if' part of statement (2), note that if $\mathfrak{m}$ is a maximal ideal of $R$, then $\mathrm{gl.dim}(R_\mathfrak{m})=\mathrm{pd}_R(R/\mathfrak{m})$ \cite[5.92]{lamlect}. But $D_\mathrm{sg}(R)\cong 0$, so that every finitely generated $R$-module has finite projective dimension, and in particular each $R_\mathfrak{m}$ has finite global dimension. Hence each $R_\mathfrak{m}$ is a regular local ring by the Auslander--Buchsbaum--Serre theorem \cite[5.84]{lamlect}, and so $R$ is a regular ring by the global Auslander--Buchsbaum--Serre theorem again.
	\end{proof}
\begin{rmk}\label{nagatarmk}
	It is not true that a commutative noetherian regular ring must have finite global dimension. Indeed, Nagata's example\footnote{The author learned about this example from Birge Huisgen-Zimmermann via Bernhard Keller.} (see \cite{nagata} or \cite[5.96]{lamlect}) provides a counterexample: there exists a commutative regular noetherian domain $R$, of infinite Krull dimension, whose localisations $R_\mathfrak{m}$ at every maximal ideal are regular local rings of finite, but arbitrarily large, Krull dimension (and hence global dimension). Because resolutions localise, $R$ cannot have finite global dimension. The issue is essentially that although $D_\mathrm{sg}(R)$ vanishes, it may not vanish in a `uniform' way.
	\end{rmk}
\begin{cor}
	Let $R$ be a commutative noetherian ring of finite Krull dimension. The following are equivalent:
\begin{itemize}
	\item $R$ is regular.
	\item $R$ has finite global dimension.
	\item $D_\mathrm{sg}R$ vanishes.
	\end{itemize}	
\end{cor}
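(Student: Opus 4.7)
The plan is to close the three-way cycle of implications directly from the preceding proposition, using the finite Krull dimension hypothesis only at the single step where it is needed. The proposition already gives us two of the three implications essentially for free, and the finite Krull dimension assumption exists precisely to rule out the Nagata-type pathology described in the preceding remark.

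First I would note that part (1) of the proposition gives the implication ``finite global dimension $\Rightarrow$ $D_\mathrm{sg}(R)$ vanishes'' with no hypothesis on Krull dimension. Next, the ``if'' direction of part (2) gives ``$D_\mathrm{sg}(R)$ vanishes $\Rightarrow$ $R$ is regular'', again with no hypothesis on Krull dimension. These two implications are therefore immediate.

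The only implication that actually uses the finite Krull dimension assumption is ``$R$ regular $\Rightarrow$ $R$ has finite global dimension''. Here I would invoke part (3) of the proposition: if $R$ is regular then its global dimension equals its Krull dimension, which is finite by hypothesis. This is exactly the step that fails in general, as Remark \ref{nagatarmk} illustrates via Nagata's example of a regular noetherian ring of infinite Krull dimension with infinite global dimension.

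There is no real obstacle; the corollary is a packaging statement whose purpose is to record that finite Krull dimension is the precise hypothesis needed to make the three notions coincide. The proof is a one-line chain of citations to the three parts of the preceding proposition.
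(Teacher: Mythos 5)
Your proof is correct and is precisely the argument the paper intends: the preceding proposition is stated in exactly the form needed to close the cycle of implications, with the finite Krull dimension hypothesis entering only at the step ``regular $\Rightarrow$ finite global dimension'' via part (3). The paper omits the proof of the corollary as immediate, and your one-line chain of citations is what that omission presupposes.
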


	For technical reasons, we will need to know that various singularity categories we use are idempotent complete; we recall the notion below.
	\begin{defn}
		Let $\mathcal T$ be a triangulated category. A \textbf{projector} in $\mathcal T$ is a morphism\linebreak $\pi:X \to X$ in $\mathcal T$ with $\pi^2=\pi$. Say that a projector $\pi:X \to X$ \textbf{splits} if $X$ admits a direct summand $X'$ such that $\pi$ is the composition $X \to X' \to X$. Say that $\mathcal T$ is \textbf{idempotent complete} if every projector in $\mathcal T$ splits.
	\end{defn}
	\begin{prop}[\cite{bsidempotent}]
		If $\mathcal T$ is a triangulated category, then there exists an idempotent complete triangulated category ${\mathcal T}^\omega$ and a fully faithful triangle functor ${\mathcal T}\to{\mathcal T}^\omega$, universal among functors from ${\mathcal T}$ into idempotent complete triangulated categories. Call ${\mathcal T}^\omega$ the \textbf{idempotent completion} or the \textbf{Karoubi envelope} of ${\mathcal T}$. The assignment ${\mathcal T}\mapsto{\mathcal T}^\omega$ is functorial.
	\end{prop}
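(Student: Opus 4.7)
The plan is to give the direct Balmer--Schlichting construction of ${\mathcal T}^\omega$ and then verify the universal property. First I would define ${\mathcal T}^\omega$ at the level of additive categories: objects are pairs $(X,\pi)$ where $X\in{\mathcal T}$ and $\pi:X\to X$ is a projector, and a morphism $(X,\pi)\to(Y,\rho)$ is a morphism $f:X\to Y$ in ${\mathcal T}$ satisfying $\rho f\pi=f$, with the identity on $(X,\pi)$ taken to be $\pi$ itself. A routine check shows this is an additive category in which every projector splits: indeed, if $\alpha:(X,\pi)\to(X,\pi)$ is a projector, then $\alpha$ is also a projector on $X$ in ${\mathcal T}$, and $(X,\alpha)$ provides the required direct summand. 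The canonical functor $\iota:{\mathcal T}\to{\mathcal T}^\omega$, $X\mapsto(X,\id_X)$, is fully faithful.

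Next I would lift the triangulated structure. The shift is defined by $(X,\pi)[1]\coloneqq(X[1],\pi[1])$, and I would declare a triangle in ${\mathcal T}^\omega$ to be distinguished if it is isomorphic to a direct summand of the image under $\iota$ of a distinguished triangle in ${\mathcal T}$; equivalently, this can be phrased in terms of triangles that become distinguished after adding contractible complements. One then has to verify the axioms (TR1)--(TR4). The axioms (TR1)--(TR3) follow by elementary manipulations. The main obstacle, and by far the most delicate step, is the octahedral axiom (TR4): given a composable pair of morphisms in ${\mathcal T}^\omega$, one must produce lifts to composable morphisms in ${\mathcal T}$ of objects that are upgraded to pairs by suitable projectors, apply the octahedral axiom in ${\mathcal T}$, and then check that the resulting diagram in ${\mathcal T}^\omega$ is well-defined independent of choices and satisfies the compatibility. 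This is exactly the calculation carried out by Balmer--Schlichting in \cite{bsidempotent}, and I would invoke their argument here rather than reproduce it.

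For the universal property, given a triangulated functor $F:{\mathcal T}\to{\mathcal S}$ with ${\mathcal S}$ idempotent complete, extend $F$ to $\widetilde F:{\mathcal T}^\omega\to{\mathcal S}$ by sending $(X,\pi)$ to the splitting in ${\mathcal S}$ of the projector $F(\pi):F(X)\to F(X)$, which exists uniquely up to unique isomorphism by idempotent completeness of ${\mathcal S}$. On morphisms, $\widetilde F(f)$ is the induced map between the splittings. Functoriality of splittings yields a well-defined additive functor, which inherits a triangulated structure from $F$ because distinguished triangles in ${\mathcal T}^\omega$ are summands of distinguished triangles coming from ${\mathcal T}$ and ${\mathcal S}$ is closed under such summands. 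Uniqueness of $\widetilde F$ up to unique natural isomorphism is forced by the requirement that $\widetilde F\circ\iota=F$ together with the fact that every object $(X,\pi)$ is a direct summand of $\iota(X)$.

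Finally, functoriality of the assignment ${\mathcal T}\mapsto{\mathcal T}^\omega$ is immediate from the universal property: a triangulated functor $G:{\mathcal T}\to{\mathcal T}'$ composed with $\iota':{\mathcal T}'\to({\mathcal T}')^\omega$ lands in an idempotent complete target, hence extends uniquely to $G^\omega:{\mathcal T}^\omega\to({\mathcal T}')^\omega$; uniqueness of the extension then gives $(G\circ H)^\omega\cong G^\omega\circ H^\omega$ and $(\id_{\mathcal T})^\omega\cong\id_{{\mathcal T}^\omega}$.
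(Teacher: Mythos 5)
Your sketch is correct and reproduces the Balmer--Schlichting construction that the paper simply cites without proof, so the two approaches coincide: the paper invokes \cite{bsidempotent} as a black box, and you have unpacked that black box faithfully, including the standard Karoubi envelope on objects, the definition of distinguished triangles as summands of triangles from $\mathcal T$, the acknowledgment that (TR4) is the genuinely hard verification, and the universal property via splitting $F(\pi)$ in the idempotent complete target. The only point worth flagging for completeness is that your claim ``$\mathcal S$ is closed under such summands'' relies on the general fact that in any triangulated category a direct summand of a distinguished triangle is again distinguished; this is true but is itself a nontrivial lemma (it is essentially a consequence of the same circle of ideas Balmer--Schlichting develop), so you should either cite it or note that it requires proof.
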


	\begin{prop}[{\cite[5.5]{kalckyang2}}]\label{sgidemref}
		Let $R$ be a Gorenstein ring. If $R$ is a finitely generated module over a commutative complete local noetherian $k$-algebra, then $D_\mathrm{sg}(R)$ is idempotent complete.
	\end{prop}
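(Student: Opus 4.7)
The plan is to use Buchweitz's theorem to reduce to idempotent completeness of the stable category $\stab(R)$ of maximal Cohen--Macaulay $R$-modules, and then to establish this by exhibiting $\stab(R)$ as a Krull--Schmidt category. Since $R$ is Gorenstein, Buchweitz's equivalence \cite{buchweitz} gives a triangle equivalence $D_\mathrm{sg}(R)\simeq\stab(R)$, so it suffices to show that $\stab(R)$ is idempotent complete.

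Let $S$ denote the commutative complete local noetherian $k$-algebra over which $R$ is module-finite, with maximal ideal $\mathfrak{m}$. For any finitely generated $R$-module $M$, the endomorphism ring $\mathrm{End}_R(M)$ is a module-finite $S$-algebra, so it is $\mathfrak{m}$-adically complete, and its quotient $\mathrm{End}_R(M)/\mathfrak{m}\mathrm{End}_R(M)$ is a finite-dimensional $(S/\mathfrak{m})$-algebra, in particular semilocal. By the classical fact that complete semilocal rings are semiperfect, $\mathrm{End}_R(M)$ is semiperfect. Combined with the observation that $\mathrm{CM}(R)$ is closed under direct summands inside the abelian category $\mathrm{mod}(R)$ (summands of MCM modules are MCM) and hence is itself idempotent complete, this yields that $\mathrm{CM}(R)$ is Krull--Schmidt: every MCM module decomposes as a finite direct sum of indecomposables with local endomorphism rings.

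The Krull--Schmidt property then descends to $\stab(R)$: decomposing $M=\bigoplus_i M_i$ in $\mathrm{CM}(R)$ and discarding the projective summands (which vanish in $\stab(R)$) leaves a decomposition whose indecomposable summands have local stable endomorphism rings, since $\underline{\mathrm{End}}_R(M_i)$ is a non-zero quotient of the local ring $\mathrm{End}_R(M_i)$. Finally, a Krull--Schmidt additive category is automatically idempotent complete: given an idempotent $e$ on $X=\bigoplus X_i$ with each $\mathrm{End}(X_i)$ local, an Azumaya-type argument inside the semiperfect matrix ring $\mathrm{End}(X)$ conjugates $e$ to a standard diagonal idempotent whose image is a sub-sum of the $X_i$'s, supplying the required splitting. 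The main point requiring care is separating the semiperfectness of endomorphism rings (automatic from module-finiteness) from the idempotent completeness of the ambient category (coming from summand-closure of $\mathrm{CM}(R)$ in $\mathrm{mod}(R)$); both ingredients are needed to deduce Krull--Schmidt, whereupon the conclusion is immediate.
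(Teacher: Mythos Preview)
The paper does not give its own proof of this proposition: it is simply quoted from \cite[5.5]{kalckyang2} and used as a black box. So there is nothing to compare against directly.

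Your argument is correct and is essentially the standard route to this kind of statement (and indeed is the argument underlying the cited reference). The logic is sound: module-finiteness over a complete local noetherian ring forces endomorphism rings of finitely generated modules to be semiperfect, $\mathrm{CM}(R)$ is closed under summands in $\mathrm{mod}(R)$, hence $\mathrm{CM}(R)$ is Krull--Schmidt, this descends to the stable category because quotients of local rings by proper ideals remain local, and Krull--Schmidt categories are idempotent complete. Buchweitz's equivalence then transports this to $D_\mathrm{sg}(R)$. One small point worth making explicit: you need $R$ itself to be (two-sided) noetherian so that $\mathrm{mod}(R)$ is abelian and the MCM condition behaves well; this follows from the hypothesis that $R$ is module-finite over a noetherian commutative ring, but it is worth saying.
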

	\begin{rmk}
		The second condition is satisfied for example when $R$ is a finite-dimensional $k$-algebra, or when $R$ is itself a commutative complete local noetherian $k$-algebra.
	\end{rmk}
	We now introduce a relative version of the singularity category. Let $A$ be a $k$-algebra and let $e\in A$ be an idempotent. Write $R$ for the cornering $eAe$. Note that by \ref{recoll}, the functor $j_!= -\lot_{R} eA$ embeds $D(R)$ into $D(A)$. In fact, since $D(R)=\langle R\rangle$, we have $j_{!}D(R)=\langle eA \rangle$. Similarly, restricting to compact objects shows that $j_{!}\per R = \thick (eA) \subseteq \per A$. 
	\begin{defn}[Burban--Kalck \cite{burbankalck}, Kalck--Yang \cite{kalckyang}]\label{relsingcatdefn}
		Let $A$ be an algebra over $k$, and let $e\in A$ be an idempotent. Write $R$ for the cornering $eAe$. The \textbf{relative singularity category} is the Verdier quotient $$\Delta_R(A) \coloneqq  \frac{D^b(A)}{j_! \per R} \cong \frac{D^b(A)}{\thick (eA)}.$$
	\end{defn}
	In \cite{kalckyang2}, this is referred to as the \textbf{singularity category of $A$ relative to $e$}. We immediately turn to dg singularity categories:
	\begin{defn}\label{dgsing}
		Let $A$ be a $k$-algebra. The \textbf{dg singularity category} of $A$ is the Drinfeld quotient ${D}^{\mathrm{dg}}_\mathrm{sg}(A)\coloneqq D^b_{\mathrm{dg}}(A) / \cat{per}_\mathrm{dg}(A)$. If $e\in A$ is an idempotent, write $R$ for the cornering $eAe$ and $j_!$ for the functor $-\otimes_R^\mathbb{L}eA:D(R)\to D(A)$. It is easy to see that $j_!$ admits a dg enhancement. The \textbf{dg relative singularity category} is the Drinfeld quotient
		$$\Delta^{\mathrm{dg}}_R(A) \coloneqq  \frac{D_{\mathrm{dg}}^b(A)}{j_! \per_{\mathrm{dg}} R} \cong \frac{D_{\mathrm{dg}}^b(A)}{\thick (eA)}.$$
	\end{defn}
	By \ref{drinfeldpretr}, we have  $[{D}^{\mathrm{dg}}_\mathrm{sg}(A)]\cong D_\mathrm{sg}(A)$ and $[{\Delta}^{\mathrm{dg}}_R(A)]\cong {\Delta}_R(A)$. The following easy lemma is useful:
	
	\begin{lem}\label{drinfeldlem}The objects of ${D}^{\mathrm{dg}}_\mathrm{sg}(A)$ are precisely those ind-objects $X \in \cat{ind}D^b_{\mathrm{dg}}(A)$ such that $\varinjlim X$ is acyclic and there is an $M \in D^b_{\mathrm{dg}}(A)$ with a map $M \to X$ with ind-perfect cone.
	\end{lem}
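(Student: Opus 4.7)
The plan is to unpack the Drinfeld quotient definition from Section 3.2 specialised to $\mathcal{A}=D^b_\mathrm{dg}(A)$ and $\mathcal{B}=\cat{per}_\mathrm{dg}(A)$. By definition the objects of the quotient are those ind-objects $X\in\cat{ind}D^b_\mathrm{dg}(A)$ satisfying (i) $\dgh_{\cat{ind}D^b_\mathrm{dg}(A)}(\cat{per}_\mathrm{dg}(A),X)$ is acyclic, and (ii) there exists $M\in D^b_\mathrm{dg}(A)$ and a map $f\colon M\to X$ whose cone lies in $\cat{ind}\cat{per}_\mathrm{dg}(A)$, i.e.\ is ind-perfect. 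Condition (ii) is already exactly what we want, so the substance of the lemma is the equivalence of (i) with the acyclicity of $\varinjlim X$ (computed in the larger dg category $D_\mathrm{dg}(A)$, which does admit all filtered colimits).

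For this equivalence I would first reduce from the whole subcategory $\cat{per}_\mathrm{dg}(A)$ to the single object $A$. By definition every perfect complex lies in $\thick(A)$, and the bifunctor $\dgh_{\cat{ind}D^b_\mathrm{dg}(A)}(-,X)$ sends shifts, mapping cones and direct summands in its first variable to shifts, (rotated) triangles and retracts in $D(k)$. Consequently, condition (i) holds for all of $\cat{per}_\mathrm{dg}(A)$ if and only if it holds for $A$ alone. It therefore suffices to prove that $\dgh_{\cat{ind}D^b_\mathrm{dg}(A)}(A,X)$ is acyclic if and only if $\varinjlim X$ is acyclic.

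For the remaining step, write $X=\{X_j\}_{j\in J}$. Since $A$ is a constant ind-object, the ind-hom formula recalled in Section 3.2 gives
\[
\dgh_{\cat{ind}D^b_\mathrm{dg}(A)}(A,X)\;=\;\varinjlim_{j}\dgh_{D^b_\mathrm{dg}(A)}(A,X_j).
\]
For each $j$ the complex $\dgh(A,X_j)$ is quasi-isomorphic to $X_j$ (as $A$ represents the forgetful functor on cofibrant modules), so the right-hand side is quasi-isomorphic to $\varinjlim_j X_j=\varinjlim X$. Equivalently, $A$ is compact in $D_\mathrm{dg}(A)$, so $\dgh(A,-)$ commutes with the filtered colimit; this gives the same identification. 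Acyclicity of the ind-hom is therefore equivalent to acyclicity of $\varinjlim X$, completing the characterisation.

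I do not anticipate any serious obstacle; the only mild subtlety is making sure the realisation $\varinjlim X$ is interpreted in $D_\mathrm{dg}(A)$ rather than in $D^b_\mathrm{dg}(A)$ (which need not admit arbitrary filtered colimits), and that filtered colimits of quasi-isomorphisms in $D_\mathrm{dg}(A)$ are quasi-isomorphisms, so the statement ``$\varinjlim X$ is acyclic'' is well defined and homotopy invariant in $X$.
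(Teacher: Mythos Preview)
Your proposal is correct and follows essentially the same approach as the paper. The only organisational difference is that you first reduce condition (i) from all of $\per_\mathrm{dg}(A)$ to the single object $A$ via a thick-subcategory argument and then identify $\dgh(A,X)\simeq\varinjlim X$, whereas the paper proves directly that $\dgh_{\cat{ind}D^b_\mathrm{dg}(A)}(P,X)\simeq\R\hom_A(P,\varinjlim X)$ for every perfect $P$ using compactness, and then specialises to $P=A$ for the converse direction; both routes use the same ingredients and the same passage to $D_\mathrm{dg}(A)$ that you flag.
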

	\begin{proof}
		By the definition of the Drinfeld quotient, the objects of ${D}^{\mathrm{dg}}_\mathrm{sg}(A)$ are precisely those ind-objects $X \in \cat{ind}D^b_{\mathrm{dg}}(A)$ such that:
		\begin{itemize}
			\item $\dgh_{\cat{ind}{D}_{\mathrm{dg}}^b}(\per_{\mathrm{dg}}(A),X)$ is acyclic.
			\item There exists $M \in D^b_{\mathrm{dg}}(A)$ and a map $f:M \to X$ with $\mathrm{cone}(f)\in \cat{ind}(\per_{\mathrm{dg}}(A))$.
		\end{itemize}
It will hence be enough to prove that $\varinjlim X$ is acyclic if and only if $\dgh_{\cat{ind}{D}_{\mathrm{dg}}^b}(\per_{\mathrm{dg}}(A),X)$ is acyclic. Let $P \in \per_{\mathrm{dg}}(A)$. Then we have quasi-isomorphisms
\begin{align*}
\dgh_{\cat{ind}D^b_\mathrm{dg}(A)}(P,X)  \coloneqq & \varinjlim \dgh_{D^b_\mathrm{dg}(A)}(P,X) && \text{by definition}\\
 \cong &\varinjlim\dgh_{D_\mathrm{dg}(A)}(P,X) && \text{because } D^b_\mathrm{dg}(A)\text{ embeds in } D_\mathrm{dg}(A)\\
 \cong &\dgh_{D_\mathrm{dg}(A)}(P,\varinjlim X) && \text{because perfect objects are compact}\\
 \simeq & \R\hom_A(P,\varinjlim X) && \text{because } \dgh \text{ is a model for } \R\hom.
\end{align*}
Note that it was necessary to embed $D^b_\mathrm{dg}(A)$ into $ D_\mathrm{dg}(A)$ as $\varinjlim X$ need not have bounded cohomology. If $\varinjlim X$ is acyclic then so is $\R\hom_A(P,\varinjlim X)$ for any $P$ at all, and hence if $P$ is in addition perfect then $\dgh_{\cat{ind}D^b_\mathrm{dg}(A)}(P,X)$ is acyclic. Conversely if $\dgh_{\cat{ind}D^b_\mathrm{dg}(A)}(P,X)$ is acyclic for every perfect $P$, then in particular taking $P=A$ we see that $$\dgh_{\cat{ind}D^b_\mathrm{dg}(A)}(A,X)\simeq \R\hom_A(A,\varinjlim X) \simeq \varinjlim X$$ is acyclic.
	\end{proof}
\section{Gorenstein rings and the stable category}\label{stabcatsec}
When $R$ is a Gorenstein ring, Buchweitz \cite{buchweitz} noticed that the singularity category of $R$ has an alternate interpretation as the stable category of maximal Cohen-Macaulay $R$-modules. In this section we review Buchweitz's theorem and do some computations with stable Ext modules.
\subsection{Hypersurface singularities}
For completeness, we recall some basic definitions from singularity theory.
\begin{defn}
	A complete local \textbf{hypersurface singularity} is a complete local ring of the form $k\llbracket x_1,\ldots, x_n \rrbracket / \sigma$, where $\sigma$ is a nonzero element of the maximal ideal $\mathfrak{m}_{k\llbracket x_1,\ldots, x_n \rrbracket}$.
	\end{defn}
\begin{defn}
	Let $\sigma \in \mathfrak{m}_{k\llbracket x_1,\ldots, x_n \rrbracket}$ be nonzero. The \textbf{Jacobian ideal} (or the \textbf{Milnor ideal}) $J_\sigma$ of $\sigma$ is the ideal of $k\llbracket x_1,\ldots, x_n \rrbracket$ generated by the partial derivatives $\frac{\partial \sigma}{\partial x_i}$ for $i=1,\ldots,n$. The \textbf{Milnor algebra} $M_\sigma$ is the algebra $k\llbracket x_1,\ldots, x_n \rrbracket/J_\sigma$ and the \textbf{Milnor number} $\mu_\sigma$ is the dimension (over $k$) of the Milnor algebra. The \textbf{Tjurina algebra} of $\sigma$ is the quotient $$T_\sigma\coloneqq \frac{k\llbracket x_1,\ldots, x_n \rrbracket}{(\sigma, J_\sigma)}.$$The \textbf{Tjurina number} $\tau_\sigma$ is the dimension of the Tjurina algebra.
\end{defn}
\begin{defn}
	Say that the hypersurface singularity $ k\llbracket x_1,\ldots, x_n \rrbracket/\sigma$ is \textbf{isolated} if the Milnor number $\mu_\sigma$ is finite.
\end{defn}
\begin{rmk}
	In the holomorphic setting, this is equivalent to the usual definition: the singular locus is an isolated point \cite[2.3]{singsintro}.
\end{rmk}

\subsection{Gorenstein rings}
We recall some standard facts about Gorenstein rings.
\begin{defn}
	Let $R$ be a noncommutative two-sided noetherian ring. Say that $R$ is \textbf{Gorenstein} (or \textbf{Iwanaga-Gorenstein}) if it has finite injective dimension over itself as both a left module and a right module.
\end{defn}
\begin{rmk}
	In this setting, the left injective dimension of $R$ must necessarily agree with the right injective dimension \cite[Lemma A]{zaks}. In general, $R$ might have infinite injective dimension over itself on one side and finite injective dimension on the other.
\end{rmk}
Local complete intersections are Gorenstein:
\begin{prop}[e.g.\ {\cite[21.19]{eisenbud}}]\label{hypsgor}
	Let $S$ be a commutative noetherian regular local ring and $I \subseteq S$ an ideal generated by a regular sequence. Then $R\coloneqq S/I$ is Gorenstein.
\end{prop}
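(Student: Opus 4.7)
The plan is to leverage the fact that $S$ is regular local of finite global dimension $d = \dim S$ and to transfer this to $R$ via a change-of-rings argument. Since the statement is in the commutative setting, the left and right self-injective dimensions of $R$ automatically coincide, so it suffices to show that $\mathrm{inj.dim}_R(R)$ is finite. Write $n$ for the length of the regular sequence $x_1,\ldots,x_n$ generating $I$.

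The first step is to compute $\ext_S^i(R,S)$. Because the $x_j$ form a regular sequence, the Koszul complex $K(\underline{x};S)$ is a finite free $S$-resolution of $R$, concentrated in degrees $0,\ldots,n$. Its $S$-dual $\hom_S(K(\underline{x};S),S)$ is isomorphic to $K(\underline{x};S)$ shifted by $n$, by the standard self-duality of the Koszul complex (the pairing $\Lambda^i S^n \otimes \Lambda^{n-i}S^n \to \Lambda^n S^n \cong S$ is perfect). Taking cohomology yields $\ext_S^i(R,S)=0$ for $i\neq n$ and $\ext_S^n(R,S)\cong R$.

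The second step is to invoke the change-of-rings spectral sequence for the quotient map $S\to R$: for any $R$-module $M$,
\[ E_2^{p,q} \;=\; \ext_R^p\bigl(M,\ext_S^q(R,S)\bigr) \;\Longrightarrow\; \ext_S^{p+q}(M,S). \]
The Koszul computation of the previous step collapses this spectral sequence at the $E_2$ page along the single row $q=n$, yielding natural isomorphisms $\ext_R^p(M,R)\cong \ext_S^{p+n}(M,S)$ for every $R$-module $M$. Since $S$ has global dimension $d$, the right-hand side vanishes whenever $p>d-n$. Hence $\mathrm{inj.dim}_R(R)\leq d-n<\infty$, so $R$ is Gorenstein.

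The main obstacle is the careful verification of the convergence and collapse of the change-of-rings spectral sequence; once that is in hand, everything else reduces to explicit Koszul computations. An alternative route would be induction on $n$, based on the single-step lemma that the quotient of a Gorenstein local ring by a non-zero-divisor is again Gorenstein (established via a Rees-type long exact sequence coming from $0\to S' \xrightarrow{x} S' \to S'/(x) \to 0$), but the spectral sequence argument is the most direct path.
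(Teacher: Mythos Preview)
The paper does not supply a proof of this proposition; it is merely stated with a reference to \cite[21.19]{eisenbud}. Your argument is correct: the Koszul self-duality computation of $\ext_S^*(R,S)$ is standard, the change-of-rings spectral sequence you invoke is the Grothendieck spectral sequence for the composite $\hom_S(M,-)\cong\hom_R(M,\hom_S(R,-))$ (which applies since $\hom_S(R,-)$ carries $S$-injectives to $R$-injectives), and the collapse along the row $q=n$ gives exactly the bound $\mathrm{inj.dim}_R(R)\le d-n$. The inductive alternative you mention at the end is essentially how Eisenbud's cited proof proceeds, so your spectral-sequence route is a more direct variant of the same underlying Rees-type change-of-rings phenomenon.
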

In particular, complete local hypersurface singularities are Gorenstein.

\subsection{The stable category}
When $R$ is a Gorenstein ring, then its singularity category has a more algebraic interpretation as a certain stable category of modules. In this section, we follow Buchweitz's seminal unpublished manuscript \cite{buchweitz}.	
\begin{defn}
	Let $R$ be a Gorenstein ring. If $M$ is an $R$-module, write $M^\vee$ for the $R$-linear dual $\hom_R(M,R)$. A finitely generated $R$-module $M$ is \textbf{maximal Cohen--Macaulay} or just \textbf{MCM} if the natural map $\R\hom_R(M,R)\to M^\vee$ is a quasi-isomorphism.
\end{defn}
\begin{rmk}
	An equivalent characterisation of MCM $R$-modules is that they are those modules $M$ for which $\ext_R^j(M,R)$ vanishes whenever $j> 0$.
\end{rmk}
Let $R$ be a Gorenstein ring and $M,N$ be two MCM $R$-modules. Say that a pair of maps $f,g:M\to N$ are \textbf{stably equivalent} if their difference $f-g$ factors through a projective module. Stable equivalence is an equivalence relation, and we denote the set of stable equivalence classes of maps $M \to N$ by $\underline{\hom}_R(M,N)$. We refer to such an equivalence class as a \textbf{stable map}. The \textbf{stable category} of $R$-modules is the category $\stab R$ whose objects are the MCM $R$-modules and whose morphisms are the stable maps. Composition is inherited from $\cat{mod}\text{-}R$.
\begin{defn}\label{weaksyz}
	Let $R$ be a Gorenstein ring. For each $R$-module $X$, choose a surjection $f:R^n \onto X$ and set $\Omega X\coloneqq \ker f$. We refer to $\Omega$ as a \textbf{syzygy} of $X$.
\end{defn}
\begin{prop}
	The assignment $X \mapsto \Omega X$ is a well-defined endofunctor of $\stab R$.
\end{prop}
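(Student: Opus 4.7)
The plan is to verify three claims in turn: (i) $\Omega X$ is MCM and well-defined up to stable isomorphism, (ii) a morphism $f \colon X \to Y$ induces a well-defined morphism $\Omega f \colon \Omega X \to \Omega Y$ in $\stab R$, and (iii) this assignment respects stable equivalence and composition.

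For (i), given a surjection $R^n \onto X$ with kernel $\Omega X$, the long exact sequence obtained by applying $\hom_R(-,R)$ to the associated short exact sequence yields $\ext^i_R(\Omega X, R) \cong \ext^{i+1}_R(X,R)$ for $i \geq 1$, using projectivity of $R^n$. The right-hand side vanishes since $X$ is MCM, so $\Omega X$ is MCM. To see the choice of surjection is immaterial in $\stab R$, I would invoke Schanuel's lemma: two choices yield $\Omega_1 X \oplus R^{n_2} \cong \Omega_2 X \oplus R^{n_1}$, providing a canonical isomorphism in $\stab R$.

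For (ii), fix surjections $R^{n_X} \onto X$ and $R^{n_Y} \onto Y$. Projectivity of $R^{n_X}$ lifts the composite $R^{n_X} \to X \xrightarrow{f} Y$ through $R^{n_Y} \onto Y$ to some $\tilde f \colon R^{n_X} \to R^{n_Y}$, whose restriction to kernels defines $\Omega f \colon \Omega X \to \Omega Y$. Two lifts $\tilde f, \tilde f'$ differ by a map whose composition with $R^{n_Y} \onto Y$ vanishes, so this difference factors through $\Omega Y \into R^{n_Y}$; restricting to $\Omega X$, the difference $\Omega f - \Omega f'$ factors through the projective $R^{n_X}$ and hence vanishes in $\stab R$.

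For (iii), composition of lifts is a lift of the composition and the identity lifts to the identity, giving functoriality. If $f$ factors through a projective $P$, the key observation is that $\Omega P$ is itself projective: any surjection $R^{n_P} \onto P$ splits, making $\Omega P$ a direct summand of $R^{n_P}$. Then $\Omega f$ factors through $\Omega P$ in $\stab R$ and is therefore stably zero. The step I expect to require the most care is this last one, since one must control $\Omega$ of projectives precisely enough to show that stably zero maps are sent to stably zero maps regardless of the choices made in defining $\Omega$ on objects.
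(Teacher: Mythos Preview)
Your proof is correct. The paper itself does not prove this proposition; it is stated without proof, and the subsequent remark sketches only (a) why syzygies are well-defined up to projectives, and (b) why $\Omega X$ is again MCM. Your argument supplies the full verification, including functoriality on morphisms and invariance under stable equivalence, which the paper omits entirely.

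One small difference worth noting: for the MCM claim, the paper's remark argues by extending $R^n \onto X$ to a free resolution $F$, truncating and shifting to obtain a resolution of $\Omega X$, then dualising and using that $F^\vee$ has cohomology concentrated in degree zero to conclude that $\R\hom_R(\Omega X,R)$ does too. Your argument via the long exact sequence in $\ext_R^*(-,R)$ is more direct and reaches the same conclusion. Your worry about step (iii) is unfounded: since lifts compose to lifts (so $\Omega$ respects composition up to the ambiguity already handled in (ii)), and any surjection from a free module onto a projective $P$ splits, $\Omega P$ is a summand of a free module and the factorisation of $\Omega f$ through $\Omega P$ goes through exactly as you describe.
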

\begin{rmk}
	In particular, the ambiguities in the definition of syzygies are resolved upon passing to the stable category: syzygies are really only defined up to projective modules, but projective modules go to zero in $ \stab R$. Moreover, the syzygy of a MCM module is again MCM; this is not hard to see by continuing $R^n\onto X$ to a free resolution $F$ of $X$, truncating and shifting to get a resolution $(\tau_{\leq -1}F)[-1]$ of $\Omega X$, dualising, and using that $F^\vee$ has cohomology only in degree zero to see that $(\tau_{\leq -1}F)^\vee[1] \simeq \R\hom_R(\Omega X, R)$ has cohomology only in degree zero.
\end{rmk}
\begin{prop}
	Let $R$ be a Gorenstein ring. Then the syzygy functor $\Omega$ is an autoequivalence of $\stab R$. Its inverse $\Omega^{-1}$ makes $\stab R$ into a triangulated category.
\end{prop}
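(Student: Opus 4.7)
The plan is to realise $\stab R$ as the stable category of a Frobenius exact category in the sense of Happel, which will simultaneously produce the inverse of $\Omega$ and the triangulated structure.

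First I would construct a candidate inverse, the \emph{cosyzygy} functor $\Omega^{-1}$. Given an MCM module $M$, the Gorenstein hypothesis implies the dual $M^\vee$ is again MCM (from the remark after the definition of MCM, since $\R\hom_R(M,R)\simeq M^\vee$ means $\ext^{>0}_R(M,R)=0$, and the derived double dual recovers $M$). Picking any surjection $R^n\onto M^\vee$ and dualising gives an injection $M\cong M^{\vee\vee}\hookrightarrow R^n$; set $\Omega^{-1}M$ to be its cokernel. A short diagram chase, analogous to the remark in the excerpt for $\Omega$, shows that $\Omega^{-1}M$ is again MCM, and that $\Omega^{-1}$ descends to a well-defined functor on $\stab R$ because any two choices of embedding differ by a projective summand.

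Next I would verify that $\Omega$ and $\Omega^{-1}$ are mutually inverse on $\stab R$. From a short exact sequence $0\to \Omega M\to R^n\to M\to 0$ one produces, by applying $\Omega^{-1}$ to $\Omega M$, an isomorphism $\Omega^{-1}\Omega M\cong M$ in $\stab R$, using that the projective term $R^n$ becomes zero stably; the other composition is symmetric.

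The triangulated structure then falls out of Happel's theorem once one exhibits $\mathrm{MCM}(R)$ as a Frobenius exact category. The exact structure I would use consists of the short exact sequences of $R$-modules all of whose terms are MCM; equivalently, these are sequences that remain exact after applying $\hom_R(-,R)$. The projective-injective objects of this structure are exactly the finitely generated projective $R$-modules: projectivity is automatic, injectivity within MCM follows because $\ext^1_R(M,R)=0$ for MCM $M$ forces any extension of an MCM by a projective to split, and there are enough projective-injectives on both sides by the construction of $\Omega$ and $\Omega^{-1}$ above. Applying Happel's theorem then canonically equips $\stab R$ with a triangulated structure whose shift is the cosyzygy, namely $\Omega^{-1}$, and whose distinguished triangles arise from short exact sequences in the exact structure.

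The main obstacle is the verification of the Frobenius property, specifically that $\mathrm{MCM}(R)$ has enough projective-injectives on the injective side and that these coincide with the projective $R$-modules; this is where the two-sided finite injective dimension of $R$ is genuinely used, via the fact that $R$ itself is a dualising module making $(-)^\vee$ a duality on MCM modules.
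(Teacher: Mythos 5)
The paper states this proposition without proof, deferring to Buchweitz's manuscript; your proposal reconstructs the standard Buchweitz--Happel argument (realize $\mathrm{MCM}(R)$ as a Frobenius exact category, apply Happel's theorem, identify the cosyzygy functor with the shift $\Omega^{-1}$) and it is essentially correct. The key ingredients are all correctly identified: that $(-)^\vee$ is a duality on MCM modules (which is exactly where Gorensteinness is used, via $R$ being a dualizing complex for itself and MCM modules being reflexive), that the class of MCM modules is extension-closed, that finitely generated projectives are both projective and injective in the induced exact structure, and that there are enough of each -- the non-obvious half being enough injectives, which your cosyzygy embedding $M \cong M^{\vee\vee} \hookrightarrow R^n$ supplies. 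One direction you assert but do not verify -- that every projective or injective object of this exact structure is conversely a finitely generated projective $R$-module, so that Happel's stable category is literally $\stab R$ and not a quotient by a larger class -- follows at once by splitting the (co)syzygy sequence to exhibit any such object as a summand of $R^n$, but that sentence should be written down. The well-definedness of $\Omega^{-1}$ independently of the chosen embedding deserves a reference to the co-Schanuel lemma rather than a gesture, though this is routine.
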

\begin{rmk}
	Later on, we will primarily be interested in situations where $\Omega\cong \Omega^{-1}$, so the use of the inverse syzygy functor instead of the syzygy functor itself is unimportant to us.
\end{rmk}
A famous theorem of Buchweitz tells us that the stable category we have just defined is the same as the singularity category:
\begin{thm}\label{mcmisdsg}Let $R$ be a Gorenstein $k$-algebra. The categories $D_{\mathrm{sg}}(R)$ and $\stab R$ are triangle equivalent, via the map that sends a MCM module $M$ to the object $M \in D^b(R)$.
\end{thm}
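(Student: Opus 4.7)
The plan is to construct a triangle functor $F\colon \stab R \to D_\mathrm{sg}(R)$ sending an MCM module $M$ to its class in $D^b(R)/\per R$, and to verify that it is an equivalence. Well-definedness on morphisms is immediate: any map $M \to N$ factoring through a projective module becomes zero in $D_\mathrm{sg}(R)$ since projectives are perfect, so stably equivalent maps induce the same morphism, and $F$ descends from $\cat{mod}\text{-}R$ to $\stab R$.

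For essential surjectivity I would use a syzygy argument. Given $X \in D^b(R)$, take a projective resolution $P \to X$ (bounded above, with finitely generated terms), and consider the stupid truncation $\sigma_{\geq -n} P$. The cone of $\sigma_{\geq -n} P \to P$ is a shift of the $n$-th syzygy $\Omega^n X$, while $\sigma_{\geq -n} P$ is perfect, so $X \cong (\Omega^n X)[-n]$ in $D_\mathrm{sg}(R)$. Because $R$ is Gorenstein of finite injective dimension $d$, the syzygy $\Omega^n X$ is MCM for all $n \geq d$: dualising the resolution and using that $\R\hom_R(-,R)$ has cohomology concentrated in degree zero on MCM modules, a high enough syzygy satisfies the Ext-vanishing characterisation. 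Combined with the fact that $\Omega$ is an autoequivalence of $\stab R$, this shows every object of $D_\mathrm{sg}(R)$ is isomorphic to $F(M)$ for some MCM $M$.

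The main technical step is fully faithfulness, namely the natural isomorphism
$$\underline{\hom}_R(M,N) \xrightarrow{\ \cong\ } \hom_{D_\mathrm{sg}(R)}(M,N)$$
for MCM modules $M,N$. Morphisms in the Verdier quotient are roofs $M \xleftarrow{s} Y \to N$ with $\mathrm{cone}(s)$ perfect; using a complete (Tate) projective resolution of $M$, which exists since $R$ is Gorenstein, one can replace such a roof by an honest module map, giving surjectivity of $\hom_R(M,N)\to \hom_{D_\mathrm{sg}(R)}(M,N)$. For injectivity, one shows that if $f\colon M \to N$ becomes zero in $D_\mathrm{sg}(R)$ then $f$ factors through a perfect complex; using the MCM property of $M$ and $N$ together with the Ext-vanishing $\ext^{>0}_R(M,R)=0$, one then lifts this factorisation to show $f$ factors through a finitely generated projective module. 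The kernel is thus exactly the ideal of projectively factoring maps, yielding the claimed identification.

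Finally, $F$ is a triangle functor. For any MCM module $M$, choose a short exact sequence $0 \to M \to P \to \Omega^{-1}M \to 0$ with $P$ projective, realising the translation $\Omega^{-1}$ of $\stab R$. In $D^b(R)$ this gives a distinguished triangle $M \to P \to \Omega^{-1}M \to M[1]$, and since $P$ is zero in $D_\mathrm{sg}(R)$ this produces a natural isomorphism $F(\Omega^{-1}M)\cong F(M)[1]$. Since distinguished triangles in $\stab R$ are defined to be those arising from such sequences, they map to distinguished triangles in $D_\mathrm{sg}(R)$, completing the proof. The main obstacle throughout is the fullness/faithfulness step, which hinges on using Gorensteinness to produce complete resolutions and to control Ext-vanishing.
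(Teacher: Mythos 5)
The paper does not prove this theorem; it quotes it from Buchweitz's unpublished manuscript \cite{buchweitz} without supplying an argument, so there is no in-paper proof to compare against. Your outline is, however, a faithful reconstruction of Buchweitz's original proof, and each step is in the right place.

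A few remarks on correctness. Well-definedness and the triangle structure are unproblematic and correctly argued. Essential surjectivity is correct in substance: for a Gorenstein ring of self-injective dimension $d$ the dimension shift $\ext^i_R(\Omega^n X,R)\cong\ext^{i+n}_R(X,R)$ vanishes for $i\geq 1$ once $n\geq d$, so high syzygies are MCM, and in $D_\mathrm{sg}(R)$ every object is a shift of a high syzygy. (Your formula $X\cong(\Omega^n X)[-n]$ has the sign/index off by a shift relative to the convention $\Omega\simeq[-1]$, but that is cosmetic.) The real content is fully faithfulness, and you are right to flag it as such; your description of it is the correct plan but it is where all the work is hidden. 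To complete it one needs Buchweitz's actual mechanism: the natural isomorphism $\hom_{D_\mathrm{sg}(R)}(M,N)\cong H^0\hom_R(\mathbf{CR}(M),N)$ (which the paper records as \ref{buchcohom}), obtained by showing that morphisms into $M$ from perfect complexes can be absorbed into the positively-graded half of the complete resolution, and conversely that a roof $M\from Y\to N$ with perfect cone can be straightened into an honest chain map out of $\mathbf{CR}(M)$. Stating that one ``can replace such a roof by an honest module map'' and that one ``then lifts this factorisation'' is the right heuristic, but on its own it gives no reason these replacements exist; the existence is exactly what Gorensteinness, via the two-sided complete resolution and the vanishing $\ext^{>0}_R(M,R)=0$, is buying you, and that deduction should be carried out explicitly. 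With that step filled in, your proof would be a complete account of Buchweitz's theorem and fully in line with the reference the paper points to.
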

\subsection{The dg stable category}
Let $R$ be a Gorenstein ring. We regard the dg singularity category $D^\mathrm{dg}_{\mathrm{sg}}(R)$ as a dg enhancement of $\stab R$. 
\begin{defn}
	Let $R$ be a Gorenstein $k$-algebra and let $M,N$ be elements of $D^\mathrm{dg}_{\mathrm{sg}}(R)$. Write $\R\underline{\hom}_R(M,N)$ for the complex $\dgh_{D^\mathrm{dg}_{\mathrm{sg}}(R)}(M,N)$ and write $\R\underline{\enn}_R(M)$ for the dga $\dge_{D^\mathrm{dg}_{\mathrm{sg}}(R)}(M)$.
\end{defn}We denote Ext groups in the singularity category by $\underline{\ext}$. Note that $\underline\hom$ coincides with $\underline\ext^0$, and that $\underline{\ext}^j(M,N)\cong H^j\R\underline{\hom}_R(M,N)$. In order to investigate the stable Ext groups, we recall the notion of the complete resolution of a MCM $R$-module -- the construction works for arbitrary complexes in $D^b(\cat{mod}\text{-}R)$.
\begin{defn}[{\cite[5.6.2]{buchweitz}}]
	Let $R$ be a Gorenstein $k$-algebra and let $M$ be any MCM $R$-module. Let $P$ be a projective resolution of $M$, and let $Q$ be a projective resolution of $M^\vee$. Dualising and using that $(-)^\vee$ is an exact functor on MCM modules and on projectives gives us a projective coresolution $M \to Q^\vee$. The \textbf{complete resolution} of $M$ is the (acyclic) complex $\mathbf{CR}(M)\coloneqq \mathrm{cocone}(P \to Q^\vee)$. So in nonpositive degrees, $\mathbf{CR}(M)$ agrees with $P$, and in positive degrees, $\mathbf{CR}(M)$ agrees with $Q^\vee[-1]$.
\end{defn}
\begin{prop}[{\cite[6.1.2.ii]{buchweitz}}]\label{buchcohom}Let $R$ be a Gorenstein $k$-algebra and let $M,N$ be MCM $R$-modules. Then $$\underline{\ext}_R^j(M,N) \cong H^j\dgh_R(\mathbf{CR}(M),N) .$$
\end{prop}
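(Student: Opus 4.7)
The plan is to reduce to the case $j=0$ via dimension shifting with syzygies, and to establish the base case directly by identifying cocycles with $\hom_R(M,N)$ and coboundaries with stably null maps.

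For the base case $j=0$: since $N$ is concentrated in degree $0$, the $j$-th component of $\dgh_R(\mathbf{CR}(M), N)$ is $\hom_R(\mathbf{CR}(M)^{-j}, N)$. By construction $\mathbf{CR}(M)^0 = P^0$, $\mathbf{CR}(M)^{-1} = P^{-1}$, and $\mathbf{CR}(M)^1 = Q^{\vee,0}$, with the differential $P^0 \to Q^{\vee,0}$ given by the composition $P^0 \onto M \into Q^{\vee,0}$. Thus a degree-$0$ cocycle is a map $P^0 \to N$ vanishing on $d_P$, equivalently a map $M \to N$; and a coboundary is a map $P^0 \to N$ factoring as $P^0 \onto M \into Q^{\vee,0} \to N$, equivalently a map $M \to N$ that extends along $M \into Q^{\vee,0}$. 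Since $Q^{\vee,0}$ is projective (duality sends projectives to projectives over a Gorenstein ring), every such extendable map is stably null. Conversely, given any stably null $\tilde f: M \to N$, factor it as $M \xrightarrow{\iota} P' \xrightarrow{g} N$ through a projective $P'$; the Frobenius property of the exact category of MCM $R$-modules (its projective and injective objects coincide) lets us lift $\iota$ along the monomorphism $M \into Q^{\vee,0}$ of MCM modules to a map $Q^{\vee,0} \to P'$, and composing with $g$ exhibits $\tilde f$ as a coboundary. This yields $H^0 \dgh_R(\mathbf{CR}(M),N) \cong \underline{\hom}_R(M,N)$.

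For general $j$: brutally truncating $\mathbf{CR}(M)$ and shifting produces, up to addition of projective summands, a complete resolution of the iterated syzygy $\Omega^j M$ (these added projective summands contribute only acyclic pieces to $\dgh_R(-,N)$). Combining the $j=0$ case applied to $\Omega^j M$ with the standard dimension-shift $\underline{\ext}_R^j(M,N) \cong \underline{\hom}_R(\Omega^j M, N)$ on the stable category gives the isomorphism for all $j\in\Z$.

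The main obstacle is the converse direction in the base-case identification of coboundaries with stably null maps: this is where one cannot dispense with the Frobenius structure on MCM modules, since we must promote a factorisation through an arbitrary projective to one through the specific projective $Q^{\vee,0}$ that appears in the chosen complete resolution. Once this lifting step is secured, the rest of the proof is routine bookkeeping with brutal truncations and shifts of $\mathbf{CR}(M)$.
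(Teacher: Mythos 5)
The paper cites this as \cite[6.1.2.ii]{buchweitz} and gives no proof of its own, so there is no in-paper argument to compare against; it is treated as a black box. Your proof is correct and is essentially the standard one. The $j=0$ analysis is right: cocycles are maps $P^0 \to N$ annihilating $d_P$, hence factoring through $M = \coker(P^{-1}\to P^0)$, and coboundaries are those factoring further through $M \into Q^{\vee,0}$; the nontrivial direction is exactly the one you isolate, namely upgrading an arbitrary factorisation through a projective to one through $Q^{\vee,0}$. This step is secured because $M \into Q^{\vee,0}$ is an admissible monomorphism in the Frobenius category of MCM modules (its cokernel is $(\Omega(M^\vee))^\vee$, hence MCM), and finitely generated projectives are injective there. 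One small simplification is available in your dimension shift: no brutal truncation or projective padding is needed, since the acyclic complex $\mathbf{CR}(M)[-j]$ is already a complete resolution of $\Omega^j M$ on the nose — shifting merely relocates the module $\coker\bigl(\mathbf{CR}(M)^{-j-1}\to\mathbf{CR}(M)^{-j}\bigr)$ to degree $0$. Combined with $\underline{\ext}^j_R(M,N)\cong\underline{\hom}_R(\Omega^j M,N)$, which holds because $\Omega^{-1}$ is the translation functor of $\stab R$, the general case follows cleanly.
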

\begin{cor}\label{stabextisext}
	Let $R$ be a Gorenstein $k$-algebra and let $M,N$ be MCM $R$-modules.\begin{enumerate}
		\item If $j>0$ then $\underline{\ext}_R^j(M,N)\cong\ext_R^j(M,N)$. 
		\item If $j<-1$ then $\underline{\ext}_R^j(M,N)\cong\tor^R_{-j-1}(N,M^\vee)$.
	\end{enumerate}
\end{cor}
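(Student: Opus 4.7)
The plan is to compute both sides directly from Proposition \ref{buchcohom}, which identifies $\underline{\ext}_R^j(M,N)$ with $H^j\dgh_R(\mathbf{CR}(M),N)$, using the explicit description of the complete resolution: $\mathbf{CR}(M)^i = P^i$ for $i\leq 0$ and $\mathbf{CR}(M)^i = (Q^\vee[-1])^i = (Q^{1-i})^\vee$ for $i\geq 1$. Since $N$ is a module concentrated in degree zero, the complex $\dgh_R(\mathbf{CR}(M),N)$ in cohomological degree $\ell$ is just $\hom_R(\mathbf{CR}(M)^{-\ell},N)$, so computing $H^j$ only involves $\mathbf{CR}(M)$ in positions $-j-1,-j,-j+1$.

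For part (1), take $j\geq 1$. Then $-j-1,-j,-j+1$ are all $\leq 0$, so throughout this range $\mathbf{CR}(M)$ coincides with $P$, a projective resolution of $M$. Therefore $H^j\dgh_R(\mathbf{CR}(M),N) \cong H^j\dgh_R(P,N) \cong \ext_R^j(M,N)$ by the usual computation of Ext via a projective resolution.

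For part (2), take $j\leq -2$. Then $-j-1,-j,-j+1$ are all $\geq 1$, so throughout this range $\mathbf{CR}(M)$ coincides with $T\coloneqq Q^\vee[-1]$. Hence $H^j\dgh_R(\mathbf{CR}(M),N) \cong H^j\dgh_R(Q^\vee[-1],N) \cong H^{j+1}\dgh_R(Q^\vee,N)$. Because each $Q^i$ is a finitely generated projective $R$-module, the canonical evaluation map $N\otimes_R Q \to \dgh_R(Q^\vee,N)$ is a term-wise isomorphism and hence a quasi-isomorphism of complexes. Since $Q$ is a projective resolution of $M^\vee$ placed in nonpositive cohomological degrees, we conclude $H^{j+1}(N\otimes_R Q) \cong \tor_{-j-1}^R(N,M^\vee)$, as required.

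The only thing requiring real care is the bookkeeping of grading and dualisation conventions: verifying the index formula $(Q^\vee[-1])^i = (Q^{1-i})^\vee$, checking that the differential induced on $\dgh_R(Q^\vee,N)$ is compatible with the standard Tor differential under the evaluation isomorphism, and noting that the excluded cases $j=0,-1$ are precisely those in which both the $P$-piece and the $Q^\vee[-1]$-piece of $\mathbf{CR}(M)$ enter the computation of $H^j$, which is exactly why the statement separates into the two disjoint ranges.
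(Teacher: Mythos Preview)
Your argument is correct and follows essentially the same route as the paper's own proof: both use \ref{buchcohom} and then split $\mathbf{CR}(M)$ into its $P$-part in nonpositive degrees and its $Q^\vee[-1]$-part in positive degrees, reducing (1) to the usual Ext computation and (2) to a Tor computation via $Q$. The only cosmetic difference is that for the quasi-isomorphism $\dgh_R(Q^\vee,N)\simeq N\lot_R M^\vee$ the paper simply cites \cite[6.2.1.ii]{buchweitz}, whereas you spell it out via the evaluation map $N\otimes_R Q \to \dgh_R(Q^\vee,N)$; this is fine, provided you note that $R$ is noetherian (it is, being Gorenstein in the sense of the paper) so that $Q$ may be chosen with each $Q^i$ finitely generated projective.
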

\begin{proof}
	Let $P\to M$ and $Q \to M^\vee$ be projective resolutions. If $j>0$ we have $$H^j\hom_R(\mathbf{CR}(M),N)\cong H^j\dgh_R(P,N)\cong\ext^j_R(M,N)$$whereas if $j<-1$ we have
	$$H^j\dgh_R(\mathbf{CR}(M),N)\cong H^j\dgh_R(Q^\vee[-1],N)\cong H^j(N\lot_R M^\vee[1]) \cong \tor^R_{-j-1}(N,M^\vee)$$ where we use \cite[6.2.1.ii]{buchweitz} for the quasi-isomorphism $\R\hom_R(Q^\vee,N) \simeq N\lot_R M^\vee$.
\end{proof}
Finally, we recall AR duality, which will assist us in some computations later:
\begin{prop}[Auslander--Reiten duality \cite{auslander}]\label{arduality}
	Let $R$ be a commutative complete local	Gorenstein isolated singularity of Krull dimension $d$. Let $M,N$ be MCM $R$-modules. Then we have $$\underline{\hom}_R(M,N) \cong {\ext}_R^1(N,\Omega^{2-d} M)^*$$where the notation $(-)^*$ denotes the $k$-linear dual.
\end{prop}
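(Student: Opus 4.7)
My plan is to reduce the claim to the statement that $\stab R \simeq D_{\mathrm{sg}}(R)$ is $(d-1)$-Calabi--Yau as a $k$-linear triangulated category, i.e.\ admits a Serre functor given by the $(d-1)$-fold shift. Since the shift in $\stab R$ is $\Omega^{-1}$ and $\underline{\ext}^1 = \ext^1$ by Corollary~\ref{stabextisext}, one can rewrite
\begin{equation*}
\ext^1_R(N, \Omega^{2-d}M) \;=\; \underline{\ext}^1_R(N, \Omega^{2-d}M) \;=\; \underline{\hom}_R(N, \Omega^{1-d}M) \;=\; \underline{\hom}_R(N, M[d-1]),
\end{equation*}
so the proposition is equivalent to $\underline{\hom}_R(M,N) \cong \underline{\hom}_R(N, M[d-1])^*$. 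The isolated singularity hypothesis guarantees that every MCM module becomes free after localising at any non-maximal prime, so $\underline{\hom}_R(M,N)$ is supported at $\mathfrak{m}$ only, and being finitely generated over the complete local ring $R$, it has finite length and is finite-dimensional over $k$. Thus the $k$-linear dual is well-defined, and I am reduced to proving Calabi--Yau duality.

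For this I would use Grothendieck local duality. Since $R$ is complete local Gorenstein of dimension $d$ with canonical module $\omega_R \cong R$, one has a functorial quasi-isomorphism $\R\Gamma_\mathfrak{m}\R\hom_R(X,R)[d] \simeq D(X)$ for $X \in D^b(R)$, where $D = \hom_R(-,E)$ is Matlis duality and agrees with $(-)^*$ on finite-length modules. Applying this to $\R\hom_R(M,N)$ and feeding in the complete resolution $\mathbf{CR}(M)$ of Proposition~\ref{buchcohom}, together with the built-in self-duality $\mathbf{CR}(M^\vee) \simeq \mathbf{CR}(M)^\vee[1]$, ought to produce a natural isomorphism between $\R\underline{\hom}_R(M,N)$ and $D\R\underline{\hom}_R(N,M)[d-1]$. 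Taking $H^0$ then yields the desired duality, with the loss of one shift compared to the absolute case reflecting the fact that the truncation involved in passing from $\R\hom$ to its stable counterpart absorbs one degree of local duality.

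The hard part is the careful bookkeeping in the second step: ensuring that local duality descends from $\R\hom_R$ to $\R\underline{\hom}_R$ picking up \emph{exactly} the shift $d-1$. Rather than grinding through this by hand, the cleanest execution of the proof would follow Auslander's original route cited in the statement. Namely, one uses the isolated singularity hypothesis to construct an almost split sequence $0 \to \Omega^{2-d}M \to E \to M \to 0$ for each indecomposable non-projective MCM $M$, and extracts the duality directly from the associated connecting homomorphism $\underline{\hom}_R(M,-) \to \ext^1_R(-, \Omega^{2-d}M)$ together with the defining almost-split property. The existence of these almost split sequences is itself a substantive input that ultimately rests on local duality, but packaging it this way sidesteps the explicit dimension-shift calculation that is the main technical nuisance in the direct approach.
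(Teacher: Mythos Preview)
The paper does not supply a proof of this proposition: it is stated with a citation to Auslander \cite{auslander} and used as a black box. So there is no ``paper's own proof'' to compare against.

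Your reformulation is correct. The chain of identifications reducing the statement to the $(d-1)$-Calabi--Yau property of $\stab R$ is sound, and the finite-dimensionality argument via the isolated-singularity hypothesis is the standard one. Both approaches you outline are genuine routes to the result: the local-duality argument is essentially how one proves that $\stab R$ is $(d-1)$-CY in modern treatments (see for instance Iyama--Yoshino or Yoshino's book), while the almost-split-sequence approach is indeed closer to Auslander's original. You are right that the bookkeeping in the local-duality route is the delicate part; your sketch of it (via the self-duality $\mathbf{CR}(M^\vee)\simeq \mathbf{CR}(M)^\vee[1]$ of complete resolutions) is on the right track but would need to be made precise to count as a full proof. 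Since the paper treats this as an imported result, either sketch is an acceptable indication of why the statement holds.
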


	\section{Partial resolutions and the singularity functor}
	We introduce the class of `noncommutative partial resolutions' as a key example of rings with idempotents; these will be our main objects of study later. Given a ring $A$ with an idempotent $e$ and cornering $R=eAe$, we can construct two objects: a dga $\dq$, and a triangulated or dg category $\Delta_R(A)$. We link these two constructions by recalling some results of Kalck and Yang \cite{kalckyang,kalckyang2} on relative singularity categories, as seen from the perspective of the derived quotient. We define a functor $\Sigma: \per(\dq) \to D_\mathrm{sg}(R)$, the \textbf{singularity functor}, and enhance it to a dg functor. We prove our key technical theorem stating that when $A\cong \enn_R(R\oplus M)$ is a noncommutative partial resolution of $R$, the singularity functor induces a quasi-isomorphism $\dq \xrightarrow{\simeq} \tau_{\leq 0}\R\underline{\enn}_R(M)$.
	
	\subsection{Noncommutative partial resolutions}
	We introduce a useful class of examples of rings with idempotents. These rings will be our main objects of study.
	\begin{defn}\label{partrsln}
		Let $R$ be a commutative Gorenstein $k$-algebra. A $k$-algebra $A$ is a \textbf{(noncommutative) partial resolution} of $R$ if it is of the form $A\cong\enn_R(R\oplus M)$ for some MCM $R$-module $M$. Note that $A$ is a finitely generated module over $R$, and hence itself a noetherian $k$-algebra. Say that a partial resolution is a \textbf{resolution} if it has finite global dimension.
	\end{defn}
Clearly the data of a noncommutative partial resolution of $R$ is the same thing as the data of a MCM $R$-module.
\begin{rmk}\label{premcmrmk}
If $A=\enn_R(R\oplus M)$ is a NCCR of $R$ with $M$ maximal Cohen--Macaulay, then $A$ is automatically a noncommutative partial resolution of $R$ in our sense. In \cite{vdbnccr} it is remarked that in all examples of NCCRs given, one may indeed take $M$ to be MCM. We are essentially removing smoothness (i.e. $A$ has finite global dimension) and crepancy ($A$ is a MCM $R-$module) at the cost of assuming an extra mild hypothesis on $M$. One can develop most of the results of this section without the assumption that $M$ is MCM, but one can of course no longer make arguments using the stable category. It seems that all one needs for our key technical theorem (\ref{qisolem}) to go through is reflexivity of $M$ along with the hypothesis that $\ext^1_R(M,R)$ vanishes (see \ref{mcmrmk}). In our main application to threefold flops, in fact this is equivalent to $M$ being MCM: reflexivity implies that $M$ has depth at least 2, and now local duality implies that if $\ext^1_R(M,R)$ vanishes then $M$ has depth 3; i.e.\ is MCM.
	\end{rmk}
	Recall that if $M$ is an $R$-module then we write $M^\vee$ for the $R$-linear dual $\hom_R(M,R)$. If $A=\enn_R(R\oplus M)$ is a noncommutative partial resolution of $R$, observe that $e\coloneqq \id_R$ is an idempotent in $A$. One has $eAe\cong R$, $Ae \cong R \oplus M$, and $eA \cong R \oplus M^\vee$; in particular $(Ae)^\vee\cong eA$. Hence we have a ring isomorphism $A=\enn_R(R\oplus M)\cong \enn_{eAe}(Ae)$ which will be useful to us (note that not every stratifying idempotent yields such an isomorphism). Observe that $Ae\cong M $ in the singularity category, and indeed we have $A/AeA\cong \underline{\enn}(M)$. Note that, to the above data, one can canonically attach a dga $\dq$.
	\begin{defn}
		Let $R$ be a Gorenstein $k$-algebra and $(A,e)$ a noncommutative partial resolution. We refer to the derived quotient $\dq$ as the \textbf{derived exceptional locus}.
		\end{defn}
	\begin{rmk}\label{delrmk}
		The name `derived exceptional locus' for $\dq$ is motivated by the recollement of \ref{recoll}. Loosely, one can think of $A$ as some noncommutative scheme $X$ over $\spec R$, and then \ref{recoll} identifies the kernel of the derived pushforward functor $D(X) \to D(\spec R)$ with the derived category of $\dq$. When $R \to A$ is actually derived equivalent to some partial resolution $X \xrightarrow{\pi} \spec R$ then $D(\dq)$ is equivalent to the category $\ker \R\pi_* \subseteq D(X)$, which can be thought of as a `derived thickening' of the exceptional locus $E$ (indeed, the cohomology sheaves of $\dq$ are all set-theoretically supported on $E$). 
		\end{rmk}
	 Since ${R}\cong 0$ in the stable category, $\R \underline{\enn}_{R}({R}\oplus M)$ is naturally quasi-isomorphic to $\R \underline{\enn}_{R}(M)$. Hence, the stable derived endomorphism algebra $\R \underline{\enn}_R(M)$ gets the structure of an $A$-module. Clearly, $\R \underline{\enn}_R(M) e$ is acyclic, and so $\R \underline{\enn}_R(M)$ is in fact a module over $\dq$.
	 
	 In the sequel, we will frequently refer to the following setup:
	 \begin{setup}\label{sfsetup}
	 	Let $R$ be a commutative complete local Gorenstein $k$-algebra. Fix a MCM $R$-module $M$ and let $A=\enn_R(R\oplus M)$ be the associated partial resolution. Let $e\in A$ be the idempotent $e=\id_R$. 
	 \end{setup}
 The condition that $R$ is complete local is merely there to ensure that $D_{\mathrm{sg}}(R)$ is idempotent complete (by \ref{sgidemref}); this will be necessary for some our technical arguments about singularity categories to work. The author is not aware of any weaker easily verifiable condition to ensure that a commutative Gorenstein $k$-algebra has idempotent complete singularity category. 
	\begin{rmk}
		We could also stipulate that $M$ is not projective (i.e.\ nonzero in $\stab R$) in the definition of a noncommutative partial resolution. This is a harmless assumption as partial resolutions with $M$ projective are uninteresting: in view of \ref{qisolem} any such partial resolution has $\dq\simeq 0$. However our arguments do not require this assumption, and we find it more illuminating to allow for the possibility that $M\cong 0$ in the stable category. The only time we need to assume that $M$ is not projective is right at the end in the proof of \ref{recov} (which is clearly false without this assumption). 
	\end{rmk}
	
	\begin{rmk}
		In addition to the above conditions, suppose that $R$ is of finite MCM-representation type (e.g.\ an ADE hypersurface), and that $A$ is the Auslander algebra of $R$ (i.e.\ take $M$ to be the sum of all the indecomposable non-projective MCM modules). Let $S$ denote the quotient of $A/AeA$ by its radical. If $S$ is one-dimensional over $k$ (i.e.\ $M$ was actually indecomposable), then it follows that the derived exceptional locus $\dq$ is quasi-isomorphic to the dg Auslander algebra $\Lambda_{dg}(\stab R)$ of Kalck and Yang \cite{kalckyang}. Indeed, in this setting they are both quasi-isomorphic to the Koszul dual of the augmented algebra $\R\enn_A(S)$; this holds for the derived quotient by results of the author's thesis \cite{me} while it holds for the dg Auslander algebra by the proof of \cite[5.5]{kalckyang}. The statement remains true if one drops the one-dimensional hypothesis on $S$; this follows from the results of \cite{ddefpt}. The dga $\Lambda_{dg}(\stab R)$ together with the action of the AR-translation on $\stab R$ determines the relative singularity category of $A$ as a dg category, and taking the dg quotient this determines $\stab R$ as a dg category. It would be interesting to compare this with our approach to determining $\stab R$ from $\dq$. Loosely, knowledge of the AR-translation should be equivalent to knowledge of the periodicity element $\eta$ of \S7.
		\end{rmk}
	
		\subsection{The singularity functor}Let $A$ be a right noetherian $k$-algebra with an idempotent $e$, and write $R\coloneqq eAe$ for the cornering. Recall from \ref{recoll} the existence of the recollement $D(\dq)\recol D(A) \recol D(R)$, and recall from \ref{relsingcatdefn} the definition of the relative singularity category $\Delta_R(A)\coloneqq D^b(A)/\thick(eA)$.  The map $j^*: D(A) \to D(R)$ sends $\thick(eA)$ into $\per R$, and hence defines a map $j^*:\Delta_R(A) \to D_\mathrm{sg}R$. In fact, $j^*$ is onto, which follows from \cite[3.3]{kalckyang}. We are about to identify its kernel.
	\begin{defn}\label{dfgdefn}
		Write $ D_\mathrm{fg}(\dq)$ for the subcategory of $D(\dq)$ on those modules whose total cohomology is finitely generated over $A/AeA$. Similarly, write $\per_\mathrm{fg}(\dq)$ for the subcategory of $\per(\dq)$ on those modules whose total cohomology is finitely generated over $A/AeA$.
	\end{defn}
	\begin{lem}\label{kerjlem}
		The kernel of the map $j^*:\Delta_R(A) \to D_\mathrm{sg}R$ is precisely $D_\mathrm{fg}(\dq)$.
	\end{lem}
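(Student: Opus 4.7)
The plan is to use the recollement of Theorem \ref{recoll} together with the distinguished triangle $j_!j^*X \to X \to i_*i^*X \to$ constructed in its proof. Recall from that theorem that with $Q = \dq$, we have $j^* = -\lot_A Ae$, $i^* = -\lot_A Q$, $j_! = -\lot_R eA$, and that $i_*$ is restriction along the canonical map $A \to Q$.

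For the inclusion $D_{\mathrm{fg}}(\dq) \subseteq \ker j^*$, let $X \in D_{\mathrm{fg}}(\dq)$. Since the total cohomology of $X$ is finitely generated over $A/AeA$ (which sits in degree zero), $X$ has bounded cohomology, each $H^i(X)$ is finitely generated over $A$, and therefore $i_*X$ lies in $D^b(A)$. Now compute
\[
j^* i_* X \;=\; X \lot_A Ae \;\simeq\; X \lot_Q \bigl(Q \lot_A Ae\bigr) \;\simeq\; X\lot_Q Qe,
\]
and $Qe$ is acyclic since $Q$ is $e$-killing (as used in the proof of \ref{recoll}). Hence $j^*i_*X \simeq 0$ in $D(R)$, which is in particular zero in $D_{\mathrm{sg}}(R)$.

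For the reverse inclusion $\ker j^* \subseteq D_{\mathrm{fg}}(\dq)$, suppose $X \in D^b(A)$ represents an object of $\ker j^*$, so that $j^*X \in \per R$. Then $j_! j^*X \in j_!\per R = \thick(eA)$, so the triangle $j_!j^*X \to X \to i_* i^* X \to$ shows $X \cong i_*i^*X$ in the Verdier quotient $\Delta_R(A)$. It remains to verify that $i^*X \in D_{\mathrm{fg}}(\dq)$. Both $X$ and $j_!j^*X$ have bounded cohomology that is finitely generated over $A$ (the former by hypothesis, the latter because it lies in $\thick(eA) \subseteq D^b(A)$), so the long exact cohomology sequence of the above triangle forces $i_*i^*X$ to have the same property. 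Since $i^*X$ is a $Q$-module, the $A$-action on each $H^n(i^*X) = H^n(i_*i^*X)$ factors through $H^0(Q) = A/AeA$, so each cohomology group is a finitely generated $A/AeA$-module; combined with boundedness, this places $i^*X$ in $D_{\mathrm{fg}}(\dq)$.

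The main obstacle is the second direction: one must verify that the candidate object $i^*X$ in $D(\dq)$ really has finitely generated total cohomology over $A/AeA$, which is where one uses both the cohomological bookkeeping via the triangle and the fact that the residual $Q$-action on cohomology collapses to the $A/AeA$-action.
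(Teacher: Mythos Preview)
Your proof is correct and takes a genuinely different route from the paper's. The paper's argument is essentially a black box: it invokes a result on idempotents from Kalck--Yang together with a proposition of Verdier to identify $\ker j^*$ with $\thick_{D(A)}(\cat{mod}\text{-}A/AeA)$, and then cites a modification of another Kalck--Yang argument to identify the latter with $D_{\mathrm{fg}}(\dq)$. Your approach bypasses these references entirely by working directly with the recollement triangle $j_!j^*X \to X \to i_*i^*X \to$ from \ref{recoll}: for one inclusion you exploit $j^*i_* = 0$, and for the other you use that $j^*X \in \per R$ forces $j_!j^*X$ to die in the Verdier quotient, reducing $X$ to something in the image of $i_*$. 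This is more elementary and more self-contained; the paper's route is shorter on the page only because it outsources the work.

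One small point worth making explicit: you are implicitly treating $D_{\mathrm{fg}}(\dq)$ as a full subcategory of $\Delta_R(A)$ via the composite of $i_*$ with the projection, so strictly speaking you should check that this composite is fully faithful. This follows immediately from the recollement: since $j^*i_* = 0$ and $j_! \dashv j^*$, one has $\hom_{D(A)}(\thick(eA), i_*D(\dq)) = 0$, so $i_*D_{\mathrm{fg}}(\dq)$ lies in the right orthogonal $\thick(eA)^\perp$, on which the Verdier projection is always fully faithful. Combined with $i_*$ itself being fully faithful, this closes the loop. It is a routine addition, but worth a sentence since the paper's phrasing (``is precisely'') and its later uses of the lemma do rely on the categorical equivalence, not merely a bijection on isomorphism classes.
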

	
	\begin{proof}As in the proof of \cite[6.13]{kalckyang}, a result on idempotents \cite[3.3]{kalckyang} combined with a proposition of Verdier \cite[II.2.3.3]{verdierthesis} shows that $\ker j^* \cong \thick_{D(A)}(\cat{mod}\text{-}A/AeA)$, with the isomorphism given by the projection map. Hence it suffices to show that we have an isomorphism $\thick_{D(A)}(\cat{mod}\text{-}A/AeA)\cong D_\mathrm{fg}(\dq)$. But this can be shown to hold via a modification of the proof of \cite[2.12]{kalckyang}.
	\end{proof}
	\begin{rmk}\label{persrmk}
		If $A/AeA$ is a finite-dimensional algebra, let $\mathcal{S}$ be the set of one-dimensional $A/AeA$-modules corresponding to a set of primitive orthogonal idempotents for $A/AeA$. Then $D_\mathrm{fg}(\dq)\cong \thick(\mathcal{S})$. Because each simple in $\mathcal{S}$ need not be perfect over $\dq$, the category $\per_\mathrm{fg}(\dq)$ may be smaller than $D_\mathrm{fg}(\dq)$. If each simple is perfect over $A$, or if $\dq$ is homologically smooth, then we have an equivalence $D_\mathrm{fg}(\dq)\cong \per_\mathrm{fg}(\dq)$. 
	\end{rmk}
	When the singularity category is idempotent complete, Kalck and Yang observed that there is a triangle functor ${\Sigma: \per(\dq) \to D_{\mathrm{sg}}(R)}$, sending $\dq$ to the right $R$-module $Ae$. We establish this with a series of results. Recall that when $\mathcal T$ is a triangulated category, ${\mathcal T}^\omega$ denotes the idempotent completion of $\mathcal T$.
	\begin{lem}\label{flem}
		There is a triangle functor $F:\per(\dq)\to \Delta_R(A)^\omega$ which sends $\dq$ to the object $A$.
	\end{lem}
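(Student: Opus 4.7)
The plan is to derive $F$ from the Neeman--Thomason--Trobaugh--Yao localisation sequence of \ref{ntty}, applied to the singleton $S=\{1-e\}\subseteq H^0(A)$. Combined with \ref{Rcoloc}, which identifies the colocalisation $\mathbb{L}^{1-e}(A)$ with $R$, and the fact (implicit in the proof of \ref{Rcoloc}) that the first functor in the sequence sends $R$ to $eA$ --- i.e.\ agrees on this generator with the compact restriction of $j_!=-\lot_R eA$ --- one obtains a sequence
$$\per(R)\xrightarrow{j_!}\per(A)\xrightarrow{-\lot_A\dq}\per(\dq)$$
which is exact up to direct summands. Since $j_!\per(R)=\thick(eA)$ inside $\per(A)$, this is equivalent to the assertion that the induced triangle functor $\per(A)/\thick(eA)\to\per(\dq)$ is fully faithful with essential image dense up to direct summands, so that after idempotent completion one has a triangle equivalence $(\per(A)/\thick(eA))^\omega\xrightarrow{\sim}\per(\dq)$.

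I would then define $F:\per(\dq)\to\Delta_R(A)^\omega$ as the composition of the inverse of this equivalence with the canonical triangle functor $(\per(A)/\thick(eA))^\omega\to\Delta_R(A)^\omega$. The latter is induced by the inclusion $\per(A)\into D^b(A)$ (which certainly preserves $\thick(eA)$), yielding a Verdier quotient functor $\per(A)/\thick(eA)\to D^b(A)/\thick(eA)=\Delta_R(A)$, and then by the functoriality of idempotent completion.

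To verify $F(\dq)\cong A$, observe that under the equivalence above the image of $A\in\per(A)$ corresponds to $A\lot_A\dq\cong\dq$; inverting therefore sends $\dq$ back to the class of $A$, which is in turn sent to $A$ in $\Delta_R(A)^\omega$. The main technical subtlety is matching the first arrow in \ref{ntty} with $j_!$ on generators: since both are triangle functors out of $\per(R)=\thick(R)$ sending $R$ to $eA$, they agree up to natural isomorphism, and this identification is essentially the content of the proof of \ref{Rcoloc}. Beyond this bookkeeping the construction is formal.
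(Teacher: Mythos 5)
Your proposal is correct and takes essentially the same route as the paper: the paper cites Kalck--Yang for the triangle equivalence $\bigl(\per A/j_!\per R\bigr)^\omega \xrightarrow{\sim} \per(\dq)$, noting it is an application of NTTY localisation, whereas you unpack that citation directly via \ref{ntty} applied to $S=\{1-e\}$ together with the identification $\mathbb{L}^{1-e}(A)\cong R$ from \ref{Rcoloc}; beyond this the definition of $F$ as a composition with the canonical functor to $\Delta_R(A)^\omega$ and the check that $F(\dq)\cong A$ are identical.
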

	\begin{proof}
		As in \cite[2.12]{kalckyang} (which is an application of Neeman--Thomason--Trobaugh--Yao localisation; cf.\ \ref{ntty}), the map $i^*$ gives a triangle equivalence $$i^*:\left(\frac{\per A}{j_!\per R}\right)^\omega \xrightarrow{\cong} \per (\dq).$$The inclusion $\per A \into D^b(A)$ gives a map $G:\per A / j_!\per R \to \Delta_R(A)$, which is a triangle equivalence if $A$ has finite right global dimension. The composition $$F:\per(\dq) \xrightarrow{(i^*)^{-1}} \left(\frac{\per A}{j_!\per R}\right)^\omega \xrightarrow{G^\omega} \Delta_R(A)^\omega$$ is easily seen to send $\dq$ to $A$. 
	\end{proof}
	\begin{lem}\label{preontolem}
		Suppose that $A$ is of finite right global dimension. Then the map $F$ of \ref{flem} is a triangle equivalence $\per(\dq)\to \Delta_R(A)^\omega$.
	\end{lem}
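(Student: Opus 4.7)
The proof proposal is essentially bookkeeping: the lemma is a direct consequence of unpacking the definition of $F$ in \ref{flem} together with the hypothesis. I would argue as follows.

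Recall that $F$ was defined as the composition
\[
F:\per(\dq) \xrightarrow{(i^*)^{-1}} \left(\frac{\per A}{j_!\per R}\right)^{\!\omega} \xrightarrow{G^\omega} \Delta_R(A)^\omega,
\]
where the first arrow is already a triangle equivalence by Neeman--Thomason--Trobaugh--Yao localisation (\ref{ntty}), and where $G$ is induced by the inclusion $\per A \hookrightarrow D^b(A)$. Since the composition of triangle equivalences is again a triangle equivalence, it suffices to prove that $G^\omega$ is a triangle equivalence. For this, I would show that $G$ itself is already a triangle equivalence before idempotent completion, since idempotent completion is a $2$-functor and therefore preserves equivalences.

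The key observation is that under the hypothesis that $A$ has finite right global dimension, the canonical inclusion $\per A \hookrightarrow D^b(A)$ is a triangle equivalence. Indeed, $A$ is right noetherian (being finitely generated over the noetherian commutative ring $R$), so every finitely generated right $A$-module admits a finite resolution by finitely generated projective modules, hence lies in $\per A$; by standard dévissage this propagates to bounded complexes. Consequently, passing to Verdier quotients by the common triangulated subcategory $j_!\per R = \thick(eA)$, the induced functor $G:\per A / j_!\per R \to D^b(A)/j_!\per R = \Delta_R(A)$ is a triangle equivalence.

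Applying idempotent completion to this equivalence gives that $G^\omega$ is also a triangle equivalence, and composing with $(i^*)^{-1}$ yields the required result. I do not anticipate any technical obstacles: the only subtlety is ensuring that $\per A = D^b(A)$ genuinely holds for finitely generated noetherian rings of finite global dimension, which is standard. I note that the finite global dimension hypothesis is used only here, to promote the inclusion $\per A \hookrightarrow D^b(A)$ to an equivalence; without it one only gets the weaker statement of \ref{flem}, together with the identification of the kernel $\ker j^* \cong D_{\mathrm{fg}}(\dq)$ furnished by \ref{kerjlem}.
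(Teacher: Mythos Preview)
Your proposal is correct and follows exactly the same approach as the paper: the paper simply observes that under the finite global dimension hypothesis, $G$ is a triangle equivalence (this fact is already noted in the statement of \ref{flem}), so $F$ is a composition of triangle equivalences. Your version just spells out in more detail why $\per A \hookrightarrow D^b(A)$ is an equivalence and why idempotent completion preserves equivalences, but the argument is the same.
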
	 
	\begin{proof}
		When $A$ has finite global dimension then $F$ is a composition of triangle equivalences and hence a triangle equivalence.
	\end{proof}
	
	\begin{prop}[cf.\ {\cite[6.6]{kalckyang2}}]\label{kymap}
		Suppose that $D_\mathrm{sg}(R)$ is idempotent complete. Then there is a map of triangulated categories $\Sigma:\per(\dq) \to D_\mathrm{sg}(R)$, sending $\dq$ to $Ae$. Moreover $\Sigma$ has image $\thick_{D_\mathrm{sg}(R)}(Ae)$.
	\end{prop}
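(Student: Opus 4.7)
The plan is to construct $\Sigma$ as a composition of the functor $F$ from Lemma \ref{flem} with an idempotent-completed version of the natural quotient $j^*$. Specifically, the restriction of scalars $j^*=-\lot_A Ae:D^b(A)\to D^b(R)$ sends $\thick(eA)$ into $\per R$ (since $j^*(eA)\cong eAe=R$), so it descends to a triangle functor $\bar{j^*}:\Delta_R(A)\to D_\mathrm{sg}(R)$, and hence to $\bar{j^*}{}^{\omega}:\Delta_R(A)^\omega\to D_\mathrm{sg}(R)^\omega$. The hypothesis that $D_\mathrm{sg}(R)$ is idempotent complete identifies the target with $D_\mathrm{sg}(R)$ itself, so I may define $\Sigma\coloneqq \bar{j^*}{}^{\omega}\circ F$.

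Next I compute $\Sigma(\dq)$. By Lemma \ref{flem}, $F(\dq)\cong A$ inside $\Delta_R(A)^\omega$, and $j^*(A)=A\lot_A Ae\cong Ae$, giving $\Sigma(\dq)\cong Ae$ in $D_\mathrm{sg}(R)$ as required. Since $\Sigma$ is a triangle functor and $\per(\dq)=\thick_{\per(\dq)}(\dq)$, the essential image of $\Sigma$ is a triangulated subcategory of $D_\mathrm{sg}(R)$ closed under the operations generating $\per(\dq)$ from $\dq$; in particular it sits inside the thick subcategory $\thick_{D_\mathrm{sg}(R)}(Ae)$, proving one containment.

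For the reverse containment, I would argue as follows: any object $Z\in\thick_{D_\mathrm{sg}(R)}(Ae)$ is a summand of some $W$ built from $Ae$ by finitely many shifts and cones. Running the same construction inside the idempotent-complete category $\per(\dq)$, starting from $\dq$ in place of $Ae$, produces an object $\widetilde{W}\in\per(\dq)$ with $\Sigma(\widetilde{W})\cong W$. The corresponding splitting idempotent on $W$ must then be realised in $\per(\dq)$; this is the step that needs the most care and is where idempotent completeness of both sides is used, together with the description of $\per(\dq)$ in the recollement of \ref{recoll} (which allows one to represent summands of $\Sigma(\widetilde{W})$ by actual perfect $\dq$-modules). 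Writing $Z$ as the image under $\Sigma$ of the corresponding splitting in $\per(\dq)$ then shows $Z$ lies in the essential image.

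The main obstacle is precisely this last lifting-of-idempotents step: unlike in the setup of Lemma \ref{preontolem}, when $A$ has infinite global dimension the functor $F$ need not be an equivalence, so idempotents on objects of the form $F(X)$ are not automatically in the image of $\enn_{\per(\dq)}(X)$. I expect this to be handled by using that $\per(\dq)$ is already idempotent complete (being the perfect derived category of a dga) and relating splittings in $\per(\dq)$ with splittings in $\Delta_R(A)^\omega$ via the equivalence $i^*:(\per A/j_!\per R)^\omega\xrightarrow{\cong}\per(\dq)$ appearing in the proof of \ref{flem}, which is the only place the idempotent completion really enters.
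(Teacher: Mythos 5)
Your construction of $\Sigma$ and the computation $\Sigma(\dq)\cong Ae$ are exactly the paper's: one composes the functor $F$ of Lemma \ref{flem} with $(j^*)^\omega$, using idempotent-completeness of the target to identify $D_\mathrm{sg}(R)^\omega$ with $D_\mathrm{sg}(R)$, and traces through that $F(\dq)\cong A$ and $j^*(A)\cong Ae$. The containment $\im\Sigma\subseteq\thick_{D_\mathrm{sg}(R)}(Ae)$ is also argued the same way. So far the two proofs coincide; the paper itself then simply asserts the reverse containment ``since $A$ generates $\per A/j_!\per R$,'' citing Kalck--Yang.

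There is, however, a gap in the ``reverse containment'' step of your argument, and it occurs earlier than the idempotent-lifting issue you flag. You write that since $W\in\thick_{D_\mathrm{sg}(R)}(Ae)$ is built from $Ae$ by finitely many shifts and cones, one can ``run the same construction'' inside $\per(\dq)$ starting from $\dq$ and obtain $\widetilde W$ with $\Sigma(\widetilde W)\cong W$. But to run the same construction one must lift the morphisms appearing in those cones, and this requires $\Sigma$ to be full. It is not: on endomorphisms of the generator, $\Sigma$ induces the map $H^i(\dq)\to\underline{\ext}^i_R(M,M)$, which by \ref{qisolem} is an isomorphism only for $i\le 0$; since $\dq$ is connective, for $i>0$ the source vanishes while the target is $\ext^i_R(M,M)$, nonzero whenever $M$ is not rigid. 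So a map $Ae\to Ae[1]$ in $D_\mathrm{sg}(R)$ need not lift to a map $\dq\to\dq[1]$ in $\per(\dq)$, and one cannot ``run the same construction.'' (Your instinct that idempotent completeness must then close the argument is not a fix either, since it is downstream of this failure.) Put differently, the essential image of $\Sigma$ is a priori only the class of summands of objects $i^*(\overline{X})e$ for $X\in\per A$, and showing that every object of $\thick_{D_\mathrm{sg}(R)}(Ae)$ arises this way requires a genuinely different argument --- which is why the paper defers to \cite[6.6]{kalckyang2} rather than attempting a direct lift.
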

	\begin{proof}		
		We already have a map $j^*:\Delta_R(A)\to D_\mathrm{sg}(R)$, with kernel $D_\mathrm{fg}(\dq)$. Let $\Sigma$ be the composition
		$$\Sigma: \per(\dq) \xrightarrow{F} \Delta_R(A)^\omega \xrightarrow{(j^*)^\omega} D_{\mathrm{sg}}(R)$$ of the functor $F$ of \ref{flem} and the idempotent completion of $j^*$. It is easy to see that $\Sigma$ sends $\dq$ to $Ae$, and since $A$ generates $\per A / j_!\per R$, then $\Sigma $ has image $\thick(Ae)$.
	\end{proof}
	For future reference, it will be convenient to give $\Sigma$ a name.
	\begin{defn}
		We refer to the triangle functor $\Sigma$ of \ref{kymap} as the \textbf{singularity functor}.
	\end{defn}
Occasionally, it will be useful for us to have a classification of all thick subcategories of a singularity category. For isolated abstract hypersurfaces, thick subcategories were classified by Takahashi \cite{takahashi} in terms of the geometry of the singular locus\footnote{The author learnt about this result from Martin Kalck.}. We recall a very special case of this classification:
\begin{thm}[{\cite[6.9]{takahashi}}]\label{takthm}
	Let $R$ be a complete local isolated hypersurface singularity. The only nonempty thick subcategories of $D_\mathrm{sg}(R)$ are the zero subcategory and all of $D_\mathrm{sg}(R)$. In particular, if $M \in D^b(R)$ then $M$ generates the singularity category unless $M$ is perfect, in which case $\thick_{D_\mathrm{sg}(R)}(M)\cong 0 $.
	\end{thm}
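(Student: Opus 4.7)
The plan is to classify the thick subcategories of $D_\mathrm{sg}(R)$ via a support theory on $\mathrm{Spec}(R)$, and then exploit the hypothesis that the singular locus consists only of the closed point. The last sentence of the theorem is a trivial consequence of the first: any $M\in D^b(R)$ gives an object of $D_\mathrm{sg}(R)$ which is zero precisely when $M$ is perfect, and the thick subcategory it generates must be either $0$ or all of $D_\mathrm{sg}(R)$ by the main claim.

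For the main claim, I would assign to each object $M\in D_\mathrm{sg}(R)$ its \emph{singular support} $\mathrm{supp}(M)\coloneqq\{\mathfrak{p}\in\spec(R):M_\mathfrak{p}\not\simeq 0\text{ in }D_\mathrm{sg}(R_\mathfrak{p})\}$, and show first that $\mathrm{supp}(M)\subseteq\mathrm{Sing}(R)$ for every $M$. This part is essentially formal: localisation is exact and sends perfect complexes to perfect complexes, and $D_\mathrm{sg}(R_\mathfrak{p})$ vanishes as soon as $R_\mathfrak{p}$ is regular. Since $R$ is assumed to be an isolated hypersurface singularity, $R_\mathfrak{p}$ is a regular local ring for every $\mathfrak{p}\neq\mathfrak{m}$, so in fact $\mathrm{supp}(M)\subseteq\{\mathfrak{m}\}$ for every object $M\in D_\mathrm{sg}(R)$.

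The heart of the argument, and the main obstacle, is establishing a bijection between thick subcategories of $D_\mathrm{sg}(R)$ and specialisation-closed subsets of $\mathrm{Sing}(R)$ via the assignment $\mathcal{X}\mapsto\bigcup_{M\in\mathcal{X}}\mathrm{supp}(M)$. Once this is set up, the conclusion is immediate: $\mathrm{Sing}(R)=\{\mathfrak{m}\}$ has only two specialisation-closed subsets, namely $\emptyset$ and $\{\mathfrak{m}\}$, which correspond to $0$ and $D_\mathrm{sg}(R)$. The nontrivial direction of the bijection is showing that if $M\in D_\mathrm{sg}(R)$ is nonzero, then $\thick(M)=D_\mathrm{sg}(R)$; equivalently, that $M$ generates the residue field $k=R/\mathfrak{m}$ (viewed in the stable category via a MCM approximation, for instance its first syzygy $\Omega k$).

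To prove this generation statement I would exploit the hypersurface hypothesis through Eisenbud's matrix factorisation theorem, which makes $D_\mathrm{sg}(R)$ $2$-periodic (so $\Omega^2\cong\mathrm{id}$) and in particular ensures that $\R\underline{\enn}_R(M)$ is a $2$-periodic dga. The key technical input is that the support condition $\mathrm{supp}(M)=\{\mathfrak{m}\}$ implies some power $\mathfrak{m}^N$ annihilates $\underline{\enn}_R(M)$; indeed, because $M$ is locally free on the punctured spectrum, $\underline{\enn}_R(M)$ is a finitely generated $R$-module supported only at $\mathfrak{m}$. A Koszul-style induction on the number of generators of $\mathfrak{m}$, using the triangles $N\xrightarrow{x}N\to N/xN$ for $x\in\mathfrak{m}$, then builds $\Omega k$ from finitely many iterated cones involving $M$, producing $\Omega k\in\thick(M)$. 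The $2$-periodicity is then used to show that $\Omega k$ generates $D_\mathrm{sg}(R)$; combined with the previous step this yields $\thick(M)=D_\mathrm{sg}(R)$, completing the proof.
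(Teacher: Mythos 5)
The paper does not prove this theorem; it simply cites Takahashi's result \cite[6.9]{takahashi}, so there is no in-paper argument to compare against. The overall shape of your proposal --- a support map into $\mathrm{Spec}(R)$, restriction to $\mathrm{Sing}(R)=\{\mathfrak{m}\}$, specialisation-closed subsets, and reduction to showing every nonzero object generates --- does match the strategy of Takahashi and of Stevenson, and your deduction of the second sentence from the first is fine. But the sketch of the crucial generation step contains a genuine gap.

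You propose to build $\Omega k$ from a nonzero $M$ by a ``Koszul-style induction'' using the fact that some power $\mathfrak{m}^N$ annihilates $\underline{\enn}_R(M)$. This fact is actually counterproductive for cone constructions: if $x\in\mathfrak{m}^N$ then $x\cdot\id_M=0$ in $\underline{\enn}_R(M)$, so the triangle $M\xrightarrow{x}M\to\mathrm{cone}(x)\to$ splits and $\mathrm{cone}(x)\cong M\oplus M[1]$ in $\stab R$. Iterating with such $x$ only produces finite direct sums of shifts of $M$ and never escapes the obvious objects of $\thick(M)$. If you instead take a system of parameters $x_1,\ldots,x_d\in\mathfrak{m}$, then $M\otimes K(x_1,\ldots,x_d)\simeq M/(x_1,\ldots,x_d)M$ does lie in $\thick(M)$ and has finite length, but nothing you have said guarantees it is nonzero in $D_\mathrm{sg}(R)$, and even when it is nonzero you cannot conclude that a finite-length object generates $\thick(k)$ in the singularity category without already invoking (essentially) the classification theorem you are trying to prove. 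The genuine input is a nilpotence theorem in the style of Hopkins--Neeman, which for $D_\mathrm{sg}(R)$ requires the tensor structure on matrix factorisations --- this is where the hypersurface hypothesis is used in an essential way beyond mere $2$-periodicity, and it is the main content of Takahashi's argument (see also Stevenson's treatment via tensor actions). Your Step B, that $\Omega k$ generates, is likewise not a formal consequence of $2$-periodicity; it is essentially Dyckerhoff's compact-generation theorem for matrix factorisations and also uses the isolated-singularity hypothesis.
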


	\begin{prop}\label{kymapbetter}
		Suppose that $D_\mathrm{sg}(R)$ is idempotent complete. Then the triangle functor $\Sigma$ induces an essentially surjective triangle functor $${\bar{\Sigma}}:\frac{\per(\dq)}{\ker\Sigma} \to \thick_{D_\mathrm{sg}(R)}(Ae).$$
	\end{prop}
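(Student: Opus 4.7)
The plan is to observe that this proposition is essentially an immediate corollary of Proposition \ref{kymap} combined with the universal property of the Verdier quotient; there is no deep input required beyond what has already been established.

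First, I would note that $\ker\Sigma$, defined as the full subcategory of $\per(\dq)$ on objects $X$ with $\Sigma(X)\cong 0$, is a thick subcategory of $\per(\dq)$. This is a standard fact for the kernel of any triangle functor: it is closed under shifts (since $\Sigma$ commutes with shifts), under cones (since $\Sigma$ takes triangles to triangles and in any triangulated category a morphism between zero objects has zero cone), and under direct summands (since $\Sigma(X\oplus Y) \cong \Sigma(X)\oplus\Sigma(Y)$ vanishing forces both summands to vanish).

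Next I would apply the universal property of the Verdier quotient: given that $\Sigma$ annihilates $\ker\Sigma$ by definition, $\Sigma$ factors uniquely as
\[
\Sigma: \per(\dq) \xrightarrow{\pi} \frac{\per(\dq)}{\ker\Sigma} \xrightarrow{\bar\Sigma} D_\mathrm{sg}(R),
\]
where $\pi$ is the canonical localisation functor and $\bar\Sigma$ is a triangle functor.

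Finally, essential surjectivity onto $\thick_{D_\mathrm{sg}(R)}(Ae)$ is automatic: the image of $\bar\Sigma$ coincides with the image of $\Sigma$, since $\pi$ is the identity on objects. By \ref{kymap}, the image of $\Sigma$ is precisely $\thick_{D_\mathrm{sg}(R)}(Ae)$, so $\bar\Sigma$ is essentially surjective when viewed as landing in this thick subcategory. There is no real obstacle here; the result is formal. (The substantive question, whether $\bar\Sigma$ is in fact an equivalence, is not being claimed at this stage and would require genuinely new input, presumably tying in Theorem \ref{takthm} to control $\ker\Sigma$ and identifying the remaining quotient with $\thick(Ae)$; but that is not what is being proved here.)
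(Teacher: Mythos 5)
Your proposal is correct and matches the paper's approach: the paper simply writes ``Follows immediately from \ref{kymap},'' and you have supplied exactly the routine verification (thickness of $\ker\Sigma$, the universal property of the Verdier quotient, and that the image is unchanged by factoring through $\pi$) that justifies that one-line proof.
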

	\begin{proof}
		Follows immediately from \ref{kymap}. 
	\end{proof}
We note that by \cite[2.1.10]{neemantxt}, the kernel of the quotient functor $\per{\dq} \to \frac{\per(\dq)}{\ker\Sigma}$ is precisely $\thick(\ker\Sigma) = (\ker\Sigma)^\omega$.
	\p When $A$ is smooth, one can do better:
	\begin{lem}\label{ontolem}
		Suppose that $A$ is of finite right global dimension and $D_\mathrm{sg}(R)$ is idempotent complete. Then the singularity functor $\Sigma$ is onto.
	\end{lem}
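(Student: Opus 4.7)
The plan is to unpack the definition of $\Sigma$ and show essential surjectivity factor by factor. Recall that $\Sigma$ is defined as the composition
\[
\Sigma: \per(\dq) \xrightarrow{F} \Delta_R(A)^\omega \xrightarrow{(j^*)^\omega} D_{\mathrm{sg}}(R),
\]
so it suffices to show that each of $F$ and $(j^*)^\omega$ is essentially surjective.

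First I would invoke \ref{preontolem}: under the finite right global dimension hypothesis on $A$, the functor $F$ is a triangle equivalence, hence \emph{a fortiori} essentially surjective. This is really the only place the global dimension assumption enters the argument, since it is what makes the inclusion $G: \per A / j_!\per R \to \Delta_R(A)$ from the proof of \ref{flem} a triangle equivalence rather than merely a triangle functor.

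Next I would observe that the underlying functor $j^*: \Delta_R(A) \to D_{\mathrm{sg}}(R)$ is already essentially surjective, as was noted just before \ref{dfgdefn} via \cite[3.3]{kalckyang}. Since $D_{\mathrm{sg}}(R)$ is assumed idempotent complete, we have $D_{\mathrm{sg}}(R)^\omega = D_{\mathrm{sg}}(R)$, and the idempotent completion $(j^*)^\omega$ fits into a commutative square with $j^*$ along the natural embedding $\Delta_R(A) \hookrightarrow \Delta_R(A)^\omega$. Any object $X \in D_{\mathrm{sg}}(R)$ already lies in the essential image of $j^*$, hence in the essential image of $(j^*)^\omega$.

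Composing these two essential surjectivities gives that $\Sigma$ is essentially surjective, as required. There is no genuine obstacle here: given the preparatory lemmas already in place, the result is essentially a formal consequence of the factorisation of $\Sigma$, combined with the standing surjectivity of $j^*$ on objects and the idempotent completeness of the target. The only subtlety worth flagging is that without the finite global dimension hypothesis, $F$ need only be a triangle functor (sending $\dq \mapsto A$) and not an equivalence, so essential surjectivity of $\Sigma$ would fail in general and one is forced into the weaker statement of \ref{kymapbetter}.
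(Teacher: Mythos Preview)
Your proof is correct and follows essentially the same approach as the paper: factor $\Sigma$ as $F$ followed by (the idempotent completion of) $j^*$, use \ref{preontolem} to see that $F$ is an equivalence under the finite global dimension hypothesis, and use the known surjectivity of $j^*$ for the second factor. You are simply more explicit than the paper about the idempotent completion step, but the argument is the same.
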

	\begin{proof}
		The singularity functor is the equivalence of \ref{preontolem} followed by the surjective triangle functor $j^*:\Delta_R(A)\to D_\mathrm{sg}(R)$.
	\end{proof}

\begin{rmk}
	The converse of the above statement is not true: it may be the case that $A$ has infinite global dimension but $\Sigma$ is still onto (in view of Takahashi's theorem this is perhaps not surprising). One general class of counterexamples, pointed out to the author by Martin Kalck, is the following. Let $R$ be an even-dimensional ADE singularity, let $M$ be an indecomposable non-projective MCM module and let $A$ be the associated partial resolution. Then by \cite{kiwy}, the ring $A$ does not have finite global dimension unless $M$ is the only indecomposable non-projective MCM module. But by Takahashi's theorem \ref{takthm}, $M$ always generates the singularity category. A concrete counterexample in this spirit, found by Michael Wemyss, is the following. Let $R$ be the complete local hypersurface in $\A^4$ defined by the equation $uv = y(x^2+y^3)$. Let $M$ be the $R$-module $M=(u,x^2+y^3)$ and let $A$ be the associated partial resolution. Kn{\"{o}}rrer periodicity \cite{knoerrer} gives an equivalence between the singularity category of $R$ and the stable category of modules on the $D_5$ curve singularity $x^2y + y^4=0$, which has finite CM type. Let $N$ be the module over the $D_5$ curve corresponding to $M$. Then $A$ has infinite global dimension, because $N$ is maximal rigid but not cluster tilting \cite[2.5]{bikr}. One can check by hand using Auslander--Reiten theory that $N$ generates the singularity category. Note that the AR quiver and the AR sequences appear in \cite[9.11]{yoshino}, where $N$ is labelled by either $A$ or $B$ (the two are equivalent up to quiver automorphism).
	\end{rmk}

 	\begin{prop}[{\cite[1.2]{kalckyang2}}]\label{dsgsmooth}Suppose that all finitely generated $A/AeA$-modules have finite projective dimension over $A$. Suppose also that $D_\mathrm{sg}(R)$ is idempotent complete. Then the singularity functor induces a triangle equivalence $${\bar{\Sigma}}:\frac{\per(\dq)}{D_{\mathrm{fg}}(\dq)} \to D_\mathrm{sg}(R).$$
 \end{prop}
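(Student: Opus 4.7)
The plan is to build $\bar\Sigma$ as a composition of two triangle equivalences, following the strategy of Kalck--Yang \cite[1.2]{kalckyang2}. First I would verify that $\bar\Sigma$ is well-defined. Under the hypothesis, every finitely generated $A/AeA$-module (in particular every simple one) is perfect as an $A$-module, so Remark \ref{persrmk} gives $D_\mathrm{fg}(\dq) \subseteq \per(\dq)$, and the left-hand Verdier quotient exists. Any $X \in D_\mathrm{fg}(\dq)$ pushed via $i_*$ becomes a bounded $A$-complex whose cohomology modules are finitely generated $A/AeA$-modules (by Proposition \ref{cohomsupport}); building $i_*X$ as an iterated extension of shifts of its cohomology and invoking the hypothesis places $i_*X \in \per(A)$. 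Hence $\Sigma(X) \simeq i_*X \lot_A Ae$ is perfect over $R$ and vanishes in $D_\mathrm{sg}(R)$, so $\Sigma$ descends to $\bar\Sigma$.

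For the structural step I would factor $\bar\Sigma = \overline{j^*}^\omega \circ \bar F$. The functor $\overline{j^*}:\Delta_R(A)/D_\mathrm{fg}(\dq) \to D_\mathrm{sg}(R)$ is a triangle equivalence: Lemma \ref{kerjlem} computes $\ker j^* = D_\mathrm{fg}(\dq)$, and the recollement of Theorem \ref{recoll} restricts to give a fully faithful right adjoint $\overline{j_*}:D_\mathrm{sg}(R) \to \Delta_R(A)/D_\mathrm{fg}(\dq)$ (using $j_*R \cong eA$ to see $j_*(\per R) \subseteq \thick(eA)$), which makes $j^*$ a Verdier localization. Idempotent completeness of $D_\mathrm{sg}(R)$, the other hypothesis, then absorbs the outer $(-)^\omega$ in the composition.

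The main obstacle is showing that $\bar F:\per(\dq)/D_\mathrm{fg}(\dq) \to (\Delta_R(A)/D_\mathrm{fg}(\dq))^\omega$ is a triangle equivalence, strengthening Lemma \ref{preontolem} from the finite-global-dimension hypothesis to the present one. Writing $F = (i^*)^{-1} \circ G^\omega$ with $i^*$ unconditionally an equivalence, the task reduces to analysing $G:\per A / j_!\per R \to \Delta_R(A) = D^b(A)/j_!\per R$ after descent by $D_\mathrm{fg}(\dq)$. For essential surjectivity I would take $N \in D^b(A)$ and use the cellularisation triangle $\cell N \to N \to N\lot_A Q$ of Proposition \ref{dqexact} in order to isolate, modulo $\thick(eA)$, the $Q$-module contribution $N \lot_A Q$, whose cohomology is concentrated in $\cat{mod}(A/AeA)$ by Proposition \ref{cohomsupport}; by the hypothesis these cohomology modules are perfect over $A$, so this part becomes invisible after further passing to the quotient by $D_\mathrm{fg}(\dq)$. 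Faithfulness is then tracked by a diagram chase using the kernel identifications of Lemma \ref{kerjlem}, completing the construction of $\bar\Sigma$ as an equivalence.
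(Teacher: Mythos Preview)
Your approach diverges from the paper's. The paper does not try to factor $\bar\Sigma$ and prove each piece is an equivalence; instead it invokes Kalck--Yang's own argument to show that the intermediate quotient $\per A / j_!\per R$ is already idempotent complete, and then feeds this into the kernel-identification machinery of \S6.5 (Proposition~\ref{kersspan} and Corollary~\ref{kerscor}) to pin down $\ker\Sigma = \per_\mathrm{fg}(\dq)$, which equals $D_\mathrm{fg}(\dq)$ by Lemma~\ref{ksigl}. The actual equivalence is then borrowed wholesale from \cite[1.2]{kalckyang2}; the paper's contribution is identifying it with the functor $\bar\Sigma$.

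Your direct approach would be pleasant if it worked, but Step~4 has a real gap. In the cellularisation triangle $\cell N \to N \to N\lot_A Q$, the third term $N\lot_A Q$ is typically \emph{unbounded}, since $Q$ itself has cohomology in infinitely many degrees; so even though each $H^j(N\lot_A Q)$ lies in $\cat{mod}\text{-}A/AeA$, the complex does not lie in $D_\mathrm{fg}(\dq)$ and cannot be killed in the quotient. Even granting that, you would only have reduced $N$ to $\cell N = j_!(Ne)$, which lies in $j_!D^b(R)$ rather than in $\thick(eA)=j_!\per R$ or in $\per A$, so you have not exhibited $N$ as equivalent to a perfect $A$-module modulo $D_\mathrm{fg}(\dq)$. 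The ``diagram chase'' for fullness is likewise not an argument.

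A smaller issue: in Step~3 you invoke $j_*=\R\hom_R(Ae,-)$ as a right adjoint restricting to bounded categories, but there is no reason for $j_*$ to preserve $D^b$ in this generality. The standard route to seeing that $j^*:\Delta_R(A)\to D_\mathrm{sg}(R)$ is a Verdier quotient uses \cite[3.3]{kalckyang} together with Verdier's \cite[II.2.3.3]{verdierthesis}, as in the proof of Lemma~\ref{kerjlem}, rather than $j_*$.
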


 \begin{proof}
 	The proof of \cite[1.2]{kalckyang2} shows that the quotient $\frac{\per A}{j_!\per R}$ is idempotent complete. Hence we may apply \ref{kerscor} to deduce that the kernel of $\Sigma$ is precisely the thick subcategory $\per_{\mathrm{fg}}(\dq)$. As in \ref{ksigl}, we have $\per_{\mathrm{fg}}(\dq) = D_{\mathrm{fg}}(\dq)$.
 \end{proof}

	\begin{rmk}
	This equivalence is essentially the same as that of \cite[5.1.1]{dtdvvdb}.
\end{rmk}

	\subsection{The dg singularity functor}
	We enhance some of the results of the last section to the dg setting. In this section suppose that $A$ is a right noetherian $k$-algebra with idempotent $e$. Put $R\coloneqq eAe$ and assume that $D_{\mathrm{sg}}(R)$ is idempotent complete. 
	\begin{prop}\label{Fdg}The singularity functor $\Sigma:\cat{per}(\dq) \to D_{\mathrm{sg}}(R)$ lifts to a dg functor $ \Sigma_\mathrm{dg}:\cat{per}_\mathrm{dg}(\dq) \to {D}^{\mathrm{dg}}_\mathrm{sg}(R)$, which we refer to as the \textbf{dg singularity functor}.
	\end{prop}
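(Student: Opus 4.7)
The plan is to mimic, at the dg level, the construction of $\Sigma$ given in \ref{flem} and \ref{kymap}, realizing each constituent of the composition as a dg functor (or as a zig-zag in $\mathrm{Hqe}$) and composing. Recall that $\Sigma$ is the composition
$$\per(\dq) \xrightarrow{(i^*)^{-1}} (\per A / j_!\per R)^\omega \xrightarrow{G^\omega} \Delta_R(A)^\omega \xrightarrow{(j^*)^\omega} D_\mathrm{sg}(R),$$
so it suffices to dg-enhance each arrow and compose in $\mathrm{Hqe}$.

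First I would enhance the Neeman--Thomason--Trobaugh--Yao equivalence $i^*$: the dg functor $-\lot_A \dq: \cat{per}_\mathrm{dg}(A) \to \cat{per}_\mathrm{dg}(\dq)$ annihilates the dg subcategory $j_!\cat{per}_\mathrm{dg}(R)$ up to quasi-isomorphism (since $eA\lot_A \dq$ is acyclic by the recollement of \ref{recoll}), so by the universal property of the Drinfeld quotient it factors through a dg functor $\cat{per}_\mathrm{dg}(A)/j_!\cat{per}_\mathrm{dg}(R) \to \cat{per}_\mathrm{dg}(\dq)$. By \ref{drinfeldpretr} this descends on homotopy categories to $i^*$, which is an equivalence after Karoubi completion; since $\cat{per}_\mathrm{dg}(\dq)$ is already idempotent complete, this identifies $\cat{per}_\mathrm{dg}(\dq)$ with the dg Karoubi envelope of the Drinfeld quotient (in $\mathrm{Hqe}$). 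Next, the inclusion $\cat{per}_\mathrm{dg}(A) \hookrightarrow D^b_\mathrm{dg}(A)$ is a dg functor preserving the relevant subcategories, and by functoriality of the Drinfeld quotient it induces a dg functor $\cat{per}_\mathrm{dg}(A)/j_!\cat{per}_\mathrm{dg}(R) \to \Delta^{\mathrm{dg}}_R(A)$ enhancing $G$. Finally, $j^*_\mathrm{dg} := -\lot_A Ae: D^b_\mathrm{dg}(A) \to D^b_\mathrm{dg}(R)$ sends $j_!\cat{per}_\mathrm{dg}(R)$ into $\cat{per}_\mathrm{dg}(R)$ (this is the defining property of $j_!$), so again by functoriality it descends to a dg functor $\Delta^{\mathrm{dg}}_R(A) \to D^{\mathrm{dg}}_\mathrm{sg}(R)$ enhancing $j^*$.

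Composing these and inverting the first quasi-equivalence in $\mathrm{Hqe}$ yields the desired dg functor $\Sigma_\mathrm{dg}: \cat{per}_\mathrm{dg}(\dq) \to D^{\mathrm{dg}}_\mathrm{sg}(R)$. That $[\Sigma_\mathrm{dg}]\cong \Sigma$ on homotopy categories is immediate from \ref{drinfeldpretr} applied at each stage, together with the description of the Karoubi completion of a dg category (and the standing hypothesis that $D_\mathrm{sg}(R)$ is idempotent complete, which ensures $D^{\mathrm{dg}}_\mathrm{sg}(R)$ itself is a suitable idempotent-complete enhancement via \ref{drinfeldlem}).

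The main obstacle will be bookkeeping the idempotent completions at the dg level, since $\Sigma$ is only defined after Karoubi envelope while the natural dg models of the constituent steps are not a priori idempotent complete. The cleanest way around this is to work on the \emph{source} side: since $\cat{per}_\mathrm{dg}(\dq)$ is intrinsically idempotent complete, one only needs to invert the first quasi-equivalence in $\mathrm{Hqe}$ (using Tabuada's model structure \ref{tabmod}) and compose the remaining dg functors in the direction of $D^{\mathrm{dg}}_\mathrm{sg}(R)$; no explicit Karoubi completion of the middle dg categories is required to define $\Sigma_\mathrm{dg}$, only to check that it matches $\Sigma$ on homotopy categories.
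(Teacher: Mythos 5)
Your proof takes essentially the same route as the paper: lift $\Sigma$ piecewise through Drinfeld quotients (i.e.\ homotopy cofibres of dg categories), using the quotient's universal property and functoriality at each stage, then compose in $\mathrm{Hqe}$. You are somewhat more explicit than the published proof about the Karoubi-completion bookkeeping --- since the Neeman--Thomason--Trobaugh--Yao sequence $\per_{\mathrm{dg}}R \to \per_{\mathrm{dg}}A \to \per_{\mathrm{dg}}(\dq)$ is only exact up to direct summands, the comparison $\per_{\mathrm{dg}}A/j_!\per_{\mathrm{dg}}R \to \per_{\mathrm{dg}}(\dq)$ is a quasi-equivalence only after Karoubi completion, a point you address directly (using that $\per_{\mathrm{dg}}(\dq)$ is already idempotent complete) while the paper treats it as an honest cofibre sequence.
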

	\begin{proof}
		We simply mimic the proof (\ref{kymap}) from the triangulated setting. Recalling from \ref{kymap} the construction of $\Sigma$ as a composition $\per(\dq) \xrightarrow{\Sigma_1} \Delta_R(A) \xrightarrow{\Sigma_2} {D}_\mathrm{sg}(R)$, we lift the two maps separately to dg functors. To lift $\Sigma_1$, first note that \ref{ntty} and \ref{Rcoloc} provide a homotopy cofibre sequence of dg categories $\per_{\mathrm{dg}}R \to \per_{\mathrm{dg}}A \to \per_{\mathrm{dg}}(\dq)$, in which the first map is $j_!$. There is a homotopy cofibre sequence $\per_{\mathrm{dg}}R \to D_\mathrm{dg}^b(A) \to \Delta^\mathrm{dg}_R(A)$, and we can extend $\id: \per_{\mathrm{dg}}R \to \per_{\mathrm{dg}}R$ and the inclusion $\per_{\mathrm{dg}}A \into D_\mathrm{dg}^b(A)$ into a morphism of homotopy cofibre sequences, which gives a lift of $\Sigma_1$. Lifting $\Sigma_2=j^*$ is similar and uses the sequence $\per_{\mathrm{dg}}R \to D_\mathrm{dg}^b(R) \to {D}^{\mathrm{dg}}_\mathrm{sg}(R)$.
	\end{proof}
Let $\ker^\mathrm{dg}\Sigma$ be the kernel of the dg functor $\Sigma$. 
\begin{prop}\label{kymapbetterdg}
	There is a quasi-equivalence of dg categories $$\bar{\Sigma}_\mathrm{dg}: \frac{\per_\mathrm{dg}(\dq)}{\ker^\mathrm{dg}\Sigma}\xrightarrow{\simeq} \thick_{D^\mathrm{dg}_\mathrm{sg}(R)}(Ae)$$which enhances the triangle equivalence $\bar\Sigma$ of \ref{kymapbetter}.
	\end{prop}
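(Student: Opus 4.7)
The plan is to construct $\bar\Sigma_\mathrm{dg}$ via the universal property of the Drinfeld dg quotient, identify its underlying homotopy functor with $\bar\Sigma$ of \ref{kymapbetter}, and then upgrade this triangulated statement to a dg quasi-equivalence. First, since $\Sigma_\mathrm{dg}$ sends every object of $\ker^\mathrm{dg}\Sigma$ to an acyclic complex in $D^\mathrm{dg}_\mathrm{sg}(R)$, the universal property of the Drinfeld quotient recalled in \S\ref{dgquotsctn} yields a dg functor $\bar\Sigma_\mathrm{dg}\colon \per_\mathrm{dg}(\dq)/\ker^\mathrm{dg}\Sigma \to D^\mathrm{dg}_\mathrm{sg}(R)$. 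Since $\dq$ generates the source as a pretriangulated dg category and maps to $Ae$, the essential image lies inside $\thick_{D^\mathrm{dg}_\mathrm{sg}(R)}(Ae)$.

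By \ref{drinfeldpretr}, the homotopy category $[\per_\mathrm{dg}(\dq)/\ker^\mathrm{dg}\Sigma]$ is canonically the Verdier quotient $\per(\dq)/\ker\Sigma$, and under this identification $[\bar\Sigma_\mathrm{dg}]$ coincides with the functor $\bar\Sigma$ of \ref{kymapbetter}. The essential surjectivity of $\bar\Sigma$ onto $\thick_{D_\mathrm{sg}(R)}(Ae)$ therefore immediately gives quasi-essential surjectivity of $\bar\Sigma_\mathrm{dg}$ onto $\thick_{D^\mathrm{dg}_\mathrm{sg}(R)}(Ae)$.

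The main obstacle is proving quasi-fully faithfulness, since \ref{kymapbetter} asserts only essential surjectivity at the triangulated level. My approach is to reduce to a derived Morita check on compact generators: $\dq$ generates the source and $Ae$ generates the target as pretriangulated dg categories, and $\bar\Sigma_\mathrm{dg}$ is compatible with shifts, so quasi-fully faithfulness will follow once we show that the induced map of endomorphism dgas $\dge(\dq)\to\dge(Ae)$ is a quasi-isomorphism. In the target this dga is $\R\underline{\enn}_R(Ae)\simeq\R\underline{\enn}_R(M)$ (as $R\cong 0$ stably), and by \ref{qisolem} the canonical map $\dq \to \R\underline{\enn}_R(M)$ identifies $\dq$ with the connective truncation $\tau_{\leq 0}\R\underline{\enn}_R(M)$, so agreement on non-positive cohomology is automatic. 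The crux of the argument is to verify that passing to the Drinfeld quotient by $\ker^\mathrm{dg}\Sigma$ adjoins exactly the missing positive-degree cohomology classes, recovering all of $\R\underline{\enn}_R(M)$ on endomorphisms; concretely these new classes should correspond to stable Ext classes represented via the complete resolution machinery of \ref{buchcohom} and \ref{stabextisext}, realised in the quotient as formal inverses of quasi-isomorphisms with cone in $\ker^\mathrm{dg}\Sigma$.
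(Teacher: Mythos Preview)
Your approach differs substantially from the paper's. The paper's proof is a one-line structural assertion: it observes that $\Sigma_\mathrm{dg}$ is a quasi-essential surjection onto $\thick_{D^\mathrm{dg}_\mathrm{sg}(R)}(Ae)$ and then declares that it ``descends to a quasi-equivalence'' after Drinfeld-quotienting by its kernel, relying implicitly on the fact that $\Sigma_\mathrm{dg}$ was built in \ref{Fdg} from morphisms of homotopy cofibre sequences. It does not isolate quasi-fully faithfulness as a separate step. You, by contrast, correctly notice that \ref{kymapbetter} only establishes essential surjectivity of $\bar\Sigma$ (so the phrase ``triangle equivalence $\bar\Sigma$'' in the statement already outruns what \ref{kymapbetter} proves), and you try to supply the missing quasi-fully faithfulness argument by hand.

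However, your argument has a genuine gap. The reduction to the generator is legitimate: since $\dq$ generates the source and $Ae$ generates the target as pretriangulated dg categories, a quasi-isomorphism on endomorphism dgas would suffice. But the relevant source dga is $\dge_{\per_\mathrm{dg}(\dq)/\ker^\mathrm{dg}\Sigma}(\dq)$, \emph{not} $\dq$ itself, and you never compute it. Your appeal to \ref{qisolem} does not help: that theorem concerns the comparison map $\Xi$, which is the component of $\Sigma_\mathrm{dg}$ \emph{before} quotienting, and only tells you $\dq\simeq\tau_{\leq 0}\R\underline{\enn}_R(M)$. Your final sentence --- that the Drinfeld quotient ``adjoins exactly the missing positive-degree cohomology classes'' --- is precisely the content of the proposition, and you have asserted it rather than proved it. Carrying this out would require a calculus-of-fractions computation of hom-complexes in the quotient, for which you would need concrete control over $\ker^\mathrm{dg}\Sigma$; the paper's own analysis of $\ker\Sigma$ only appears later in \S6.5. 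Note also that \ref{qisolem} is proved after \ref{kymapbetterdg} and requires the stronger hypotheses of Setup \ref{sfsetup}, whereas \ref{kymapbetterdg} is stated in the generality of an arbitrary right noetherian $A$ with idempotent complete $D_\mathrm{sg}(R)$; so your argument, even if completed, would establish less than the stated proposition.
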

	\begin{proof}
	By \ref{Fdg} and \ref{kymap} we have a quasi-essential surjection $$\Sigma_{\mathrm{dg}}:\per_{\mathrm{dg}}(\dq) \onto \thick_{D^\mathrm{dg}_\mathrm{sg}(R)}(Ae)$$ which sends $\dq$ to $Ae$. Hence, $\Sigma_{\mathrm{dg}}$ descends to a quasi-equivalence $$\bar{\Sigma}_\mathrm{dg}: \frac{\per_\mathrm{dg}(\dq)}{\per^\mathrm{dg}_\mathrm{fg}(\dq)}\xrightarrow{\simeq} \thick_{D^\mathrm{dg}_\mathrm{sg}(R)}(Ae)$$which by construction enhances $\bar\Sigma$, since the dg quotient enhances the Verdier quotient.
	\end{proof}
\begin{prop}\label{dgtakahashi}
	With setup as above, if $Ae$ generates the singularity category as a triangulated category, then there is a natural quasi-equivalence $\thick_{D^\mathrm{dg}_\mathrm{sg}(R)}(Ae)\simeq {D^\mathrm{dg}_\mathrm{sg}(R)}$.
	\end{prop}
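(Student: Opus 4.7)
The plan is to observe that the inclusion $\iota:\thick_{D^\mathrm{dg}_\mathrm{sg}(R)}(Ae)\hookrightarrow D^\mathrm{dg}_\mathrm{sg}(R)$ is by construction a full dg subcategory, and to show it is a quasi-equivalence by checking the two halves separately: quasi-fully faithful is immediate, and quasi-essentially surjective is precisely the hypothesis that $Ae$ generates the singularity category (as a thick subcategory).

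In more detail, I would first note that $\thick_{D^\mathrm{dg}_\mathrm{sg}(R)}(Ae)$ is defined as the smallest full pretriangulated dg subcategory of $D^\mathrm{dg}_\mathrm{sg}(R)$ containing $Ae$ and closed under shifts, cones, and direct summands (in the pretriangulated/idempotent sense). In particular $\iota$ is by definition a full embedding of dg categories, so every component $\iota_{xy}: \dgh(x,y) \to \dgh(\iota x, \iota y)$ is the identity and hence a quasi-isomorphism; this settles quasi-full-faithfulness.

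Next I would verify that passing to the homotopy category commutes with forming thick subcategories: concretely, the inclusion $[\thick_{D^\mathrm{dg}_\mathrm{sg}(R)}(Ae)] \hookrightarrow \thick_{[D^\mathrm{dg}_\mathrm{sg}(R)]}(Ae) = \thick_{D_\mathrm{sg}(R)}(Ae)$ is an equivalence. This is because shifts, cones and direct summands in a pretriangulated dg category descend to shifts, distinguished triangles and split idempotents in its homotopy category (the latter using that $D_\mathrm{sg}(R)$ is idempotent complete by the standing hypothesis of the section, via \ref{sgidemref}). By assumption we have $\thick_{D_\mathrm{sg}(R)}(Ae) = D_\mathrm{sg}(R) = [D^\mathrm{dg}_\mathrm{sg}(R)]$, so $[\iota]$ is essentially surjective, i.e.\ $\iota$ is quasi-essentially surjective.

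Combining the two, $\iota$ is a quasi-equivalence. Naturality is clear from the construction, since the dg thick hull and the dg enhancement $D^\mathrm{dg}_\mathrm{sg}(R)$ are both canonical. The only mildly delicate point is the interplay between idempotent completion at the dg level and at the triangulated level, but this is already encoded in the standard fact that $[\cdot]$ commutes with forming the pretriangulated hull and the Karoubi envelope for idempotent complete targets, so this step should not present a serious obstacle.
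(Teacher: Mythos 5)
Your proof is correct and takes essentially the same approach as the paper: the inclusion is quasi-fully faithful because it is a full dg subcategory, and quasi-essential surjectivity follows from the hypothesis that $Ae$ generates $D_\mathrm{sg}(R)=[D^\mathrm{dg}_\mathrm{sg}(R)]$ as a thick subcategory. The paper simply states this in one line, whereas you have spelled out the (correct) compatibility between taking thick hulls at the dg level and at the triangulated level, including the role of idempotent completeness of $D_\mathrm{sg}(R)$.
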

\begin{proof}
The natural inclusion $\thick_{D^\mathrm{dg}_\mathrm{sg}(R)}(Ae)\into {D^\mathrm{dg}_\mathrm{sg}(R)}$ is always quasi-fully faithful, and it is quasi-essentially surjective by assumption.
	\end{proof}
	
	\subsection{The comparison map}
	The components of the singularity functor give us dga maps $\dge(X) \to \dge(\Sigma X)$. We investigate in detail the case $X = \dq$. Observe that $\Sigma(\dq)\simeq Ae$, and moreover we can canonically identify $\dq$ with the endomorphism dga $\dge_{\per_{\mathrm{dg}}(\dq)}(\dq)$.
	\begin{defn}\label{comparisonmap}
		The \textbf{comparison map} $\Xi:\dq \to \dge_{D^\mathrm{dg}_\mathrm{sg}(R)}(Ae)$ is the component of the dg singularity functor $\Sigma_{\mathrm{dg}}$ at the object $\dq \in {\per_{\mathrm{dg}}(\dq)}$.
	\end{defn}
	In other words, the comparison map is the morphism of dgas given by $$\dq \cong \dge_{\dq}(\dq)\xrightarrow{\Sigma} \dge(\Sigma(\dq))\simeq \dge(Ae).$$The main theorem of this section is that when $A$ is a partial resolution of $R$, defined by a MCM module $M$, the comparison map $$\Xi:\dq \to \R\underline{\enn}_R(M)$$ is a `quasi-isomorphism in nonpositive degrees'. The strategy will be as follows. We are going to use the `Drinfeld quotient' model $B$ for $\dq$ that appears in \ref{drinfeld}, and use this to write down an explicit model for $M$ as an object of the Drinfeld quotient $D^\mathrm{dg}_\mathrm{sg}(R)$. This will allow us to calculate an explicit model for $\R\underline{\enn}_R(M)$. At this point we will already be able to see that $\dq$ and the truncation $\tau_{\leq 0}\R\underline{\enn}_R(M)$ are quasi-isomorphic as abstract complexes. We think of the comparison map $\Xi$ as giving an action of $B$ on (our model for) $M$, which we are able to explicitly identify. We are also able to explicitly identify the action of $\tau_{\leq 0}\R\underline{\enn}_R(M)$ on $M$. This allows us to explicitly identify the comparison map and at this point it will be easy to see that it is a quasi-isomorphism.
	
	\p However, before we begin we must prove a technical lemma. Loosely, to write down a representative of the object $M\cong Ae$ of the singularity category, we need to pick an ind-perfect $R$-module $I$ whose  colimit resolves $Ae$. Keeping \ref{drinfeldmodel} in mind, our preferred resolution will be the bar resolution. The na\"ive thing to do is to simply choose $I$ to be the filtered system of all perfect submodules of the bar resolution. However, with this choice we run into problems with homotopy limits. We wish to choose our $I$ such that if $f:U \into V$ is a morphism in $I$, then $\coker(f)$ is degreewise projective (i.e.\ $f$ is a cofibration). The following construction is one such choice.
	
	\begin{defn}\label{induceddefn}
		Suppose that we are in the situation of Setup \ref{sfsetup}. Let $$\mathrm{Bar}(Ae)\coloneqq\cdots \to Ae \otimes_k R \otimes_k R \to Ae\otimes_k R$$ be the bar resolution of the $R$-module $Ae$, with $Ae \otimes_k R^{\otimes(1-n)}$ in degree $-n$. Say that a dg-$R$-submodule $M$ of $\mathrm{Bar}(Ae)$ is \textbf{induced} if
		\begin{itemize}
			\item $M$ is bounded.
			\item For every $n\geq 0$, the module $M^{-n}$ is isomorphic to an $R$-module of the form $V\otimes_k R$, where $V\subseteq Ae \otimes_k R^{\otimes-n}$ is a finite-dimensional $k$-vector space. Moreover, the inclusion $M^{-n} \into \mathrm{Bar}(Ae)^{-n}$ is induced by applying the exact functor $-\otimes_k R$ to the vector space inclusion $V \into Ae \otimes_k R^{\otimes-n}$.
			\end{itemize}
		In particular, an induced module $M$ is bounded, degreewise free and finitely generated, and hence perfect. Say that an inclusion $M \into M'$ of induced modules is \textbf{induced} if degreewise it is of the form $V \otimes_k R \into V' \otimes_k R$, induced by a linear inclusion $V \into V'\subseteq Ae \otimes_k R^{\otimes-n}$. Let $\mathcal{I}$ be the set of all induced submodules of $\mathrm{Bar}(Ae)$, regarded as a diagram of $R$-modules with morphisms the induced inclusions.
		\end{defn}
	We say that an ind-dg-$R$-module $I=\{I_\alpha\}_\alpha \in \cat{ind}(\cat{Mod}\text{-}R)$ is \textbf{ind-perfect} if each $I_\alpha$ is a perfect $R$-module. This is equivalent to specifying that $I$ is an object of the full subcategory $\cat{ind}(\per R)$. The relevance of ind-perfect objects to us is due to their presence in \ref{drinfeldlem}, where they play a part in the Drinfeld quotient model for the dg singularity category.
	
	\begin{lem}\label{ilem}\hfill
\begin{enumerate}
	\item The diagram $\mathcal{I}$ is an ind-perfect $R$-module.
	\item The colimit of $\mathcal{I}$ is $\mathrm{Bar}(Ae)$.
		\item If $f:M \into M'$ is an induced inclusion of induced modules, then $\coker{f}$ is degreewise free and finitely generated over $R$.
	\end{enumerate}
		\end{lem}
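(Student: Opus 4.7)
The plan is to verify the three claims essentially by unpacking the definitions, the main subtlety being to check that $\mathcal{I}$ is genuinely filtered so that it makes sense as an ind-object at all.

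First I would establish that $\mathcal{I}$ is a filtered diagram. Non-emptiness is clear since the zero subcomplex is induced. For the coproduct condition, given two induced submodules $M, M'$ with $M^{-n} = V \otimes_k R$ and $(M')^{-n} = V' \otimes_k R$, I would form the degreewise sum $M + M'$ inside $\mathrm{Bar}(Ae)$. The key identity is
\[
V \otimes_k R + V' \otimes_k R \;=\; (V + V') \otimes_k R
\]
viewed as $R$-submodules of $Ae \otimes_k R^{\otimes(1-n)}$; this follows because $- \otimes_k R$ is exact and in particular preserves the pushout defining $V + V'$. Since $V + V'$ is a finite-dimensional subspace of $Ae \otimes_k R^{\otimes(1-n)}$, the submodule $M + M'$ is induced, and since both $M$ and $M'$ are closed under the differential of $\mathrm{Bar}(Ae)$, so is $M + M'$. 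The two inclusions $V \hookrightarrow V + V'$ and $V' \hookrightarrow V + V'$ induce induced inclusions $M, M' \hookrightarrow M + M'$. The coequalizer condition is vacuous, since any two parallel induced inclusions with the same source and target agree. Part (1) now follows: each induced submodule is by definition bounded and degreewise isomorphic to a free $R$-module of finite rank $\dim_k V$, hence perfect.

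For part (2), I would check that the obvious map $\varinjlim \mathcal{I} \to \mathrm{Bar}(Ae)$ is both injective and surjective. Injectivity is automatic, as a filtered colimit of inclusions of $R$-modules is an inclusion. For surjectivity, given any element $x \in \mathrm{Bar}(Ae)^{-n}$, I construct an induced submodule containing $x$ as follows: in degree $-n$ take $V_n \subseteq Ae \otimes_k R^{\otimes(1-n)}$ to be the $k$-span of $x$, and then iteratively close off under the differential by taking $V_{n-1}$ to be the span of the components of $d(x)$ under the decomposition coming from $Ae \otimes_k R^{\otimes(2-n)}$, and so on. Because the bar resolution is concentrated in nonpositive degrees, this process terminates in finitely many steps, producing a bounded induced submodule $V_\bullet \otimes_k R$ containing $x$. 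The same recipe applies to any finite collection of elements, which is all one needs for surjectivity of the colimit map.

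Part (3) is a direct computation: by hypothesis the inclusion $f$ is degreewise of the form $V \otimes_k R \hookrightarrow V' \otimes_k R$ induced by a $k$-linear inclusion $V \hookrightarrow V'$ of finite-dimensional vector spaces. Since $- \otimes_k R$ is exact, one has $(\coker f)^{-n} \cong (V'/V) \otimes_k R$, which is a finitely generated free $R$-module of rank $\dim_k(V'/V)$. The main obstacle in the whole argument is really just bookkeeping: the content lies in the identity $V \otimes_k R + V' \otimes_k R = (V + V') \otimes_k R$ and in having chosen a definition of $\mathcal{I}$ rigid enough that the inclusions are controlled by linear maps of vector spaces rather than arbitrary module homomorphisms — which is exactly what makes cokernels well-behaved in part (3).
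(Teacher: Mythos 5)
Your part (1) is correct and in fact slightly cleaner than the paper's argument: you observe directly that the degreewise sum $M+M'$ is again a dg submodule (it's closed under $d$, being a sum of two dg submodules), with degreewise form $(V_n+V'_n)\otimes_k R$ by exactness of $-\otimes_k R$; the paper instead builds an auxiliary ``closure'' operation $\langle-\rangle$, applies it to $N'=M+M'$, and then notes the inclusions are induced --- but the closure does no work here since $N'$ is already closed under $d$. Your part (3) is also correct and is exactly the paper's argument.

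Part (2), however, has a genuine gap rooted in a misreading of what ``induced'' means. For an induced submodule, $M^{-n}$ must be of the form $V\otimes_k R$ where $V$ is a finite-dimensional subspace of the \emph{shorter} tensor space $Ae\otimes_k R^{\otimes n}$, so that $V\otimes_k R$ lands in $Ae\otimes_k R^{\otimes(n+1)}=\mathrm{Bar}(Ae)^{-n}$. You instead take $V_n$ to be the $k$-span of $x$ itself --- but $x$ lives in $\mathrm{Bar}(Ae)^{-n}$, the \emph{wrong} space. With that choice $V_n\otimes_k R$ lands one degree too low, or, read as the $R$-submodule $x\cdot R$, it is generically not of induced form at all (e.g.\ if $x=w\otimes r$ with $r$ a non-unit, then $x\cdot R = w\otimes rR \neq \langle w\rangle\otimes R$). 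The paper's fix is the essential move you are missing: write $x=r_0\otimes\cdots\otimes r_{n+1}$ as a pure tensor, \emph{drop the last factor} to get $r_0\otimes\cdots\otimes r_n\in Ae\otimes_k R^{\otimes n}$, and let $V$ be its span; then $V\otimes_k R$ is in the right degree and visibly contains $x$. The same care is then needed when ``closing off under the differential'': each term of the bar differential $d(v\otimes 1)$ must itself be re-expressed in the form (something in the shorter space)$\otimes R$, which is straightforward for all but the last term, where one has to manually record $r_0\otimes\cdots\otimes r_{n-1}$ rather than a genuine summand of $d$. Your phrase ``the span of the components of $d(x)$'' glosses over exactly this bookkeeping. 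Once the closure is built correctly this way, the rest of your argument (finite termination since the bar resolution is concentrated in degrees $\le 0$, and colimit surjectivity since every element lies in some induced submodule) goes through.
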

	\begin{proof}
		For the first two claims, we will need to first introduce an auxiliary construction. Let $M$ be an induced \textit{graded} submodule of $\mathrm{Bar}(Ae)$; in other words $M$ is a graded submodule of $\mathrm{Bar}(Ae)$ satisfying the two conditions of \ref{induceddefn}. Of course, $M$ may not be a dg submodule as it may not be closed under the differential. Let $n$ be the largest integer such that $M^{-n}\cong V \otimes_k R$ is nonzero. Then $V$ is a finite-dimensional linear subspace of $Ae\otimes_kR^{\otimes n}$. Pick a finite collection $\{v_i= r_0^i\otimes r^i_1\otimes\cdots \otimes r^i_n\}_{1\leq i \leq m}\subseteq Ae\otimes_kR^{\otimes n}$ such that $V$ is contained in the linear span of the $v_i$. Note that $r^i_0 \in Ae$ and $r^i_j \in R$. For each $1\leq i \leq m$, and $0 \leq j < n$, let $u^j_i\in Ae\otimes_kR^{\otimes (n-1)}$ be the element $u^j_i=r_0^i \otimes r^i_1 \otimes \cdots r^i_jr^i_{j+1} \otimes \cdots \otimes r^i_n $. In other words, $u^j_i\otimes 1$ is (up to sign) the $(j+1)$st monomial appearing in the ($n+1$)-element sum $d(v_i \otimes 1)$, where $d$ is the bar differential. Moreover, let $u^{n}_i \in Ae\otimes_kR^{\otimes (n-1)}$ be the element $r^i_0\otimes\cdots \otimes r^i_{n-1}$. In other words, $u^n_i\otimes 1$ is not quite (up to sign) the last term of the sum $d(v_i\otimes 1)$; it has a $1$ in the rightmost position instead of an $r^i_n$. However, the last term of $d(v_i\otimes 1)$ is certainly an element of $ U^n_i\otimes R$, where $U^n_i$ is the one-dimensional vector space spanned by $u^n_i$. Let $U$ be the finite-dimensional linear subspace of $Ae\otimes_kR^{\otimes (n-1)}$ spanned by all of the $u^j_i$ for $1\leq i \leq m$ and $0 \leq j \leq n$. By construction, the image of $M^{-n}$ under the differential $d$ is a $R$-submodule of $U \otimes_k R$. Write $M^{1-n}\cong U'\otimes_k R$ for some $U'\subseteq Ae\otimes_kR^{\otimes (n-1)}$. Regarding $U'$ and $U$ as subspaces of $Ae\otimes_kR^{\otimes (n-1)}$, let $W$ be their sum, which is again a finite-dimensional subspace. Replacing $M^{1-n}$ by $W\otimes_k R$, we may hence assume that $d(M^{-n})\subseteq M^{1-n}$. Continuing inductively, we produce a dg-$R$-module $\langle M \rangle$ with inclusions $M \into \langle M \rangle \into \mathrm{Bar}(Ae)$. It is easy to see that $\langle M \rangle $ is an induced dg submodule of $\mathrm{Bar}(Ae)$, and moreover that the inclusion $M \into \langle M \rangle$ of graded induced modules is induced. Call $\langle M \rangle $ a \textbf{closure} of $M$.
		
			\p For $(1)$, it is clear that $\mathcal{I}$ is a diagram of perfect modules, so we need only show that it is filtered. For this it will suffice to show that if both $M$ and $M'$ are induced modules, then there exists an induced module $N$ together with induced inclusions $M \into N$ and $M' \into N$. For this, write $M^{-n}=V_n\otimes_k R$ and $M'^{-n}=V'_n\otimes_k R$ for all $n\geq 0$. Fixing an $n$, and regarding each $V_n$ and $V'_n$ as linear subspaces of $Ae\otimes_kR^{\otimes n}$, let $W_n$ be their sum, which is again a finite-dimensional subspace. Let $N'$ be the induced graded submodule of $\mathrm{Bar}(Ae)$ with $N'^{-n}=W_n\otimes_k R$. Let $N=\langle N' \rangle$ be a closure. Then it is clear that $N$ is an induced module, and that the obvious inclusions $M \into N$ and $M' \into N$ are induced maps. 
			
			\p Because colimits commute with tensor products, and a vector space is the colimit of its one-dimensional subspaces, to prove (2) it will suffice to show that if $x \in \mathrm{Bar}(Ae)^{-n}$ then there is an induced submodule $M$ such that $x \in M^{-n}$. Write $x=r_0\otimes\cdots \otimes r_{n+1}$. Let $V$ be the one-dimensional subspace of $Ae\otimes_kR^{\otimes n}$ generated by $r_0\otimes\cdots \otimes r_{n}$. It is clear that $M'\coloneqq(V \otimes_k R)[n]$ is an induced graded submodule of $\mathrm{Bar}(Ae)$ containing $x$. Letting $M=\langle M' \rangle$ be a closure, we see that $M$ satisfies the desired condition.
			
\p For $(3)$, let $M \into M'$ be an induced inclusion. Degreewise, this looks like an inclusion $V \into V'$ of finite-dimensional vector spaces, tensored over $k$ with $R$. Because the tensor product is right exact, the cokernel of $M \into M'$ degreewise looks like $\coker(V \into V')\otimes_k R$, which is clearly degreewise free and finitely generated (it is even itself induced).
		\end{proof}
	
	\begin{rmk}\label{indsprmk}
		Of course, the ind-module $\mathcal{I}$ is more than just ind-perfect: it is ind-strictly perfect, in the sense that each level of $\mathcal{I}$ is a bounded complex of finitely generated projective $R$-modules. This will be of relevance to us soon during the proof of \ref{qisolem}.
		\end{rmk}
	
	\begin{rmk}
		If $\mathcal{F}$ denotes the collection of all perfect submodules of $\mathrm{Bar}(Ae)$, we are not claiming the existence of an isomorphism $\mathcal{I} \cong \mathcal{F}$ of ind-perfect $R$-modules (although such an isomorphism presumably exists and is induced by a natural map $\mathcal{I} \to \mathcal{F}$). Rather, we are just claiming that $\mathcal{I}$ is an ind-perfect module whose colimit agrees with the colimit of $\mathcal{F}$.
		\end{rmk}

	\begin{thm}\label{qisolem}Suppose that we are in the situation of Setup \ref{sfsetup}. Then, for all $j\leq 0$, the comparison map $\Xi:\dq \to \R\underline{\enn}_R(M)$ induces isomorphisms on cohomology $$ H^j(\dq) \xrightarrow{\cong} \underline{\ext}_R^j(M,M).$$
	\end{thm}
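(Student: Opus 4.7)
My strategy is to explicitly compute both $H^\bullet(\dq)$ and $H^\bullet\R\underline\enn_R(M)$ in nonpositive degrees as concrete $\tor$ groups over $R$, and then show that the comparison map $\Xi$ realises the natural identification between them. For the derived quotient side, I would use the Drinfeld model $B \simeq \dq$ from Proposition \ref{drinfeld}, whose terms are $B^{-n} \cong Ae \otimes_k R^{\otimes(n-1)} \otimes_k eA$ with the Hochschild differential (Lemma \ref{drinfeldmodel}). By Corollary \ref{derquotcohom}, $H^0(B) = A/AeA$, $H^{-1}(B) = \ker(Ae \otimes_R eA \to A)$, and $H^{-n}(B) \cong \tor^R_{n-1}(Ae, eA)$ for $n \geq 2$. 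Under the splittings $Ae \cong R \oplus M$ and $eA \cong R \oplus M^\vee$, flatness of $R$ over itself gives $H^{-n}(B) \cong \tor^R_{n-1}(M, M^\vee)$ in degrees $\leq -2$, while $A/AeA$ identifies with $\underline\enn_R(M)$ since $AeA$ is exactly the ideal of endomorphisms of $R \oplus M$ factoring through the projective summand $R$.

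For the stable endomorphism side, I would represent $M \simeq Ae$ in $D^\mathrm{dg}_\mathrm{sg}(R)$ by an ind-object $\tilde M$ built from the bar resolution $\mathrm{Bar}(Ae)$, using the induced submodules $\mathcal I$ of Lemma \ref{ilem} whose technical cofibrancy properties ensure that hom-complexes out of $\mathcal I$ behave well under passage to colimits. Then $\R\underline\enn_R(M) \simeq \dgh_{D^\mathrm{dg}_\mathrm{sg}(R)}(\tilde M, \tilde M)$ is computable via the ind-category, and on cohomology in degrees $j \leq -2$, Corollary \ref{stabextisext} (via Proposition \ref{buchcohom}) gives $\underline\ext^j_R(M,M) \cong \tor^R_{-j-1}(M, M^\vee)$. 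In degree $0$ we recover $\underline\enn_R(M)$, and in degree $-1$ a long-exact-sequence argument applied to the complete resolution triangle $\mathbf{CR}(M) \to P \to Q^\vee$ yields a kernel description matching $H^{-1}(B)$. So, abstractly, $H^j(B)$ and $\underline\ext^j_R(M,M)$ coincide for all $j \leq 0$.

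The crux of the proof is to check that the comparison map $\Xi$ genuinely realises this abstract identification. Tracing through Proposition \ref{Fdg}, $\Xi$ in degree $0$ is the natural map $A \to \underline\enn_R(Ae)$, which factors through $A/AeA \cong \underline\enn_R(M)$. In negative degrees, the aim is to identify $\Xi$ applied to an element $a_0 \otimes r_1 \otimes \cdots \otimes r_{n-1} \otimes b \in B^{-n}$ as the corresponding composition in the hom-complex of $\tilde M$ derived from the bar resolution, with the degree $-1$ generator $h$ of $B$ (satisfying $d(h) = e$) acting via the contracting homotopy of $\mathrm{Bar}(Ae) \to Ae$. Granted this, both sides of $\Xi$ become quasi-isomorphic to the truncated complex computing $\tor^R(Ae, eA)$, and $\Xi$ becomes the natural projection onto the $\tor^R(M, M^\vee)$ summand, which is an isomorphism in degrees $\leq -2$; the degrees $0$ and $-1$ are checked by direct comparison. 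The main obstacle is precisely this explicit bookkeeping: I must verify that the dg functor $\Sigma_\mathrm{dg}$ of Proposition \ref{Fdg} genuinely produces the described action of $B$ on $\tilde M$, rather than some homotopic but combinatorially different map. This is where the careful ind-model of Lemma \ref{ilem} and strict dg lifts of the homotopy cofibre sequences used to define $\Sigma_\mathrm{dg}$ become essential.
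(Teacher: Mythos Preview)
Your proposal is on the right track and follows the same overall strategy as the paper: use the Drinfeld model $B$ for $\dq$, represent $Ae$ in $D^\mathrm{dg}_\mathrm{sg}(R)$ via the ind-object built from the induced submodules $\mathcal{I}$ of the bar resolution (Lemma \ref{ilem}), and then explicitly identify the comparison map $\Xi$ as a concatenation action. The paper, however, organises the argument differently, and this reorganisation dissolves several of the difficulties you anticipate.

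Rather than computing $H^j(\dq)$ and $\underline{\ext}^j_R(M,M)$ separately (via Corollary \ref{derquotcohom} and Corollary \ref{stabextisext}) and then matching them degree-by-degree through the splitting $Ae\cong R\oplus M$, the paper computes the dga $\dge_{D^\mathrm{dg}_\mathrm{sg}(R)}(Ce)$ directly as a double (co)limit over the ind-category. After unwinding cones and using that each $V_\beta$ is strictly perfect, the decisive technical step is a Reedy fibrancy argument---this is exactly where Lemma \ref{ilem}(3) enters---showing that $\varprojlim_\alpha \dgh(V_\alpha, \varinjlim_\beta W_\beta)$ is acyclic. Combined with the MCM hypothesis on $M$, which yields $\R\hom_R(Ae, V_\beta) \simeq V_\beta \otimes_R eA$, one obtains a chain-level quasi-isomorphism $\tau_{\leq 0}\dge(Ce) \simeq \mathrm{cone}(T \xrightarrow{\mu} A)$ with $T = \mathrm{Bar}(Ae)\otimes_R eA$. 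But this is the \emph{same} cone as $B$, by Proposition \ref{dqexact}. So both sides of $\Xi$ are identified with a single explicit complex, uniformly across all nonpositive degrees, and the final step reduces to checking that $\tau_{\leq 0}\Xi$ factors through these two identifications---which is the concatenation-action check you describe.

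This chain-level route never decomposes $Ae$ into $R\oplus M$, handles degrees $0$, $-1$, and $\leq -2$ in one stroke (so your separate long-exact-sequence treatment of degree $-1$ is unnecessary), and makes the verification of $\Xi$ cleaner: one is comparing two cone descriptions of the same object, not chasing a projection onto a $\tor^R(M,M^\vee)$ summand. Your instinct that the hard part is pinning down $\Xi$ explicitly is correct, and the paper resolves it precisely as you suggest, but the surrounding scaffolding is lighter than your proposal would require.
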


	\begin{proof}
		We begin by writing down a model for $M$, which we immediately replace by the isomorphic (in the singularity category!) object $Ae$. Letting $\mathcal{I}$ be as in \ref{induceddefn}, we see that $\mathcal{I}$ is ind-perfect by \ref{ilem}(1) and moreover that $\mathrm{Bar}(Ae)\cong \varinjlim \mathcal{I}$ by \ref{ilem}(2). Noting that $j_!=-\lot_R eA$, we hence see that $\mathcal{I}\otimes_R eA$ is an object of $\cat{ind}j_!\per_{\mathrm{dg}} R$. Since tensor products commute with filtered colimits, we have an isomorphism $\varinjlim(\mathcal{I}\otimes_R eA)\cong \mathrm{Bar}(Ae) \otimes_R eA$. Let $B$ be the model for $\dq$ from \ref{drinfeld}. Put $T\coloneqq \varinjlim(\mathcal{I}\otimes_R eA)$ and observe that $T$ is isomorphic to the $A$-bimodule $(\tau_{\leq -1}B)[-1]$ that appears in \ref{drinfeldmodel} and the alternate proof of \ref{dqexact}. Note that $\mathcal{I}\otimes_R eA$ also comes with a multiplication map $\mu$ to $A$ that lifts the multiplication $Ae\otimes_R eA \to A$. Let $C$ be the cone of this map; then $C$ is an ind-bounded module with a map from $A$ whose cone is in $\cat{ind}j_!\per_{\mathrm{dg}} R$. In fact, $\varinjlim C \simeq \dq$ by \ref{dqexact}. Hence, if $P\in j_!\per_{\mathrm{dg}} R=\thick_{\mathrm{dg}}(eA)$, then $\dgh_{\cat{ind}D_{\mathrm{dg}}(A)}(P,C) \simeq\R\hom_A(P,\dq)\simeq 0$, since $P$ is compact, and we have the semi-orthogonal decomposition of \ref{semiorthog}. Hence $C$ is a representative of the $A$-module $\dq$ in the Drinfeld quotient $\Delta^\mathrm{dg}_R(A)$.
		
		\p Note that the dg functor $j^*: \Delta^\mathrm{dg}_R(A) \to {D}^{\mathrm{dg}}_\mathrm{sg}(R)$ is simply multiplication on the right by $e$. Hence, sending $C$ through this map, we obtain an ind-object $Ce\cong\mathrm{cone}(\mathcal{I} \to Ae)$ that represents $\Sigma(\dq)\simeq Ae$ in the dg singularity category $D^\mathrm{dg}_\mathrm{sg}(R)$. As an aside, one can check this directly: since $\mathrm{Bar}(Ae)$ resolves $Ae$, and mapping cones commute with filtered colimits, it is clear that $\varinjlim Ce$ is acyclic, and that $Ce$ admits a map from $Ae \in D^b(R)$ whose cone is the ind-perfect $R$-module $\mathcal{I}$.

		\p Now we will explicitly identify the dga $\R\underline{\enn}_R(Ae)\simeq \dge_{D^\mathrm{dg}_\mathrm{sg}(R)}(Ce)$. Write $Ce=\{W_\alpha\}_\alpha$, where each $W_\alpha$ is a cone $V_\alpha \to Ae$ with $V_\alpha$ induced. For the remainder of this proof, we use $\dgh$ to denote the hom-complexes in the dg derived category of $R$ for brevity. We similarly use $\dge$ to denote endomorphism dgas in the dg derived category of $R$. We compute
		\begin{align*}
		\dge_{D^\mathrm{dg}_\mathrm{sg}(R)}(Ce) & \coloneqq \varprojlim_\alpha\varinjlim_\beta \dgh\left(\mathrm{cone}(V_\alpha \to Ae), W_\beta\right)
		\\ & \cong \varprojlim_\alpha\varinjlim_\beta \mathrm{cocone}\left( \dgh(Ae, W_\beta) \to\dgh(V_\alpha, W_\beta)\right)
		\\ & \cong \varprojlim_\alpha \mathrm{cocone}\left(\varinjlim_\beta  \dgh(Ae, W_\beta) \to \varinjlim_\beta\dgh(V_\alpha, W_\beta)\right)
		\\ & \cong \varprojlim_\alpha \mathrm{cocone}\left(\varinjlim_\beta  \dgh(Ae, W_\beta) \to \dgh(V_\alpha, \varinjlim_\beta W_\beta)\right)&&
		\\ & \cong \mathrm{cocone}\left(\varinjlim_\beta  \dgh(Ae, W_\beta) \to \varprojlim_\alpha\dgh(V_\alpha, \varinjlim_\beta W_\beta)\right)&&
			\end{align*}
		where the isomorphism in the fourth line follows because each $V_\alpha$ is a bounded complex of finitely generated projectives (cf.\ \ref{indsprmk}), and hence the natural map $\varinjlim_\beta\dgh(V_\alpha, W_\beta) \to \dgh(V_\alpha, \varinjlim_\beta W_\beta)$ is an isomorphism. Let $V_\alpha \into V_{\alpha'}$ be an induced inclusion of induced modules. Then the cokernel is degreewise projective by \ref{ilem}(3). Hence for any dg-$R$-module $W$, the map $\dgh_R(V_{\alpha'},W) \to \dgh_R(V_{\alpha},W)$ is degreewise surjective. In particular, the cofiltered system $U_\alpha\coloneqq \dgh(V_\alpha, \varinjlim_\beta W_\beta)$ of dg vector spaces has surjective transition maps. Because every cofiltered set has a cofinal codirected subset \cite[Expos\'e 1, 8.1.6]{sga4}, to compute the limit of $\{U_\alpha\}_\alpha$ we may as well assume it is a codirected (a.k.a.\ inverse) system. A codirected set, viewed as a partial order, is a Reedy category (see \cite[\S5.2]{hovey}), and it is clear that $\{U_\alpha\}_\alpha$ is a Reedy fibrant diagram of dg vector spaces\footnote{When our codirected set is a copy of $\N$, Reedy fibrancy implies the usual Mittag-Leffler condition.}. Hence we have a quasi-isomorphism $\varprojlim_\alpha U_\alpha \simeq \holim _\alpha U_\alpha$, because one can compute homotopy limits as usual limits along resolved diagrams. But because $\varinjlim_\beta W_\beta$ is acyclic, the obvious morphism of pro-objects $0 \to \{U_\alpha\}_\alpha$ is a levelwise quasi-isomorphism, and it follows that we have quasi-isomorphisms $0\simeq \holim 0 \simeq \holim_\alpha U_\alpha \simeq \varprojlim_\alpha U_\alpha$. Hence $\varprojlim_\alpha\dgh(V_\alpha, \varinjlim_\beta W_\beta)$ is acyclic. Continuing on with the proof, we now have
			\begin{align*}
		 \dge_{D^\mathrm{dg}_\mathrm{sg}(R)}(Ce) & \simeq \varinjlim_\beta  \dgh(Ae, W_\beta) &&
		\\ & \cong  \varinjlim_\beta \mathrm{cone}\left(\dgh(Ae,V_\beta) \to \dgh(Ae,Ae)\right) 
		\\ & \cong \mathrm{cone}\left(\varinjlim_\beta \dgh(Ae,V_\beta) \to \dge(Ae)\right). 
		\end{align*}
		We remark that if $Ae$ is a perfect $R$-module then we can pass the limit appearing in the last line through the derived hom, and we see that $\dge(Ce)$ is acyclic, as expected. Moving on with the proof, fix $\beta$ and consider $\R\hom_R(Ae, V_\beta)$. Since $V_\beta$ is perfect, and $Ae$ has some finitely generated projective resolution, we can write $\R\hom_R(Ae, V_\beta)\simeq V_\beta \otimes_R \R\hom_R(Ae,R)$. Since $M$ is MCM we have a quasi-isomorphism $\R\hom_R(Ae,R)\simeq\hom_R(Ae,R)$, so that $\R\hom_R(Ae, V_\beta)$ is quasi-isomorphic to the tensor product $V_\beta \otimes_R \hom_R(Ae,R)$. The natural isomorphism $eA\otimes_A Ae \to R$ gives an isomorphism $\hom_R(Ae,R)\cong eA$, so we get quasi-isomorphisms $\dgh(Ae,V_\beta)\simeq \R\hom_R(Ae, V_\beta)\simeq V_\beta \otimes_R eA$. So we have $$\varinjlim_\beta \dgh(Ae,V_\beta)\simeq\varinjlim_\beta(  V_\beta \otimes_R eA)\simeq \mathrm{Bar}(Ae)\otimes_R eA\simeq T.$$Hence, we have $\dge(Ce) \simeq \mathrm{cone}(T \to \dge(Ae))$. From the description above, we see that the map $T \to \dge(Ae)$ is exactly the multiplication map $\mu$. More precisely, $xe \otimes ey \in T^0$ is sent to the derived endomorphism that multiplies by $xey \in AeA$ on the left. Putting $B'\coloneqq \tau_{\leq 0} \dge(Ce)$, we hence have a quasi-isomorphism $B'\simeq \mathrm{cone}(T \xrightarrow{\mu} A)$. 
		
		\p By \ref{dqexact}, one has a quasi-isomorphism $B\simeq \mathrm{cone}(T \xrightarrow{\mu} A)$, so that we can already conclude that $B$ and $B'$ are abstractly quasi-isomorphic as complexes. We wish to show that this quasi-isomorphism is induced by the comparison map. To identify this map $\Xi: B \to B'$ we use the explicit description of $B$ given in \ref{drinfeldmodel}. Take a tensor $t=x\otimes r_1 \otimes \cdots \otimes r_i \otimes y\in B$; we remark that we allow $i=0$, in which case the convention is that $t\in A$. The action of $B$ on itself is via concatenating tensors; i.e.\ $$t.\left(w\otimes s_1 \otimes \cdots \otimes s_j \otimes z\right)=x\otimes r_1 \otimes \cdots \otimes r_i \otimes yw\otimes s_1 \otimes \cdots \otimes s_j \otimes z$$(one can check that this also holds when at least one of $i$ or $j$ are zero).
		
		\p We think of the map $\Xi: B\to \dge(Ce)$ as giving an action of $B$ on $Ce$. More precisely, one takes the action of $B$ on itself and sends it through the map $\Sigma$ to obtain an action of $B$ on $Ce$. It is not hard to check this action of $B$ on $Ce$ is via concatenation; more precisely take $c=a\otimes l_1 \otimes \cdots \otimes l_k \otimes b \in Ce \simeq \mathrm{cone}(\mathcal{I} \to Ae)$; then with notation as before we have $$t.c=x\otimes r_1 \otimes \cdots \otimes r_i \otimes ya\otimes l_1 \otimes \cdots \otimes l_k \otimes b$$(which remains true for $i=0$). Because $Ce$ is an ind-object, when we write this we mean that $c$ is an element of some level $(Ce)_\alpha$, and $t.c$ is an element of some other level $(Ce)_\beta$, and we identify $c$ as an element of $(Ce)_\beta$ along the canonical map $(Ce)_\alpha \to (Ce)_\beta$. With the convention that for $k=0$, the element $a\otimes l_1 \otimes \cdots \otimes l_k \otimes b$ is just an element of $Ae$, the above is also true for $k=0$.
		
		\p I claim that across the identification $B'\simeq \mathrm{cone}(T \xrightarrow{\mu} A)$, the action of $B'$ on $Ce$ is precisely the action described above. Showing this claim will prove the theorem, since we have then factored the dga map $\tau_{\leq 0}\Xi$ into a composition of two quasi-isomorphisms $B \to \mathrm{cone}(T \xrightarrow{\mu} A) \to B'$.
		
		\p We saw that we could write $\tau_{\leq 0}B'$ as a cone of the form $\mathrm{cone}\left(\varinjlim_\beta \dgh(Ae,V_\beta) \to A\right)$ where the left hand part acts on $Ce$ by sending $Ae$ into $\mathcal{I}=\{V_\beta\}_\beta$ in the obvious manner, and the right hand part acts on $Ce$ by sending $Ae$ into itself by multiplication on the left. It is clear that across the quasi-isomorphism $B \to B'$, the $A$ summand in the cone acts in the correct manner. So we need to check that the action of $\varinjlim\dgh(Ae,V_\beta)$ on $Ce$ corresponds to the concatenation action of $T$ on $Ce$ provided by $\Xi$.
		
		\p But across the quasi-isomorphism $V_\beta \otimes_R eA \simeq \dgh(Ae, V_\beta)$, an element $v\otimes x$ corresponds to the morphism that sends $y \mapsto v\otimes xy$. Taking limits, we see that across the quasi-isomorphism $T \simeq \varinjlim_\beta \dgh(Ae,V_\beta)$, an element $a\otimes\cdots \otimes x$ corresponds to the morphism that sends $y$ to $a\otimes\cdots \otimes xy$. But this is precisely the concatenation action of $T$ on $Ce$.
	\end{proof}

	\begin{rmk}
		One can think of the above theorem as a chain-level enhancement of \ref{buchcohom}: firstly, the proof above writes $\dge(Ce)$ up to quasi-isomorphism as a cone $\cell A \to \R\enn_R(Ae)$ with $\cell A$ in negative degrees. Hence there is a quasi-isomorphism $\tau_{\geq 1}\R\enn_R(Ae) \to \tau_{\geq 1} \dge(Ce)$. Secondly, for $j<-1$, there is an isomorphism $\underline{\ext}_R^j(Ae,Ae)\cong \tor^R_{-j-1}(Ae,eA)$, which one obtains from \ref{derquotcohom}.
	\end{rmk}
	\begin{rmk}Morally, we ought to have $\R\underline{\enn}_R(M) \simeq \dgh_R(\mathbf{CR}(M),M)$, but the right hand side does not admit an obvious dga structure. Note that $\mathbf{CR}(M)$ is glued together out of a projective resolution $P$ and a projective coresolution $Q^\vee$ of $M$, and hence we ought to have $$\R\underline{\enn}_R(M) \simeq \mathrm{cone}\left( \dgh_R(Q^\vee, M) \to \dgh_R(P,M)\right) \simeq \mathrm{cone}\left(\cell A\to \R\enn_R(M)\right)$$which we do indeed obtain during the course of the proof of \ref{qisolem}.
	\end{rmk}
	
	\begin{rmk}\label{mcmrmk}
		Suppose now that $M$ is no longer assumed to be an MCM module (but is still reflexive). Consider the map $$\psi:Ae\lot_R eA \to Ae\lot_R \R\hom_R(Ae,R)$$ induced by the natural inclusion $\phi:eA\simeq \tau_{\leq 0}\R\hom_R(Ae,R) \to  \R\hom_R(Ae,R)$. Analysing the proof of \ref{qisolem}, it can be shown that $H^i\Xi$ is an isomorphism for a fixed $i\leq 0$ if and only if $H^{i+1}\psi$ is. If $\psi$ induces an isomorphism on $H^j$, then so does $e\psi$, since left multiplication by $e$ is exact (it is tensoring over $A$ with the projective module $eA$). But $e\psi$ is precisely the natural map $\phi$. Conversely if $\phi$ induces an isomorphism on $H^j$ then a spectral sequence argument tells us that $\psi$ induces an isomorphism on $H^j$. So, for a fixed $i\leq 0$, we see that $H^i\Xi$ is an isomorphism if and only if $H^{i+1}\phi$ is. Since $\R\hom_R(Ae,R)$ has cohomology concentrated in nonnegative degrees, and $H^0\phi$ is always an isomorphism, we see that $\tau_{\leq 1}\Xi$ is still a quasi-isomorphism. Moreover $H^0\Xi$ is an isomorphism if and only if $\ext^1_R(Ae,R)\cong \ext^1_R(M,R)$ vanishes. Putting this together, we obtain an exact triangle $$\dq \xrightarrow{\Xi} \tau_{\leq 0}\R\underline{\enn}_R(M) \to \ext^1_R(M,R)[0]\to$$of $A$-bimodules.
		\end{rmk}
	
	\subsection{The kernel of $\Sigma$}
	In the published version of this paper, it was asserted that the kernel of $\Sigma$ is precisely $\per_{\mathrm{fg}}(\dq)$. Unfortunately, this seems not to be the case, as pointed out to the author by Hongxing Chen and Zhengfang Wang. In this section, we make some partial progress towards identifying the actual kernel.
	
	\p Let $A$ be a right noetherian $k$-algebra, let $e\in A$ be an idempotent and let $R\coloneqq eAe$. Let $B\coloneqq\dq$ the derived quotient. Assume also that the singularity category of $R$ is idempotent complete.
	
	\p Recall that to define $\Sigma$, we used as an intermediary step the composition $$\Pi:\frac{\per A}{j_!\per R} \to \Delta_R(A) \xrightarrow{j^*} D_\mathrm{sg}R$$where the first map is induced by the inclusion $\per A \into D^b(A)$ and the second map is right multiplication by $e$. 
	
	\begin{defn}
		If $X \in D^b(A)$, then $\overline{X}$ denotes its image in $\Delta_R(A)$.
		\end{defn}
	As in \ref{kerjlem} and its proof, the kernel of $j^*$ is $\overline{\thick_{D(A)}(\cat{mod}\text{-}A/AeA)}$, and moreover the canonical map $${\thick_{D(A)}(\cat{mod}\text{-}A/AeA)}\to \overline{\thick_{D(A)}(\cat{mod}\text{-}A/AeA)}$$ is an equivalence. Put $\mathcal{P}\coloneqq\per(A) \cap \per_\mathrm{fg}B$. Again, as in the proof of \ref{kerjlem} we see that the projection $\mathcal{P} \to \overline{\mathcal{P}}$ is an equivalence.
	\begin{prop}
		The kernel of $\Pi$ is precisely $\overline{\mathcal{P}}$.
		\end{prop}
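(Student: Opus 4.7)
\p The plan is to verify the two inclusions $\overline{\mathcal{P}} \subseteq \ker \Pi$ and $\ker \Pi \subseteq \overline{\mathcal{P}}$ by reducing both sides to concrete conditions on objects of $\per A$. Since $\Pi$ is the composition of the canonical functor $\per A / j_!\per R \to \Delta_R(A)$ with $j^* = -\otimes_A Ae$, for $X \in \per A$ we have $\Pi(\overline{X}) = 0$ in $D_\mathrm{sg}(R)$ if and only if $Xe \in \per R$. The inclusion $\overline{\mathcal{P}} \subseteq \ker\Pi$ is then immediate: any $X \in \mathcal{P}$ lies in the essential image of $i_* : D(B) \hookrightarrow D(A)$, and since $j^* i_* \simeq 0$ by \ref{recoll}, we have $Xe \simeq 0$, so $\Pi(\overline{X}) = 0$.

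\p For the reverse inclusion, given $X \in \per A$ with $Xe \in \per R$, I would apply the recollement triangle $j_!j^!X \to X \to i_*i^*X \to$ from \ref{recoll}. Because $j^! = j^*$ in this setup, this is a triangle $j_!(Xe) \to X \to X' \to$ with $X' := i_*i^*X$. Since $Xe \in \per R$ by hypothesis, $j_!(Xe)$ lies in $j_!\per R = \thick(eA)$, so $\overline{X} = \overline{X'}$ in $\per A / j_!\per R$. Moreover $X'$ is perfect over $A$ as the cone of two perfect modules, it lies in the image of $i_*$ (so $X'e \simeq 0$), and $i^*X' \in \per B$ because $i^*$ preserves compactness (it sends the compact generator $A$ to the compact generator $B$).

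\p It remains to verify that $i^*X'$ has cohomology finitely generated over $A/AeA$, so that $X' \in \mathcal{P}$. Since $X' \in \per A$ has bounded, finitely generated cohomology over $A$, and $X'e \simeq 0$ forces each $H^n(X')$ to be annihilated by right multiplication by $e$ and hence to carry the structure of a finitely generated $A/AeA$-module, the required finiteness transfers to $i^*X'$ by recalling that $i_* : D(B) \to D(A)$ is restriction of scalars along the homological epimorphism $A \to B$ (so cohomology is preserved under the identification $H^0(B) = A/AeA$). This gives $X' \in \mathcal{P}$ and $\overline{X} = \overline{X'} \in \overline{\mathcal{P}}$, completing the proof.

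\p The main point requiring care is this last finiteness check, since one must track precisely how the cohomology of $X'$ as an $A$-module matches the cohomology of $i^*X'$ as a $B$-module under the recollement; once this is in hand the rest is formal manipulation of the standard recollement triangles. Everything else is a routine consequence of the observation that the canonical decomposition of $X$ provided by the recollement gives an explicit representative of $\overline{X}$ lying in $\mathcal{P}$ precisely when the obstruction $j_!(Xe)$ already lives in $j_!\per R$, which is exactly the condition $Xe \in \per R$ defining $\ker \Pi$.
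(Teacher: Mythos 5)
Your proof is correct, and it takes a genuinely different and cleaner route than the paper's argument. The paper's proof starts from the characterisation $\ker j^* \cong \thick_{D(A)}(\cat{mod}\text{-}A/AeA)$ of \ref{kerjlem}, and so works with an arbitrary witness $X' \in D_\mathrm{fg}(\dq)$ for which $\overline X \cong \overline{X'}$ in $\Delta_R(A)$. It then invokes Neeman's explicit description of isomorphisms in a Verdier quotient (roofs and cones of maps into the denominator) and carries out several applications of the functorial recollement triangles together with a splitting argument to conclude that the given $X'$ is, after all, both perfect over $A$ and perfect over $B$. You sidestep all of this by first observing directly that, for $X \in \per A$, one has $\Pi(\overline X) \cong 0$ if and only if $Xe \in \per R$ (using that $\per R$ is the kernel of $D^b(R) \to D_\mathrm{sg}(R)$), and then applying the canonical recollement triangle $j_!j^!X \to X \to i_*i^*X$ to manufacture a specific representative $X' := i_*i^*X$. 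Because $j^! = j^*$ and $Xe \in \per R$, the first term $j_!(Xe)$ lands in $j_!\per R$, so $\overline X \cong \overline{X'}$ already in $\per A / j_!\per R$; perfection of $X'$ over $A$ is immediate as a cone of perfect complexes; perfection over $B$ follows since $i^*$ preserves compacts and $i^*X' \simeq i^*X$; and the finiteness of cohomology over $A/AeA$ follows from exactness of $-\otimes_A Ae$ forcing $H^n(X')e = 0$, noetherianity of $A$, and the identification of $i_*$ with restriction of scalars along $A \to B$. What your approach buys is a canonical decomposition in place of an arbitrary witness, which both shortens the proof and makes the isomorphism $\overline X \cong \overline{X'}$ hold visibly in $\per A / j_!\per R$ (not merely in the larger quotient $\Delta_R(A)$), avoiding a subtlety that the paper's roof-chasing has to resolve by exhibiting the triangle $X' \oplus Z' \to X \to W$.
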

	\begin{proof}
		The kernel of $\Pi$ consists of those objects $\overline X$, with $X \in \per A$, such that there exists $X' \in \thick_{D(A)}(\cat{mod}\text{-}A/AeA)$ with an isomorphism $\overline X \cong \overline{X'}$ in $\Delta_R(A)$. Clearly, $\overline{\mathcal{P}}$ is a subcategory of the kernel, because if $X \in \mathcal{P}$ we may just take $X'=X$. To go the other way is more difficult. Suppose that $X$ is in the kernel, and take an appropriate $X'$ as above. Then, \cite[2.1.32]{neemantxt} tells us that we have an  isomorphism $\overline X \cong \overline{X'}$ if and only if:
		\begin{itemize}
			\item There exists $P \in D^b(A)$ and a roof $X \from P \to X'$ such that $\mathrm{cone}(P \to X)\in j_!\per R$.
			\item There exists $Q \to P$ such that $\mathrm{cone}(Q \to X')\in j_!\per R$.
			\item There exists $X' \to Y$ such that $\mathrm{cone}(P \to Y)\in j_!\per R$.
		\end{itemize}
		Recall from \ref{recoll} that we have functorial triangles $$j_!j^!U \to U \to i_*i^*U \to$$ for any $U\in D(A)$. First, apply this to the triangle forming $\mathrm{cone}(Q \to X')$ to conclude that the natural map $i_*i^*Q \to X'$ is an isomorphism. Next, apply $i_*i^*$ to the sequence $Q \to P \to X'$ to conclude that the isomorphism $i_*i^*Q \to X'$ factors through $i_*i^*P$. We observe that this makes $X'$ into a retract of $i_*i^*P$ and hence we have $i_*i^*P\cong X'\oplus Z$. We may moreover choose this isomorphism to make the map $i_*i^*P\to X'$ into the projection.
		
		\p Because $i_*D(B)$ is a thick subcategory of $D(A)$, we must also have $Z\in D(B)$. Because $i_*$ is an embedding we hence have $i^*P \cong X'\oplus Z$. Applying $i_*i^*$ to the map $P \to X$ gives us an isomorphism $i_*i^*P \to i_*i^*X$, and hence an isomorphism $i^*X \cong X'\oplus Z$. The left hand side of this is perfect over $B$, because $i^*$ respects compact objects. Hence the right hand side, and in particular $X'$, is perfect over $B$. So $X'$ is in the intersection of $\per B$ and $\thick_{D(A)}(\cat{mod}\text{-}A/AeA)$,which is exactly $\per_\mathrm{fg}B$.

		\p Similarly, apply the functorial decomposition to the completion of the map $P \to X'$ to an exact triangle. Because $X' \to Z[1]$ is the zero map, we see that $P$ splits as $P\cong X'\oplus Z'$ with $i_*i^*Z'\cong Z$. We hence have a cone $$X' \oplus Z'\to X \to W \to$$ with $W\in j_!\per R$. The rightmost two terms are perfect over $A$, and hence the leftmost term is perfect over $A$. In particular, $X'$ is perfect over $A$.
		\p Hence, $X'$ is an object of $\per(A) \cap \per_\mathrm{fg}B\eqqcolon \mathcal{P}$. Hence $\overline X \cong \overline{X'}$ is an object of $\overline {\mathcal{P}}$, as required.
		\end{proof}
	
	\begin{lem}\label{ksigl}Assume that one of the two following conditions hold.
		\begin{enumerate}
			\item $A$ has finite right global dimension
			\item $A/AeA$ is a finite-dimensional algebra, and all of its simple modules are perfect over $B$.
			\end{enumerate}
		Then we have $\mathcal{P} = \per_\mathrm{fg}B= D_\mathrm{fg}B$ .
		\end{lem}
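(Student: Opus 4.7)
The plan is to prove the two equalities in the chain $\mathcal{P} = \per_{\mathrm{fg}}B = D_{\mathrm{fg}}B$ separately, handling hypotheses (1) and (2) by distinct arguments.

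For $D_{\mathrm{fg}}B \subseteq \per_{\mathrm{fg}}B$ (the reverse containment being tautological from the definitions): under (1), finite right global dimension of $A$ yields $D^b(\cat{mod}\text{-}A) = \per A$, so if $X \in D_{\mathrm{fg}}B$ then $i_*X$ has bounded cohomology finitely generated over $A$ and hence lies in $\per A$. Applying $i^*$ (which preserves compact objects and satisfies $i^*i_* \simeq \id$ since $i_*$ is fully faithful by the recollement of \ref{recoll}) gives $X \simeq i^*i_*X \in \per B$. Under (2), I would instead use the glued t-structure of \ref{tstrs}, whose heart on $D(B)$ is $\cat{Mod}\text{-}A/AeA$: any $X \in D_{\mathrm{fg}}B$ has bounded cohomology (finite generation over a finite-dimensional algebra forces finite-dimensionality over $k$), so is a finite iterated cone of shifts of its cohomology modules. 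Each such module admits a finite composition series with simple factors in $\mathcal{S}$, whence $D_{\mathrm{fg}}B = \thick_{D(B)}(\mathcal{S})$, which sits inside $\per B$ by the hypothesis on simples.

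For $\mathcal{P} = \per_{\mathrm{fg}}B$: the inclusion $\mathcal{P} \subseteq \per_{\mathrm{fg}}B$ is immediate from the definition of $\mathcal{P}$, while the reverse amounts to showing $i_*(\per_{\mathrm{fg}}B) \subseteq \per A$. Under (1) the argument is identical to the one above. Under (2), having identified $\per_{\mathrm{fg}}B$ with $\thick_{D(B)}(\mathcal{S})$, one pushes forward along the fully faithful $i_*$ to obtain $i_*(\per_{\mathrm{fg}}B) \subseteq \thick_{D(A)}(i_*\mathcal{S})$ and reduces to showing that each simple, viewed as an $A$-module, is perfect over $A$.

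The main obstacle is precisely this last reduction: $i_*$ does not in general preserve compact objects, so transferring perfectness of the simples from over $B$ to over $A$ requires genuine input beyond the bare statement of (2). In the applications of this lemma (notably in the proof of \ref{dsgsmooth}), this is afforded by strengthening (2) to the condition that all finitely generated $A/AeA$-modules have finite projective dimension over $A$, from which perfectness over $A$ of each simple — and thus of every object of $i_*(\per_{\mathrm{fg}}B)$ — follows immediately.
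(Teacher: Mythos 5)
Your treatment of case (1), and your proof of $\per_{\mathrm{fg}}B = D_{\mathrm{fg}}B$ under (2), match the paper's reasoning (the paper's argument for (2) is just "argue as in \ref{persrmk}", which is a terser version of the t-structure/composition-series argument you spell out). Your main observation is the substantive one: establishing $\mathcal{P} = \per_{\mathrm{fg}}B$ under hypothesis (2) requires the simples to be perfect over $A$, not merely over $B$. Since $\mathcal{P} = \per A \cap \per_{\mathrm{fg}}B$ and, under (2), $\per_{\mathrm{fg}}B = \thick_{D(B)}(\mathcal{S})$, the needed inclusion is $i_*\thick_{D(B)}(\mathcal{S}) \subseteq \per A$, which reduces to $\mathcal{S}\subseteq\per A$. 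As you note, $i_*$ (restriction of scalars along $A\to B$) does not preserve compacts in general; it does so iff $B$ is perfect over $A$, which typically fails here since $\dq$ has cohomology in infinitely many negative degrees (\ref{derquotcohom}). Note moreover that \ref{persrmk}, the paper's cited justification for case (2), itself hypothesizes perfectness over $A$ and in any event only addresses the second equality, not $\mathcal{P} = \per_{\mathrm{fg}}B$.

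So you have correctly located a gap in the lemma as stated: hypothesis (2) should require perfectness of the simples over $A$, which is the strictly stronger condition (perfectness over $A$ implies perfectness over $B$ because $i^*=-\lot_A B$ preserves compacts and $i^*i_*\simeq\mathrm{id}$, but not conversely). You are also right that the only chain of application — through \ref{kerscor} into \ref{dsgsmooth} — assumes all finitely generated $A/AeA$-modules have finite projective dimension over $A$, which gives perfectness of the simples over $A$ directly, so the downstream results are unaffected. In short, you have not fallen short of the paper's proof; you have found an error in its hypothesis.
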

	\begin{proof}
		In the first case, to show the first equality use that all $A/AeA$-modules are necessarily perfect over $A$ and so $\per_\mathrm{fg}B\subseteq \per A$. To show the second equality, use that $i^*$ respects compact objects and so we have $D_{\text {fg}}B \cong i^*i_*D_{\text {fg}}B\subseteq \per B$. In the second case, argue as in \ref{persrmk}.
		\end{proof}
	
	\begin{prop}\label{kersspan}
		There are inclusions $$\per_\mathrm{fg}B\supseteq i^*\mathcal{P} \subseteq \ker \Sigma$$between subcategories of $\per B$. If either of the two conditions of \ref{ksigl} holds, the leftmost inclusion becomes an equality and hence we have $$\per_\mathrm{fg}B \subseteq \ker \Sigma.$$If $\frac{\per A}{j_!\per R}$ is idempotent complete, then the rightmost inclusion becomes an equality and hence we have $$\per_\mathrm{fg}B \supseteq \ker \Sigma.$$
		\end{prop}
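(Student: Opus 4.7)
The plan is to use the identification $\ker\Pi=\overline{\mathcal P}$ from the immediately preceding proposition, together with the factorisation $\Sigma=(j^*)^\omega\circ G^\omega\circ (i^*)^{-1}$ of the singularity functor from \ref{kymap}, and to transport information along the Karoubi-completion equivalence $i^*\colon (\per A/j_!\per R)^\omega \xrightarrow{\simeq} \per(\dq)$ of \ref{flem}. Both inclusions are then essentially formal, and the two equality statements correspond to the two hypotheses that respectively eliminate the Karoubi completion on the left-hand side and on the right-hand side.

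For the two inclusions I argue directly. For $i^*\mathcal P\subseteq\per_\mathrm{fg}B$: by definition every $X\in\mathcal P=\per A\cap i_*\per_\mathrm{fg}B$ has the form $i_*Y$ for some $Y\in\per_\mathrm{fg}B$, and full faithfulness of $i_*$ gives $i^*X\cong Y\in\per_\mathrm{fg}B$. For $i^*\mathcal P\subseteq\ker\Sigma$: for $X\in\mathcal P\subseteq\per A$ one has $(i^*)^{-1}(i^*X)\cong\overline X$ in $(\per A/j_!\per R)^\omega$, and by the preceding proposition $\overline X\in\overline{\mathcal P}=\ker\Pi$, so applying the remaining factor $(j^*)^\omega\circ G^\omega$ of $\Sigma$ produces zero.

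For the equalities I handle each hypothesis separately. Under either condition of \ref{ksigl}, the lemma supplies $\mathcal P=i_*\per_\mathrm{fg}B$ as subcategories of $D(A)$, so applying $i^*$ and using $i^*i_*\cong\id$ immediately gives $i^*\mathcal P=\per_\mathrm{fg}B$, hence $\per_\mathrm{fg}B\subseteq\ker\Sigma$. Under the assumption that $\per A/j_!\per R$ is already idempotent complete, the equivalence of \ref{flem} specialises to a plain equivalence $\per A/j_!\per R\xrightarrow{\simeq}\per(\dq)$ and $\Sigma$ factors as $j^*\circ G\circ (i^*)^{-1}$; transporting kernels then gives $\ker\Sigma=i^*\ker(j^*\circ G)=i^*\ker\Pi=i^*\overline{\mathcal P}$, and this equals $i^*\mathcal P$ because the projection $\mathcal P\to\overline{\mathcal P}$ is an equivalence (as already noted in the proof of the preceding proposition).

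I expect the only real obstacle to be bookkeeping: keeping track of whether $\mathcal P$ is being viewed inside $\per A$ or inside $i_*\per_\mathrm{fg}B\subseteq D(A)$, and whether one is working in $\per A/j_!\per R$ or in its Karoubi envelope. The role of the idempotent-completeness hypothesis on $\per A/j_!\per R$ in the right-hand equality is exactly to ensure that the pullback of $\ker\Sigma$ along $(i^*)^{-1}$ lands inside $\per A/j_!\per R$ rather than only inside its Karoubi envelope, so that the identification with $\ker\Pi=\overline{\mathcal P}$ is legitimate.
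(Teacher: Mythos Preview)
Your argument is correct and follows essentially the same route as the paper's proof. The paper packages the induced map $\frac{\per A}{j_!\per R}\to\per B$ as a single functor $\iota$ and phrases the kernel computation as $\ker\Sigma=\iota^\omega\ker\Pi^\omega$, but the substance is identical: both inclusions come from transporting $\overline{\mathcal P}=\ker\Pi$ along the Karoubi-completed equivalence, and the two equality statements arise exactly as you describe, by eliminating the need for Karoubi completion on one side or the other.
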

	\begin{proof}
	
	We have an exact sequence $$0 \to j_!\per R \to \per A \xrightarrow{i^*} \per B$$which becomes right exact upon idempotent completion. So $i^*$ descends to a fully faithful map $$\iota:\frac{\per A}{j_!\per R} \to \per B$$ that becomes an equivalence on idempotent completion. Because $\overline {\mathcal{P}}$ is a copy of $\mathcal{P}$, we have $\iota\overline {\mathcal{P}} = i^*\mathcal{P}$. We have $i^*\per_\mathrm{fg}B = \per_\mathrm{fg}B$, because $i^*$ is the identity on $D(B)$. Hence $i^*\mathcal{P} \subseteq \per_\mathrm{fg}B$, and in the situation of \ref{ksigl} this inclusion is an equality.
	
	\p Passing to idempotent completions, observe that $\mathcal{P}\cong \overline{\mathcal{P}}$ is already idempotent complete. It follows that $\mathcal{P}$ is naturally a subcategory of $\left({\frac{\per A}{j_!\per R}}\right)^\omega$. We have $\iota^\omega (\overline{\mathcal{P}}^\omega) = i^*\mathcal{P}$ as subcategories of $\per B$. We have a span $$\per B \xleftarrow{\iota}\frac{\per A}{j_!\per R} \xrightarrow{\Pi} D_\mathrm{sg}R$$that upon idempotent completion becomes a span $$\per B \xleftarrow{\simeq}\left({\frac{\per A}{j_!\per R}}\right)^\omega \xrightarrow{\Pi^\omega} D_\mathrm{sg}R$$where if we invert the first arrow we get precisely the singularity functor $\Sigma$. So we have $\ker \Sigma = \iota^\omega\ker \Pi^\omega $. Certainly $\mathcal{P}\subseteq \ker \Pi^\omega$. So $i^*\mathcal{P}\subseteq \ker \Sigma$. 
	
	\p When $\frac{\per A}{j_!\per R}$ is idempotent complete, $\iota$ is already an equivalence. Hence we do not need to take the idempotent completion and so we have $\ker \Sigma = \iota (\ker \Pi) = \iota \overline{\mathcal{P}} = i^*\mathcal{P}$.
		\end{proof}

	\begin{cor}\label{kerscor}
		Suppose that \begin{enumerate}
			\item Either of the two conditions of \ref{ksigl} holds.
						\item $\frac{\per A}{j_!\per R}$ is idempotent complete.
			\end{enumerate}
		Then the kernel of $\Sigma$ is precisely $\per_{\mathrm{fg}}B$.
		\end{cor}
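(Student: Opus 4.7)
The plan is straightforward: Corollary \ref{kerscor} is an immediate consequence of Proposition \ref{kersspan} combined with Lemma \ref{ksigl}. Indeed, \ref{kersspan} gives an ambient chain $\per_\mathrm{fg}B \supseteq i^*\mathcal{P} \subseteq \ker\Sigma$ inside $\per B$, and each of the two hypotheses of the corollary is tailored to force one of these containments to become an equality.

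First I would use hypothesis (1). Under either of the two conditions of \ref{ksigl}, that lemma identifies $\mathcal{P} = \per(A)\cap\per_\mathrm{fg}B$ with $\per_\mathrm{fg}B$ itself. Since $i^*$ restricts to the identity on $D(B)\subseteq D(A)$, we have $i^*\mathcal{P} = \per_\mathrm{fg}B$, so the leftmost inclusion of \ref{kersspan} collapses. This already yields $\per_\mathrm{fg}B \subseteq \ker\Sigma$.

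Next I would invoke hypothesis (2). Because $\per A / j_!\per R$ is already idempotent complete, the functor $\iota:\per A / j_!\per R \to \per B$ appearing in the proof of \ref{kersspan} is an equivalence with no Karoubi completion needed. Tracing through that argument, the kernel of $\Sigma$ is then exactly $\iota(\ker\Pi) = i^*\mathcal{P}$, which by the previous paragraph coincides with $\per_\mathrm{fg}B$. Combining the two inclusions gives the desired equality $\ker\Sigma = \per_\mathrm{fg}B$.

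There is no substantive obstacle: \ref{kersspan} has done all the work, and the corollary merely records that its two partial collapses happen under complementary hypotheses. The one point to keep straight is that hypothesis (2) alone only shows $\ker\Sigma = i^*\mathcal{P}$, and it is precisely hypothesis (1) that identifies $i^*\mathcal{P}$ with $\per_\mathrm{fg}B$, so both directions land at the same target.
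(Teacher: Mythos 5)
Your argument is correct and is essentially the paper's own reading of Corollary~\ref{kerscor} as an immediate consequence of Proposition~\ref{kersspan}: the paper states (without a separate written proof) that hypothesis (1) forces $\per_\mathrm{fg}B = i^*\mathcal{P} \subseteq \ker\Sigma$ and hypothesis (2) forces $i^*\mathcal{P} = \ker\Sigma$, and combining gives the equality. The only (harmless) stylistic difference is that the paper derives the reverse inclusion $\ker\Sigma\subseteq\per_\mathrm{fg}B$ from hypothesis (2) alone via the always-true inclusion $i^*\mathcal{P}\subseteq\per_\mathrm{fg}B$, whereas you derive the final equality by chaining both hypotheses; either route is fine.
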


	\begin{rmk}
		In general, without idempotent completions we have a sequence of functors $$\Sigma\vert_{\im \iota}: \im\iota \xrightarrow{\iota^{-1}}\frac{\per A}{j_!\per R} \to D_\mathrm{sg}R.$$ Clearly $B \in \im \iota$, because $B = i^*A$. The image of $\Sigma\vert_{\im \iota}$ may be smaller than the image of $\Sigma$, because $\im \iota$ need not be thick (it is thick if and only if $\iota$ was surjective in the first place). However we still of course have $Ae \in \im (\Sigma\vert_{\im \iota})$. It's not hard to see that $\ker(\Sigma\vert_{\im \iota}) = i^*\mathcal{P}\subseteq \per_\mathrm{fg}B$. Under the conditions of \ref{ksigl}, we hence have $\ker(\Sigma\vert_{\im \iota}) =  \per_\mathrm{fg}B$.
		\end{rmk}

	\section{Hypersurfaces and periodicity}
	When $R$ is a complete local isolated hypersurface singularity, a famous theorem of Eisenbud \cite{eisenbudper} states that singularity category of $R$ is 2-periodic, in the sense that the syzygy functor $\Omega$ squares to the identity (and in particular $\Omega \cong \Omega^{-1}$). In this section, we show that this periodicity is detected in the derived exceptional locus of a noncommutative partial resolution of $R$. We extend \ref{qisolem} to show that in this situation the comparison map $\Xi:\dq \to \R\underline{\enn}_R(M)$ is a derived localisation at a `periodicity element' $\eta \in H^{-2}(\dq)$. This will give us very tight control over the relationship between $\dq$ and $D_\mathrm{sg}(R)$.
	
	\subsection{Periodicity in the singularity category}
	When $R$ is a commutative complete local hypersurface singularity, it is well-known that the syzygy functor is 2-periodic:
	\begin{thm}[Eisenbud {\cite[6.1(\textit{ii})]{eisenbudper}}]\label{ebudper}
		Let $R$ be a commutative complete local hypersurface singularity over $k$. Then $\Omega^2\cong \id$ as an endofunctor of $\stab R$.
	\end{thm}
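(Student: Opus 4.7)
The plan is to establish the classical theorem via matrix factorisations. Write $R = S/(\sigma)$ where $S = k\llbracket x_1,\ldots,x_n\rrbracket$ is regular local and $\sigma \in \mathfrak{m}_S$ is nonzero, and fix any MCM $R$-module $M$. The strategy is to build a 2-periodic $R$-free resolution of $M$, from which $\Omega^2 M \cong M$ in $\stab R$ will fall out immediately, and then argue that this isomorphism is natural.

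First, I would observe that $M$ has projective dimension at most $1$ when regarded as an $S$-module. Indeed, since $\sigma$ is a non-zero-divisor in $S$ and $R = S/(\sigma)$, depth over $R$ and depth over $S$ coincide for $R$-modules; since $M$ is MCM over $R$ we have $\mathrm{depth}_S M = \mathrm{depth}_R M = \dim R = \dim S - 1$, so Auslander--Buchsbaum over the regular ring $S$ gives $\mathrm{pd}_S M = 1$. Thus $M$ admits a minimal $S$-free resolution $0 \to S^n \xrightarrow{\phi} S^n \to M \to 0$ (ranks agree because minimality forces injectivity at the top of the resolution and $\sigma M = 0$).

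Next, since $\sigma$ annihilates $M$, multiplication by $\sigma$ on this resolution is null-homotopic. The null-homotopy produces a map $\psi:S^n \to S^n$ with $\phi\psi = \psi\phi = \sigma \cdot \mathrm{id}$; this is a matrix factorisation of $\sigma$. Reducing modulo $\sigma$ and splicing, the sequence
\[ \cdots \to R^n \xrightarrow{\bar\psi} R^n \xrightarrow{\bar\phi} R^n \xrightarrow{\bar\psi} R^n \xrightarrow{\bar\phi} R^n \to M \to 0 \]
is exact (the key point being that the kernel of $\bar\phi$ is the image of $\bar\psi$, which follows from $\phi\psi = \sigma\cdot\mathrm{id}$ together with the fact that $\phi$ is injective on $S^n$ modulo $\sigma$-torsion) and provides a 2-periodic free resolution of $M$ over $R$. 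Reading off syzygies, $\Omega M \cong \mathrm{coker}(\bar\psi)$ and $\Omega^2 M \cong M$ in $\stab R$.

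The main subtlety is promoting these pointwise isomorphisms $\Omega^2 M \xrightarrow{\cong} M$ to a natural isomorphism of functors on $\stab R$. For this I would note that the construction of $\Omega$ is already functorial on the stable category (\ref{weaksyz} and the following proposition), and that the iso $\Omega^2 M \cong M$ above is induced by the canonical comparison of any two free resolutions of $M$: given a stable map $f: M \to N$, lift it to a chain map of the chosen 2-periodic free resolutions; the resulting morphism on second syzygies agrees, up to stable equivalence, with $f$ itself under the identifications $\Omega^2 M \cong M$, $\Omega^2 N \cong N$. The hardest bookkeeping here is checking well-definedness across the choices involved in the matrix factorisation, but any two matrix factorisations of $\sigma$ lifting the same MCM module are homotopy equivalent as 2-periodic complexes (a standard lifting argument using $\mathrm{pd}_S M = 1$), so the induced isomorphism in $\stab R$ is canonical. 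This yields the natural isomorphism $\Omega^2 \cong \id$.
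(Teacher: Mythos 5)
Your argument is correct and is exactly Eisenbud's: the paper itself does not prove this statement (it cites Eisenbud's 1980 paper), but the accompanying remark sketches precisely the matrix-factorisation / periodic-minimal-resolution route you take, so you and the paper agree in approach. The one point worth tightening is the naturality step: the cleanest packaging of your ``lift $f$ and compare syzygies'' argument is to observe that $(\phi,\psi)\mapsto\mathrm{coker}(\bar\phi)$ induces an equivalence between the homotopy category of matrix factorisations of $\sigma$ and $\stab R$ (this is where the homotopy-uniqueness of factorisations enters), under which $\Omega$ corresponds to the swap $(\phi,\psi)\mapsto(\psi,\phi)$, whose square is literally the identity; your well-definedness remark is then exactly the verification that this equivalence is an equivalence.
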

	\begin{rmk}
		Eisenbud proves that minimal free resolutions of a MCM module without free summands are 2-periodic. Adding free summands if necessary, one can show that MCM modules always admit 2-periodic resolutions. One can decompose these into short exact sequences to see that the syzygies of $M$ are 2-periodic as claimed.
	\end{rmk}
	
	Recall that $\underline{\ext}^j_R \cong \ext^j_R$ for $j>0$ (\ref{stabextisext}). Combining this with AR duality \ref{arduality} immediately gives us:
	\begin{prop}\label{arduality2}
		Let $R$ be a commutative complete local isolated hypersurface singularity over $k$. Let $M,N$ be MCM $R$-modules. If the Krull dimension of $R$ is even, then one has an isomorphism $$\underline{\hom}_R(M,N) \cong {\ext}_R^1(N,M)^*.$$If the Krull dimension of $R$ is odd, then one has an isomorphism $$\underline{\hom}_R(M,N) \cong {\ext}_R^1(\Omega N,M)^*.$$
	\end{prop}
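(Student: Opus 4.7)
The plan is to combine Auslander–Reiten duality (\ref{arduality}) with Eisenbud's periodicity theorem (\ref{ebudper}); indeed, the phrase ``immediately gives us'' preceding the statement signals that the proof is little more than specialising AR duality using that $\Omega^2 \cong \id$ on $\stab R$. AR duality provides, for any complete local Gorenstein isolated singularity of Krull dimension $d$, the canonical isomorphism $\underline{\hom}_R(M,N) \cong \ext^1_R(N, \Omega^{2-d}M)^*$, so the entire problem reduces to simplifying the exponent $2-d$ modulo the 2-periodicity available in the hypersurface case.

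When $d$ is even, $2-d$ is even, hence $\Omega^{2-d}M \cong M$ in $\stab R$. Since $M,N$ are MCM and $\ext^1$ only depends on $M$ through its class in $\stab R$ (by \ref{stabextisext}, where $\ext^1 \cong \underline{\ext}^1$ for MCM modules), we get $\ext^1_R(N, \Omega^{2-d}M) \cong \ext^1_R(N,M)$ and the first formula falls out.

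When $d$ is odd, $2-d$ is odd, so periodicity gives $\Omega^{2-d}M \cong \Omega M$, yielding $\underline{\hom}_R(M,N) \cong \ext^1_R(N, \Omega M)^*$. The remaining task is to move the $\Omega$ from the second argument into the first. For this I would pass to stable Ext via \ref{stabextisext}, use that $\Omega$ is an autoequivalence of $\stab R$ to rewrite $\underline{\ext}^1_R(N, \Omega M) \cong \underline{\ext}^1_R(\Omega^{-1}N, M)$, and then invoke periodicity one more time to identify $\Omega^{-1} \cong \Omega$ in the first slot. Converting back to ordinary Ext produces $\ext^1_R(\Omega N, M)^*$, as required.

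There is no serious obstacle: the only subtlety is keeping careful track of the identification of the triangulated shift on $\stab R$ with $\Omega^{-1}$, so that the transposition of $\Omega$ between the two arguments of $\ext^1$ is performed consistently. Once that bookkeeping is done, the two claimed formulas follow from AR duality and Eisenbud periodicity in a single line each.
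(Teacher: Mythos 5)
Your argument is correct and is precisely the one the paper intends (the paper offers no written proof, only the phrase ``Combining this with AR duality immediately gives us''). You specialise AR duality $\underline{\hom}_R(M,N) \cong \ext^1_R(N,\Omega^{2-d}M)^*$ using Eisenbud's $\Omega^2\cong\id$ to reduce the exponent modulo $2$, and then use \ref{stabextisext} to transfer the $\Omega$ between arguments via stable Ext; this is exactly the expected route and there is no gap.
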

	We can immediately deduce that in the odd-dimensional case, the stable endomorphism algebra is a symmetric algebra:
	\begin{prop}
		Let $R$ be a commutative complete local isolated hypersurface singularity of odd Krull dimension over $k$. Let $M$ be a MCM $R$-module. Then the stable endomorphism algebra $\Lambda\coloneqq\underline{\enn}_R(M)$ is a symmetric algebra; i.e.\ there is an isomorphism of $\Lambda$-bimodules $\Lambda\cong \Lambda^*$ between $\Lambda$ and its linear dual $\Lambda^*$.
	\end{prop}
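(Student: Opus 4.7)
The plan is to combine Auslander--Reiten duality with Eisenbud's periodicity. First, apply \ref{arduality2} with $N=M$ to obtain a $k$-linear isomorphism $\Lambda \cong \ext^1_R(\Omega M, M)^*$. The key point is that AR duality furnishes a \emph{natural} isomorphism of bifunctors $\underline{\hom}_R(M,N) \cong D\ext^1_R(\Omega N, M)$ on $\stab R \times (\stab R)^{\mathrm{op}}$. Evaluating at $M=N$, naturality in the first slot of $\underline{\hom}_R(-,N)$ (equivalently, in $M$ on the right) promotes the vector-space isomorphism to a right $\Lambda$-equivariant one, and naturality in $N$ (transported through $\Omega$) promotes it further to a $\Lambda$-bimodule isomorphism
\[
\Lambda \;\cong\; D\ext^1_R(\Omega M, M).
\]

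Next, I would identify $\ext^1_R(\Omega M, M)$ with $\Lambda$ as a $\Lambda$-bimodule. By \ref{stabextisext} we have $\ext^1_R(\Omega M, M) \cong \underline{\ext}^1_R(\Omega M, M)$, and since the shift in the triangulated category $\stab R$ is $\Omega^{-1}$, this equals $\underline{\hom}_R(\Omega M, \Omega^{-1}M)$. Eisenbud's periodicity (\ref{ebudper}) supplies a natural isomorphism $\theta\colon \Omega^{-1} \xrightarrow{\simeq} \Omega$ of endofunctors of $\stab R$. Postcomposition with $\theta$ and then applying the autoequivalence $\Omega^{-1}$ gives
\[
\underline{\hom}_R(\Omega M, \Omega^{-1} M) \xrightarrow{\;\theta_*\;} \underline{\hom}_R(\Omega M, \Omega M) \xrightarrow{\;\Omega^{-1}\;} \underline{\hom}_R(M,M) = \Lambda.
\]
Naturality of $\theta$, together with the fact that $\Omega$ (and hence $\Omega^{-1}$) is an autoequivalence compatible with composition, ensures that the composite is a $\Lambda$-bimodule isomorphism. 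Chaining everything together yields $\Lambda \cong D\Lambda = \Lambda^*$ as $\Lambda$-bimodules, which is the definition of a symmetric algebra.

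The main obstacle is the bookkeeping: the underlying $k$-linear isomorphism is essentially immediate from \ref{arduality2} and \ref{ebudper}, but upgrading it to a bimodule isomorphism requires tracking how left versus right multiplication by $\lambda\in\Lambda$ is transported through each of the natural transformations above. Conceptually this is forced by functoriality, but one has to commute $\Omega^{-1}\lambda$ past $\theta$ using naturality of $\theta$ and identify $\underline{\enn}(\Omega M)$ with $\Lambda$ via the algebra isomorphism $\Omega$; once this is done the bimodule equivariance on both sides falls out.
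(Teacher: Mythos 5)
Your proof is correct and follows essentially the same route as the paper's: apply AR duality (\ref{arduality2}) to get a functorial, hence bimodule, isomorphism $\underline{\hom}_R(M,N)\cong\ext^1_R(\Omega N,M)^*$, then use \ref{stabextisext} together with 2-periodicity of the shift to identify $\ext^1_R(\Omega N,M)$ with $\underline{\hom}_R(N,M)$, and finally set $N=M$. The only cosmetic difference is that the paper keeps $N$ general until the last line and leaves the use of $\Omega^2\cong\id$ implicit in the phrase ``because $\Omega$ is the shift,'' whereas you set $N=M$ immediately and spell out the periodicity witness $\theta\colon\Omega^{-1}\xrightarrow{\sim}\Omega$; both are valid ways of tracking the same bimodule bookkeeping.
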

	\begin{proof}
		This is essentially \cite[7.1]{bikr}; we follow the explicit proof given in \cite[3.3]{jennyf}. Let $N$ be another MCM $R$-module and put $\Gamma\coloneqq \underline{\enn}_R(N)$. Because $R$ has odd Krull dimension, \ref{arduality2} tells us that we have a functorial isomorphism $$\underline{\hom}_R(M,N) \cong {\ext}_R^1(\Omega N,M)^*$$of $\Lambda$-$\Gamma$-bimodules. But stable Ext agrees with usual Ext in positive degrees by \ref{stabextisext}, so we have functorial isomorphisms $${\ext}_R^1(\Omega N,M) \cong \underline{\ext}_R^1(\Omega N,M)\cong \underline{\ext}_R^0(N,M)\cong \underline{\hom}_R(N,M)$$of $\Gamma$-$\Lambda$-bimodules, because $\Omega$ is the shift. Hence we get a functorial isomorphism $$\underline{\hom}_R(M,N) \cong \underline{\hom}_R(N,M)^*$$of $\Lambda$-$\Gamma$-bimodules. Now put $N=M$.
	\end{proof}

	\begin{defn}\label{rigiddefn}
		Call $M \in \stab R$ \textbf{rigid} if $\ext_R^1(M,M)\cong 0$.
	\end{defn}
	The following is clear:
	\begin{cor}
		Let $R$ be a commutative complete local isolated hypersurface singularity over $k$ of even Krull dimension. If $M$ is a MCM module then it is rigid if and only if it is projective.
	\end{cor}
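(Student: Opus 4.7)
The statement is immediate once one unpacks the even-dimensional case of the AR duality formula recorded in \ref{arduality2}. I would structure the argument as two separate implications.

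For the ``only if'' direction, the plan is to observe that a projective MCM module $M$ trivially satisfies $\ext_R^1(M,M)\cong 0$, so it is rigid. (This does not use the hypersurface or even-dimensional hypothesis.)

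For the ``if'' direction, which is the content, I would apply \ref{arduality2} with $N=M$. Because the Krull dimension of $R$ is even, this gives a functorial isomorphism
\[
\underline{\hom}_R(M,M)\cong \ext_R^1(M,M)^\ast.
\]
If $M$ is rigid then the right-hand side vanishes, and therefore so does $\underline{\hom}_R(M,M)$. In particular the stable class of $\id_M$ is zero, i.e.\ $\id_M$ factors through a projective $R$-module $P$ as $\id_M=qp$ for some $p:M\to P$ and $q:P\to M$. This exhibits $M$ as a direct summand of $P$, and since $P$ is projective and $M$ is finitely generated, $M$ is itself projective.

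There is essentially no obstacle here: the only nontrivial input is the evenness of the Krull dimension, which is exactly what selects the relevant form of AR duality in \ref{arduality2} (the odd-dimensional form involves an extra $\Omega$ and does not yield this clean statement). The argument is formal once that isomorphism is in hand.
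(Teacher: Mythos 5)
Your argument is correct and is exactly the one the paper leaves implicit (the paper simply says ``the following is clear'' after stating the even-dimensional form of AR duality in \ref{arduality2}). Setting $N=M$ in that isomorphism, rigidity kills $\ext_R^1(M,M)$ and hence $\underline{\hom}_R(M,M)$, forcing $M\cong 0$ in $\stab R$, i.e.\ $M$ projective; the converse is trivial.
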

	We will show that 2-periodicity in the singularity category is detected by the derived stable hom-complexes and the derived stable endomorphism algebras. As a warm-up, we will show that this periodicity appears in the stable Ext-algebras.
	
	\begin{lem}
		Let $R$ be a commutative complete local hypersurface singularity over $k$. Then there are functorial isomorphisms $\underline{\ext}_R^j(M,N)\cong \underline{\ext}_R^{j-2}(M,N)$ for all MCM $R$-modules $M$ and $N$.
	\end{lem}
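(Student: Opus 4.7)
The plan is to deduce the claimed $2$-periodicity of stable $\ext$ directly from Eisenbud's theorem \ref{ebudper}. Recall that the stable category $\stab R$ is triangulated with translation given by the inverse syzygy $\Omega^{-1}$, so by the standard identification one has a functorial isomorphism
\[
\underline{\ext}^j_R(M,N) \;\cong\; \underline{\hom}_R\bigl(M,\Omega^{-j}N\bigr)
\]
for all integers $j$ and all MCM modules $M,N$.

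Next, I would invoke \ref{ebudper}, which provides a natural isomorphism of endofunctors $\Omega^2 \cong \id$ on $\stab R$. Composing with $\Omega^{-j}$ gives a natural isomorphism $\Omega^{-j} \cong \Omega^{-j+2}$ of endofunctors of $\stab R$. Applying this naturally in the second argument yields
\[
\underline{\hom}_R\bigl(M,\Omega^{-j}N\bigr) \;\cong\; \underline{\hom}_R\bigl(M,\Omega^{-(j-2)}N\bigr) \;\cong\; \underline{\ext}^{j-2}_R(M,N),
\]
and the composite of these isomorphisms is functorial in both $M$ and $N$ because each step is natural.

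There is no real obstacle here: everything is formal once Eisenbud's periodicity is granted. The only small point to be careful about is that the shift in $\stab R$ is $\Omega^{-1}$ rather than $\Omega$ (so one should keep track of signs in the exponent), but since $\Omega^2\cong\id$ forces $\Omega\cong\Omega^{-1}$, this bookkeeping is harmless and $2$-periodicity holds in either direction.
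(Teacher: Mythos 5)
Your proof is correct and uses essentially the same idea as the paper: invoke Eisenbud's periodicity $\Omega^2\cong\id$ together with the fact that $\Omega^{-1}$ is the shift in $\stab R$. The paper phrases the argument at the level of the derived hom-complexes $\R\underline{\hom}_R(M,N)$ and then takes cohomology, whereas you work directly with stable $\hom$ groups via the identification $\underline{\ext}^j_R(M,N)\cong\underline{\hom}_R(M,\Omega^{-j}N)$, but this is only a cosmetic difference.
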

	\begin{proof}
		There are quasi-isomorphisms $$\R\underline{\hom}_R(M,N)\simeq \R\underline{\hom}_R(M,\Omega^{-2}N)\simeq \R\underline{\hom}_R(M,N)[-2]$$where the first exists by assumption and the second exists since $\Omega^{-1}$ is the shift functor of $\stab R$. Now take cohomology.
	\end{proof}
	\begin{cor}\label{perext}
		Let $R$ be a commutative complete local hypersurface singularity over $k$, and let $M,N$ be MCM $R$-modules. Then for any integers $i,j$ with $i> -j/2$, there are functorial isomorphisms $\underline{\ext}_R^{j}(M,N)\cong \ext_R^{j+2i}(M,N)$. In particular, if $j<0$ then one has an isomorphism $\underline{\ext}_R^{j}(M,N)\cong {\ext}_R^{-j}(M,N)$. 
	\end{cor}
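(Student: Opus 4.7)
The proof is essentially a direct two-step combination of results that have already been established, so the plan is short.

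First, I would iterate the preceding lemma. That lemma supplies a functorial isomorphism
\[
\underline{\ext}_R^{j}(M,N) \cong \underline{\ext}_R^{j-2}(M,N),
\]
and by inverting the isomorphism and iterating $|i|$ times in the appropriate direction, one obtains for every integer $i$ a functorial isomorphism
\[
\underline{\ext}_R^{j}(M,N) \cong \underline{\ext}_R^{j+2i}(M,N).
\]

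Second, under the hypothesis $i > -j/2$, the shifted index satisfies $j+2i > 0$, so \ref{stabextisext}(1) applies and yields a functorial isomorphism $\underline{\ext}_R^{j+2i}(M,N) \cong \ext_R^{j+2i}(M,N)$. Composing these two functorial isomorphisms gives the desired statement.

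For the ``in particular'' clause, given $j<0$, one checks that $i = -j$ satisfies $-j > -j/2$, which is equivalent to $j<0$; with this choice $j + 2i = -j > 0$, so the general statement specialises to $\underline{\ext}_R^{j}(M,N) \cong \ext_R^{-j}(M,N)$. There is no real obstacle here — everything is an immediate consequence of results already in place (\ref{ebudper} via the preceding lemma, and \ref{stabextisext}); the only thing to check is that the numerical condition $i > -j/2$ is exactly what is needed to land the shifted exponent in the range where stable Ext coincides with ordinary Ext.
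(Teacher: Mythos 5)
Your proposal is correct and follows essentially the same route as the paper: iterate the 2-periodicity isomorphism from the preceding lemma to shift the degree by $2i$, observe that $i > -j/2$ forces $j+2i > 0$ so that \ref{stabextisext}(1) identifies the stable Ext with the ordinary Ext, and take $i=-j$ for the final clause. Nothing to add.
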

	\begin{proof}
		Periodicity in the stable Ext groups gives isomorphisms $\underline{\ext}_R^{j}(M,N)\cong \underline{\ext}_R^{j+2i}(M,N)$. By assumption, $j+2i>0$ so that $\underline{\ext}_R^{j+2i}(M,N)$ agrees with the usual Ext group. The second assertion follows from taking $i=-j$.
	\end{proof}
	Recall that by definition each MCM $R$-module $M$ comes with a syzygy exact sequence $$0\to \Omega M \to R^a \to M \to 0$$and one in particular has exact sequences of the form $$0\to \Omega^{i+1} M \to R^{a_i}\to \Omega^{i}M \to 0$$for all $i\geq 0$. One can stitch these together into a finite-rank free resolution of $M$. In particular, if $\Omega^2\cong \id$ then one can take $\Omega^{i+2}M=\Omega^iM$, and stitch the syzygy exact sequences together into a $2$-periodic free resolution. The endomorphism algebra of such a resolution detects the 2-periodicity:
	\begin{defn}
		Let $R$ be a commutative complete local hypersurface singularity over $k$, and let $M$ be a MCM $R$-module. Let $\tilde M$ be a $2$-periodic free resolution of $M$. A \textbf{periodicity witness} for $\tilde M$ is a central cocycle $\theta \in \dge^{2}_R( \tilde M)$ whose components $\theta_i:\tilde M ^{i-2} \to \tilde M ^ i$ for $i \leq 0$ are identity maps, up to sign. 
	\end{defn}
	It is clear from the above discussion that periodicity witnesses exist. Because $\dge_R(\tilde M)$ is a model for the derived endomorphism algebra $\R\enn_R(M)$, a periodicity witness hence defines an element of $\ext^2_R(M,M)$. However, note that having a periodicity witness is not a homotopy invariant concept: an element of $\ext^2_R(M,M)$ always lifts to a cocycle in any model for $\R\enn_R(M)$, but need not lift to a central one whose components are identities. We will return to this later. Witnessing elements allow us to explicitly produce a periodic model for the derived stable endomorphism algebra:
	\begin{prop}\label{periodicend}
		Let $R$ be a commutative complete local hypersurface singularity over $k$, and let $M$ be a MCM $R$-module. Let $\tilde M$ be a $2$-periodic free resolution of $M$, with periodicity witness $\theta$. Then there is a quasi-isomorphism of dgas $$\R\underline{\enn}_R(M) \simeq \dge_{R}(\tilde M)[\theta^{-1}] .$$
	\end{prop}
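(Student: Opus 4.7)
The plan is to construct a dga map $\Phi: \dge_R(\tilde M)[\theta^{-1}] \to \R\underline{\enn}_R(M)$ via the universal property of derived localisation, and then verify it is a quasi-isomorphism by a cohomological computation using 2-periodicity.

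First, since $\tilde M$ is a K-projective resolution of $M$, the dga $\dge_R(\tilde M)$ is a concrete model for $\R\enn_R(M)$, and the canonical functor $\cat{mod}\text{-}R \to \stab R$ induces a dga map $\pi: \dge_R(\tilde M) \to \R\underline{\enn}_R(M)$. I would then check that $\pi([\theta])$ is invertible. By \ref{stabextisext}(1), $H^2\pi$ is the identification $\ext_R^2(M,M) \xrightarrow{\cong} \underline{\ext}_R^2(M,M)$, and by 2-periodicity (\ref{ebudper}, \ref{perext}) this group is canonically isomorphic to $\underline{\hom}_R(M,M)$. Since each component $\theta_i$ for $i \leq 0$ is the identity up to sign, unwinding the identifications shows that $\pi([\theta])$ corresponds to the class of $\id_M$ in $\underline{\hom}_R(M,M)$, which is a unit. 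By the universal property of derived localisation (\ref{derloc}) the map $\pi$ hence factors through a dga map
\[\Phi: \dge_R(\tilde M)[\theta^{-1}] \to \R\underline{\enn}_R(M).\]

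Second, I would verify $\Phi$ is a quasi-isomorphism by comparing cohomology in each degree. Because $[\theta]$ is central, derived localisation at $[\theta]$ can be computed on cohomology as the mapping telescope of multiplication by $[\theta]$, yielding
\[H^j\bigl(\dge_R(\tilde M)[\theta^{-1}]\bigr) \;\cong\; \varinjlim_n \ext_R^{j+2n}(M,M),\]
with transition maps given by left multiplication by $[\theta]$. For any fixed $j$, choose $n$ with $j + 2n > 0$. By \ref{stabextisext}(1) we may replace $\ext_R^{j+2n}(M,M)$ by $\underline{\ext}_R^{j+2n}(M,M)$, and by \ref{perext} this is canonically isomorphic to $\underline{\ext}_R^j(M,M)$. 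In the stable range (all $j + 2n > 0$), multiplication by $[\theta]$ realises precisely the periodicity isomorphism of \ref{perext}, so the telescope stabilises and its colimit is $\underline{\ext}_R^j(M,M) = H^j(\R\underline{\enn}_R(M))$. A diagram chase then confirms that these identifications agree with those induced by $H^j\Phi$.

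The main obstacle is handling the small-$j$ boundary of the telescope, in particular the transition from ordinary Ext to stable Ext. For instance at $j = 0$ the first transition map is precomposition with $\theta$, viewed as $\ext_R^0(M,M) = \hom_R(M,M) \to \ext_R^2(M,M) \cong \underline{\hom}_R(M,M)$; one must verify that it is exactly the stabilisation quotient $\hom_R(M,M) \onto \underline{\hom}_R(M,M)$, and then that all subsequent transitions are the 2-periodicity isomorphism. This can be done by choosing explicit representatives (the components $\theta_i$ being identities for $i \leq 0$ is exactly what makes the factorisation work). Once this is in place, compatibility with $\Phi$ is a routine consequence of the explicit construction of $\pi$ together with the universal property defining $\Phi$.
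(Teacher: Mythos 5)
Your proof is correct in outline but takes a genuinely different route from the paper. The paper's proof works entirely inside the Drinfeld quotient model of the dg singularity category (via \ref{drinfeldlem}): it exhibits the shifted resolutions $V_n = \tilde M[2n]$ as an ind-perfect system representing $M$ in $D^\mathrm{dg}_\mathrm{sg}(R)$, computes $\R\underline{\enn}_R(M)$ directly as $\varprojlim_m\varinjlim_n\dgh_R(V_m,V_n) \cong \varprojlim_m \dge_R(\tilde M)[-2m][\theta^{-1}]$, and observes that the inverse system is constant by 2-periodicity. Your proof instead constructs a comparison map $\Phi$ abstractly, using the universal property of derived localisation, and then verifies it is a quasi-isomorphism degree-by-degree via the telescope description. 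Each approach has merits: the paper's is self-contained and gives the quasi-isomorphism as an explicit construction, while yours is more conceptual and makes the ``localisation'' nature of the statement transparent. Two points in your version deserve a word of caution. First, the ``induced dga map $\pi$'' does not come strictly from a functor $\cat{mod}\text{-}R\to\stab R$ (these are not dg categories); it comes from the dg quotient $D^b_\mathrm{dg}(R)\to D^\mathrm{dg}_\mathrm{sg}(R)$, which exists only as a quasi-functor (or zigzag in $\mathrm{Hqe}$), so $\pi$ lives in the homotopy category of dgas --- this is fine for your argument but should be stated. Second, you invoke the universal property of \emph{derived} localisation to produce $\Phi$ landing in the Ore localisation $\dge_R(\tilde M)[\theta^{-1}]$; to make this precise you must cite the fact (from \cite{bcl}) that for a genuinely central cocycle the naive Ore localisation computes the derived localisation, since $\pi(\theta)$ is only invertible up to homotopy. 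With those two references supplied, and the boundary-of-telescope check at $j=0$ carried out as you sketch, your argument is a valid alternative proof.
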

	\begin{proof}We will use \ref{drinfeldlem}. Let $V_n$ be $\tilde M [2n]$, that is, $\tilde M$ shifted $2n$ places to the left. We see that the $V_n$ fit into a direct system with transition maps given by $\theta$. It is not hard to see that $\varinjlim_n V_n$ is acyclic. Projection $\tilde M \to V_n$ defines a map in $\cat{ind}(D^b(R))$ whose cone is clearly ind-perfect, since $V_n$ differs from $\tilde M$ by only finitely many terms. In other words, we have computed $$\R \underline{\enn}_R(M)\simeq \varprojlim_m\varinjlim_n\dgh_{ R}(V_m, V_n)$$Temporarily write $E$ for $\dge_{R}(\tilde M)$, so that $\dgh_{ R}(V_m, V_n)\cong E[2(n-m)]$. Now, the direct limit $\varinjlim_n E[2(n-m)]$ is exactly the colimit of $E[-2m]\xrightarrow{\theta}E[-2m]\xrightarrow{\theta}E[-2m]\xrightarrow{\theta}\cdots$, which is exactly $E[-2m][\theta^{-1}]$. This dga is $2$-periodic, and in particular $E[-2m][\theta^{-1}] \xrightarrow{\theta} E[-2(m+1)][\theta^{-1}]$ is the identity map. Hence $\varprojlim_m E[-2m][\theta^{-1}]$ is just $E[\theta^{-1}]$, as required.
	\end{proof}
	We can state a similar result for the derived stable hom-complexes. Morally, one gets these by periodicising the unstable derived hom-complexes:
	\begin{prop}\label{periodichom}
		Let $R$ be a commutative complete local hypersurface singularity over $k$, and let $M,N$ be MCM $R$-modules. Then the derived stable hom-complex $\R\underline{\hom}_R(M,N)$ admits a $2$-periodic model.
	\end{prop}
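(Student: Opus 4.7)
My plan is to mimic the proof of Proposition \ref{periodicend}, but with a pair of periodic resolutions rather than one. By Eisenbud's theorem \ref{ebudper} and the discussion preceding \ref{periodicend}, pick $2$-periodic free resolutions $\tilde M$, $\tilde N$ of $M$ and $N$ together with periodicity witnesses $\theta_M \in \dge_R^{2}(\tilde M)$ and $\theta_N \in \dge_R^{2}(\tilde N)$. Set $V_m \coloneqq \tilde M[2m]$ and $W_n \coloneqq \tilde N[2n]$; as in the proof of \ref{periodicend} these are ind-objects of $D^b_\mathrm{dg}(R)$ with acyclic colimits admitting maps from $\tilde M$ and $\tilde N$ with ind-perfect cones, so by \ref{drinfeldlem} they represent $M$ and $N$ in the dg singularity category.

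I then compute the hom in the Drinfeld quotient:
$$\R\underline{\hom}_R(M,N) \simeq \varprojlim_m \varinjlim_n \dgh_R(V_m, W_n).$$
Writing $H \coloneqq \dgh_R(\tilde M,\tilde N)$, we have $\dgh_R(V_m, W_n) \cong H[2(n-m)]$, with colimit transitions induced by post-composition with $\theta_N$ and limit transitions by pre-composition with $\theta_M$. The inner colimit is the localisation of $H[-2m]$ at post-composition with $\theta_N$, namely $H[-2m][\theta_N^{-1}]$, which is already $2$-periodic by construction. The outer inverse system has surjective transition maps by the same degreewise-freeness argument used at the end of the proof of \ref{qisolem}, so $\varprojlim$ agrees with the homotopy limit via Reedy fibrancy, and it suffices to verify that the transition maps are quasi-isomorphisms.

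The main obstacle is precisely this collapse: after inverting $\theta_N$, pre-composition by $\theta_M$ must become a quasi-isomorphism of complexes. At the level of cohomology this is immediate, because both pre-composition with $\theta_M$ and post-composition with $\theta_N$ induce the same $2$-periodicity isomorphism on $\underline{\ext}_R^\ast(M,N)$ coming from $\Omega^2 \cong \id$. To promote this to a chain-level homotopy, I would argue that $\theta_M$ and $\theta_N$ are two realisations of a single natural transformation $\id \cong \Omega^2$ in $\stab R$; the functoriality of this transformation delivers a homotopy $\theta_M \cdot f \simeq f \cdot \theta_N$ for every $f \in H$, which is exactly what is needed (an alternative is to sidestep this by observing that $\mathbf{CR}(M)$ is literally $2$-periodic for a hypersurface, so the chain-level enhancement of \ref{buchcohom} implicit in the proof of \ref{qisolem} already exhibits $\R\underline{\hom}_R(M,N) \simeq \dgh_R(\mathbf{CR}(M), N)$ as a $2$-periodic complex). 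Granting the collapse, we obtain a quasi-isomorphism $\R\underline{\hom}_R(M,N) \simeq \dgh_R(\tilde M,\tilde N)[\theta_N^{-1}]$, which is $2$-periodic by construction.
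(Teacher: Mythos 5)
Your proof takes a genuinely different and more ambitious route than the paper's. The paper's proof is much shorter: it observes that the inner colimit $E'\coloneqq\varinjlim_n E[2n]$ is a $2$-periodic complex (a localisation at $\theta_N$), that the outer transition maps (pre-composition by $\theta_M$) commute with the $k[\theta_N^{\pm1}]$-action and so preserve this periodicity, and that an inverse limit of periodic complexes along periodicity-preserving maps is itself periodic. No collapse of the outer system is claimed or needed, since the statement only asks for \emph{some} periodic model, not for the identification $\R\underline{\hom}_R(M,N)\simeq\dgh_R(\tilde M,\tilde N)[\theta_N^{-1}]$. Precisely because $\theta_M$ and $\theta_N$ are different elements of different endomorphism algebras, the trick that makes the outer limit trivial in \ref{periodicend} (centrality forces post- and pre-composition by $\theta$ to agree, so the transitions are identities) is unavailable, and the paper avoids having to argue otherwise.

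The collapse you want is in fact true, but the step you flag as ``the main obstacle'' is where your argument is weakest. You do not need a chain-level homotopy $\theta_M\cdot f\simeq f\cdot\theta_N$, and the assertion that $-\circ\theta_M$ and $\theta_N\circ -$ induce ``the same $2$-periodicity isomorphism'' on $\underline{\ext}^*_R(M,N)$ is both unnecessary and not obviously correct --- a priori they are two different isomorphisms. The fix is simpler: $[\theta_M]$ is a unit in $\underline{\ext}^*_R(M,M)$ (this is built into the statement of \ref{periodicend}), so pre-composition by $[\theta_M]$ acts invertibly on $H^*(E')\cong\underline{\ext}^*_R(M,N)$; hence each outer transition map is a quasi-isomorphism and the Reedy-fibrancy argument from the proof of \ref{qisolem} lets you replace $\varprojlim$ by $\holim$, which then collapses. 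With that repair your route proves a stronger conclusion than the proposition states, at the cost of more work. Your parenthetical alternative via the literal $2$-periodicity of $\mathbf{CR}(M)$ is a nice heuristic, but as the remark following the proof of \ref{qisolem} acknowledges, $\dgh_R(\mathbf{CR}(M),N)$ is not obviously a chain-level model for $\R\underline{\hom}_R(M,N)$, so that route would also need to be developed.
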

	\begin{proof}
		As before, let $\tilde M$ be a periodic resolution for $M$ and write $M_n\coloneqq \tilde M [2n]$. Similarly let $\tilde N$ be a periodic resolution for $N$ and write $N_n\coloneqq \tilde N [2n]$. Then as before one has a quasi-isomorphism $$\R\underline{\hom}_R(M,N)\simeq \varprojlim_m \varinjlim_n E[2n][-2m]$$where we write $E\coloneqq \dgh_R(\tilde M, \tilde N)$, which is a model for $\R\hom_R(M,N)$. The inner colimit $E'\coloneqq \varinjlim_n E[2n]$ is a periodic complex, and the transition maps in the limit $\varprojlim_m E'[-2m]$ all preserve this periodicity, and so the limit is periodic.
	\end{proof}

	\begin{rmk}
		Instead of specifying that $R$ is a commutative complete local hypersurface singularity over $k$, one might want to consider the seemingly more general case when $\Omega^p\cong \id$ for some $p \geq 1$. But if $R$ is a commutative Gorenstein local $k$-algebra satisfying $\Omega^p\cong \id$ for some $p$ then, following the proof of \cite[5.10(4)$\implies$(1)]{crollperiodic}, the $R$-module $k$ is eventually periodic and has bounded Betti numbers. Hence $R$ must be a hypersurface singularity by Gulliksen \cite[Cor. 1]{gulliksen}, and in particular one can take $p=2$.
\end{rmk}

	\subsection{Periodicity in the derived quotient}\label{dqperiod}
Assume in this part that $R$ is a complete local isolated hypersurface singularity and that $M$ is a MCM $R$-module. Let $A\coloneqq \enn_R(R\oplus M)$ be the associated noncommutative partial resolution and put $e\coloneqq \id_R$. Note that by \ref{sgidemref}, the singularity category of $R$ is idempotent complete, so that we are in the situation of Setup \ref{sfsetup}. Because $R$ is a complete local hypersurface singularity, by \ref{ebudper} the shift functor of $\stab R$ is 2-periodic. We show that this periodicity is detected in the derived exceptional locus. The following lemma is useful:
\begin{lem}\label{dqcohom}
	Let $j \in \Z$. Then there are isomorphisms
	
	$$H^j(\dq)\cong \begin{cases} 0 & j>0
	\\ \underline{\enn}(M) & j=0
	\\ \ext_R^{-j}(M,M) & j<0\end{cases}
	$$
\end{lem}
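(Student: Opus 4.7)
The plan is to assemble the three cases by combining connectivity of the derived quotient (Corollary \ref{derquotcohom}), the comparison map quasi-isomorphism in nonpositive degrees (Theorem \ref{qisolem}), and the hypersurface periodicity for stable Ext (Corollary \ref{perext}).

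First, for $j>0$, this is immediate: Corollary \ref{derquotcohom} already tells us that $\dq$ is a connective dga for any algebra $A$ with idempotent $e$, so its cohomology vanishes in positive degrees with no further input.

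For $j=0$, I would invoke Corollary \ref{derquotcohom} again, which gives $H^0(\dq)\cong A/AeA$. Since $A=\enn_R(R\oplus M)$ with $e=\id_R$, the observation made just after Definition \ref{partrsln} identifies $A/AeA \cong \underline{\enn}_R(M)$ (morphisms through $R$ are precisely those factoring through a projective $R$-summand). This gives $H^0(\dq)\cong \underline{\enn}_R(M)$.

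For $j<0$, I would use Theorem \ref{qisolem}, which says the comparison map $\Xi:\dq\to \R\underline{\enn}_R(M)$ induces an isomorphism $H^j(\dq)\xrightarrow{\cong}\underline{\ext}^j_R(M,M)$ for every $j\leq 0$. Then, since $R$ is a complete local hypersurface singularity, Corollary \ref{perext} supplies the identification $\underline{\ext}^j_R(M,M)\cong \ext^{-j}_R(M,M)$ for all $j<0$ (taking $i=-j$ so that $j+2i=-j>0$ and stable Ext collapses to ordinary Ext by \ref{stabextisext}). Composing these two isomorphisms yields $H^j(\dq)\cong \ext^{-j}_R(M,M)$, as required.

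There is no real obstacle here: the lemma is essentially a bookkeeping assembly of results already proved, with the two nontrivial inputs being Theorem \ref{qisolem} (the chain-level identification of $\dq$ with $\tau_{\leq 0}\R\underline{\enn}_R(M)$) and Eisenbud's periodicity as it manifests in \ref{perext}. The only place one needs to be slightly careful is in the $j=0$ case, to match the formulas: Theorem \ref{qisolem} gives $H^0(\dq)\cong \underline{\ext}^0_R(M,M) = \underline{\hom}_R(M,M)$, which agrees with $A/AeA\cong\underline{\enn}_R(M)$ as expected, so the two routes to the $j=0$ answer are consistent.
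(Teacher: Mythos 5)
Your proof is correct and matches the paper's own argument: the paper likewise dispatches $j>0$ and $j=0$ as immediate (from \ref{derquotcohom} and $A/AeA\cong\underline{\enn}_R(M)$) and handles $j<0$ by composing the isomorphism of \ref{qisolem} with the periodicity isomorphism $\underline{\ext}^j_R(M,M)\cong\ext^{-j}_R(M,M)$ from \ref{perext}. Your write-up just spells out the first two cases a bit more explicitly than the paper does.
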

\begin{proof}
	The only assertion that is not clear is the case $j<0$. But in this case, by \ref{perext} we have isomorphisms $\underline{\ext}^j_R(M,M)\cong \ext^{-j}_R(M,M)$. Hence by \ref{qisolem} we have isomorphisms $H^{j}(\dq)\cong \ext^{-j}_R(M,M)$ for all $j<0$. 
\end{proof}
The extra structure given by periodicity allows us to have good control over the relationship between $\dq$ and $\R \underline{\enn}_R(M)$.
\begin{defn}
	If $W$ is a dga and $w\in H(W)$ is a cohomology class, say that $w$ is \textbf{homotopy central} if it is central in the graded algebra $H(W)$. We abuse terminology by referring to cocycles in $W$ as homotopy central.
\end{defn}
Recall from the previous section the existence of an invertible homotopy central cohomology class $\Theta=[\theta]$ in ${\ext}^{2}_R(M,M)$ such that multiplication by $\Theta$ is an isomorphism.

\begin{thm}\label{etaex}Let $\Xi$ be the comparison map.\hfill
	\begin{enumerate}
		\item There is a degree $-2$ homotopy central class $\eta \in H^{-2}(\dq)$ such that $\Xi(\eta)=\Theta^{-1}$.
		\item Multiplication by $\eta$ induces isomorphisms $H^j(\dq) \xrightarrow{\cong} H^{j-2}(\dq)$ for all $j\leq 0$.
		\item The derived localisation of $\dq$ at $\eta$ is quasi-isomorphic to $\R \underline{\enn}_R(M)$.
		\item The comparison map $\Xi: \dq \to \R \underline{\enn}_R(M)$ is the derived localisation map.
	\end{enumerate}
\end{thm}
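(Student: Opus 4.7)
The proof bootstraps from Theorem \ref{qisolem}, which supplies the cohomology isomorphism $\Xi\colon H^j(\dq)\xrightarrow{\cong}\underline{\ext}_R^j(M,M)$ for every $j\le 0$, combined with the $2$-periodicity of $\R\underline{\enn}_R(M)$ established in \ref{periodicend}. For (1), the class $\Theta=[\theta]\in \underline{\ext}_R^{2}(M,M)$ is an invertible, homotopy central element of $H^*\R\underline{\enn}_R(M)$ by \ref{periodicend}, so its inverse $\Theta^{-1}$ lies in $\underline{\ext}_R^{-2}(M,M)$. By \ref{qisolem} there is a unique class $\eta\in H^{-2}(\dq)$ with $\Xi(\eta)=\Theta^{-1}$. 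To verify homotopy centrality, take any $x\in H^j(\dq)$; since $\dq$ is connective (\ref{derquotcohom}) we have $j\le 0$, so $[\eta,x]$ lives in $H^{j-2}(\dq)$ with $j-2\le -2$, a range in which $\Xi$ is injective. Applying $\Xi$ gives $[\Theta^{-1},\Xi(x)]=0$ by centrality of $\Theta^{-1}$, forcing $[\eta,x]=0$. Part (2) is then formal: under the vertical cohomology isomorphisms of \ref{qisolem} (which apply since $j,j-2\le 0$), the multiplication-by-$\eta$ map $H^j(\dq)\to H^{j-2}(\dq)$ intertwines with multiplication by the invertible element $\Theta^{-1}$ on $\underline{\ext}_R^{*}(M,M)$, which is visibly an isomorphism.

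For (3) and (4), since $\Xi(\eta)=\Theta^{-1}$ is already invertible in $H^*\R\underline{\enn}_R(M)$, the universal property of the derived localization (Section \ref{derloc}) produces a unique factorization in the derived under category
$$\dq\;\longrightarrow\; \mathbb{L}_\eta(\dq)\;\xrightarrow{\;\widetilde{\Xi}\;}\;\R\underline{\enn}_R(M);$$
statement (4) is precisely this factorization, and (3) reduces to showing $\widetilde{\Xi}$ is a quasi-isomorphism. Since $\eta$ is homotopy central, results of \cite{bcl} identify $H^*\mathbb{L}_\eta(\dq)$ with the graded algebra localization $H^*(\dq)[\eta^{-1}]$ (concretely, via a mapping telescope along multiplication by a central cocycle representative of $\eta$). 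The map $H^*(\dq)[\eta^{-1}]\to H^*\R\underline{\enn}_R(M)$ induced by $\widetilde{\Xi}$ extends $\Xi$ by sending $\eta^{-1}\mapsto\Theta$: in nonpositive degrees it coincides with $\Xi$ and is an isomorphism by \ref{qisolem}, while every class of degree $n>0$ has the form $\eta^{-m}x$ with $|x|\le 0$ and is sent to $\Theta^m\Xi(x)$. Both source and target are $2$-periodic, via $\eta^{\mp 1}$ and $\Theta^{\mp 1}$ respectively, and $\widetilde{\Xi}$ intertwines these periodicities, so the isomorphism in nonpositive degrees propagates to all degrees.

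The main obstacle is the cohomology identification $H^*\mathbb{L}_\eta(\dq)\cong H^*(\dq)[\eta^{-1}]$ for the homotopy central class $\eta$. The cleanest route is a telescope model: lift $\eta$ to a strictly central cocycle $\tilde\eta$ in some cofibrant model of $\dq$, and realise $\mathbb{L}_\eta(\dq)$ as the homotopy colimit of $\dq\xrightarrow{\tilde\eta}\dq[2]\xrightarrow{\tilde\eta}\dq[4]\to\cdots$, whose cohomology is by construction the directed colimit computing the graded localization. The subtlety is in promoting $\eta$ to an honestly central representative (homotopy centrality is a priori weaker), but this can be arranged by standard obstruction theory since $\dq$ is well-behaved, or else one appeals directly to the general central localization results of \cite{bcl}.
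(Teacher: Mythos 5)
Your proposal is correct and takes essentially the same route as the paper: produce $\eta$ as the preimage of $\Theta^{-1}$ under the cohomology isomorphism of \ref{qisolem}, deduce homotopy centrality and part (2) by pushing everything through $\Xi$, factor $\Xi$ through $\mathbb{L}_\eta(\dq)$ via the universal property, and use $2$-periodicity of both sides to propagate the nonpositive-degree isomorphism to all degrees. The only divergence is cosmetic: the paper dispenses with your closing telescope discussion by citing \cite[5.3]{bcl} directly for flatness of localisation at a homotopy central class (you acknowledge this route yourself), so there is no need to promote $\eta$ to a strictly central cocycle in the proof — although \ref{dsgrmk} later shows such a lift does exist.
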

\begin{proof}By \ref{qisolem}, the comparison map $\Xi$ is a cohomology isomorphism in nonpositive degrees. The first statement is now clear. The element $\eta$ is homotopy central in $\dq$ because $\Theta$ is homotopy central in $\R \underline{\enn}_R(Ae)$. Since $\Xi$ is a dga map, the following diagram commutes for all $j$:
	$$\begin{tikzcd} H^j(\dq) \ar[r,"\eta"]\ar[d,"\Xi"] & H^{j-2}(\dq)\ar[d,"\Xi"] \\ \underline{\ext}_R^{j}(M,M) \ar[r,"\Theta^{-1}"] & \underline{\ext}_R^{j-2}(M,M)\end{tikzcd} $$
	The vertical maps and the lower horizontal map are isomorphisms for $j\leq 0$, and hence the upper horizontal map must be an isomorphism, which is the second statement. Let $B$ be the derived localisation of $\dq$ at $\eta$. Because $\eta$ is homotopy central, the localisation is flat \cite[5.3]{bcl} and so we have $H(B) \cong H(\dq)[\eta^{-1}]$. In particular, for $j\leq 0$, we have $H^j(B)\cong H^j(\dq)$. The map $\Xi$ is clearly $\eta$-inverting, which gives us a factorisation of $\Xi$ through $\Xi':B \to \R \underline{\enn}_R(M)$. Again, the following diagram commutes for all $i,j$ :
	$$\begin{tikzcd} H^j(B) \ar[r,"\eta^i"]\ar[d,"\Xi'"] & H^{j-2i}(B)\ar[d,"\Xi'"] \\ \underline{\ext}_R^{j}(M,M) \ar[r,"\Theta^{-i}"] & \underline{\ext}_R^{j-2i}(M,M)\end{tikzcd} $$The horizontal maps are always isomorphisms. For a fixed $j$, if one takes a sufficiently large $i$, then the right-hand vertical map is an isomorphism. Hence, the left-hand vertical map must be an isomorphism too. But since $j$ was arbitrary, $\Xi'$ must be a quasi-isomorphism, proving the last two statements.
\end{proof} 
\begin{rmk}\label{rigidrmk}
	If $M$ is rigid (see \ref{rigiddefn}) then we have $H(\dq)\cong A/AeA[\eta]$, but in general $\dq$ need not be formal.
\end{rmk}
Left multiplication by $\eta$ is obviously a map $\dq \to \dq$ of right $\dq$-modules. Since $\eta$ is homotopy central, one might expect $\eta$ to be a bimodule map, and in fact this is the case:
\begin{prop}\label{etahoch}
	The element $\eta$ lifts to an element of $ HH^{-2}(\dq)$, the $-2^\text{nd}$ Hochschild cohomology of $\dq$ with coefficients in itself.
\end{prop}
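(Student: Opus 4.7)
Plan. The plan is to transfer the lifting problem across the homological epimorphism $\Xi\colon \dq \to \R\underline{\enn}_R(M)$ of \ref{etaex}, using the fact that $\R\underline{\enn}_R(M)$ admits a model in which the periodicity element is strictly central. By \ref{hochprop} we have $HH^{-2}(\dq)\cong HH^{-2}(A,\dq)=\ext^{-2}_{A^e}(A,\dq)$, so it suffices to produce a class in the latter group whose underlying cohomology class is $\eta$.

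First I would fit $\Xi$ into an exact triangle of $A$-bimodules $\dq\xrightarrow{\Xi}\R\underline{\enn}_R(M)\to C\to$. By \ref{qisolem}, $\Xi$ is a cohomology isomorphism in all nonpositive degrees, and since $\dq$ is connective the cone $C$ has cohomology concentrated in strictly positive degrees. Applying $\R\hom_{A^e}(A,-)$ and using that $A$ is ungraded (so the bar resolution of $A$ sits in nonpositive cohomological degrees), a standard bicomplex total-degree count shows that $HH^n(A,C)=0$ for all $n\leq 0$. The long exact sequence associated to the triangle therefore yields an isomorphism
\[
\Xi_*\colon HH^{-2}(A,\dq)\xrightarrow{\cong} HH^{-2}(A,\R\underline{\enn}_R(M)).
\]

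Next I would exhibit a preimage of $\Theta^{-1}$ on the right-hand side. By \ref{periodicend}, $\R\underline{\enn}_R(M)$ admits the model $E=\dge_R(\tilde M)[\theta^{-1}]$, in which the periodicity witness $\theta\in E^{2}$ is strictly central, hence so is the cocycle $\theta^{-1}\in E^{-2}$ representing $\Theta^{-1}$. Multiplication by $\theta^{-1}$ is an $E$-bimodule endomorphism $E\to E[-2]$; restricting along the structure dga map $A\to E$ then yields an $A$-bimodule map $A\to E[-2]$, that is, a class in $HH^{-2}(A,\R\underline{\enn}_R(M))$ whose underlying cohomology class is $\Theta^{-1}$. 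Pulling this back through $\Xi_*^{-1}$, and using naturality of the edge map to $H^{-2}$ together with the identity $\Xi(\eta)=\Theta^{-1}$ from \ref{etaex}, shows that the underlying cohomology class of the resulting element in $HH^{-2}(\dq)$ is precisely $\eta$. The only nontrivial technical ingredient is the vanishing $HH^{\leq 0}(A,C)=0$, which presents no substantial obstacle.
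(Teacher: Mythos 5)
Your proof is correct and follows essentially the same strategy as the paper: both arguments exploit the strictly central cocycle $\theta^{-1}$ in the periodic model $E$ from \ref{periodicend}, and both establish that the map on Hochschild cohomology induced by the comparison map $\Xi$ is an isomorphism in degree $-2$ via the long exact sequence of the cone $C=\mathrm{cone}(\Xi)$ (which is concentrated in strictly positive degrees) together with a connectivity degree count. The only cosmetic difference is bookkeeping: you transfer to $A$-bimodules at the outset via \ref{hochprop} applied to $A\to\dq$ and carry out the vanishing argument over $A^e$, then produce the class in $HH^{-2}(A,E)$ directly as an explicit $A$-bimodule map $A\to E[-2]$, whereas the paper carries out the vanishing over $\dq^e$ and then appeals to \ref{hochprop} for the second localisation $\dq\to E$ to land in $HH^{-2}(E)$; the two routes are interchangeable.
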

\begin{proof}
	Using \ref{periodicend} and \ref{etaex} gives us a dga $E$, a genuinely central element $\theta^{-1} \in E^{-2}$, and a dga map $\Xi:\dq \to E$ with $\Xi(\eta)=[\theta^{-1}]$. Since $\theta^{-1}$ is central it represents an element of $HH^{-2}(E)$. Because $\Xi$ is the derived localisation map, we have $HH^*(E)\cong HH^*(\dq,E)$ by \ref{hochprop}. Let $C$ be the mapping cone of $\Xi$. Then $C$ is an $\dq$-bimodule, concentrated in positive degrees. We get a long exact sequence in Hochschild cohomology $$\cdots \to HH^n(\dq) \xrightarrow{\Xi} HH^n(\dq,E) \to HH^n(\dq,C)\to \cdots.$$Because $C$ is concentrated in strictly positive degrees and $\dq$ is connective, the cohomology group $HH^n(\dq,C)$ must vanish for $n\leq0$. In particular we get isomorphisms $HH^n(\dq)\cong HH^n(\dq,E)$ for $n\leq0$. Putting this together we have an isomorphism $$HH^{-2}(\dq)\xrightarrow{\Xi} HH^{-2}(\dq,E)\xrightarrow{\Xi} HH^{-2}(E)$$ and it is clear that $\eta$ on the left hand side corresponds to $\theta^{-1}$ on the right.
\end{proof}
\begin{rmk}
	Because $\eta$ is a bimodule morphism, $\mathrm{cone}(\eta)$ is naturally an $\dq$-bimodule. Note that $\mathrm{cone}(\eta)$ is also quasi-isomorphic to the $2$-term dga $\tau_{\geq -1}(\dq)$. This is a quasi-isomorphism of $\dq$-bimodules, because if $Q$ is the standard bimodule resolution of $\dq$ obtained by totalising the bar complex, then the composition $Q \xrightarrow{\eta}\dq \to \tau_{\geq -1}(\dq)$ is zero for degree reasons.
\end{rmk}
\begin{rmk}
	The dga $\dq$ is quasi-isomorphic to the truncation $\tau_{\leq 0}E$, which is a dga over $k[\theta^{-1}]$. Let $H=HH^*_{k[\theta^{-1}]}(\tau_{\leq 0}E)$ be the Hochschild cohomology of the $k[\theta^{-1}]$-dga $\tau_{\leq 0}E$, which is itself a graded $k[\theta^{-1}]$-algebra. One can think of $H$ as a family of algebras over $\A^1$, with general fibre $H[\theta]\cong HH_{k[\theta,\theta^{-1}]}^*(E)$ and special fibre $HH^*(\mathrm{cone}(\eta))$.
\end{rmk}

\begin{prop}\label{uniqueeta}
	Suppose that $A/AeA$ is an Artinian local $k$-algebra. Then $\eta$ is characterised up to multiplication by units in $H(\dq)$ as the only non-nilpotent element in $H^{-2}(\dq)$.
\end{prop}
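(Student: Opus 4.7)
The plan is to exploit the cohomology isomorphism $\eta : H^0(\dq) \xrightarrow{\cong} H^{-2}(\dq)$ from \ref{etaex}(2), together with the standard fact that an element of an Artinian local ring is a unit if and only if it is not nilpotent. First I would pick an arbitrary non-nilpotent $\zeta \in H^{-2}(\dq)$ and use the isomorphism to write $\zeta = u\eta$ for a unique $u \in H^0(\dq) \cong A/AeA$.

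Next I would pass to powers. Because $\eta$ is homotopy central by \ref{etaex}(1), we have $\zeta^n = u^n \eta^n$ in $H(\dq)$ for every $n \geq 1$. Iterating \ref{etaex}(2) shows that multiplication by $\eta^n$ is an isomorphism $H^0(\dq) \to H^{-2n}(\dq)$, and in particular it is injective. Thus if $u^n = 0$ then $\zeta^n = u^n \eta^n = 0$; contrapositively, the non-nilpotence of $\zeta$ forces $u$ to be non-nilpotent in $A/AeA$.

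Now I would invoke the Artinian local hypothesis. Since $A/AeA$ is Artinian local its Jacobson radical coincides with the nilradical, so every non-nilpotent element is a unit. Hence $u$ is a unit in $A/AeA = H^0(\dq)$, and therefore also a unit in $H(\dq)$ (its inverse in degree zero remains a two-sided inverse in the whole cohomology ring). Combined with the observation that $\eta$ itself is non-nilpotent (again because $\eta^n = \eta^n \cdot 1 \neq 0$ for all $n$ by \ref{etaex}(2)), and that this property is clearly stable under multiplication by units, this establishes that the non-nilpotent elements of $H^{-2}(\dq)$ are precisely the unit multiples of $\eta$.

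There is no serious obstacle here: the proposition reduces to a one-line bookkeeping once \ref{etaex} and the Artinian local hypothesis are brought to bear. The only point that requires care is ensuring that the statement "$u$ is a unit" takes place in the ungraded algebra $A/AeA$ and then transfers to $H(\dq)$, which is immediate because $H(\dq)$ is connective and $H^0(\dq)$ sits inside it as a subring.
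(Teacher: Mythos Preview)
Your proof is correct and follows essentially the same approach as the paper: write a non-nilpotent element as $u\eta$ via \ref{etaex}(2), use homotopy centrality of $\eta$ to compute powers, deduce that $u$ is non-nilpotent in the Artinian local ring $A/AeA$ and hence a unit, and observe that units of the connective cohomology ring are exactly the units of $H^0$. You are slightly more explicit than the paper in noting that $\eta$ itself is non-nilpotent and that non-nilpotence is preserved by unit multiples, but the argument is the same.
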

\begin{proof}
	Let $y\in H^{-2}(\dq)$ be non-nilpotent. Since $\eta: H^0(\dq) \to H^{-2}(\dq)$ is an isomorphism, we must have $y = \eta x$ for some $x \in H^0(\dq)\cong A/AeA$. Since $\eta$ is homotopy central, we have $y^n=\eta^n x^n$ for all $n \in \N$. Since $y$ is non-nilpotent by assumption, $x$ must also be non-nilpotent. Since $A/AeA$ is Artinian local, $x$ must hence be a unit. Note that because $H(\dq)$ is connective, the units of $H(\dq)$ are precisely the units of $A/AeA$.
\end{proof}
\begin{rmk}
	If $A/AeA$ is finite-dimensional over $k$, but not necessarily local, then all that can be said is that $x$ is not an element of the Jacobson radical $J(A/AeA)$. 
\end{rmk}
\begin{prop}\label{uniqueetaqi}
	Let $N$ be another MCM $R$-module and put $B:=\enn_R(R\oplus N)$. Let $e\in B$ denote the idempotent $\id_R$. Suppose that $\dq$ is quasi-isomorphic to $\dqb$. Suppose that $A/AeA\cong B/BeB$ is Artinian local. Then $\R \underline{\enn}_{R}(M)$ and $\R \underline{\enn}_{R}(N)$ are quasi-isomorphic.
\end{prop}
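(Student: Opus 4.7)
The plan is to combine Theorem \ref{etaex} with the uniqueness statement \ref{uniqueeta} to transport the periodicity element across the quasi-isomorphism. Concretely, by \ref{etaex}(3,4), there are periodicity elements $\eta_A \in H^{-2}(\dq)$ and $\eta_B \in H^{-2}(\dqb)$ such that the comparison maps realize $\R\underline{\enn}_R(M)$ and $\R\underline{\enn}_R(N)$ as the derived localisations $\mathbb{L}_{\eta_A}(\dq)$ and $\mathbb{L}_{\eta_B}(\dqb)$ respectively. The derived localisation is invariant under quasi-isomorphism of dgas, and moreover under replacing the localising element by a unit multiple (since inverting $u\eta$ is the same as inverting $\eta$ once $u$ is already a unit in $H^0$, which it is by the connectivity of the derived quotient). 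So the whole argument reduces to matching up these two distinguished degree $-2$ classes.

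First I would fix a quasi-isomorphism $\phi: \dq \xrightarrow{\simeq} \dqb$ of dgas provided by the hypothesis; it induces an isomorphism of graded algebras $H^*(\phi): H^*(\dq) \xrightarrow{\cong} H^*(\dqb)$. Consider the class $\phi_*(\eta_A) \in H^{-2}(\dqb)$. Since $\eta_A$ is non-nilpotent in $H^*(\dq)$ (all its powers are nonzero by \ref{etaex}(2)), its image $\phi_*(\eta_A)$ is likewise non-nilpotent. Now invoke \ref{uniqueeta} applied to $B$: because $B/BeB \cong A/AeA$ is Artinian local, the only non-nilpotent elements of $H^{-2}(\dqb)$ are unit multiples of $\eta_B$. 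Therefore $\phi_*(\eta_A) = u \cdot \eta_B$ for some unit $u \in H^0(\dqb) \cong B/BeB$.

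Finally I would assemble the chain of quasi-isomorphisms of dgas
\begin{equation*}
\R\underline{\enn}_R(M) \simeq \mathbb{L}_{\eta_A}(\dq) \xrightarrow{\mathbb{L}\phi} \mathbb{L}_{\phi_*(\eta_A)}(\dqb) = \mathbb{L}_{u\eta_B}(\dqb) \simeq \mathbb{L}_{\eta_B}(\dqb) \simeq \R\underline{\enn}_R(N),
\end{equation*}
where the outer equivalences are \ref{etaex}(3), the arrow $\mathbb{L}\phi$ is a quasi-isomorphism because $\phi$ is one (and quasi-isomorphisms are preserved by derived localisation, \cite[3.5]{bcl}), and the penultimate equivalence uses that $u$ is already a unit in cohomology so inverting $u\eta_B$ is the same as inverting $\eta_B$.

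The only genuinely non-formal input is the identification $\phi_*(\eta_A) = u\eta_B$, which is exactly where the Artinian local hypothesis on $A/AeA$ is used via \ref{uniqueeta}; without it one could only conclude that $\phi_*(\eta_A)$ lies outside the Jacobson radical, and the localisations might differ. So the "hard" step is really packaged into the already-proven \ref{uniqueeta}, and the proof of this proposition itself is short and essentially formal once the previous results are in hand.
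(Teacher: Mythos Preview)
Your proof is correct and follows essentially the same route as the paper: transport $\eta_A$ across the given quasi-isomorphism, invoke \ref{uniqueeta} to identify its image with a unit multiple of $\eta_B$, and then use invariance of derived localisation under quasi-isomorphism and under multiplication by units to conclude via \ref{etaex}(3). Your version is slightly more explicit in justifying non-nilpotency of $\eta_A$ and in citing the invariance results, but the argument is the same.
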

\begin{proof}
	The idea is that the periodicity elements must agree up to units, and this forces the derived localisations to be quasi-isomorphic. Let $\eta_A\in H^{-2}(\dq)$ and $\eta_B\in H^{-2}(\dqb)$ denote the periodicity elements for $\dq$ and $\dqb$ respectively. By assumption, we have a quasi-isomorphism $\dq \to \dqb$; let $\xi \in H^{-2}(\dqb)$ be the image of $\eta_A$ under this quasi-isomorphism. By \ref{uniqueeta}, there is a unit $u\in H^0(\dqb)$ such that $\xi=u.\eta_B$. Because derived localisation is invariant under quasi-isomorphism, we have $\mathbb{L}_{\eta_A}(\dq) \simeq \mathbb{L}_{\xi}(\dqb)$. Observe that if $W$ is a dga, $w \in HW$ any cohomology class, and $v \in HW$ is a unit, then the derived localisations $\mathbb{L}_wW$ and $\mathbb{L}_{vw}W$ are naturally quasi-isomorphic. So we have a chain of quasi-isomorphisms $$\mathbb{L}_{\eta_A}(\dq) \simeq \mathbb{L}_{\xi}(\dqb)\simeq\mathbb{L}_{u\eta_B}(\dqb) \simeq \mathbb{L}_{\eta_B}(\dqb).$$Now the result follows by applying \ref{etaex}(3).
\end{proof}
Since $\R \underline{\enn}_{R}(M)$ is quasi-isomorphic to a dga over $k[\theta,\theta^{-1}]$, and $\R \underline{\enn}_{R}(M)$ is morally obtained from $\dq$ by adjoining $\theta^{-1}$, the following conjecture is a natural one to make:
\begin{conj}\label{perconj}
	If $A/AeA$ is Artinian local then the quasi-isomorphism type of $\dq$ determines the quasi-isomorphism type	of $\R \underline{\enn}_{R}(M)$ as a dga over $k[\theta,\theta^{-1}]$.
\end{conj}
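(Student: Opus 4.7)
The strategy is to show that the $k[\theta,\theta^{-1}]$-dga structure on $\R\underline{\enn}_R(M)$ is functorially encoded by the derived quotient $\dq$, so that any dga quasi-isomorphism between derived quotients automatically produces a $k[\theta,\theta^{-1}]$-equivariant quasi-isomorphism of the associated stable derived endomorphism algebras. The conceptual key is \ref{etahoch}: the periodicity element $\eta$ lifts canonically to a class in $HH^{-2}(\dq)$, so the data determining the $k[\theta,\theta^{-1}]$-action is intrinsic to the derived quotient.

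My plan is as follows. First, I would make precise the correspondence between $k[\theta,\theta^{-1}]$-dga structures on $\dq[\eta^{-1}]$ and invertible Hochschild classes in $HH^{-2}(\dq)$, using the proof of \ref{etahoch} as a template. Next, given a dga quasi-isomorphism $\phi:\dq\xrightarrow{\simeq}\dqb$, I would invoke the argument of \ref{uniqueetaqi} to obtain $\phi_*\eta_A=u\cdot\eta_B$ for some unit $u\in H^0(\dqb)$; the fact that $\eta_B$ induces isomorphisms $H^j\xrightarrow{\cong}H^{j-2}$ for all $j$ forces $u$ to be central in the whole graded algebra $H(\dqb)$. Finally, to upgrade the induced quasi-isomorphism of localisations $\dq[\eta_A^{-1}]\simeq\dqb[\eta_B^{-1}]$ to a $k[\theta,\theta^{-1}]$-equivariant one, I would construct a dga automorphism of $\dqb[\eta_B^{-1}]$ sending $\eta_B^{-1}\mapsto u^{-1}\eta_B^{-1}$, and precompose $\phi$ with it.

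The main obstacle is the last step. If $u$ lay in $k^\times$, one could rescale the $k[\theta,\theta^{-1}]$-generator by hand and the argument would be immediate. In general, however, $u$ is merely a central unit in the Artinian local ring $A/AeA$, and no strict rescaling of the generator is available. My plan to surmount this is to exploit the hypothesis that $k$ has characteristic zero together with the local Artinian structure: write $u=\lambda\cdot\exp(n)$ with $\lambda\in k^\times$ and $n$ an element of the (nilpotent) Jacobson radical of $A/AeA$, where the formal logarithm converges by nilpotence. The scalar $\lambda$ is absorbed by a genuine $\mathbb{G}_m$-rescaling of the periodicity generator. The unipotent factor $\exp(n)$ is to be realised as an exponentiated inner derivation of $\dqb[\eta_B^{-1}]$ coming from a degree zero Hochschild cocycle lifting $n$; the existence and convergence of this exponential should reduce to a standard Maurer--Cartan argument in pro-nilpotent deformation theory.

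More conceptually, one would hope to recast the problem as follows: the space of generators $\vartheta\in HH^{-2}(\dqb)$ yielding a given $k[\theta,\theta^{-1}]$-dga structure on $\dqb[\eta_B^{-1}]$ after localisation should be a torsor under the unit group of $H^0(\dqb)$, and the issue is the connectedness (in the appropriate homotopical sense) of this torsor. The $\mathbb{G}_m$-part of this unit group is visibly handled by rescaling; the pro-unipotent part is contractible in characteristic zero by the exponential argument above. The technical heart of the proof of \ref{perconj} will therefore be the verification that these two pieces of the unit group do indeed act by $k[\theta,\theta^{-1}]$-equivalences, for which a careful treatment of derived automorphisms of localisations at central elements will be required.
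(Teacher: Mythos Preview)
The statement you are attempting to prove is labelled \texttt{conj} in the paper: it is a \emph{conjecture}, not a theorem, and the paper offers no proof. There is therefore nothing to compare your proposal against; the author explicitly leaves this open, having proved only the weaker statement \ref{uniqueetaqi} that $\dq$ determines $\R\underline{\enn}_R(M)$ as a dga over $k$.

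Your sketch correctly isolates the obstruction the author presumably had in mind: the unit $u\in (A/AeA)^\times$ relating the two periodicity elements need not lie in $k^\times$, so one cannot simply rescale the generator of $k[\theta,\theta^{-1}]$. However, your proposed fix has a gap. You want an automorphism of $\dqb[\eta_B^{-1}]$ sending $\eta_B\mapsto u\eta_B$, and you propose to build its unipotent part as the exponential of an ``inner derivation coming from a degree zero Hochschild cocycle lifting $n$''. But a degree zero Hochschild cocycle is precisely a (homotopy) central element, and the inner derivation $[n,-]$ associated to a central element is zero; exponentiating it gives the identity, which does not rescale $\eta_B$ at all. What you actually need is a derivation $\delta$ of the localisation with $\delta(\eta_B)=n\eta_B$ and $\delta|_{H^0}=0$, and there is no reason such a thing should exist or integrate: the cohomology of the localisation sits in every degree, not just even ones, so a naive ``grading automorphism'' scaling degree $-2j$ by $u^j$ does not extend consistently to odd degrees. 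This is exactly the difficulty that keeps \ref{perconj} a conjecture rather than a corollary of \ref{uniqueetaqi}.
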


\begin{rmk}
	Note that $\eta$ is a central element of the cohomology algebra $H(\dq)$ but need not lift to a genuinely central cocycle in a model for $\dq$.
\end{rmk}

\begin{rmk}\label{dsgrmk}
	The description of \ref{qisolem} shows that, in this situation, one can compute $\dq$ directly from knowledge of the dg singularity category. This also provides a way to produce an explicit model of $\dq$ where $\eta$ is represented by a genuinely central cocycle: first, stitch together the syzygy exact sequences for $M$ into a 2-periodic resolution $\tilde M \to M$. Let $\theta: \tilde M \to \tilde M$ be the degree 2 map whose components are the identity that witnesses this periodicity. Let $E=\dge_R(\tilde M)$, which is a dga. It is easy to see that $\theta$ is a central cocycle in $E$. Since $\R\underline\enn_R(M)$ is quasi-isomorphic to the dga $E[\theta^{-1}]$, and $\eta$ is identified with $\theta^{-1}$ across this quasi-isomorphism, it follows that $\dq$ is quasi-isomorphic to the dga $\tau_{\leq 0}\left(\dge_R(\tilde M)[\theta^{-1}]\right)$, which is naturally a dga over $k[\eta]=k[\theta^{-1}]$.

\end{rmk}

	\section{A recovery theorem}
	In this section, we prove our main theorem: that the quasi-isomorphism type of the derived exceptional locus of a noncommutative partial resolution of a complete local isolated hypersurface singularity $R$ recovers the isomorphism class of $R$ as a $k$-algebra. The idea is to prove a dg category version of \ref{etaex}, which will allow us to determine the quasi-equivalence class of $D^\mathrm{dg}_\mathrm{sg}(R)$ from the quasi-isomorphism class of $\dq$. We will then apply a recent result of Hua and Keller \cite{huakeller} stating that $D^\mathrm{dg}_\mathrm{sg}(R)$ recovers $R$.
	
	For the remainder of this section, let $R$ be a complete local isolated hypersurface singularity, $M$ a MCM $R$-module,  $A=\enn_R(R\oplus M)$ the associated noncommutative partial resolution, and $e=\id_R \in A$. For brevity, we will often denote the derived exceptional locus by $Q\coloneqq\dq$.

	\subsection{Torsion modules}\label{torsmods}
	By \ref{etaex} there exists a special periodicity element $\eta\in H^{-2}Q$ such that the derived localisation of $Q$ at $\eta$ is the derived stable endomorphism algebra $\R \underline{\enn}_R(M)$. Recall from \ref{colocdga} the construction of the \textbf{colocalisation} $\mathbb{L}^\eta(Q)$ of $Q$, and the fact that an $\eta$-torsion $Q$-module is precisely a module over $\mathbb{L}^\eta(Q)$. 
	\begin{defn}
		Let $\per^bQ$ denote the full triangulated subcategory of $\per Q$ on those modules with bounded cohomology.
	\end{defn}
	Note that $\per^bQ$ is a thick subcategory of the unbounded derived category $D(Q)$.
	\begin{prop}\label{perbisperl}
		The subcategory $\per^bQ$ is exactly $\per\mathbb{L}^\eta(Q)$.
	\end{prop}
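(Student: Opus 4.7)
The plan is to identify $\per \mathbb{L}^\eta(Q)$ with the category of perfect $\eta$-torsion $Q$-modules, and then show this coincides with $\per^b Q$.

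Put $K \coloneqq \mathrm{cone}(Q \xrightarrow{\eta} Q)$, the compact generator of $D(Q)_{\eta\text{-tor}}$ appearing in \ref{colocdga}. First I will verify that $K$ has bounded cohomology. The long exact sequence of the defining triangle expresses $H^j(K)$ in terms of cokernels and kernels of multiplication by $\eta$ on $H^*(Q)$; combined with the fact (\ref{etaex}(2)) that $\eta: H^j(Q) \to H^{j-2}(Q)$ is an isomorphism for $j \leq 0$, and that $Q$ is connective, this forces $H^j(K)$ to vanish outside a bounded range. Since $K$ is clearly perfect over $Q$ and moreover $\eta$-torsion (multiplication by $\eta$ on $K$ is null-homotopic by construction, so $\R\hom_Q(K,Y) \simeq 0$ whenever $\eta$ acts invertibly on $Y$), the thick closure $\thick_{D(Q)}(K)$ lands inside $\per^b Q \cap D(Q)_{\eta\text{-tor}}$.

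Next, under the equivalence $D(\mathbb{L}^\eta(Q)) \simeq D(Q)_{\eta\text{-tor}}$ of \cite[7.6]{bcl}, the compact generator $\mathbb{L}^\eta(Q)$ on the left corresponds to $K$ on the right, so $\per \mathbb{L}^\eta(Q)$ corresponds to the compact objects of $D(Q)_{\eta\text{-tor}}$. Because $D(Q)_{\eta\text{-tor}}$ is a localising subcategory of $D(Q)$, its coproducts are inherited from $D(Q)$, and so the compact objects of $D(Q)_{\eta\text{-tor}}$ coincide with $\per Q \cap D(Q)_{\eta\text{-tor}} = \thick_{D(Q)}(K)$. Combined with the previous paragraph this gives $\per \mathbb{L}^\eta(Q) \subseteq \per^b Q$, and reduces the theorem to showing the converse: every $X \in \per^b Q$ is $\eta$-torsion.

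For this, I will use that $\eta$ is homotopy central---in fact, by \ref{etahoch} it lifts to an element of $HH^{-2}(Q)$. Hence by \cite[5.3]{bcl}, the derived localisation $\mathbb{L}_\eta(Q)$ is flat, giving $H^*(X \lot_Q \mathbb{L}_\eta(Q)) \cong H^*(X)[\eta^{-1}]$. Because $\eta$ has cohomological degree $-2$, the $\eta$-localisation of $H^*(X)$ in degree $j$ is computed as the directed colimit $\varinjlim_k H^{j-2k}(X)$; when $H^*(X)$ is bounded this colimit eventually stabilises at zero. Hence $X \lot_Q \mathbb{L}_\eta(Q) \simeq 0$, which by the adjunction $\R\hom_Q(X,Y) \simeq \R\hom_{\mathbb{L}_\eta(Q)}(X \lot_Q \mathbb{L}_\eta(Q), Y)$ for $\eta$-local $Y$ shows $X$ is $\eta$-torsion, finishing the proof. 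The main obstacle is this last step: without the flatness of derived localisation at a homotopy central element, one cannot pass the vanishing from $X \lot_Q \mathbb{L}_\eta(Q)$ down to the cohomology level so cleanly. Everything else is essentially a matter of unwinding the colocalisation construction and invoking compact-generator yoga.
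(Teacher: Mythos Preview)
Your proof is correct and follows essentially the same strategy as the paper: show the compact generator of the torsion category lies in $\per^bQ$, and show that bounded perfect modules are $\eta$-torsion. There are two minor organisational differences worth noting. First, you work with $K=\mathrm{cone}(\eta)$ and invoke the equivalence $D(\mathbb{L}^\eta Q)\simeq D(Q)_{\eta\text{-tor}}$ to identify $\per\mathbb{L}^\eta(Q)$ with $\thick(K)$, whereas the paper instead computes $\mathbb{L}^\eta(Q)=\R\enn_Q(K)$ directly as a $Q$-module and checks it is perfect and bounded; both routes arrive at the same inclusion $\per\mathbb{L}^\eta(Q)\subseteq\per^bQ$. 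Second, for the step ``bounded implies torsion'' the paper asserts that some power $\eta^i$ kills $X$ and then manipulates a cofibrant model of $\mathbb{L}_\eta(Q)$, while you go straight through flatness of the localisation to compute $H^*(X\lot_Q\mathbb{L}_\eta(Q))\cong H^*(X)[\eta^{-1}]=0$; your version is arguably cleaner here. One small point: you do not need \ref{etahoch} for this---homotopy centrality of $\eta$ (already in \ref{etaex}(1)) is all that \cite[5.3]{bcl} requires.
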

	\begin{proof}We show that $\per\mathbb{L}^\eta(Q)\subseteq\per^bQ \subseteq \per\mathbb{L}^\eta(Q)$. Since $\per\mathbb{L}^\eta(Q)=\thick_{D(Q)}(\mathbb{L}^\eta(Q))$, and $ \per^bQ$ is a thick subcategory, to show that $\per\mathbb{L}^\eta(Q)\subseteq\per^bQ$ it is enough to check that $\mathbb{L}^\eta(Q)$ is an element of $\per^bQ$. Put $C\coloneqq\mathrm{cone}(Q \xrightarrow{\eta} Q)$. By construction, the colocalisation $\mathbb{L}^\eta(Q)$ is exactly $\R\enn_Q(C)$. Now, $C$ is clearly a perfect $Q$-module. It is bounded because $\eta$ is an isomorphism on cohomology in sufficiently low degree. As a $Q$-module, we have \begin{align*}
		\R\enn_Q(C) &\simeq \R\hom_Q(\mathrm{cone}(\eta),C)
		\\ &\simeq \mathrm{cocone}\left[\R\hom_Q(Q,C) \xrightarrow{\eta^*} \R\hom_Q(Q,C)\right]
		\\ &\simeq \mathrm{cocone}\left[C \xrightarrow{\eta^*} C\right]
		\end{align*}which is clearly perfect and bounded. Hence $\mathbb{L}^\eta(Q) \in \per^bQ$. To show that we have an inclusion $\per^bQ\subseteq\per\mathbb{L}^\eta(Q)$, we first show that a bounded module is torsion. Let $X$ be any bounded $Q$-module. Then there exists an $i$ such that $X\eta^i\simeq 0$. Choose a $Q$-cofibrant model $L$ for $\mathbb{L}_\eta(Q)$, so that $\mathbb{L}_\eta(X) \simeq X\otimes_Q L$. Then we have $X\otimes_Q L \cong X\otimes_Q\eta^i\eta^{-i} L\cong X\eta^i\otimes_Q\eta^{-i} L\simeq 0$. Now it is enough to show that a perfect $Q$-module which happens to be torsion is in fact a perfect $\mathbb{L}^\eta(Q)$-module. But this is clear: a perfect $Q$-module is exactly a compact $Q$-module, and hence remains compact in the full subcategory of torsion modules.
	\end{proof}
In order to make progress, we will need to know that $\dq$ satisfies some finiteness hypotheses. We take the following definition from Shaul \cite{shaulCM}.
	\begin{defn}
		Say that a dga $W$ is \textbf{noetherian} if 
		\begin{enumerate}
			\item $H^0(W)$ is a noetherian ring.
			\item Each $H^jW$ is finitely generated over $H^0W$.
			\end{enumerate}
	\end{defn}
\begin{prop} The dga $\dq$ is noetherian.
	\end{prop}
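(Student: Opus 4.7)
The plan is to combine the cohomology computation in Lemma \ref{dqcohom} with the two-periodicity isomorphism of Theorem \ref{etaex}(2) to reduce the claim to two base cases, $H^0$ and $H^{-1}$.

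For (1), note that $A = \enn_R(R \oplus M)$ is a finitely generated module over the commutative noetherian ring $R$, so $A$ is itself a noetherian ring. Hence its quotient $H^0(\dq) \cong A/AeA$ is noetherian.

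For (2), by Lemma \ref{dqcohom} we have $H^0(\dq) \cong A/AeA$ and $H^{-j}(\dq) \cong \ext^j_R(M,M)$ for $j > 0$. The key point is that because $R$ is an isolated singularity and $M$ is MCM, each $\ext^j_R(M,M)$ for $j \geq 1$ is supported only at the maximal ideal $\mathfrak{m}$ of $R$: after localising at any prime $\mathfrak p \neq \mathfrak m$, the ring $R_\mathfrak p$ is regular and $M_\mathfrak p$ is projective, killing the Ext. Combined with the fact that $\ext^j_R(M,M)$ is finitely generated over $R$, it follows that each $H^{-j}(\dq)$ for $j \geq 1$ has finite length over $R$, hence is finite-dimensional over $k$. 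In particular, each such $H^{-j}(\dq)$ is finitely generated as an $A/AeA$-module.

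To propagate this to all negative degrees in one stroke (and to make the role of periodicity explicit), one uses $\eta \in H^{-2}(\dq)$: since $\eta$ is homotopy central (Theorem \ref{etaex}), left multiplication by $\eta^i$ is an $H^0(\dq)$-linear isomorphism $H^0(\dq) \xrightarrow{\cong} H^{-2i}(\dq)$ and $H^{-1}(\dq) \xrightarrow{\cong} H^{-2i-1}(\dq)$ for every $i \geq 0$. Thus every $H^j(\dq)$ with $j \leq 0$ is isomorphic as an $H^0(\dq)$-module to either $H^0(\dq)$ or $H^{-1}(\dq)$, both of which are finitely generated by the preceding paragraph. Positive degrees vanish, completing the proof. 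The only step with any content is the isolated-singularity input ensuring the finite length of $\ext^1_R(M,M)$; everything else is bookkeeping.
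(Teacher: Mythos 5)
Your proof is correct, and it reaches the conclusion by a route that is more self-contained than the paper's: where the paper simply cites Auslander's theorem that the stable category of an isolated singularity is hom-finite (so that all $\underline{\ext}^j_R(M,M)$, and hence all $H^j(\dq)$ via \ref{qisolem}, are finite-dimensional), you unpack that citation by running the localisation argument directly --- $M_{\mathfrak p}$ is free over the regular local ring $R_{\mathfrak p}$ for $\mathfrak p\neq\mathfrak m$, so $\ext^j_R(M,M)$ is supported at the closed point and hence finite length. You also go through Lemma \ref{dqcohom} (ordinary Ext, which uses Eisenbud 2-periodicity) rather than through \ref{qisolem} (stable Ext); this restricts the argument to the hypersurface case, which is fine since that is the standing hypothesis here. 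Your handling of $H^0$, via $A$ being module-finite over the noetherian $R$ rather than via hom-finiteness, is also a perfectly good and slightly more robust variant.

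One small structural remark: the final paragraph invoking $\eta$-periodicity is logically redundant. Your localisation argument already applies uniformly for every $j\geq 1$, so you have already established finite-dimensionality of $H^{-j}(\dq)$ in all negative degrees before the periodicity step; nothing needs to be ``propagated.'' Periodicity would only be needed if you had restricted the support argument to $j=1$, which you did not. You can safely delete that paragraph without losing anything.
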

\begin{proof}
The stable category of $R$ is hom-finite (e.g.\ \cite{auslanderhomf}). In particular, for all $j\in \Z$ the vector space $\underline{\ext}^j_R(M,M)$ is finite-dimensional. Hence, the graded algebra $H(\dq)$ is finite-dimensional in each degree by \ref{qisolem}, and in particular it is noetherian.
\end{proof}
\begin{rmk}
	By \ref{etaex}(2), the graded algebra $H(Q)$ is finitely generated over $A/AeA$, which is a finite-dimensional algebra. Hence $H(Q)$ is a finitely generated algebra, generated in degrees $0$ through $-2$, with the only degree $-2$ generator being $\eta$. If $M$ is rigid (i.e.\ $\ext^1_R(M,M)\cong 0$) then we have $H(Q)\cong A/AeA[\eta]$. We caution that $Q$ need not be formal.
	\end{rmk}		

	\begin{thm}\label{fintype}
		We have $\per\mathbb{L}^\eta(Q)=\per_\mathrm{fg}(Q)$.
	\end{thm}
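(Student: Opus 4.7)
The plan is to combine the previous proposition \ref{perbisperl}, which identifies $\per\mathbb{L}^\eta(Q)$ with $\per^b Q$, with the noetherianness of $Q$. It therefore suffices to show $\per^b Q = \per_\mathrm{fg}(Q)$, and I will prove the two inclusions separately.

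The reverse inclusion $\per^b Q \supseteq \per_\mathrm{fg}(Q)$ is essentially immediate. If $X\in\per_\mathrm{fg}(Q)$, then $\bigoplus_j H^j X$ is finitely generated as a module over the \emph{ungraded} ring $A/AeA$. Since $A/AeA$ is concentrated in degree $0$, this forces all but finitely many $H^j X$ to vanish, so $X$ has bounded cohomology and lies in $\per^b Q$.

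For the forward inclusion, I want to show that if $X\in\per^b Q$, then each $H^j X$ is finitely generated over $A/AeA = H^0 Q$. The key ingredient is that $Q$ is noetherian: each $H^j Q \cong \underline{\ext}^{-j}_R(M,M)$ is finite-dimensional over $k$, and in particular finitely generated over $A/AeA$. I will then argue by d\'evissage within $\per Q = \thick_{D(Q)}(Q)$: the statement that each cohomology of $X$ is finitely generated over $A/AeA$ is preserved under shifts (trivially) and under taking mapping cones, via the long exact sequence in cohomology combined with noetherianness of $A/AeA$ (a subquotient of a finitely generated module over a noetherian ring is finitely generated). It is also preserved under direct summands, again by noetherianness. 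Hence every perfect $Q$-module has finitely generated cohomology in each individual degree; combined with the boundedness assumption, the total cohomology of $X$ is finitely generated over $A/AeA$, so $X \in \per_\mathrm{fg}(Q)$.

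The main obstacle is the d\'evissage step verifying that perfect modules over the noetherian dga $Q$ have degreewise finitely generated cohomology over $H^0 Q$; this is a standard fact but the graded-vs-ungraded bookkeeping needs to be handled cleanly, and it is really where the noetherianness of $Q$ (rather than just finite-dimensionality of its cohomology in each degree) is used. Once that is in hand, both inclusions follow without further work.
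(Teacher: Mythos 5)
Your proof is correct and follows the same route as the paper's: reduce via \ref{perbisperl} to the equality $\per^b Q = \per_\mathrm{fg}(Q)$, note the easy inclusion, and for the other direction use noetherianness of $Q$ to show perfect modules have degreewise finitely generated cohomology over $H^0Q$. The paper simply asserts this last fact, whereas you supply the d\'evissage justification; this is the expected reading of the paper's one-line appeal to noetherianness.
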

	\begin{proof}
		By \ref{perbisperl}, it is enough to show that $\per_\mathrm{fg}(Q)=\per^bQ$. Note that $\per_\mathrm{fg} Q$ is always a subcategory of $\per^bQ$. Since $Q$ is noetherian, we see that for $X\in\per Q$, each $H^jX$ is also finitely generated over $H^0Q$. So a bounded perfect $Q$-module has total cohomology finitely generated over $H^0Q$.
	\end{proof}
\begin{rmk}
	Because $H^0Q$ is finite dimensional, we see that $\per_\mathrm{fg}(Q)$ is the category of perfect $Q$-modules with finite dimensional total cohomology.
	\end{rmk}

\begin{prop}\label{smdg}
Suppose that we are in the situation of \ref{dsgsmooth}. Then 
\begin{enumerate}\item the triangulated categories $\per\mathbb{L}_\eta(Q)$ and $\thick_{D_\mathrm{sg}(R)}(M)$ are triangle equivalent, via the map that sends $\mathbb{L}_\eta(Q)$ to $M$.
	\item The dg categories $\per_{\mathrm{dg}}\mathbb{L}_\eta(Q)$ and $\thick_{D^\mathrm{dg}_\mathrm{sg}(R)}(M)$ are quasi-equivalent, via the map that sends $\mathbb{L}_\eta(Q)$ to $M$.
	\end{enumerate}
	\end{prop}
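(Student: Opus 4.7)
The plan is to deduce both parts from the equivalence $\bar\Sigma$ of \ref{dsgsmooth} (resp.\ $\bar\Sigma_\mathrm{dg}$ of \ref{kymapbetterdg}), combined with the NTTY localisation \ref{ntty} applied to $S=\{\eta\}$ and the identifications established in the previous subsection.

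For (1), first note that under the hypothesis of \ref{dsgsmooth} the kernel of $\Sigma$ is $\per_\mathrm{fg}(Q)$ by \ref{kerscor}, which equals $\per\mathbb{L}^\eta(Q)$ by \ref{fintype}. The NTTY sequence $\per\mathbb{L}^\eta(Q) \to \per Q \to \per\mathbb{L}_\eta(Q)$ is exact up to direct summands, and since $\per\mathbb{L}_\eta(Q)$ is idempotent complete (as the perfect derived category of any dga), this descends to a genuine triangle equivalence $\per Q / \per\mathbb{L}^\eta(Q) \xrightarrow{\simeq} \per\mathbb{L}_\eta(Q)$. Composing the inverse with $\bar\Sigma: \per Q/\ker\Sigma \xrightarrow{\simeq} D_\mathrm{sg}(R)$ produces a triangle equivalence $\per\mathbb{L}_\eta(Q) \simeq D_\mathrm{sg}(R)$ sending $\mathbb{L}_\eta(Q) \mapsto \Sigma(Q) \simeq Ae \simeq M$ (using that $R \simeq 0$ in the stable category). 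Takahashi's theorem \ref{takthm}, or equivalently the essential surjectivity of $\bar\Sigma$, then gives $\thick_{D_\mathrm{sg}(R)}(M) = D_\mathrm{sg}(R)$, completing (1). The projective case $M\simeq 0$ is trivial since both sides are zero.

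For (2), the cleanest route combines \ref{etaex}(3) with the standard fact that for a pretriangulated dg category $\mathcal{C}$ with idempotent-complete homotopy category, any object $X \in \mathcal{C}$ yields a natural quasi-equivalence $\per_\mathrm{dg}(\dge_\mathcal{C}(X)) \xrightarrow{\simeq} \thick_\mathcal{C}(X)$ sending $\dge(X) \mapsto X$. Applied to $X = M \in D^\mathrm{dg}_\mathrm{sg}(R)$, this gives $\per_\mathrm{dg}\R\underline{\enn}_R(M) \simeq \thick_{D^\mathrm{dg}_\mathrm{sg}(R)}(M)$ sending $\R\underline{\enn}_R(M) \mapsto M$; combining with the quasi-isomorphism $\mathbb{L}_\eta(Q) \simeq \R\underline{\enn}_R(M)$ from \ref{etaex}(3) yields the desired quasi-equivalence $\per_\mathrm{dg}\mathbb{L}_\eta(Q) \simeq \thick_{D^\mathrm{dg}_\mathrm{sg}(R)}(M)$, with $\mathbb{L}_\eta(Q) \mapsto M$. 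Alternatively, one can enhance the proof of (1) dg-categorically by composing $\bar\Sigma_\mathrm{dg}$ from \ref{kymapbetterdg} with a dg version of NTTY, itself established by applying the universal property of the Drinfeld quotient to the dg localisation functor $-\otimes^\mathbb{L}_Q \mathbb{L}_\eta(Q)$, which annihilates $\per_\mathrm{dg}\mathbb{L}^\eta(Q)$; the resulting factored dg functor is then a quasi-equivalence because it is a triangle equivalence on homotopy categories by (1) and the target is Karoubi complete.

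The main technical point is the dg enhancement needed for (2): while the triangulated NTTY statement \ref{ntty} is in hand, its dg lift does not appear explicitly, and if one follows the NTTY-route one must verify that the Drinfeld quotient of $\per_\mathrm{dg}(Q)$ by $\per_\mathrm{dg}\mathbb{L}^\eta(Q)$ recovers $\per_\mathrm{dg}\mathbb{L}_\eta(Q)$ as a dg category. The direct approach via the $\per_\mathrm{dg}(\dge(X)) \simeq \thick(X)$ principle sidesteps this issue, reducing (2) essentially to the content of \ref{etaex}(3).
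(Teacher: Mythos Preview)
Your overall strategy for (1) matches the paper's, but there is a logical slip in the NTTY step. Saying that $\per\mathbb{L}_\eta(Q)$ is idempotent complete does \emph{not} imply that the induced fully faithful functor $\per Q / \per\mathbb{L}^\eta(Q) \to \per\mathbb{L}_\eta(Q)$ is an equivalence; for that you need the \emph{source} to be idempotent complete. The paper obtains this in the correct order: first $\bar\Sigma$ identifies $\per Q / \per\mathbb{L}^\eta(Q)$ with $\thick_{D_\mathrm{sg}(R)}(M)$, which is idempotent complete as a thick subcategory of an idempotent complete category; \emph{then} NTTY yields $\per Q / \per\mathbb{L}^\eta(Q)\simeq \per\mathbb{L}_\eta(Q)$. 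Your argument is easily repaired by either reordering in this way or by passing to idempotent completions on both sides before composing.

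For (2), your first approach via \ref{etaex}(3) together with the Bondal--Kapranov principle $\per_\mathrm{dg}(\dge_\mathcal{C}(X))\simeq \thick_\mathcal{C}(X)$ is correct and genuinely different from the paper's proof here. The paper instead runs the NTTY route at the dg level: it uses \ref{ntty} to get $\per_\mathrm{dg}\mathbb{L}_\eta Q \simeq \per_\mathrm{dg}(Q)/\per_\mathrm{dg}\mathbb{L}^\eta(Q)$, then \ref{fintype} and \ref{kymapbetterdg}. Your direct route is cleaner and, notably, does not use the hypothesis of \ref{dsgsmooth} at all --- it is precisely the argument the paper later records as \ref{bkprop} and deploys in \ref{recovwk}. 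What the paper's route buys is explicit compatibility with the singularity functor $\Sigma$ (the resulting quasi-equivalence visibly factors $\bar\Sigma_\mathrm{dg}$), which is the content exploited in \ref{dgcd}; your direct route gives the abstract quasi-equivalence but not this compatibility without further argument.
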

\begin{proof}
	As in the proof of \ref{dsgsmooth}, we have $\ker\Sigma =\per_\mathrm{fg}(Q)$. 	Hence by \ref{dsgsmooth} and \ref{fintype}, the singularity functor induces a triangle equivalence  $${\bar{\Sigma}}:\frac{\per(Q)}{\per\mathbb{L}^\eta(Q)} \to \thick_{D_\mathrm{sg}(R)}(M)$$which sends $Q$ to $M$. In particular, $\frac{\per(Q)}{\per\mathbb{L}^\eta(Q)}$ is idempotent complete.  By \ref{ntty}, this quotient is precisely $\per\mathbb{L}_\eta(Q)$, and the quotient map sends $Q$ to $\mathbb{L}_\eta(Q)$. For the second statement, by \ref{ntty} we have a quasi-equivalence of dg categories $$
	\per_\mathrm{dg}\mathbb{L}_\eta Q \xrightarrow{\simeq} \frac{\per_\mathrm{dg}(Q)}{\per_\mathrm{dg}\mathbb{L}^\eta(Q)} $$which sends $\mathbb{L}_\eta(Q)$ to $Q$. It is easy to see that the proof of \ref{fintype} gives a quasi-equivalence of dg categories $$\per^\mathrm{dg}_\mathrm{fg}(Q)\simeq \per_\mathrm{dg}\mathbb{L}^\eta(Q)$$ compatible with the inclusion into $\per Q$, and it now follows that the composition
	$$\per_\mathrm{dg}\mathbb{L}_\eta Q \xrightarrow{\simeq} \frac{\per_\mathrm{dg}(Q)}{\per_\mathrm{dg}\mathbb{L}^\eta(Q)}  \xrightarrow{\simeq} \frac{\per_\mathrm{dg}(Q)}{\per^\mathrm{dg}_\mathrm{fg}(Q)}\xrightarrow{\simeq} \thick_{D^\mathrm{dg}_\mathrm{sg}(R)}(M)$$is a quasi-equivalence, where the last map is a quasi-equivalence by \ref{kymapbetterdg}.
	\end{proof}

	\begin{prop}\label{bkprop}Let $\mathcal{T}$ be a pretriangulated dg category and let $X \in \mathcal{T}$ be an object. Then $\thick_\mathcal{T}(X)$ is quasi-equivalent to the dg category $\per_\mathrm{dg} \dge_\mathcal{T}(X)$. 
	\end{prop}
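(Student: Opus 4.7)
The plan is to construct a Yoneda-type dg functor $F \colon \thick_\mathcal{T}(X) \to \per_\mathrm{dg} \dge_\mathcal{T}(X)$ that sends $X$ to the rank-one free module $\dge_\mathcal{T}(X)$, and then verify that $F$ is a quasi-equivalence by a standard thick-subcategory argument.

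First, I would set up the functor. Write $E\coloneqq \dge_\mathcal{T}(X)$. The assignment $Y\mapsto \dgh_\mathcal{T}(X,Y)$ is a dg functor from $\mathcal{T}$ to the dg category $\cat{Mod}\text{-}E$ of right dg $E$-modules, and it sends $X$ to $E$ with its canonical right $E$-module structure. Composing with a cofibrant replacement (or, equivalently, working in $\mathrm{Hqe}$ via Tabuada's model structure \ref{tabmod}) yields a dg functor $F\colon \mathcal{T}\to D_\mathrm{dg}(E)$ with $F(X)\simeq E$, the rank-one free module, which lies in $\per_\mathrm{dg}(E)$.

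Next, because $\mathcal{T}$ is pretriangulated and $F$ is a dg functor to a pretriangulated dg category, $F$ preserves shifts and mapping cones (up to coherent quasi-isomorphism). The full sub-dg-category of $\mathcal{T}$ on objects $Y$ with $F(Y)\in \per_\mathrm{dg}(E)$ is therefore closed under shifts, cones, and direct summands; since it contains $X$, it contains all of $\thick_\mathcal{T}(X)$. So $F$ restricts to a dg functor $\thick_\mathcal{T}(X)\to \per_\mathrm{dg}(E)$.

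To show quasi-full-faithfulness, note that the component $F_{XX}\colon \dge_\mathcal{T}(X)\to \dgh_E(E,E)$ is tautologically the identity, in particular a quasi-isomorphism. The class of pairs $(Y,Z)$ for which $F_{YZ}$ is a quasi-isomorphism is closed under shifts, cones, and summands in either variable (since both $\dgh_\mathcal{T}(-,-)$ and $\dgh_E(F-,F-)$ send such operations on one variable to the corresponding homotopy limits/colimits of complexes, and five-lemma-type arguments propagate the property). Hence $F$ is quasi-fully-faithful on all of $\thick_\mathcal{T}(X)$. For quasi-essential surjectivity, the essential image $[F](\thick_\mathcal{T}(X))\subseteq [\per_\mathrm{dg}(E)]=\per E$ is a triangulated subcategory closed under summands that contains $E$, and so equals all of $\per E$ by definition of $\per_\mathrm{dg}(E)=\thick_\mathrm{dg}(E)$.

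The main technical obstacle is the usual one with Yoneda in the dg setting: the naive assignment $Y\mapsto\dgh_\mathcal{T}(X,Y)$ need not land in cofibrant modules, so one must either invoke functorial cofibrant replacement or phrase the result in $\mathrm{Hqe}$, where the comparison can be made rigorously. Once this is handled, the closure arguments in the previous paragraph are formal and the result follows.
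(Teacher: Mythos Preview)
Your proposal is correct and is essentially the same argument as the paper's, just carried out more explicitly. The paper's proof is a one-liner: it observes that the one-object dg category $\{X\}$ is the dga $E$, and that both $\thick_\mathcal{T}(X)$ and $\per_\mathrm{dg}E$ are obtained from this one-object dg category by taking the pretriangulated hull (closing under shifts, cones, and summands), citing Bondal--Kapranov for this folklore fact; the quasi-equivalence is stated in the direction $\per_\mathrm{dg}E\to\thick_\mathcal{T}(X)$, sending $E\mapsto X$. Your Yoneda functor $Y\mapsto\dgh_\mathcal{T}(X,Y)$ is precisely the inverse of this, and your closure/five-lemma arguments are exactly what underlies the pretriangulated-hull statement the paper invokes.
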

\begin{proof}
	This is an old argument  and essentially goes back to Bondal and Kapranov \cite{bondalkapranov}. As a one-object dg category, $\{X\}$ is equal to the dga $E\coloneqq \dge_\mathcal{T}(X)$. But both $\thick_\mathcal{T}(X)$ and $\per_\mathrm{dg} \dge_\mathcal{T}(X)$ are obtained from $\{X\}$ by closing under shifts and mapping cones (i.e.\ taking the pretriangulated hull). The corresponding quasi-equivalence $\per_\mathrm{dg} \dge_\mathcal{T}(X) \to \thick_\mathcal{T}(X)$ is the obvious one defined by sending the object $E$ to the object $X$.
	\end{proof}
	\begin{thm}\label{recovwk}
When $A/AeA$ is Artinian local, the quasi-isomorphism class of $\dq$ determines the quasi-equivalence class of the dg category $\thick_{D^\mathrm{dg}_\mathrm{sg}(R)}(M)$.
	\end{thm}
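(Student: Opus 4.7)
The plan is to assemble three results already established in the paper: the Bondal--Kapranov identification \ref{bkprop} of a thick subcategory with perfect modules over an endomorphism dga, the description of $\R\underline{\enn}_R(M)$ as a derived localisation of $\dq$ (\ref{etaex}(3)), and the intrinsic characterisation of the periodicity element $\eta$ (\ref{uniqueeta}). No new computation should be required; the whole statement is essentially a reassembly of Section~7 together with \ref{bkprop}.

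First I would apply \ref{bkprop} to the object $M \in D^\mathrm{dg}_\mathrm{sg}(R)$ to obtain a quasi-equivalence
\[ \thick_{D^\mathrm{dg}_\mathrm{sg}(R)}(M) \simeq \per_\mathrm{dg} \R\underline{\enn}_R(M). \]
Since $\per_\mathrm{dg}(-)$ is manifestly an invariant of the quasi-isomorphism type of the input dga, this reduces the problem to showing that the quasi-isomorphism type of $\R\underline{\enn}_R(M)$ is determined by that of $\dq$.

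For this reduction, I would invoke \ref{etaex}(3) to identify $\R\underline{\enn}_R(M)$ with $\mathbb{L}_\eta(\dq)$, where $\eta\in H^{-2}(\dq)$ is the periodicity element. The remaining question is then: given only the quasi-isomorphism class of $\dq$, can one canonically specify a class in $H^{-2}$ well enough to define its derived localisation? Here \ref{uniqueeta} does the work. If $C$ is any dga quasi-isomorphic to $\dq$, then $H^0(C)\cong A/AeA$ is Artinian local by hypothesis, so any non-nilpotent element of $H^{-2}(C)$ agrees with the image of $\eta$ up to multiplication by a unit in $H^0(C)$. Combined with the standard fact that derived localisation is unchanged up to quasi-isomorphism when the localising class is multiplied by a unit (this is exactly the point exploited in the proof of \ref{uniqueetaqi}), it follows that $\mathbb{L}_\eta(\dq)\simeq\mathbb{L}_{\eta'}(C)$ for any such $\eta'$, and hence $\R\underline{\enn}_R(M)$ is determined intrinsically by the quasi-isomorphism class of $\dq$.

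The main obstacle is essentially bookkeeping: one has to check that ``non-nilpotent in degree $-2$'' is a condition preserved under quasi-isomorphism (which is evident, since it is detected by the cohomology ring) and that the passage from $\dq$ to $\mathbb{L}_\eta(\dq)$ to $\per_\mathrm{dg}$ to $\thick$ does not secretly depend on auxiliary data beyond the quasi-isomorphism class of $\dq$. Each of these steps is either tautological or has already been verified in the excerpt, so the argument should be short. I do not anticipate any substantive difficulty.
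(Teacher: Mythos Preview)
Your proposal is correct and follows essentially the same route as the paper: the paper's proof also reduces, via \ref{bkprop}, to showing that $\dq$ determines $\R\underline{\enn}_R(M)$ up to quasi-isomorphism, and then invokes the argument of \ref{uniqueetaqi} (which is exactly the \ref{uniqueeta}-plus-unit-invariance reasoning you spell out). The only difference is cosmetic ordering; your write-up unpacks the \ref{uniqueetaqi} step where the paper simply cites it.
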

	\begin{proof}
		As in the proof of \ref{uniqueetaqi}, the quasi-isomorphism class of $\dq$ determines the quasi-isomorphism class of $\R \underline{\enn}_{R}(M)$, hence the quasi-equivalence class of $\per_{\mathrm{dg}}\R \underline{\enn}_{R}(M)$, and hence the quasi-equivalence class of
		$\thick_{D^\mathrm{dg}_\mathrm{sg}(R)}(M)$ by \ref{bkprop}. 
	\end{proof}

\subsection{A commutative diagram}
We know by \ref{bkprop} that there is an abstract quasi-equivalence of dg categories $$\per_\mathrm{dg}\mathbb{L}_\eta Q \simeq \thick_{D_\mathrm{sg}(R)}(M).$$ Unfortunately it is not clear that this abstract quasi-equivalence can be chosen to be compatible with the maps $\per B \to \per\mathbb{L}_\eta Q $ and $\Sigma:\per B \to\thick_{D_\mathrm{sg}(R)}(M)$. 

\p Under some smoothness assumptions, we do know that such a compatibility holds, by \ref{smdg}. We now a priori have two different ways of getting from $\per_\mathrm{dg}(Q)$ to $\thick_{D_\mathrm{sg}(R)}(M)$: one can either go via the quotient $\frac{\per_\mathrm{dg}(Q)}{\per^\mathrm{dg}_\mathrm{fg}(Q)}$ or via the localisation $\per_\mathrm{dg}\mathbb{L}_\eta Q $. We prove that these are the same. This is not needed for our recovery result, and the uninterested reader can skip this part. For brevity let $E\simeq \R\underline{\enn}_R(M)$ be the derived localisation of ${Q}$ at $\eta$.
\begin{prop}\label{dgcd}Suppose that we are in the situation of \ref{dsgsmooth}. Then there is a commutative diagram in the homotopy category of dg categories $$\begin{tikzcd} \per Q\ar[d,"\pi"]\ar[dr, "\Sigma"]\ar[d] \ar[r, "-\lot_Q E"]& \per(E)\ar[d,"\alpha"] \\ \per Q / \per_\mathrm{fd}(Q) \ar[r, "\bar{\Sigma}", swap] & \thick_{D^\mathrm{dg}_\mathrm{sg}(R)}(M) \end{tikzcd}$$	where $\pi$ is the standard projection functor, $\Sigma$ is the singularity functor, and $\alpha$ and $\bar{\Sigma}$ are quasi-equivalences.
\end{prop}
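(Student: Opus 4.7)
The plan is to verify the two sub-diagrams (the lower triangle and the upper square) separately, working in the homotopy category of dg categories $\mathrm{Hqe}$. First, the lower triangle $\Sigma \simeq \bar{\Sigma} \circ \pi$ holds essentially by construction: under the hypotheses of \ref{dsgsmooth}, the proof of \ref{smdg} combined with \ref{fintype} shows $\ker^{\mathrm{dg}}\Sigma = \per^{\mathrm{dg}}_{\mathrm{fg}}(Q) = \per_{\mathrm{fd}}(Q)$, so $\Sigma$ factors through the dg quotient to produce $\bar{\Sigma}$, and this factorisation \emph{is} how $\bar{\Sigma}$ was defined.

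For the upper square, the goal is $\alpha \circ (-\lot_Q E) \simeq \Sigma$ in $\mathrm{Hqe}$. Both sides are dg functors out of $\per_{\mathrm{dg}} Q$. The key rigidity tool is \ref{bkprop} (essentially Bondal--Kapranov together with Toën's theory of internal Hom in $\mathrm{Hqe}$): the dg category $\per_{\mathrm{dg}} Q$ is the pretriangulated hull of the one-object dg category $\{Q\}$ with $\dge(Q) = Q$, so a dg functor $\per_{\mathrm{dg}} Q \to \mathcal{T}$ is determined in $\mathrm{Hqe}$ by the dga morphism $Q \to \dge_{\mathcal{T}}(FQ)$ obtained from its restriction to the generator $Q$. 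Hence it suffices to compare the two induced dga morphisms at $Q$.

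Computing these: $\Sigma$ sends $Q$ to $Ae$, and the induced dga morphism $Q \to \dge(Ae) \simeq E$ is, by definition, the comparison map $\Xi$ of \ref{comparisonmap}. On the other hand, $-\lot_Q E$ sends $Q$ to $E$ with structure morphism the canonical derived localisation map $Q \to E$, and $\alpha$ sends $E$ to (something quasi-isomorphic to) $M$ with structure morphism a quasi-isomorphism $E \xrightarrow{\simeq} \dge(M)$ supplied by the construction in \ref{bkprop}. By \ref{etaex}(4), the comparison map $\Xi$ \emph{is} the derived localisation map at $\eta$, so the two dga morphisms $Q \to E$ agree up to quasi-isomorphism; this identifies the two functors in $\mathrm{Hqe}$.

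The main obstacle is not a calculation but the rigidity step: turning ``the two functors agree on the generator $Q$'' into ``the two functors agree in $\mathrm{Hqe}$''. In principle one could try to construct an explicit natural quasi-isomorphism of dg functors between cofibrant-fibrant representatives, but it is cleaner to invoke the universal property of the pretriangulated hull as packaged in \ref{bkprop} (or, equivalently, Toën's description of $\R\mathrm{Map}_{\mathrm{Hqe}}(\per_{\mathrm{dg}} Q, \mathcal{T})$ in terms of bimodules), so that the comparison reduces to the already-established agreement of two dga morphisms $Q \to E$.
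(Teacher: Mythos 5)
Your proposal is correct and follows essentially the same route as the paper: split the diagram into the lower triangle (which commutes by the definition of $\bar\Sigma$) and the upper triangle, then reduce the latter to a comparison of dga morphisms out of $Q$ via the rigidity of dg functors from a one-object-generated pretriangulated dg category, with \ref{etaex}(4) supplying the identification of $\Xi$ with the derived localisation map. The paper phrases this slightly differently — building the dg-category diagram by applying $\per_{\mathrm{dg}}$ to a commutative triangle of dgas from the proof of \ref{etaex} and then identifying the resulting functors as $-\lot_Q E$ and $\Sigma$ using To\"en's Exercice 34 — but the key ingredients and logical content are the same.
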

\begin{proof}
	The bottom left triangle commutes by the definition of $\bar{\Sigma}$, which is an equivalence by \ref{smdg}. We show that the top right triangle commutes. The proof of \ref{etaex} gives a commutative triangle in the homotopy category of dgas $$\begin{tikzcd}
		Q \ar[dr,"\Xi"] \ar[r]& E \ar[d,"\simeq"] \\
		& \R\underline{\enn}_R(M)
	\end{tikzcd}$$where $\Xi$ is the comparison map of \ref{comparisonmap}, $Q \to E$ is the derived localisation at the periodicity element $\eta$, and $E \to \R\underline{\enn}_R(M)$ is a quasi-isomorphism. This gives us a diagram in the homotopy category of dg categories
	$$\begin{tikzcd}
		BQ \ar[dr,"B\Xi"] \ar[r]& BE \ar[d,"\simeq"] \\
		& B\R\underline{\enn}_R(M)
	\end{tikzcd}$$where $BW$ means the dg category with a single object with endomorphism dga $W$. Note that the rightmost map is a quasi-equivalence. Taking perfect modules now gives us a commutative diagram in the homotopy category of dg categories $$\begin{tikzcd}
		\per Q \ar[dr,"F'"] \ar[r,"F"]& \per E \ar[d,"\simeq"] \\
		& \per\left(\R\underline{\enn}_R(M)\right)
	\end{tikzcd}$$where the rightmost map is a quasi-equivalence. It remains to prove that the induced maps $F$ and $F'$ are the correct ones. But if $\mathcal{T}$ and $\mathcal{T}'$ are pretriangulated dg categories where $\mathcal{T}$ is generated by a single object $G$, then any dg functor $\mathcal{T}\to\mathcal{T}'$ is determined by its value on $G$: because objects in $\mathcal{T}$ are generated by $G$ under cones and shifts, their hom complexes are all iterated cones of maps between $\dge(G)$. The same clearly applies for the image of $\mathcal{T}$. So given $G' \in \mathcal{T}'$ and a dg functor of one-object dg categories $G \to G'$, this uniquely extends to a dg functor $\mathcal{T}\to \mathcal{T}'$ by tensoring the hom-complexes in $\mathcal{T}$ with the map $\dge(G)\to \dge(G')$ \cite[Exercice 34]{toendglectures}. In particular it follows that the induced map $F:\per Q \to  \per E$ is the tensor product $-\lot_Q E$. Recall the definition of $\Xi$ from \ref{comparisonmap}: it is the component of the dg functor $\Sigma:\per Q \to \thick_{D^\mathrm{dg}_\mathrm{sg}(R)}(M)$ at the object $Q$. In particular, if one restricts $\Sigma$ to the one object dg category $BQ\subseteq \per Q$, then one gets the dg functor $\Xi$. So $F'\cong\Sigma$.
\end{proof}

\subsection{The main theorem}
We have seen that $\dq$ determines the thick subcategory of the dg singularity category of $R$ generated by $M$ (\ref{recovwk}). A recent theorem of Hua and Keller states that one can recover $R$ from the dg singularity category $D^\mathrm{dg}_\mathrm{sg}(R)$:
\begin{thm}[{\cite[5.7]{huakeller}}]\label{hkthm}
Let $R=k\llbracket x_1,\ldots, x_n \rrbracket / \sigma$ and $R'=k\llbracket x_1,\ldots, x_n \rrbracket / \sigma'$ be two complete local isolated hypersurface singularities. If $D^\mathrm{dg}_\mathrm{sg}(R)$ is quasi-equivalent to $D^\mathrm{dg}_\mathrm{sg}(R')$, then $R\cong R'$ as $k$-algebras.  
\end{thm}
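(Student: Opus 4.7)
The plan is to reduce the statement to the classical Mather--Yau theorem, which says that two complete local isolated hypersurface singularities $R = k\llbracket x_1,\ldots,x_n\rrbracket/\sigma$ and $R' = k\llbracket x_1,\ldots,x_n\rrbracket/\sigma'$ are isomorphic as $k$-algebras if and only if their Tjurina algebras $T_\sigma$ and $T_{\sigma'}$ are isomorphic. Granting this, it suffices to show that the quasi-equivalence class of $D^\mathrm{dg}_\mathrm{sg}(R)$ determines the Tjurina algebra $T_\sigma$.

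First I would single out a distinguished object of $D^\mathrm{dg}_\mathrm{sg}(R)$, e.g.\ the stabilised residue field $k^\mathrm{st}$ (a sufficiently high syzygy of $k$, which is MCM). By Takahashi's theorem (\ref{takthm}), any non-zero non-perfect object of $D^\mathrm{dg}_\mathrm{sg}(R)$ generates the singularity category, so there is no genuinely triangulated obstruction; what one needs instead is an intrinsic homological characterisation of $k^\mathrm{st}$, for instance as the unique (up to shift) simple object with $\underline{\hom}_R(k^\mathrm{st},k^\mathrm{st})=k$ in the heart of an appropriate $t$-structure. A quasi-equivalence $D^\mathrm{dg}_\mathrm{sg}(R)\simeq D^\mathrm{dg}_\mathrm{sg}(R')$ will then transport $k^\mathrm{st}$ to an object with the same intrinsic description on the $R'$-side, and hence by \ref{bkprop} it will induce a quasi-isomorphism of derived stable endomorphism dgas $E := \R\underline{\enn}_R(k^\mathrm{st}) \simeq \R\underline{\enn}_{R'}(k'^\mathrm{st})$.

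Next I would extract $T_\sigma$ from $E$. Via Buchweitz--Orlov, $D^\mathrm{dg}_\mathrm{sg}(R)$ is quasi-equivalent to the $2$-periodic dg category $\mathrm{MF}(\sigma)$ of matrix factorisations, and $E$ admits an explicit Koszul-type model: by Eisenbud periodicity (\ref{ebudper}) it is $2$-periodic in positive degrees, while its low-degree cohomology recovers the cotangent space of $R$. The essential input is that the minimal $A_\infty$-structure on $H^\ast(E)$ (equivalently, the Hochschild cohomology $HH^\ast(D^\mathrm{dg}_\mathrm{sg}(R))$ in an appropriate degree, as computed for matrix factorisations by Dyckerhoff, Polishchuk--Positselski, and Segal) reads off the partial derivatives and the defining equation of $\sigma$ up to contact equivalence, and in particular determines $T_\sigma$ as a $k$-algebra. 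Applying Mather--Yau then yields $R \cong R'$.

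The main obstacle is precisely this last extraction: it is not enough to know $H^\ast(E)$ as a graded algebra, since the Tjurina data is hidden in higher $A_\infty$-operations (or, equivalently, in Massey products). One would either invoke the noncommutative Mather--Yau theorem of Hua--Zhou directly on $E$, or perform the Hochschild computation of matrix factorisations and show that the resulting piece agrees with $T_\sigma$ naturally under quasi-equivalence. A secondary subtlety is the canonicity of $k^\mathrm{st}$ under an arbitrary quasi-equivalence; this can be handled by observing that any two generators satisfying the required intrinsic properties have derived Morita equivalent endomorphism dgas, and derived Morita equivalence preserves both Hochschild cohomology and the $A_\infty$-invariants used in the Hua--Zhou input.
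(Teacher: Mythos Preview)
This theorem is not proved in the paper; it is quoted from Hua--Keller, and the paper only sketches their argument in the remark immediately following the statement. Your overall strategy matches theirs: reduce to the formal Mather--Yau theorem by showing that the quasi-equivalence class of $D^\mathrm{dg}_\mathrm{sg}(R)$ determines the Tjurina algebra $T_\sigma$.

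Where you diverge is in the extraction step. Hua--Keller do \emph{not} single out a distinguished object such as $k^\mathrm{st}$; instead they use Keller's machinery of singular Hochschild cohomology to identify the zeroth Hochschild (co)homology of the dg category $D^\mathrm{dg}_\mathrm{sg}(R)$ itself with $T_\sigma$ (this is the $\Z$-graded analogue of Dyckerhoff's computation, which gives the Milnor algebra in the $\Z/2$-graded matrix factorisation setting). Since Hochschild (co)homology is a quasi-equivalence invariant of dg categories, this immediately gives $T_\sigma \cong T_{\sigma'}$ with no need to transport any particular object across the equivalence. Your detour through $k^\mathrm{st}$ is therefore unnecessary, and the intrinsic characterisation you propose (``unique simple in the heart of an appropriate $t$-structure'') is genuinely problematic: $D_\mathrm{sg}(R)$ has no obvious canonical $t$-structure. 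You correctly observe in your final paragraph that this can be salvaged via Morita invariance of Hochschild invariants --- but once you invoke that, you are back to computing $HH^*$ of the whole category, and the choice of generator becomes irrelevant.

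The part of your sketch that remains genuinely incomplete is the identification of $T_\sigma$ with a specific Hochschild group. You gesture at Dyckerhoff's result, but that lives in the $\Z/2$-graded world and yields the Milnor algebra, not the Tjurina algebra; passing to the $\Z$-graded singularity category and landing on $T_\sigma$ is precisely the content of the Hua--Keller computation via singular Hochschild cohomology, and is the hard input you would need to supply.
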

\begin{rmk}
	The proof uses the machinery of singular Hochschild cohomology \cite{kellersing} to identify the zeroth Hochschild homology of $D^\mathrm{dg}_\mathrm{sg}(R)$ with the Tjurina algebra of $R$. This is a $\Z$-graded analogue of Dyckerhoff's theorem \cite{dyck} stating that the Hochschild cohomology of the $\Z/2$-graded dg category of matrix factorisations for $R$ is the Milnor algebra of $R$. The Tjurina algebra of $R$ recovers $R$ by the formal Mather--Yau theorem \cite{matheryau, gpmather}.
	\end{rmk}

	\begin{thm}\label{recov}
		For $i\in\{1,2\}$ let $R_i=k\llbracket x_1,\ldots, x_n \rrbracket / \sigma_i$ be an isolated hypersurface singularity. Let $M_i$ be a non-projective MCM $R_i$-module and let $A_i\coloneqq \enn_{R_i}(R_i\oplus M_i)$ be the associated noncommutative partial resolution with idempotent $e_i=\id_{R_i}$. Suppose that at least one of the $M_i$ is indecomposable. Suppose that there is a quasi-isomorphism $A_1/^\mathbb{L}A_1e_1A_1\simeq A_2/^\mathbb{L}A_2e_2A_2$ between the derived exceptional loci. Then $R_1\cong R_2$ as $k$-algebras.
	\end{thm}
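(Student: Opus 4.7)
The plan is to chain together three recovery results from the excerpt. Theorem~\ref{recovwk} will transport the given quasi-isomorphism of derived exceptional loci into a quasi-equivalence of dg categories $\thick_{D^\mathrm{dg}_\mathrm{sg}(R_1)}(M_1)\simeq \thick_{D^\mathrm{dg}_\mathrm{sg}(R_2)}(M_2)$; Takahashi's classification~\ref{takthm} together with \ref{dgtakahashi} will identify each thick subcategory with the whole dg singularity category; and finally the Hua--Keller theorem~\ref{hkthm} will recover $R_i$ from $D^\mathrm{dg}_\mathrm{sg}(R_i)$.

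Before invoking \ref{recovwk}, I must verify that $A_i/A_ie_iA_i$ is Artinian local for both $i$. The given quasi-isomorphism of derived exceptional loci induces an $H^0$-isomorphism $A_1/A_1e_1A_1\cong A_2/A_2e_2A_2$ of ungraded algebras, so it suffices to establish the property on one side. Without loss of generality, say $M_1$ is indecomposable. Since $R_1$ is complete local, $\enn_{R_1}(M_1)$ is a local ring, whence its quotient $A_1/A_1e_1A_1 \cong \underline{\enn}_{R_1}(M_1)$ is again local. Finite-dimensionality is immediate from the hom-finiteness of the stable category of the isolated singularity $R_1$, so both quotients are Artinian local.

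Now I apply \ref{recovwk}. Its content unpacks as follows: the periodicity class $\eta_1\in H^{-2}(A_1/^\mathbb{L}A_1e_1A_1)$ is carried across the quasi-isomorphism to a non-nilpotent element of $H^{-2}(A_2/^\mathbb{L}A_2e_2A_2)$, which by the uniqueness statement \ref{uniqueeta} (here is where Artinian locality is used) must coincide with $\eta_2$ up to a unit in $H^0$; invariance of derived localisation under quasi-isomorphism and change of inverted element by units then gives, as in \ref{uniqueetaqi}, a quasi-isomorphism of derived localisations $\R\underline{\enn}_{R_1}(M_1)\simeq \R\underline{\enn}_{R_2}(M_2)$, and \ref{bkprop} upgrades this to the sought quasi-equivalence of thick subcategories. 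Since each $M_i$ is non-projective MCM, it is a non-perfect object of $D^b(R_i)$, so Takahashi's theorem~\ref{takthm} guarantees that it generates $D_\mathrm{sg}(R_i)$, and \ref{dgtakahashi} then gives $\thick_{D^\mathrm{dg}_\mathrm{sg}(R_i)}(M_i)\simeq D^\mathrm{dg}_\mathrm{sg}(R_i)$. Concatenating yields $D^\mathrm{dg}_\mathrm{sg}(R_1)\simeq D^\mathrm{dg}_\mathrm{sg}(R_2)$, and \ref{hkthm} produces $R_1\cong R_2$ as $k$-algebras.

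The only non-routine step is securing the Artinian local condition on $A_i/A_ie_iA_i$; everything after that is a citation. This is the sole purpose of the indecomposability hypothesis on (at least) one $M_i$: without it, the uniqueness of the periodicity element \ref{uniqueeta} can fail, and the derived localisation $\R\underline{\enn}_{R_i}(M_i)$ is no longer pinned down up to quasi-isomorphism by the dga $A_i/^\mathbb{L}A_ie_iA_i$ alone, breaking the very first link in the recovery chain.
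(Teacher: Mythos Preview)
Your proof is correct and follows essentially the same route as the paper: verify Artinian locality of $A_i/A_ie_iA_i$ from indecomposability, invoke \ref{recovwk} to pass to a quasi-equivalence of thick subcategories, use Takahashi's theorem (\ref{takthm}, \ref{dgtakahashi}) to identify these with the full dg singularity categories, and conclude via Hua--Keller \ref{hkthm}. You supply a bit more detail than the paper does (the locality of $\enn_{R_1}(M_1)$ via completeness, hom-finiteness, and the explicit unpacking of \ref{recovwk} through \ref{uniqueeta} and \ref{uniqueetaqi}), but the argument is the same.
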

	\begin{proof}
		Because $M_1$ (say) is indecomposable, it follows that the finite-dimensional algebra $A_1/A_1e_1A_1\cong A_2/A_2e_2A_2$ is a local ring. Applying \ref{recovwk} twice now gives us a quasi-equivalence $$\thick_{D^\mathrm{dg}_\mathrm{sg}(R)}(M_1) \simeq \thick_{D^\mathrm{dg}_\mathrm{sg}(R)}(M_2).$$Takahashi's theorem \ref{takthm} (see also \ref{dgtakahashi}) tells us that because $M_i$ is not projective, it generates the stable category, and so this gives a quasi-equivalence $D^\mathrm{dg}_\mathrm{sg}(R_1)\simeq D^\mathrm{dg}_\mathrm{sg}(R_2)$. Now apply \ref{hkthm}.
	\end{proof}

\begin{rmk}
	An MCM module $M$ is not projective if and only if it generates the stable category of $R$. Moreover, this is the case if and only if $\dq\not\simeq 0$. In particular, by \ref{ontolem} if $A$ has finite global dimension then $M$ is not projective.
\end{rmk}

\begin{rmk}\label{tjurrmk}
	In the above situation, one has isomorphisms of algebras $$T_\sigma\cong HH^0(D^\mathrm{dg}_\mathrm{sg}(R))\cong HH^0(\per_{\mathrm{dg}}(\mathbb{L}_\eta(\dq)))\cong HH^0(\mathbb{L}_\eta(\dq)).$$As a vector space, one has $HH^0(\mathbb{L}_\eta(\dq))\cong HH^0(\dq)$ via the proof of \ref{etahoch}. An application of \ref{hochprop} gives an isomorphism $HH^0(\dq)\cong HH^0(A,\dq)$. In particular one can calculate the Tjurina number of the singularity as $\tau_\sigma=\dim_k HH^0(A,\dq)$.
\end{rmk}

\subsection{Threefold flops}
We give a brief sketch of our main application to the homological MMP; for an in-depth discussion, including careful proofs and references, see \cite[Chapter 8]{me}. We hope to present this material more comprehensively in future papers.

\p Let $X \xrightarrow{\pi} \spec R$ be a simple\footnote{i.e.\ the exceptional locus is an irreducible rational curve.} threefold flopping contraction where $X$ has only terminal singularities and $R$ is a complete local ring with an isolated singularity. It follows that $R$ must be an isolated hypersurface singularity (indeed, $R$ is compound du Val by \cite[5.38]{kollarmori}). By results of Van den Bergh \cite{vdb}, Donovan--Wemyss \cite{DWncdf, enhancements}, and Iyama--Wemyss \cite{iyamawemyssfactorial}, there exists a noncommutative partial resolution $A=\enn_R(R\oplus M)$ of $R$ together with a derived equivalence $D^b(A) \to D^b(X)$. Moreover, one may take $A$ to be basic (in the sense of Morita theory) and $M$ to be indecomposable (this relies on the contraction being simple). 

\p The Donovan--Wemyss \textbf{contraction algebra} is defined to be the finite-dimensional algebra $A_\con\coloneqq A/AeA$, and we define the \textbf{derived contraction algebra} to be the derived exceptional locus $\dca\coloneqq \dq$. We regard $\dca$ as an enhancement or categorification of $A_\con$. We note that $M$ is not projective because $A_\con\cong \underline{\enn}_R(M)$ is never the zero ring,  and hence the partial resolution $A$ associated to $\pi$ satisfies the conditions of \ref{recov}. Applying \ref{recov} immediately gives us the following result:

\begin{thm}[Derived Donovan--Wemyss]\label{ddwc}
	Let $X \xrightarrow{\pi} \spec R$ and $X' \xrightarrow{\pi'} \spec R'$ be two simple threefold flopping contractions where $X$ and $X'$ have only terminal singularities and $R$, $R'$ are complete local rings with isolated singularities. Let $\dca$ be the derived contraction algebra of $\pi$ and let $\dca'$ be the derived contraction algebra of $\pi'$. If $\dca$ and $\dca'$ are quasi-isomorphic as dgas, then $R\cong R'$ as $k$-algebras.
	\end{thm}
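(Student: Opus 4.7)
The plan is to reduce directly to the Main Theorem \ref{recov}, since essentially all of the required setup has already been assembled in the paragraphs preceding the statement. The key point is simply to verify that the geometric hypothesis of a simple threefold flop with isolated terminal singularities on the base produces data $(R,M,A,e)$ satisfying precisely the hypotheses of \ref{recov}, and then to transport the resulting isomorphism across.

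First I would recall the geometric input: by \cite[5.38]{kollarmori}, a complete local base $R$ of a threefold flopping contraction $X\to \spec R$ with $X$ having only terminal singularities is compound du Val, and in particular (being an isolated singularity by assumption) is a complete local isolated hypersurface singularity. Then I would invoke the results of Van den Bergh \cite{vdb}, Donovan--Wemyss \cite{DWncdf, enhancements}, and Iyama--Wemyss \cite{iyamawemyssfactorial} (as summarised in the paragraph preceding the theorem) to obtain a basic noncommutative partial resolution $A=\enn_R(R\oplus M)$ of $R$ with $M$ an indecomposable MCM $R$-module, equipped with a derived equivalence $D^b(A)\simeq D^b(X)$; I would do the same for $X'\to\spec R'$ to obtain $A'=\enn_{R'}(R'\oplus M')$ with $M'$ indecomposable and MCM. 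By construction $e=\id_R$ and $e'=\id_{R'}$ are the distinguished idempotents, and the derived contraction algebras are by definition $\dca=\dq$ and $(A')^\mathrm{der}_\con = A'/^\mathbb{L}A'e'A'$.

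Next I would check the non-projectivity hypothesis of \ref{recov}: since the contraction algebra $A_\con\cong\underline{\enn}_R(M)$ is never the zero ring (the exceptional curve is contracted to a point, so $M$ represents a nonzero object of the stable category), the module $M$ is not projective, and similarly $M'$ is not projective. At this point both $(R, M)$ and $(R', M')$ satisfy every hypothesis of \ref{recov}: the base is a complete local isolated hypersurface singularity, $M$ is an indecomposable non-projective MCM module, and $A$ is the associated noncommutative partial resolution. The main obstacle was already absorbed into the earlier development of the paper, namely constructing the comparison map $\Xi$ (\ref{qisolem}), identifying the periodicity element $\eta$ (\ref{etaex}) and using it to recover $\R\underline{\enn}_R(M)$ from $\dq$, and then invoking the Hua--Keller theorem \ref{hkthm} via the chain $\dq \rightsquigarrow \R\underline{\enn}_R(M)\rightsquigarrow \thick_{D^\mathrm{dg}_\mathrm{sg}(R)}(M) \simeq D^\mathrm{dg}_\mathrm{sg}(R)$ (using Takahashi's theorem \ref{takthm} for the last step).

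Finally, given a quasi-isomorphism $\dca \simeq (A')^\mathrm{der}_\con$ of dgas, I would apply \ref{recov} verbatim to deduce $R\cong R'$ as $k$-algebras, completing the proof. The only thing worth emphasising is that the indecomposability hypothesis in \ref{recov} (``at least one of the $M_i$ is indecomposable'') is automatic here because the contraction is simple, so both $M$ and $M'$ may be taken indecomposable.
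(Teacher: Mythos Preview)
Your proposal is correct and follows essentially the same approach as the paper: the paper likewise reduces immediately to \ref{recov} after observing (in the paragraphs preceding the statement) that $R$ is a complete local isolated hypersurface singularity via \cite[5.38]{kollarmori}, that the noncommutative partial resolution $A=\enn_R(R\oplus M)$ exists with $M$ indecomposable MCM, and that $M$ is non-projective because $A_\con$ is nonzero. Your additional recap of the internal mechanism of \ref{recov} (the chain $\dq\rightsquigarrow\R\underline{\enn}_R(M)\rightsquigarrow D^\mathrm{dg}_\mathrm{sg}(R)$) is not needed for the proof but does no harm.
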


\begin{ex}\label{singularnope}
In the singular setting, the usual contraction algebra does not classify. To see this, let $R$ be the complete local hypersurface $\frac{k\llbracket x,y,u,v\rrbracket}{uv-x(x^2+y^3)}$, which is an isolated $cA_2$ singularity. By \cite[5.1]{iwreduct} and \cite[4.10]{hmmp}, the $R$-module $M\coloneqq (u,x) \subseteq R$ is MCM, and the associated noncommutative partial resolution $A$ is derived equivalent to a singular minimal model $\pi:X \to \spec R$ constructed from $A$ via quiver GIT. The ring $A$ can be presented as the (completion of the) following quiver algebra\footnote{Our convention for quiver algebras is that $fg$ means `follow arrow $f$, then arrow $g$'.}: $$\begin{tikzcd}
	R \arrow[rr,bend left=15,"x"]\arrow[loop left, "y"] \arrow[rr,bend left=50,"u"]  && M\arrow[loop right, "y"] \arrow[ll,bend left=15,"\text{incl.}"] \arrow[ll,bend left=50,"\frac{v}{x}"]\end{tikzcd}$$(one can obtain this either via a direct calculation or by using \cite[5.33]{iwreduct}). Relabeling, we see that $A$ is isomorphic to the (completion of the) path algebra of the quiver $$\begin{tikzcd}
	1 \arrow[rr,bend left=15,"b"]\arrow[loop left, "m"] \arrow[rr,bend left=50,"a"]  && 2\arrow[loop right, "n"] \arrow[ll,bend left=15,"s"] \arrow[ll,bend left=50,"t"]\end{tikzcd}$$with the six relations $an=ma$, $bn=mb$, $ns=sm$, $nt=tm$, $at=(bs)^2+m^3$ and $ta=(sb)^2+n^3$. To compute the quotient $A_\con=A/AeA$, one simply has to kill all arrows passing through vertex 1, from which it is clear that $\pi$ has contraction algebra $\frac{k[y]}{y^3}$. However, the (smooth) pagoda flop of width 3, which is a $cA_1$ singularity, is also known to have contraction algebra $\frac{k[y]}{y^3}$ \cite{DWncdf}. So we have exhibited two singular flops with non-isomorphic bases but isomorphic contraction algebras. It follows that $A_\con$ does not classify singular flops. Moreover, if $\pi$ is a minimal model of a terminal threefold, then its derived contraction algebra has cohomology $H(\dq)\cong A_\con[\eta]$ by \cite[9.1.3]{me}. So $H(\dq)$ also does not classify singular flops; one really requires the full dga (or $A_\infty$) structure. We remark that the derived contraction algebra of the pagoda flop is computed in \cite[Chapter 9]{me} using a deformation-theoretic interpretation, and one could also compute the derived contraction algebra of the $cA_2$ flop above using the same method.
\end{ex}

	\phantomsection

\bibliographystyle{alpha}	
\bibliography{thesisbib}

\end{document}